\definecolor{mycolor}{rgb}{0.122, 0.435, 0.698}
\newmdenv[innerlinewidth=0.5pt, roundcorner=4pt,linecolor=mycolor,innerleftmargin=6pt,
innerrightmargin=6pt,innertopmargin=6pt,innerbottommargin=6pt]{mybox}
\renewcommand*{\eqref}[1]{%
  \hyperref[{#1}]{\textup{\tagform@{\ref*{#1}}}}%
}
\def\R{\mathbb{R}}
\def\1{\mathbbm{1}}
\newcommand{\s}{\scriptsize}
\newtheorem{theorem}{Theorem}[section]
\newtheorem{lemma}[theorem]{Lemma}
\theoremstyle{definition}
\newtheorem*{example}{Example}
\theoremstyle{remark}
\newtheorem{remark}[theorem]{Remark}
\theoremstyle{Proposition}
\newtheorem{proposition}[theorem]{Proposition}
\theoremstyle{Corollary}
\newtheorem{corollary}[theorem]{Corollary}
\numberwithin{equation}{section}
\begin{document}
	\newpage
	\title[Asymptotic behavior of the multilevel type error for L\'evy driven SDE]{Asymptotic behavior of the multilevel type error for  SDEs driven by a pure jump L\'evy process }
	
	
\author{Mohamed BEN ALAYA}
\address{Mohamed Ben Alaya, Laboratoire De Math\'ematiques Rapha{\"{e}}l Salem, UMR 6085, Universit\'e De Rouen, Avenue de L'Universit\'e Technop\^ole du Madrillet, 76801 Saint-Etienne-Du-Rouvray, France}
\curraddr{}
\email{mohamed.ben-alaya@univ-rouen.fr}
\thanks{}

\author{Ahmed KEBAIER}
\address{Ahmed, Kebaier, Université Sorbonne Paris Nord, LAGA, CNRS, UMR 7539,  F-93430, Villetaneuse, France}
\curraddr{}
\email{kebaier@math.univ-paris13.fr}
\thanks{This research is supported by Laboratory of Excellence MME-DII, Grant no. ANR11LBX-0023-01 (\url{http://labex-mme-dii.u-cergy.fr/}).  Ahmed Kebaier benefited from the support of the chair Risques Financiers, Fondation du Risque.}

\author{Thi Bao Tram NGO}
\address{Thi Bao Tram NGO, Université Sorbonne Paris Nord, LAGA, CNRS, UMR 7539,  F-93430, Villetaneuse, France}
\curraddr{}
\email{ngo@math.univ-paris13.fr}
\thanks{}
	
	\subjclass[2010]{Primary 60J75, 65C30; secondary 60J30, 60F17}
	
	\keywords{Multilevel Monte Carlo type error, Euler discretization, stochastic differential equations, pure jump L\'evy processes, limit theorems.}
	
	\date{25/04/2021}
	
	\maketitle
	\begin{abstract}	
		Motivated by  the multilevel Monte Carlo method introduced by Giles \cite{Gil}, we study the asymptotic behavior of the normalized error process $u_{n,m}(X^n-X^{nm})$  where $X^n$ and $X^{nm}$ are respectively Euler approximations with time steps $1/n$ and $1/nm$ of a given stochastic differential equation $X$ driven by a pure jump L\'evy process.  In this paper, we prove that this normalized multilevel error converges to different non-trivial limiting processes with various sharp rates $u_{n,m}$ depending on the behavior of the Lévy measure around zero.  Our results are consistent with those of Jacod \cite{a} obtained for the normalized error $u_n(X^n-X)$, as when letting  $m$ tends to infinity, we  recover the same limiting processes. For the multilevel error, the proofs of the current paper are challenging since unlike \cite{a} we need to deal with $m$ dependent triangular arrays instead of one. 
	\end{abstract}
	
	\section{Introduction}	
Suppose that we are in the probability space $(\Omega, \mathcal{F}, (\mathcal{F}_t)_{0\leq t\leq T}, \mathbb{P})$ endowed with the filtration $\mathcal{F}_s=\sigma(Y_u,u\leq s)$, where $Y$ is a L\'evy process with characteristics $(b,c,F)$ with respect to the truncation function $h(x)=x\1_{\{|x|\leq1\}}$, meaning   
\begin{displaymath}
\mathbb{E}(e^{\texttt{i}uY_t})=\exp\Big\{t \big(\texttt{i}ub-\frac{cu^2}{2}+\int F(dx)(e^{\texttt{i}ux}-1-\texttt{i}ux\1_{\{|x|\leq 1\}}\big)\Big\}.
\end{displaymath}
We consider the L\'evy driven stochastic differential equation (SDE) 
	\begin{equation}\label{eq:1.1}
	X_t=x_0+\int_0^tf(X_{s-})dY_s,\hskip 1cm t\in[0,T],\hskip 0.1cm T>0
	\end{equation}
	where $x_0\in\R$, $f\in\mathcal{C}^3$ (a three-times-differentiable function). Without loss of generality, we assume that $T=1$. In what follows, we consider the continuous Euler approximation  \begin{align}\label{eq:Xn}dX_t^n=f(X^n_{\eta_n(t-)})dY_t,\hskip 1cm t\in[0,1]\end{align} with time step $1/n$, where $\eta_n(t)=\frac{[nt]}{n}$.
	
For the error process $X^n-X$,  Jacod and Protter \cite{b}  proved that the sharp  rate is $\sqrt{n}$  when the characteristic triplet  corresponds to $(b,c,F)$ with $c>0$.  Let us precise that a rate is called sharp if the normalized error converges to a non-trivial  limiting process. Then, Jacod \cite{a} established new sharp rates of convergence different from the classical  $\sqrt{n}$ rate for several cases  with a  L\'evy characteristic triplet  $(b,0,F)$ and $F(\mathbb{R})=\infty$. Those cases correspond to different behaviors of the Lévy measure $F$ around zero.  More recently,  Wang \cite{m} extended the results of Jacod  \cite{a}  for the case of general It\^o semimartingales.  
	
In the current paper, motivated by the multilevel Monte Carlo method  introduced by Giles  \cite{Gil}, we study instead the error between two Euler schemes with different time steps. In particular, we  are interested in determining  sharp rates $u_{n,m}$ for the weak convergence of the error between two consecutive Euler approximations
	$X^n-X^{nm}$ and identifying the corresponding limiting processes.  Here, $X^n$ and $X^{nm}$  stand for the Euler schemes with respectively  time steps  $1/n$  and $1/mn$ that are build  on the same L\'evy paths. In the literature, several papers studied this multilevel type error. Indeed,  when the characteristic triplet of $Y$ is $(b,1,0)$, Ben Alaya and Kebaier \cite{b}  proved  that 
		\begin{displaymath}
		\;\;(Y,	\sqrt{\frac{mn}{(m-1)}}(X^{nm}-X^n))\stackrel{stably}{\Rightarrow}(Y,U),  \mbox{ as } n\to \infty
		\end{displaymath}
		with $m\in\mathbb{N}\backslash\{0,1\}$, $$U_t=\int_0^tf'(X_s)U_sdY_s+\frac{1}{\sqrt{2}}\int_0^tf(X_s)f'(X_s)dW_s$$ and 
			 $W$ is a new standard Brownian motion independent of $Y$.
	 When the characteristic triplet of $Y$ is $(b,c,F)$ with $c\neq0$, Dereich and Li \cite{DereichLi} proved  a similar result with a sharp rate $\sqrt{n}$ under some regularity condition on $F$ around zero with an explicit limiting process. 	
	 
	 Therefore, to fill the gap in the literature for the analysis of this type of error, we consider $Y$ as a L\'evy process with characteristics $(b,0,F)$ where $F$ is  an infinite measure.  More precisely,  for the same cases studied by Jacod \cite{a} we consider in the current work the multilevel type error between two consecutive Euler approximations defined by \begin{align}\label{eq:err}U^{n,m}=X^n_{\eta_{nm}}-X^{nm}_{\eta_{nm}}.\end{align} 
	For this multilevel type error, we use  triangular arrays technics to find the sharp rate of convergence  that turns out to be  faster than   $\sqrt{n}$ which is the usual rate when $Y$ incorporates a continuous Gaussian part. It is worth noticing that the work of \cite{a} when studying the error $X^n_{\eta_{n}}-X_{\eta_{n}}$ needs only to treat one main contributing triangular array. 
However, in the current work, the technical challenge we faced when proving the convergence of the correctly normalized multilevel error \eqref{eq:err}  consists in studying the asymptotic behavior of the joint probability distribution of $m$ triangular arrays. These dependent triangular arrays appear naturally when studying the multilevel error  obtained between the finer discretization with time step $1/nm$ and the coarse one with time step $1/n$.  To overcome this problem, we develop new treatments and proofs to handle these $m$ dependent terms that contribute in the limit in different ways depending on the  assumptions taken on the original Lévy measure $F$ around zero. In more details, besides using the  ``subsequences principle" trick (see e.g.\ Jacod and Protter \cite{JacodProtter2012}), we use arguments of  Sato \cite[Ex.12.8-12.10]{d} that let us avoid complicated calculations of multi-dimensional integrals and rather focus on the pairwise asymptotic behavior of the $m$  marginals and we conclude using technical criteria of Kallenberg \cite[Theorem 15.14 and Corollary 15.16]{e} to prove the weak convergence to the limiting process.

The rest of the paper is organized as follows. In Section \ref{sec:2}, using similar notations, we recall from Jacod \cite{a} some assumptions and estimates on the Lévy measure and also the semimartingale decomposition. Here, in the spirit of Jacod \cite{a}, we precise our consideration for five specific cases. In Section \ref{sec:3}, we introduce  and  prove our main results namely Theorem \ref{thm:tight} for the tightness, Theorem \ref{thm:main1} and Theorem \ref{thm:main} the functional limit theorems for the couple of normalized errors.  Section \ref{error} gives the details of the error analysis to prove our main results with specifying the main and rest terms for each cases and the study of the asymptotic behaviors of the joint distribution of the main terms. The rest terms are treated in appendix \ref{app:A}. Appendices \ref{app:B} and \ref{appC} are dedicated to recall some technical tools that we use throughout the paper.  
	\section{General settings and notations}\label{sec:2}
Let $f$ denotes a real-valued function $f$ satisfying 
	\begin{equation}\label{f}\tag{\textbf{H$_f$}} \textrm{$f\in \mathcal{C}^3$ and globally Lipschitz.}
	 \end{equation}
It is well known that assumption  \eqref{f} guarantees that $\eqref{eq:1.1}$ has an unique non-exploding solution. The crucial factor to find the sharp rates of the multilevel type error \eqref{eq:err} is the behavior of the L\'evy measure $F$ near $0$, which will be expressed through the following functions on $\mathbb{R}_+$:
\begin{align*}
\theta_+(\beta)&=F((\beta,+\infty)),\hspace{1cm}\theta_-(\beta)=F((-\infty,-\beta)),\\
\theta(\beta)&=\theta_+(\beta)+\theta_-(\beta).
\end{align*}	
 Note that from now on, we denote $C$ as a generic constant which may change from line to line. We keep the same framework as in  Jacod \cite{a} and we introduce the four main classes of Lévy measures $F$  that we are  interested in:
	\begin{equation}\label{h1}\tag{\textbf{H$_1^\alpha$}} \textrm{We have } \theta(\beta)\leq\frac{C}{\beta^{\alpha}} \textrm{ for all } \beta\in(0,1].   
	 \end{equation}
	\begin{equation}\label{h2}\tag{\textbf{H$_2^\alpha$}}\textrm{We have } \beta^{\alpha}\theta_+(\beta)\rightarrow\theta_+ \textrm{ and } \beta^{\alpha}\theta_-(\beta)\rightarrow\theta_- \textrm{ as } \beta\rightarrow 0 \textrm{ for some constants } \theta_+\textrm{, } \theta_-\geq0. 
	\end{equation}
	$\textrm{ We set } \theta=\theta_++\theta_->0 \textrm{ and } \theta'=\theta_+-\theta_-,\textrm{ as  } \beta\rightarrow0\textrm{, } \theta(\beta)\sim\frac{\theta}{\beta^{\alpha}}.$
	\begin{equation}\label{h3}\tag{\textbf{H$_3$}}\textrm{The measure }F \textrm{ is symmetrical about }0.
	\end{equation}
	\begin{equation}\label{h4}\tag{\textbf{H$_4$}}\textrm{We have }b=0.
	\end{equation}
We note that \eqref{h1}  is weaker than  \eqref{h2}. Here, the Hypothesis $(\mathbf{H_1^2})$ always holds because the L\'evy measure $F$ integrates $x\mapsto |x|^2\wedge1$. That is 
	\begin{displaymath}
	|\beta|^2\theta(\beta)=\int_{|x|>\beta}(|\beta|^2\wedge1) F(dx)\le\int_{|x|>\beta}(|x|^2\wedge1) F(dx)<+\infty.
	\end{displaymath}
	Now, let us give an example of a process in finance which satisfies the first two hypotheses. 
	\begin{example}
		We consider the $CGMY$ process with L\'evy density
		\begin{align*}
			F(dx)=\left\{\begin{array}{ll}
				Ce^{-Mx}/x^{1+Y}&x>0\\Ce^{Gx}/|x|^{1+Y}&x<0
			\end{array}\right.
		\end{align*} 
		where $C, G, M>0$ and $0<Y<2$. For some sequence $\beta_n\in (0,1]$ and $\beta_n\rightarrow0$ as $n\rightarrow\infty$, this process always satisfies Hypothesis \eqref{h2} and therefore it satisfies Hypothesis \eqref{h1}. More precisely, we have 
		\begin{align*}
			\beta_{n}^Y\theta(\beta_{n})
			=\frac{2C}{Y}+C	\beta_{n}^Y\int_{x>\beta_{n}}\frac{e^{-Mx}+e^{-Gx}-2}{x^{1+Y}}dx.
		\end{align*}
		So, noticing that 
		\begin{align*}
			C	\beta_{n}^Y\int_{x>\beta_{n}}\frac{e^{-Mx}+e^{-Gx}-2}{x^{1+Y}}dx&\leq \frac{C}{Y}(e^{-M\beta_{n}}+e^{-G\beta_{n}}-2)\underset{n\rightarrow\infty}{\rightarrow}0.
		\end{align*}
		we deduce that $\beta_{n}^Y\theta(\beta_{n})\rightarrow \theta$ as $n\rightarrow\infty$, where $\theta=\frac{2C}{Y}$.
	\end{example}
In the same  spirit  as in Jacod  \cite{a}, we prove sharp rate $u_{n,m}$ for our multilevel error $U^{n,m}$  \eqref{eq:err}  with pointing out the technical choice for the sequence $\beta_n$ tending to $0$ that truncates the small jumps. To do so, we consider five different cases depending on some reasonably general circumstances. 
\begin{enumerate}
\renewcommand{\labelenumi}{\textbf{\theenumi}}
\renewcommand{\theenumi}{ (C\arabic{enumi})}
	\item\label{1}  If  \eqref{h1} is valid for some $\alpha<1$ and $d:=b-\int_{|x|\leq1}xF(dx)\neq0$, then we choose $u_{n,m}=\frac{nm}{m-1}$ and $\beta_n=\frac{(\log{n})^2}{n}$. (See the Remark \ref{rmk:2} for the finiteness of $d$)
	\item\label{2} If  \eqref{h2} is valid  for some $\alpha<1$ and hypotheses \eqref{h3} and \eqref{h4} are also valid then we choose $u_{n,m}=\left[\frac{mn}{(m-1)\log{n}}\right]^{1/{\alpha}}$ and $\beta_{n}=\left(\frac{\log{n}}{n}\right)^{1/{\alpha}}$.
	\item\label{3} If  \eqref{h2} is valid  for  $\alpha=1$ and $F$ is non-symmetric then we choose $u_{n,m}=\frac{mn}{(m-1)(\log{n})^2}$ and $\beta_{n}=\frac{\log{n}}{n}$. 
	\item\label{4} If  \eqref{h2} is valid  for  $\alpha=1$  and hypothesis \eqref{h3} is also valid then we choose $u_{n,m}=\frac{mn}{(m-1)\log{n}}$ and $\beta_{n}=\frac{\log{n}}{n}$.
	\item\label{5} If  \eqref{h2} is valid  for some $\alpha>1$ then we choose $u_{n,m}=\left[\frac{mn}{(m-1)\log{n}}\right]^{1/{\alpha}}$ and $\beta_{n}= \frac{\log{n}}{n^{1/(2\alpha)}}$.
\end{enumerate}
   
  	\subsection{Some estimates on Lévy measure}\label{subsec:not}
	In what follows we consider the same notations as in  Jacod  \cite{a} and  for $\beta>0$, we denote
\begin{align}
	&c(\beta)=\int_{|x|\leq\beta}|x|^2F(dx),\nonumber\\&d_+(\beta)=\int_{x>\beta}|x|F(dx),\hskip 1cm d_-(\beta)=\int_{x<-\beta}|x|F(dx),\nonumber\\&\rho_+(\beta)=\int_{x>\beta}|x|^{\alpha}F(dx),\hskip 1cm \rho_-(\beta)=\int_{x<-\beta}|x|^{\alpha}F(dx),\label{3.1}\\&\delta(\beta)=d_+(\beta)+d_-(\beta),\hskip 1cm \rho(\beta)=\rho_+(\beta)+\rho_-(\beta),\nonumber\\&d'(\beta)=d_+(\beta)-d_-(\beta),\hskip 1cm b'=b+\int_{|x|>1}xF(dx),\nonumber\\&d(\beta)=b'-d'(\beta)\nonumber.
\end{align}
Note that $d(\beta)=b-\int_{\beta<|x|\leq1}xF(dx)$ if $\beta<1$ and $d(\beta)=b$ if $\beta=1$.\\\\
\noindent{$\bullet$} Without loss of generality we can reduce ourselves to study the case where we have bounded jumps and coefficient $f$ with compact support. Thus, from now on we assume that 
\begin{enumerate}
\renewcommand{\labelenumi}{\textbf{\theenumi}}
\renewcommand{\theenumi}{(A)}
\item \label{A} $f\in\mathcal{C}^3$ with compact support and $|\Delta Y|\leq p$  for some integer $p\geq1$, which amounts to say that $\theta(p)=0$. 
\end{enumerate}
 Indeed, adapting  the same arguments as in Proposition 2.4 in Jacod in \cite{a} to the multilevel error setting, we can easily recover our main results namely Theorem \ref{thm:tight}, Theorem \ref{thm:main1} and Theorem \ref{thm:main}  for non-bounded jumps and coefficient $f$ without a compact support.  
\begin{remark}\label{rmk:1} 
	Note that under \ref{A} the quantity  $\int_{\mathbb R} |x|^{a} F(dx)$, $a\ge2$ is finite. 
\end{remark} 
In this part, we recall from Jacod in \cite{a} under assumption \ref{A} some useful estimations on the above quantities introduced in \eqref{3.1}. We provide some details for the proofs of the following lemmas in appendix \ref{app:B}.
\begin{lemma}\label{lem:estimation1}
	Since $\theta(p)=0$, under \eqref{h1}, we have for any $\beta\in(0,1]$
	\begin{align}\label{eq:3.3}
		\left\{\begin{array}{l}
			c(\beta)\leq C\beta^{2-\alpha}, \hskip 1cm \rho(\beta)\leq
			C\log\left(\frac{1}{\beta}\right)\\\int_{|x|>\beta}|x|^{\alpha/2}F(dx)\le\frac{C}{\beta^{\alpha/2}}\\
			\delta(\beta)+|d(\beta)|+d_+(\beta)+d_-(\beta)+|d'(\beta)|\leq C s(\beta)\end{array}\right.
		\mbox{where }s(\beta)=\left\{\begin{array}{ll}1,&\alpha<1\\\log(\frac{1}{\beta}),&\alpha=1\\\beta^{1-\alpha},&\alpha>1\end{array}\right.
	\end{align}
\end{lemma}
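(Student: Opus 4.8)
The strategy is to reduce everything to the single bound $\theta(\beta)\le C\beta^{-\alpha}$ on $(0,1]$ provided by \eqref{h1}, together with the assumption $\theta(p)=0$ from \ref{A} (so all integrals are over $\{\beta<|x|\le p\}$ and are finite by Remark \ref{rmk:1}). For each of the quantities $c(\beta)$, $\rho(\beta)$, $\int_{|x|>\beta}|x|^{\alpha/2}F(dx)$, $\delta(\beta)$, and $|d(\beta)|$, I would write the integral in terms of $\theta$ via the layer-cake / integration-by-parts identity
\[
\int_{\beta<|x|\le p} g(|x|)\,F(dx) \;=\; g(\beta)\,\theta(\beta)-g(p)\theta(p)+\int_\beta^p g'(r)\,\theta(r)\,dr,
\]
valid for $C^1$ increasing $g$, and then substitute $\theta(r)\le Cr^{-\alpha}$ and integrate the resulting power of $r$. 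Since $\theta(p)=0$ the boundary term at $p$ drops, and the boundary term at $\beta$ is $g(\beta)\theta(\beta)\le Cg(\beta)\beta^{-\alpha}$.

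First, for $c(\beta)=\int_{|x|\le\beta}|x|^2F(dx)$ one splits $[0,\beta]$ dyadically (or uses the complementary identity $c(\beta)=\int_0^\beta 2r\,\theta(r)\,dr - \beta^2\theta(\beta)$ after noting $c(\beta)=\int_0^\beta(\theta(r)-\theta(\beta))\,2r\,dr$ type manipulation): inserting $\theta(r)\le Cr^{-\alpha}$ gives $c(\beta)\le C\int_0^\beta r^{1-\alpha}\,dr = C\beta^{2-\alpha}/(2-\alpha)$, which is legitimate because $2-\alpha>0$; this uses only that the L\'evy measure integrates $|x|^2$ near $0$ to justify the finiteness. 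Next, for $\rho(\beta)=\int_{\beta<|x|\le p}|x|^\alpha F(dx)$, the identity above with $g(r)=r^\alpha$ gives $\rho(\beta)\le \beta^\alpha\theta(\beta)+\int_\beta^p \alpha r^{\alpha-1}\theta(r)\,dr\le C+C\alpha\int_\beta^p r^{-1}\,dr=C+C\alpha\log(p/\beta)\le C\log(1/\beta)$ for $\beta\le 1$ (absorbing the constant $\log p$ and the term $C$ into the generic constant, using $\log(1/\beta)\ge\log 1=0$ and that we only care about $\beta$ small). The bound $\int_{|x|>\beta}|x|^{\alpha/2}F(dx)\le C\beta^{-\alpha/2}$ is identical with $g(r)=r^{\alpha/2}$: the boundary term is $\beta^{\alpha/2}\theta(\beta)\le C\beta^{-\alpha/2}$ and $\int_\beta^p (\alpha/2)r^{\alpha/2-1}\theta(r)\,dr\le C\int_\beta^p r^{-\alpha/2-1}\,dr\le C\beta^{-\alpha/2}$.

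Finally, for the last line: $d_\pm(\beta)$ and $d'(\beta)=d_+(\beta)-d_-(\beta)$ and $\delta(\beta)=d_+(\beta)+d_-(\beta)$ are all controlled by $\int_{\beta<|x|\le p}|x|\,F(dx)$, which by the same identity with $g(r)=r$ is bounded by $\beta\theta(\beta)+\int_\beta^p\theta(r)\,dr\le C\beta^{1-\alpha}+C\int_\beta^p r^{-\alpha}\,dr$; evaluating the last integral in the three regimes $\alpha<1$, $\alpha=1$, $\alpha>1$ produces respectively $O(1)$, $O(\log(1/\beta))$, $O(\beta^{1-\alpha})$, i.e.\ exactly $Cs(\beta)$ (note $\beta^{1-\alpha}\le 1$ when $\alpha<1$, $\beta\le1$, so $\beta\theta(\beta)$ is also $\le Cs(\beta)$ in every case). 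For $|d(\beta)|$ recall $d(\beta)=b'-d'(\beta)$; if $\alpha<1$ then $d'(\beta)$ has a finite limit as $\beta\to0$ (the integral $\int x\,F(dx)$ over a neighborhood of $0$ converges absolutely since $|x|\le\beta^{1-\alpha}\cdot|x|^\alpha$-type domination makes $\int_{|x|\le1}|x|F(dx)<\infty$), so $|d(\beta)|\le|b'|+|d'(\beta)|\le C=Cs(\beta)$; for $\alpha\ge1$, $|d(\beta)|\le|b'|+|d'(\beta)|\le|b'|+Cs(\beta)\le Cs(\beta)$ since $s(\beta)\to\infty$ dominates the constant $|b'|$ (or is bounded below away from $0$ for $\beta\le1$). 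Assembling these bounds gives \eqref{eq:3.3}.

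The only mildly delicate point — and the one place I would be careful — is the $c(\beta)$ estimate, since there the integration-by-parts identity runs on $[0,\beta]$ where $\theta(r)$ may blow up; one must check that the boundary term $\beta^2\theta(\beta)$ is nonnegative and of the right order ($\le C\beta^{2-\alpha}$), and that the representation $c(\beta)=\int_0^\beta 2r(\theta(r)-\theta(\beta))\,dr$ (equivalently $\int_0^\beta 2r\,\mathbb{P}(\beta\ge|x|>r)\,F$-style) is valid — this is just Fubini on $|x|^2=\int_0^{|x|}2r\,dr$ restricted to $\{r<|x|\le\beta\}$, so no real obstacle, merely bookkeeping. Everything else is a direct substitution of the hypothesis bound followed by an elementary power integral, so I expect no genuine difficulty.
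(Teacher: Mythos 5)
Your proposal is correct and follows essentially the same route as the paper: the layer-cake/integration-by-parts identity you use is exactly the Fubini representation \eqref{eq:3.2.1}--\eqref{eq:3.2.2} proved in Appendix \ref{app:B}, your treatment of $c(\beta)$ via $c(\beta)=2\int_0^\beta r(\theta(r)-\theta(\beta))\,dr$ is the paper's computation verbatim, and the remaining bounds are the same substitution of $\theta(r)\le Cr^{-\alpha}$ followed by elementary power integrals. The only caveat (shared with the paper's own statement) is that the $\log(1/\beta)$ bounds degenerate as $\beta\to1$, which you rightly note is harmless since only $\beta=\beta_n\to0$ is ever used.
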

\begin{remark}\label{rmk:2}
	Note that under \eqref{h1} with $\alpha<1$, we have  $|\delta(\beta)|<C$ for all $\beta\in(0,1]$, then by the monotone convergence theorem  $\int |x|F(dx)<\infty$.
	\end{remark}
\begin{lemma}\label{lem:estimation2}
	If further \eqref{h2} holds, then we obtain the following equivalences or convergences as $\beta$ goes to $0$,
	\begin{align}
		\left\{\begin{array}{lll}\label{eq:3.5}c(\beta)\sim\frac{\alpha\theta}{2-\alpha}\beta^{2-\alpha},& &\\
			\frac{\rho_+(\beta)}{\log{1/{\beta}}}\rightarrow\alpha\theta_+& \frac{\rho_-(\beta)}{\log{1/{\beta}}}\rightarrow\alpha\theta_-, &\\
			\beta^{\alpha-1}d_+(\beta)\rightarrow\frac{\alpha\theta_+}{\alpha-1}& \beta^{\alpha-1}d_-(\beta)\rightarrow\frac{\alpha\theta_-}{\alpha-1}& \textrm{if } \alpha>1 \\
			\frac{d_+(\beta)}{\log{(1/\beta)}}\rightarrow \theta_+&	\frac{d_-(\beta)}{\log{(1/\beta)}}\rightarrow \theta_-& \textrm{if } \alpha=1\\
			d_+(\beta)\rightarrow d_+&d_-(\beta)\rightarrow d_-&\textrm{if }\alpha<1 \end{array}\right.
	\end{align} 
	with some positive constants $d_+$ and $d_-$.
\end{lemma}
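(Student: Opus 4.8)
The plan is to derive each of the five blocks of asymptotics from the single input hypothesis \eqref{h2}, namely $\beta^\alpha\theta_\pm(\beta)\to\theta_\pm$, by writing the integrals in \eqref{3.1} in terms of the tail functions $\theta_\pm$ and integrating by parts (Fubini/layer-cake). Concretely, for the positive part one has $d_+(\beta)=\int_{x>\beta}x\,F(dx)=\beta\theta_+(\beta)+\int_\beta^p \theta_+(u)\,du$ (using $\theta(p)=0$ from \ref{A}), $\rho_+(\beta)=\int_{x>\beta}x^\alpha F(dx)=\beta^\alpha\theta_+(\beta)+\alpha\int_\beta^p u^{\alpha-1}\theta_+(u)\,du$, and $c(\beta)=\int_{|x|\le\beta}x^2F(dx)=\beta^2\theta(\beta)-2\int_0^\beta u\,\theta(u)\,du$ — the last one after noting $\int_{|x|\le\beta}x^2F(dx)=-\beta^2\theta(\beta)+2\int_0^\beta u\,\theta(u)\,du$ is finite thanks to Remark \ref{rmk:1}. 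I would record these identities first.

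Next I would feed $\theta_\pm(u)\sim\theta_\pm u^{-\alpha}$ into each identity. For $c(\beta)$: the boundary term is $\beta^2\theta(\beta)\sim\theta\beta^{2-\alpha}$ and $2\int_0^\beta u\cdot\theta u^{-\alpha}du=\frac{2\theta}{2-\alpha}\beta^{2-\alpha}$, so $c(\beta)\sim\bigl(1-\tfrac{2}{2-\alpha}\bigr)^{-1}$... more carefully $c(\beta)=2\int_0^\beta u\theta(u)du-\beta^2\theta(\beta)\sim\bigl(\tfrac{2}{2-\alpha}-1\bigr)\theta\beta^{2-\alpha}=\tfrac{\alpha\theta}{2-\alpha}\beta^{2-\alpha}$, which is the claimed equivalence. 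For $\rho_\pm$ with $\alpha\le 2$: when $\alpha=1$ one gets $\rho_+(\beta)=\beta\theta_+(\beta)+\int_\beta^p\theta_+(u)du\sim\theta_+\log(1/\beta)$ since $\int_\beta^p\theta_+(u)du\sim\theta_+\log(1/\beta)$ and the boundary term $\beta\theta_+(\beta)\to\theta_+$ is negligible against the log; for general $\alpha$ one similarly has $\alpha\int_\beta^p u^{\alpha-1}\theta_+(u)du\sim\alpha\theta_+\int_\beta^p u^{-1}du\sim\alpha\theta_+\log(1/\beta)$, giving $\rho_\pm(\beta)/\log(1/\beta)\to\alpha\theta_\pm$. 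For $d_\pm$: when $\alpha>1$, $\int_\beta^p\theta_+(u)du\sim\theta_+\int_\beta u^{-\alpha}du\sim\frac{\theta_+}{\alpha-1}\beta^{1-\alpha}$ and $\beta\theta_+(\beta)\sim\theta_+\beta^{1-\alpha}$, so $\beta^{\alpha-1}d_+(\beta)\to\theta_+\bigl(1+\tfrac1{\alpha-1}\bigr)=\tfrac{\alpha\theta_+}{\alpha-1}$; when $\alpha=1$, $d_+(\beta)=\beta\theta_+(\beta)+\int_\beta^p\theta_+(u)du$, the integral is $\sim\theta_+\log(1/\beta)$ and the boundary term is $O(1)$, so $d_+(\beta)/\log(1/\beta)\to\theta_+$; when $\alpha<1$, the integral $\int_\beta^p\theta_+(u)du$ converges (its integrand is integrable near $0$ under \eqref{h2} with $\alpha<1$, cf. Remark \ref{rmk:2}) and $\beta\theta_+(\beta)\to 0$, so $d_+(\beta)\to d_+:=\int_0^p\theta_+(u)du$, a finite positive constant, and likewise for $d_-$. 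The analogous statements for the "$-$" side are identical by symmetry of the argument.

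The only genuine subtlety — the step I expect to need the most care — is justifying that the asymptotic relation $\theta_\pm(u)\sim\theta_\pm u^{-\alpha}$, which holds pointwise as $u\to 0$, may be integrated to yield the stated asymptotics of $\int_\beta^p u^{\gamma}\theta_\pm(u)\,du$ as $\beta\to 0$. This is a standard Karamata-type argument: split $\int_\beta^p=\int_\beta^{\varepsilon}+\int_\varepsilon^p$; on $(0,\varepsilon)$ replace $\theta_\pm(u)$ by $(\theta_\pm\pm\text{error})u^{-\alpha}$ uniformly for $\varepsilon$ small, use \eqref{h1} (which follows from \eqref{h2}) to bound the error contribution, and observe that $\int_\varepsilon^p u^\gamma\theta_\pm(u)du$ is a bounded constant that is negligible against the divergent (or dominant) main term in every case where divergence occurs, and is absorbed into the definition of the limiting constant in the case $\alpha<1$. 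One also uses $\theta(p)=0$ throughout so that all integrals truncate at $p$ rather than $+\infty$, and Remark \ref{rmk:1} to guarantee finiteness of $c(\beta)$ before manipulating it. Collecting the five computations gives all the relations in \eqref{eq:3.5}. \dpcm
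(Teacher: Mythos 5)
Your proof is correct and follows essentially the same route as the paper: the layer-cake identities you write for $c$, $d_{\pm}$, $\rho_{\pm}$ are exactly the Fubini identities \eqref{eq:3.2.1} and \eqref{eq:3.2.2} of Appendix \ref{app:B} specialized to $b=p$ with $\theta(p)=0$, and the paper likewise concludes by inserting the asymptotics $\theta_{\pm}(u)\sim\theta_{\pm}u^{-\alpha}$ via the same splitting argument, carrying out only the $c(\beta)$ case explicitly and declaring the rest ``similar''. The only blemish is the transient sign error in your first display for $c(\beta)$, which you correct within the same sentence and which does not affect the subsequent computation.
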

\begin{lemma}\label{lem:estimation3}
	When $\alpha=1$, under assumption \eqref{h2}, we have for every $b>0$ and as $\beta\rightarrow0$
	\begin{align}\label{eq:3.6}
		\frac{1}{(\log{(1/\beta)})^2}\int_{\beta<|x|\le b}(x\log{|x|})F(dx)\rightarrow -\frac{\theta'}{2}.
	\end{align}	
\end{lemma}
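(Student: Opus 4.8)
The plan is to split the integral according to the sign of $x$ and, for each half, use Lebesgue--Stieltjes integration by parts to replace $x\log|x|\,F(dx)$ by an ordinary integral of the tail function $\theta_\pm$ against $du$; the resulting integral is then handled by a weighted Cesàro argument based on the convergences $u\,\theta_+(u)\to\theta_+$ and $u\,\theta_-(u)\to\theta_-$ provided by \eqref{h2} with $\alpha=1$. First one reduces to $b\le 1$: for $b>1$ the piece $\int_{1<|x|\le b}x\log|x|\,F(dx)$ is a finite constant, since $F(\{|x|>1\})<\infty$ and $x\log|x|$ is bounded there, hence negligible after division by $(\log(1/\beta))^2$. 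So assume $b\le 1$ and write the quantity under study as $K(\beta)=A(\beta)+B(\beta)$ with $A(\beta)=\int_{\beta<x\le b}x\log x\,F(dx)$ and $B(\beta)=\int_{-b\le x<-\beta}x\log|x|\,F(dx)$.

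For $A(\beta)$, note that on $(0,\infty)$ the measure $F$ is the Lebesgue--Stieltjes measure of the right-continuous non-increasing function $-\theta_+$ (indeed $F((a,c])=\theta_+(a)-\theta_+(c)$), while $\phi(u):=u\log u$ is $C^1$ on $(0,\infty)$ with $\phi'(u)=\log u+1$. Integration by parts (valid even if $\theta_+$ has jumps, since $\phi$ is $C^1$) gives
\[
A(\beta)=\beta\log\beta\,\theta_+(\beta)-b\log b\,\theta_+(b)+\int_\beta^b\theta_+(u)(\log u+1)\,du .
\]
The first term equals $\log\beta\cdot\bigl(\beta\,\theta_+(\beta)\bigr)$, which is $O(\log(1/\beta))$ because $\beta\,\theta_+(\beta)\to\theta_+$; the second term is a constant; both are $o((\log(1/\beta))^2)$. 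Setting $\psi_+(u):=u\,\theta_+(u)$, which is bounded on $(0,b]$ and satisfies $\psi_+(u)\to\theta_+$ as $u\to 0$ by \eqref{h2}, one has $\int_\beta^b\theta_+(u)\,du=\int_\beta^b\frac{\psi_+(u)}{u}\,du=O(\log(1/\beta))$, again negligible, so that only $\int_\beta^b\theta_+(u)\log u\,du$ contributes to the leading order.

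For that term write
\[
\int_\beta^b\theta_+(u)\log u\,du=\theta_+\int_\beta^b\frac{\log u}{u}\,du+\int_\beta^b\bigl(\psi_+(u)-\theta_+\bigr)\frac{\log u}{u}\,du .
\]
Here $\theta_+\int_\beta^b\frac{\log u}{u}\,du=\theta_+\frac{(\log b)^2-(\log\beta)^2}{2}\sim-\tfrac{\theta_+}{2}(\log(1/\beta))^2$. For the remainder, given $\varepsilon>0$ pick $\delta\in(0,b)$ with $|\psi_+(u)-\theta_+|<\varepsilon$ on $(0,\delta)$; splitting at $\delta$, the part over $[\delta,b]$ is a constant, and the part over $[\beta,\delta]$ is at most $\varepsilon\int_\beta^\delta\frac{-\log u}{u}\,du\le\frac{\varepsilon}{2}(\log(1/\beta))^2$ in absolute value, so the remainder is $o((\log(1/\beta))^2)$. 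Hence $A(\beta)/(\log(1/\beta))^2\to-\theta_+/2$. Finally, the substitution $x=-y$ turns $B(\beta)$ into $-\int_{\beta<y\le b}y\log y\,\widetilde F(dy)$, where $\widetilde F$ is the image of $F|_{(-\infty,0)}$ under $x\mapsto-x$, i.e.\ the Lebesgue--Stieltjes measure of $-\theta_-$; running the identical argument with $\theta_+$ replaced by $\theta_-$ gives $B(\beta)/(\log(1/\beta))^2\to\theta_-/2$. Adding the two limits yields $K(\beta)/(\log(1/\beta))^2\to-\theta_+/2+\theta_-/2=-\theta'/2$, which is the claim.

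The step I expect to be the main obstacle is the bookkeeping near the endpoint $\beta$: carrying out the Lebesgue--Stieltjes integration by parts cleanly when $F$ has atoms (so $\theta_\pm$ need not be continuous), and then verifying rigorously, via the split-at-$\delta$ estimate, that substituting $\theta_\pm(u)$ by $\theta_\pm/u$ inside $\int_\beta^b$ only introduces an $o((\log(1/\beta))^2)$ error. This last point is precisely where the convergence hypothesis \eqref{h2}, and not merely the crude bound \eqref{h1}, is needed.
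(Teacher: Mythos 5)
Your proof is correct and follows essentially the same route as the paper: you convert the $F(dx)$-integral into $\int_\beta^b\theta_\pm(u)(\log u+1)\,du$ via Lebesgue--Stieltjes integration by parts, where the paper obtains the identical expression (boundary term $\beta\log\beta\,\theta_+(\beta)$ included) by Fubini, and the subsequent asymptotics --- isolating $\theta_\pm\int_\beta^b\frac{\log u}{u}\,du\sim-\frac{\theta_\pm}{2}(\log(1/\beta))^2$ and controlling the remainder by splitting at a $\delta$ where $|u\theta_\pm(u)-\theta_\pm|<\varepsilon$ --- coincide with the paper's $\varepsilon$--$\varepsilon'$ argument. The sign bookkeeping on the negative half-line and the final combination $-\theta_+/2+\theta_-/2=-\theta'/2$ are also handled correctly.
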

Besides, for a given truncating sequence  $(\beta_{n})_{n\ge 0}$  that tends to zero as $n$ tends to infinity, we introduce 
\begin{align*}
	\boxed{c_{n}:=c(\beta_{n}), \;\;d_{n} :=d(\beta_{n}), \;\;d'_{n} :=d'(\beta_{n}),\;\;\rho_{n}:=\rho(\beta_{n}) \;\;\mbox{ and }\delta_{n}:=\delta(\beta_{n}),}
\end{align*}
to make the notations less cluttered.
We deduce easily from \eqref{eq:3.3} that under \eqref{h1}, we have
	\begin{align}\left\{\begin{array}{l}\label{eq:3.13}
			c_{n}\leq C\beta_{n}^{2-\alpha},\\
			{d'}_n+|d_{n}|+\delta_{n}\leq Cs(\beta_n),\\
			\rho_n\leq
			C\log\left(\frac{1}{\beta_n}\right).\\
		\end{array}\right.
	\end{align}
\subsection{Semimartingale decomposition}\label{1.2}
Now, we give a decomposition of the process $Y$.\\\\
\noindent{$\bullet$} For a predictable real function $\delta$ on ${\Omega}\times\R_+\times\R$  and a real measure $m$, we denote $\delta\ast m$ the stochastic integral process given by $\delta\ast m_t=\int_0^t\int_{\R^d}\delta(s,x)m(ds,dx)$ for $t\ge0$.
Let $\mu$ denotes the jump measure of our driving Lévy process $Y$ and $\nu(ds,dx)=ds\otimes F(dx)$ is its predictable compensator. For $\beta>0$, we can write 
	\begin{align}
	Y&=Y^{\beta}+N^{\beta}\label{eq:3.7}, \;\mbox{ where }\, Y^{\beta}=A^{\beta}+M^{\beta}\;\mbox{ with }\\
	A_t^{\beta}&=d(\beta)t,\;\;\;M_t^{\beta}=x\1_{\{|x|\leq\beta\}}\ast(\mu-\nu)_t \;\;\mbox{ and } \;
	N_t^{\beta}=x\1_{\{|x|>\beta\}}\ast\mu_t.\notag
	\end{align}
Then $M^{\beta}$ is a square-integrable martingale with predictable bracket $<M^{\beta},M^{\beta}>_t=c(\beta)t$. Moreover under assumption \ref{A}, for $\beta\ge p$ we have $N^{\beta}=0$ and then $Y=A^{\beta}+M^{\beta}$ with $A_t^{\beta}=b't$, whereas for $\beta=1$ we have $A^1_t=bt$.\\\\
\noindent{$\bullet$} In the context of the multilevel type error  \eqref{eq:err}, we consider two time discretization grids. The coarse grid with time step $1/n$ and with associated times $t_i:=\frac{i-1}{n}$ for all $i\in\{1,\hdots,n+1\}$. The finer grid with time step $1/nm$ and with associated times 
$$t_i^k:=\frac{m(i-1)+k-1}{nm},$$
with $i\in\{1,\dots,n\}$, $k\in\{1,\dots,m+1\}$ and $m\in\mathbb N\setminus\{0,1\}$.
Note that $t_1^1=t_0$ and $t_{i}^1=t_i$ corresponding to the coarser grid. We note also that the point of the coarse grid  can be written either as a final point $t_{i-1}^{m+1}$ or as the next point $t_{i}^1$ on the same grid.
Now, for a  given truncating sequence $(\beta_n)_{n\ge0}$ that tends to zero as $n\to\infty$, we denote $$ \boxed{M^{\beta_n}_{t_i^k,t}=M_{t}^{\beta_{n}}-M_{t_i^k}^{\beta_{n}} }\;  \mbox{ for }  t\in I(nm,i,k):= (t_i^k,t_i^{k+1}],$$
with $(i,k)\in\{1,\dots,n\}\times\{1,\dots,m\}$. Further, let {$\Big(T^{\beta_n}_p({ t_i^k})\Big)_{p\ge 1}$  denotes the sequence of the successive jump times of $Y$ after time $t_i^k$ and of size bigger than or equal to $\beta_n$. Let also $K({\s t_i^k})$ denotes the random number of jumps occurring in the time interval $I(nm,i,k)$ that satisfies $T^{\beta_n}_{ \s K({\s t_i^k})}{( t_i^k)}\le t_i^{k+1}< T^{\beta_n}_{ \s K({\s t_i^k})+1}{( t_i^k)}$. Note that the random number $K(t_i^k)$ is well-defined as we use the cut-off of size $\beta_n$. Then, the following two main properties hold:
\begin{enumerate}
	\renewcommand{\labelenumi}{\textbf{\theenumi}}
	\renewcommand{\theenumi}{(P\arabic{enumi})}
\item\label{I} Conditionally on $\mathcal{F}_{t_i^k}$, the random variables   $(\Delta Y_{T^{\beta_n}_{ \s p}( t_i^k)})_{p\geq1}$, $K({\s t_i^k})$  and $\{M^{\beta_n}_{t_i^k,t},t\in I(nm,i,k)\}$ are independent.  Each $\Delta Y_{T^{\beta_n}_{ \s p}( t_i^k)}$ has the density  $\frac{1}{\theta(\beta_{n})}1_{|x|>\beta_{n}}F(dx)$ and $K({\s t_i^k})$ has Poisson law with parameter $$\boxed{\lambda_{n,m}=\frac{\theta(\beta_{n})}{nm}.}$$
	\item\label{M} The process $\{M^{\beta_n}_{t_i^k,t},t\in I(nm,i,k)\}$ is a L\'evy process, independent of $\mathcal{F}_{t_i^k}$ and satisfying for $t\in I(nm,i,k)$, for $v\in\mathbb{R}$
	\begin{displaymath}
	\mathds{E}(e^{\texttt{i}vM^{\beta_n}_{t_i^k,t}})=\exp\left\{\left(t-{t_i^k}\right)\int_{|y|\leq\beta_{n}}(e^{\texttt{i}vy}-1-\texttt{i}vy)F(dy)\right\}.
	\end{displaymath}
\end{enumerate}
We also note that under Hypothesis \eqref{h1}, with some choice $\beta_n$ going to $0$, we have
$\lambda_{n,m}\rightarrow0$ as $n\rightarrow\infty$. 
\section{Main results}\label{sec:3} Our main results are to prove the convergence in law of 
	$u_{n,m}U^{n,m}$ to the  limit process with the above choices of the rate $u_{n,m}$ corresponding to those cases. First of all, we assume that function $f$ always satisfies assumption \eqref{f}. The theorem below is about the tightness which can be easily deduced by Lemma \ref{lem:tight1}, Lemma \ref{lem:rest1}, Lemma \ref{lem:tight2}, Lemma \ref{lem:rest2}, Lemma \ref{lem:tight3}, Lemma \ref{lem:rest3}, Lemma \ref{lem:tight5}, Lemma \ref{lem:rest5}, Corollary \ref{Cor:tight}  and Lemma \ref{lem:tightrest}  in appendix \ref{appC}.
\begin{theorem}\label{thm:tight}
	Assume that hypothesis \eqref{h1} holds for some $\alpha \in (0,2)$. Then, with the above choice of $u_{n,m}$ in the previous section, the sequence $(u_{n,m}U^{n,m})$ is tight.
\end{theorem}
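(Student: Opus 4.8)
The plan is to reduce the statement to the battery of lemmas announced just after it, and to explain how they fit together. Recall from Section~\ref{1.2} the decomposition $Y=A^{\beta_n}+M^{\beta_n}+N^{\beta_n}$, with $A^{\beta_n}_t=d(\beta_n)t$ the drift, $M^{\beta_n}$ a square-integrable martingale with predictable bracket $c(\beta_n)t$, and $N^{\beta_n}$ the big jumps of size $>\beta_n$ (finitely many in each fine cell). Subtracting the Euler recursions \eqref{eq:Xn} for $X^n$ and $X^{nm}$ cell by cell on the fine grid, the increment of $U^{n,m}$ over the fine cell $I(nm,i,k)$ equals $\bigl[f(X^n_{t_i})-f(X^{nm}_{t_i^k})\bigr](Y_{t_i^{k+1}}-Y_{t_i^k})$; writing $f(X^n_{t_i})-f(X^{nm}_{t_i^k})=\bigl[f(X^n_{t_i})-f(X^{nm}_{t_i})\bigr]+\bigl[f(X^{nm}_{t_i})-f(X^{nm}_{t_i^k})\bigr]$ and splitting $Y-Y_{t_i^k}$ into its drift, martingale and big-jump parts, the first bracket feeds $U^{n,m}$ back into itself — a feedback absorbed by a discrete Gronwall argument using \eqref{f} — while the second bracket, the within-coarse-cell discrepancy of $X^{nm}$, paired with the drift $d_n$, the martingale increment $M^{\beta_n}_{t_i^k,\cdot}$ and the big jumps, produces the ``main'' terms whose tightness is the real content. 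Thus tightness of $(u_{n,m}U^{n,m})$ follows once these ``main'' pieces and the associated ``rest'' pieces are shown to be tight, and this is exactly where the estimates of Lemma~\ref{lem:estimation1}--Lemma~\ref{lem:estimation3} on $c(\beta)$, $d(\beta)$, $\delta(\beta)$, $\rho(\beta)$ and the case-dependent calibration of $(u_{n,m},\beta_n)$ enter: one checks that the relevant combinations of these quantities with the grid parameters stay bounded, while everything put into ``rest'' terms is $o(1)$ at scale $u_{n,m}$, ultimately because $\lambda_{n,m}=\theta(\beta_n)/(nm)\to0$.

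I would then go through the five regimes (C1)--(C5) and invoke the corresponding results of Appendix~\ref{appC} and Appendix~\ref{app:A}. In case (C1) ($\alpha<1$, $d\neq0$, $u_{n,m}=nm/(m-1)$, $\beta_n=(\log n)^2/n$), tightness of the main term is Lemma~\ref{lem:tight1} and negligibility of the rest is Lemma~\ref{lem:rest1}; in case (C2) ($\alpha<1$ with \eqref{h3}, \eqref{h4}), Lemma~\ref{lem:tight2} and Lemma~\ref{lem:rest2}; in case (C3) ($\alpha=1$, non-symmetric), Lemma~\ref{lem:tight3} and Lemma~\ref{lem:rest3}; in case (C4) ($\alpha=1$ with \eqref{h3}), Corollary~\ref{Cor:tight} together with Lemma~\ref{lem:tightrest}; and in case (C5) ($\alpha>1$), Lemma~\ref{lem:tight5} and Lemma~\ref{lem:rest5}. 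Since a finite sum of tight sequences of processes is tight, and since a sequence converging to $0$ uniformly in probability on $[0,1]$ is tight, each case closes by combining ``main term tight'' with ``rest term $\to0$'' and propagating through the Gronwall bound above. Note that for the present statement one needs only uniform $L^1$ or $L^2$ bounds on the pieces, not their exact weak limits, so the relevant content of the ``tight'' lemmas (or of their proofs) is precisely these moment estimates.

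The genuine obstacle is not the tightness mechanism itself — that is the robust $L^2$/Gronwall scheme already present in Jacod--Protter \cite{b} and Jacod \cite{a} — but making the cellwise decomposition sharp enough that the ``rest'' pieces are negligible at the exact rate $u_{n,m}$. Inside each fine cell one must balance two error sources: the truncation of small jumps (controlled by $c(\beta_n)$ and by the drift correction $d(\beta_n)$, hence by the growth of $s(\beta_n)$ in Lemma~\ref{lem:estimation1}) against the event that a cell carries two or more jumps larger than $\beta_n$ (controlled by $\lambda_{n,m}^2$); the choices of $\beta_n$ in (C1)--(C5) are tuned to make both errors negligible, and tracking the logarithmic factors is the fiddly part. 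A further complication specific to the multilevel problem is that each coarse cell now contains $m$ fine subcells, so the object of study is a family of $m$ coupled triangular arrays rather than the single array of \cite{a}; for tightness this only requires the moment bounds to be uniform in $n$, in the coarse index $i$, and in $k\in\{1,\dots,m\}$, which the cellwise estimates provide, and the delicate analysis of the joint law of these arrays is deferred to Theorems~\ref{thm:main1} and \ref{thm:main}.
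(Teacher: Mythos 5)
Your overall strategy --- reduce tightness of $u_{n,m}U^{n,m}$ to tightness of a driving term, split that into main and rest pieces for each of the five regimes, and check the case-dependent conditional moment bounds via Lemma \ref{lem:tightrest} --- is exactly the paper's, and your inventory of which lemma does what is essentially right (except that case \ref{4} is covered by Lemma \ref{lem:tight2} and Lemma \ref{lem:rest2}, which are stated for cases \ref{2} and \ref{4} jointly; Corollary \ref{Cor:tight} and Lemma \ref{lem:tightrest} are general tools used in every case, not the case-specific input for \ref{4}). Two points in your sketch need repair before it matches what the cited lemmas actually support. First, your cellwise split $f(X^n_{t_i})-f(X^{nm}_{t_i^k})=[f(X^n_{t_i})-f(X^{nm}_{t_i})]+[f(X^{nm}_{t_i})-f(X^{nm}_{t_i^k})]$ isolates the within-coarse-cell discrepancy of the \emph{fine} scheme, which is a genuine multi-step recursion inside the coarse cell; the paper instead isolates the discrepancy of the interpolated \emph{coarse} scheme, $X^{n}_{\eta_{nm}(s-)}=X^{n}_{\eta_n(s-)}+f(X^{n}_{\eta_n(s-)})(Y_{\eta_{nm}(s-)}-Y_{\eta_n(s-)})$, whose one-step closed form is what produces the function $G$ in \eqref{func:G} and the explicit $Z^{n,m}$ of \eqref{Znm}. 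Lemmas \ref{lem:tight1}--\ref{lem:rest5} are proved for that $Z^{n,m}$, so with your split they would not apply as stated. Second, the feedback term is not absorbed by a discrete Gronwall bound: such a bound would at best control $\sup_t|u_{n,m}U^{n,m}_t|$ in probability, which is necessary but not sufficient for tightness in the Skorokhod topology. The paper instead uses the localization argument of Proposition \ref{thm:final} (the stopped sequence $u_{n,m}U^{n,m}_{\cdot\wedge T^{n,a}}$ is relatively compact for each $a$ and $T^a\uparrow\infty$ a.s.), recorded as Corollary \ref{Cor:tight}, to transfer tightness of $(\overline{Y}^n,u_{n,m}Z^{n,m})$ to $(\overline{Y}^n,u_{n,m}U^{n,m}_{\eta_n(\cdot)})$. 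With those two corrections your argument coincides with the paper's.
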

 Let $\overline{Y}^{n}$ be the discretized process associated with $Y$, that is $\overline{Y}^{n}_t=Y_{\eta_{n}(t)}$. We observe that the sequence $\overline{Y}^{n}$ converges pointwise to the process $Y$ for the Skorohod topology. The following limit theorem considering the error between two consecutive levels Euler approximations is covered by \ref{1}.
\begin{theorem}	\label{thm:main1}
For case \ref{1}, the sequence 
	\begin{displaymath}
	(\overline{Y}^{n},u_{n,m}{U}^{n,m})\stackrel{\mathcal{L}}{\longrightarrow}(Y,U), \textrm{ when }n\rightarrow\infty
	\end{displaymath}
	where $U$ is the unique solution of the linear equation 
	\begin{equation}\label{eq:1.6}
		U_t=\int_0^tf'(X_{s-})U_{s-}dY_s-Z_t, \quad t\in[0,1]
	\end{equation}
	and  when letting $n\rightarrow\infty$,
		\begin{multline}\label{eq:01}
		Z_t=d\sum\limits_{\substack{n:R_n\le t}}\left(ff'(X_{R_n-})\Delta Y_{R_n}\frac{\lfloor m\Upsilon_n\rfloor}{m-1}+(f(X_{R_n})-f(X_{R_n-}))(1-\frac{\lfloor m\Upsilon_n\rfloor}{m-1})\right)\\+\frac{d^2}{2}\int_0^tf(X_{s-})f'(X_{s-})ds
	\end{multline}
where $d=b-\int_{|x|\leq1}xF(dx)$, $(R_n)_{n\geq1}$ denotes an enumeration of the jump times of $Y$ (or of $X$) and $(\Upsilon_n)_{n\ge1}$ is a sequence of i.i.d. variables, uniform on $[0,1]$ and independent of $Y$.
\end{theorem}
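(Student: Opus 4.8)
The plan is to follow the classical scheme for stable convergence of normalized errors of Euler schemes, adapted to the multilevel setting. First I would write, for case \ref{1}, a semimartingale decomposition of the normalized error $u_{n,m}U^{n,m}$ along the lines of Jacod \cite{a}: using \eqref{eq:3.7} with $\beta=\beta_n$, decompose $Y = A^{\beta_n}+M^{\beta_n}+N^{\beta_n}$ and, plugging into the difference of the two Euler schemes \eqref{eq:Xn} at the two mesh sizes $1/n$ and $1/nm$, isolate a \emph{main term} and a collection of \emph{rest terms}. Since $\alpha<1$ and $d\neq0$, the dominant contribution comes from the drift part $d(\beta_n)t\approx dt$ interacting with the mismatch of the two discretization grids on each coarse interval $(t_i,t_{i+1}]$; the martingale part $M^{\beta_n}$ and the "big jump" part $N^{\beta_n}$ contribute only to the rest terms, whose negligibility (after multiplication by $u_{n,m}=\frac{nm}{m-1}$) is exactly the content of the tightness lemmas invoked for Theorem \ref{thm:tight} and of the lemmas in appendix \ref{app:A}. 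So the first real step is: $u_{n,m}U^{n,m} = \int_0^\cdot f'(X^{nm}_{\eta_{nm}(s-)})\,u_{n,m}U^{n,m}_{s-}\,dY_s - Z^{n,m}_\cdot + (\text{rest}_n)$, where $Z^{n,m}$ collects the grid-mismatch contributions of the drift and $\text{rest}_n\to0$ in probability uniformly on $[0,1]$.

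Second, I would identify the limit of the couple $(\overline Y^n, Z^{n,m})$ jointly with $Y$. On each coarse interval containing a jump time $R_n$ of $Y$, the coarse scheme freezes the coefficient $f$ at the left endpoint while the finer scheme updates it at the intermediate node immediately preceding $R_n$; writing $\Upsilon_n$ for the (asymptotically uniform) relative position of $R_n$ inside its coarse interval and $\lfloor m\Upsilon_n\rfloor$ for the index of the finer subinterval containing $R_n$, the localized drift-times-jump contribution converges to the first sum in \eqref{eq:01}, with the two pieces $ff'(X_{R_n-})\Delta Y_{R_n}\frac{\lfloor m\Upsilon_n\rfloor}{m-1}$ and $(f(X_{R_n})-f(X_{R_n-}))(1-\frac{\lfloor m\Upsilon_n\rfloor}{m-1})$ coming respectively from the finer scheme having already incorporated part of the drift before the jump, and from the coarse scheme lagging behind by the post-jump value of $f$. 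The deterministic quadratic term $\frac{d^2}{2}\int_0^t ff'(X_{s-})\,ds$ arises from the accumulated first-order drift mismatch over the non-jump intervals, via a Riemann-sum / law-of-large-numbers argument over the $O(n)$ coarse intervals. The key probabilistic inputs here are: (i) conditionally on $\mathcal F_{t_i^k}$ the jump position inside an interval is uniform (property \ref{I}), which produces the i.i.d.\ uniforms $\Upsilon_n$ independent of $Y$ in the limit; and (ii) the joint stable convergence of the $m$ dependent triangular arrays indexed by $k\in\{1,\dots,m\}$, which is where the arguments of Sato \cite{d} and Kallenberg \cite{e} mentioned in the introduction enter — reducing the $m$-dimensional problem to pairwise marginals.

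Third, having established $(\overline Y^n, Z^{n,m})\Rightarrow (Y,Z)$ stably with $Z$ as in \eqref{eq:01}, and the negligibility of the rest terms, I would pass to the limit in the linear stochastic equation. The map sending a driver $(\overline Y^n, Z^{n,m})$ to the solution $V^n$ of $V^n = \int_0^\cdot f'(X^{nm}_{\eta_{nm}(s-)}) V^n_{s-}\,d\overline Y^n_s - Z^{n,m}$ is continuous for the Skorohod topology on the relevant set of paths (stability of solutions of linear SDEs driven by semimartingales, e.g.\ via the results on continuity of the Itô map recalled in the appendices), so $u_{n,m}U^{n,m}\Rightarrow U$ where $U$ solves \eqref{eq:1.6}; uniqueness of $U$ is immediate since \eqref{eq:1.6} is linear with Lipschitz coefficient $f'$ along $X$. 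I expect the main obstacle to be the \textbf{joint} convergence of the $m$ dependent triangular arrays and the precise bookkeeping of which fraction $\lfloor m\Upsilon_n\rfloor/(m-1)$ of the drift each scheme has accumulated by the time of a jump — getting the two competing contributions in \eqref{eq:01} with the correct weights, and confirming that the cross-interactions between distinct coarse intervals vanish, is the delicate combinatorial-probabilistic heart of the argument; the rest-term estimates, while lengthy, are routine given Lemma \ref{lem:estimation1} and the estimates \eqref{eq:3.13}.
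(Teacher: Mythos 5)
Your overall strategy matches the paper's: decompose $u_{n,m}U^{n,m}$ into a linear equation driven by $Y$ minus a grid-mismatch term $u_{n,m}Z^{n,m}$, split the latter into a main term and rest terms, prove the joint convergence of the $m$ dependent triangular arrays by reducing to pairwise marginals (Sato) and applying Kallenberg's criteria after freezing and then unfreezing the coefficient $X^n_{t_i^1}$, and conclude by stability of the linear equation (Proposition \ref{thm:final}). However, your first paragraph contains a claim that would derail the proof if followed literally: you assert that for case \ref{1} ``the martingale part $M^{\beta_n}$ and the big jump part $N^{\beta_n}$ contribute only to the rest terms.'' This is false for $N^{\beta_n}$ and contradicts your own second paragraph. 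The entire jump sum in \eqref{eq:01} comes from the cross terms between the drift $A^{\beta_n}$ and the jumps $N^{\beta_n}$ (the terms $\Gamma^n(2,1)$ in the paper) together with the quadratic term $\overline{\Gamma}^n(1,1)$; only the pure jump-jump term $u_{n,m}\int(N^{\beta_n}_{\eta_{nm}(s-)}-N^{\beta_n}_{\eta_n(s-)})dN^{\beta_n}_s$ and the terms involving $M^{\beta_n}$ are negligible at the rate $u_{n,m}=\frac{nm}{m-1}$. If you place all of the $N^{\beta_n}$ contributions into $\mathcal{R}^{n,m}$, that rest term cannot converge to zero, since the limit $Z$ has nontrivial jumps.

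A second, related omission: case \ref{1} is the only case in which the second term of \eqref{Znm} — the quadratic Taylor remainder $y^2k(x,y)$ in $G(x,y)=yff'(x)+y^2k(x,y)$ — must be kept in the main term. The limit \eqref{eq:01} involves the full increment $f(X_{R_n})-f(X_{R_n-})=G(X_{R_n-},\Delta Y_{R_n})$, not merely its linearization $ff'(X_{R_n-})\Delta Y_{R_n}$; recovering $G$ exactly requires combining $\Gamma^n(2,1)$ with $\overline{\Gamma}^n(1,1)$ into the single array ${\tilde{z}}_{i,k}^n$ of \eqref{eq:z}, whose generic term is $[ff'(z)(k-1)x+(m-k)G(z,x)]$ on the event that the unique big jump of the coarse interval falls in the $k$-th fine subinterval. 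Your heuristic for the two competing weights $\frac{\lfloor m\Upsilon_n\rfloor}{m-1}$ and $1-\frac{\lfloor m\Upsilon_n\rfloor}{m-1}$ is correct, but the bookkeeping only closes once this quadratic term is treated as part of $\mathcal{M}^{n,m}$, which your plan (implicitly relegating it to the ``routine'' rest estimates) does not provide for.
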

\begin{proof}
This proof uses some results in Section \ref{error}. In particular, we begin with the decomposition of $u_{n,m}U^{n,m}$ from \eqref{decUn}. From Theorem \ref{thm:noise1} and  Lemma \ref{lem:rest1},  we have $(\overline{Y}^{n},u_{n,m}{Z}^{n,m})\stackrel{\mathcal{L}}{\longrightarrow}(Y,Z)$. Then the result is straight-forward according to Theorem \ref{thm:final}.
\end{proof}
\begin{remark}\label{rmk:3}
	From Theorem \ref{thm:main1}, if we let  $m\rightarrow\infty$, we will recover the same limit as the case of Euler (Case 3a in Jacod's paper \cite{a}), that is
	\begin{align*}
		Z_t=&d\sum_{n:R_n\leq t}([f(X_{R_n})-f(X_{R_n-})]\Upsilon_n+f'(X_{R_n-})\Delta X_{R_n}(1-\Upsilon_n))+\frac{d^2}{2}\int_0^tf(X_{s-})f'(X_{s-})ds,
	\end{align*}
	where $d=b-\int_{|x|\leq1}xF(dx)$ and $(\Upsilon_n)_{n\geq1}$ is a sequence of i.i.d. variables, uniform on $[0,1]$ and independent of $Y$, and $(R_n)_{n\geq1}$ is an enumeration of the jump times of $Y$ (or of $X$).
	\end{remark}
The following limit theorem is of the rest cases and with stronger assumption \eqref{h2}.
\begin{theorem}	\label{thm:main}
	We have that the sequence 
	\begin{displaymath}
	(\overline{Y}^{n},u_{n,m}U^{n,m})\stackrel{\mathcal{L}}{\longrightarrow}(Y,U), \textrm{ when }n\rightarrow\infty
	\end{displaymath}
	where $U$ is the unique solution of the linear equation $\eqref{eq:1.6}$ 
	and the process $Z$ is described as follows:
	\begin{enumerate}[(a)]
		\item In \ref{2} and \ref{4}, we have $\eqref{eq:1.7}$, where $V$ is another L\'evy process, independent of $Y$ and characterized by 
		\begin{equation}
		\label{eq:1.10}
		\mathds{E}(e^{iuV_t})=\exp\left(\frac{t\theta^2\alpha}{4}\int\frac{1}{|x|^{1+\alpha}}(e^{iux}-1-iux\1_{\{|x|\leq1\}})dx\right).
		\end{equation}
		Hence, $V$ is a symmetric stable process with index $\alpha$.
			\item In \ref{3}, 
		\begin{equation}\label{eq:1.9}
		Z_t=-\frac{{\theta'}^2}{4}\int_0^tf(X_{s-})f'(X_{s-})ds,
		\end{equation}
		and we even have that $u_{n,m}U^{n,m}$ converges to $U$ in probability (locally uniformly in time).
			\item  In \ref{5}, 
		\begin{equation}\label{eq:1.7}
		Z_t=\int_0^tf(X_{s-})f'(X_{s-})dV_s
		\end{equation} 
		where $V$ is a L\'evy process, independent of $Y$ and characterized by
		\begin{equation}
		\label{eq:1.8}
		\mathds{E}(e^{iuV_t})=\exp\left(\frac{t\alpha}{2}\int\left\{\left[(\theta_+^2+\theta_-^2)\1_{\{x>0\}}+2\theta_+\theta_-\1_{\{x<0\}}\right]\times\frac{1}{|x|^{1+\alpha}}(e^{iux}-1-iux)\right\}dx\right).
		\end{equation}
		Hence, $V$ is stable with index $\alpha$.
	\end{enumerate}
\end{theorem}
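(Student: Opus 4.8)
The plan is to follow the same scheme as for Theorem~\ref{thm:main1}: reduce the statement to a limit theorem for the ``noise'' part of the error, and then transport it through the linear equation \eqref{eq:1.6}. I would start from the decomposition of $u_{n,m}U^{n,m}$ recorded in Section~\ref{error} (the analogue of \eqref{decUn} for the present cases), which isolates a leading term $u_{n,m}Z^{n,m}$, produced by the mismatch of the two freezing grids inside each coarse block $(t_i,t_{i+1}]$, from a family of remainder terms. The remainders are shown to be asymptotically negligible in Appendix~\ref{app:A}, so that, up to a term tending to $0$ in probability, $u_{n,m}U^{n,m}$ is the solution of $\xi_t=\int_0^t f'(X^{nm}_{\eta_{nm}(s-)})\,\xi_{s-}\,dY_s-u_{n,m}Z^{n,m}_t$. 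Since the map sending the driver to this solution is continuous along the relevant sequences --- this is the content of Theorem~\ref{thm:final}, combined with the tightness of $(u_{n,m}U^{n,m})$ from Theorem~\ref{thm:tight} --- it suffices to prove
\begin{equation*}
(\overline{Y}^{n},u_{n,m}Z^{n,m})\stackrel{\mathcal{L}}{\longrightarrow}(Y,Z)
\end{equation*}
with the case-dependent process $Z$, after which the claimed convergence $(\overline{Y}^{n},u_{n,m}U^{n,m})\to(Y,U)$ and the uniqueness of the solution of \eqref{eq:1.6} close the argument.

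The heart of the proof is the convergence of $u_{n,m}Z^{n,m}$. Up to negligible terms, $Z^{n,m}$ is a sum over the coarse indices $i$ of a block contribution, and each block contribution splits into $m$ pieces indexed by the position $k\in\{1,\dots,m\}$ inside the block; these $m$ pieces are triangular arrays that are mutually dependent, since on a given block they share the frozen value $X_{t_i}$ and the same large-jump configuration. Conditioning on $\mathcal{F}_{t_i^1}$ and using \ref{I}--\ref{M}, I would describe the joint law of the relevant quantities --- the $m$ compensated small-jump increments $M^{\beta_n}_{t_i^k,t_i^{k+1}}$ and the large jumps of size $\ge\beta_n$ on the block --- with weights equal to $f(X_{t_i})f'(X_{t_i})$ times deterministic factors built from $d_n$, $c_n$ and $\lambda_{n,m}$, whose asymptotics are provided by Lemmas~\ref{lem:estimation1}--\ref{lem:estimation3}. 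Instead of computing the $m$-dimensional characteristic function directly, I would, following Sato~\cite[Ex.~12.8--12.10]{d}, identify the limiting infinitely divisible vector from its one- and two-dimensional marginals; this is where the ``subsequences principle'' of \cite{JacodProtter2012} enters, to extract a jointly convergent subsequence and then pin down the limit from the marginals. Finally I would invoke Kallenberg's criteria \cite[Theorem~15.14, Corollary~15.16]{e} to upgrade finite-dimensional convergence to functional convergence and to obtain the joint (stable) limit with $\overline{Y}^{n}$.

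The resulting limit depends on the regime, and I would treat the four cases separately as in the statement. In \ref{2} and \ref{4} the centered small-jump fluctuations dominate: after the scaling $u_{n,m}$ the $m$ block-pieces become asymptotically i.i.d.\ symmetric $\alpha$-stable and aggregate into the single symmetric stable process $V$ of \eqref{eq:1.10}, independent of $Y$ because they are carried by $M^{\beta_n}$ on a time scale that shrinks faster than the large jumps of $Y$, which gives $Z_t=\int_0^t f(X_{s-})f'(X_{s-})\,dV_s$. In \ref{5}, the regime $\alpha>1$ forces the larger cut-off $\beta_n=\log n/n^{1/(2\alpha)}$, the terms $d_\pm(\beta_n)$ now contribute at leading order, and a careful bookkeeping of the positive versus negative large jumps within a block produces the asymmetric stable limit $V$ of \eqref{eq:1.8}. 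In \ref{3}, the symmetry of $F$ kills every odd contribution, so the scaled block-pieces have no random part in the limit but retain a deterministic second-order bias coming from the quadratic coefficient in the block expansion; the limit is then the deterministic drift \eqref{eq:1.9}, and since no randomness survives the convergence is in fact in probability, locally uniformly in time.

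I expect the main obstacle to be precisely this joint convergence of the $m$ dependent triangular arrays: checking the Lindeberg-type negligibility conditions and the convergence of the (co)variation and jump-intensity functionals uniformly in the block index $i$, while keeping track of the frozen coefficients $f(X_{t_i})f'(X_{t_i})$ so that in the limit they assemble into the stochastic integral $\int_0^t f(X_{s-})f'(X_{s-})\,dV_s$ rather than into a spurious quadratic object. The symmetric case \ref{3} is also delicate, because once the first-order term cancels one must extract the correct second-order term, which requires the sharp estimate on $\int_{\beta_n<|x|\le b}(x\log|x|)F(dx)$ of the type in Lemma~\ref{lem:estimation3}.
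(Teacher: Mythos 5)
Your overall architecture coincides with the paper's: reduce the theorem to the convergence of $(\overline{Y}^{n},u_{n,m}Z^{n,m})$ via Proposition \ref{thm:final}, split $u_{n,m}Z^{n,m}$ into a main term and remainders that are killed in Appendix \ref{app:A}, and identify the limit of the $m$ dependent triangular arrays through their pairwise marginals (Sato's exercises), the subsequence principle, and Kallenberg's criteria, treating \ref{2}/\ref{4}, \ref{3} and \ref{5} separately. That is exactly the route taken in Theorems \ref{thm:noise24}, \ref{thm:noise3} and \ref{thm:noise5}.

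There is, however, a concrete misidentification of which cross-term in the expansion of $\int(Y_{\eta_{nm}(s-)}-Y_{\eta_n(s-)})\,dY_s$ carries the limit in each regime, and this is not cosmetic, since the whole case analysis rests on it. In \ref{2} and \ref{4} you attribute the symmetric stable limit to the compensated small-jump increments $M^{\beta_n}$; in fact the $M^{\beta_n}$-driven terms ($\Gamma^n(3)$ and $\Gamma^n(4)$ in the decomposition \eqref{eq:MR2}) are negligible there, and the limit is produced by ${\Gamma}^{n}(1,1)$ of \eqref{eq:gamma1}, i.e.\ by the $\frac{m(m-1)}{2}$ arrays of products $u_{n,m}\Delta Y_{T^{\beta_n}_1(t_i^k)}\Delta Y_{T^{\beta_n}_1(t_i^j)}$ of pairs of first truncated jumps in distinct fine subintervals $j<k$ (each contributing an independent stable piece of Lévy intensity $\frac{\theta^2\alpha}{2m(m-1)}|x|^{-1-\alpha}dx$), not by $m$ pieces indexed by $k$. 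Conversely, in \ref{5} the dominant object is precisely the cross product of the truncated jumps with the compensated martingale $\tilde M^{n,m}_{i,k}$ (see \eqref{eq:gamma}); the quantities $d_\pm(\beta_n)$ only enter through the asymptotics of truncated moments, not as the leading structural term. Had you run the computation as described for \ref{2}/\ref{4}, the conditional second moments of the $M^{\beta_n}$-driven arrays are of order $u_{n,m}^2c_n/n$ times vanishing factors and you would find no nondegenerate limit; the stable law must instead be extracted from the measures $nK^1_{n,m}$ and $nK^2_{n,m}$ built on the large-jump products, as in the proof of Theorem \ref{thm:noise24}.
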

\begin{proof}
This proof also uses some of the results in Section \ref{error}. In particular, we begin with the decomposition of $u_{n,m}U^{n,m}$ from \eqref{decUn}. From Theorem \ref{thm:noise24}, Theorem \ref{thm:noise3}, Theorem \ref{thm:noise5}, Lemma \ref{lem:rest2}, Lemma \ref{lem:rest3} and Lemma \ref{lem:rest5}, we have $(\overline{Y}^{n},u_{n,m}{Z}^{n,m})\stackrel{\mathcal{L}}{\longrightarrow}(Y,Z)$. Then the result is straight-forward according to Theorem \ref{thm:final}.
\end{proof}
\section{Error analysis}\label{error}
As mentioned in subsection \ref{subsec:not}, with no loss of generality, we develop our error analysis under assumption \ref{A}.  For $t\in[0,1]$, we first recall that $\eta_n(t)=\frac{[nt]}{n}$. The error between two consecutive levels Euler approximations $U^{n,m}_t:=X^n_{\eta_{nm}(t)}-X^{nm}_{\eta_{nm}(t)}$ is given by
\begin{align*}
	U^{n,m}_{\eta_n(t)}=&\int_0^{\eta_{n}(t)}\Big(f(X_{\eta_{nm}(s-)}^n)-f(X_{\eta_{nm}(s-)}^{nm})\Big)dY_s-\int_0^{\eta_{n}(t)}\Big(f(X_{\eta_{nm}(s-)}^n)-f(X_{\eta_{n}(s-)}^{n})\Big)dY_s\\
	=&\int_0^{\eta_{n}(t)}\Big(f(X_{\eta_{nm}(s-)}^{nm}+U^{n,m}_{s-})-f(X_{\eta_{nm}(s-)}^{nm})\Big)dY_s\\&\hspace{6.7cm}-\int_0^{\eta_{n}(t)}\Big(f(X_{\eta_{nm}(s-)}^n)-f(X_{\eta_{n}(s-)}^{n})\Big)dY_s.\\
\end{align*}
Let $G(x,y):=f(x+yf(x))-f(x)$, using the interpolation Euler scheme   $X^{n}_{\eta_{nm}(s-)}=X^{n}_{\eta_n(s-)}+f(X_{\eta_{n}(s-)}^{n})(Y_{\eta_{nm}(s-)}-Y_{\eta_n(s-)})$, we deduce that
\begin{align}\label{func:G}
	U^{n,m}_{\eta_n(t)}&=\int_0^{\eta_{n}(t)}\Big(f(X_{\eta_{nm}(s-)}^{nm}+U^{n,m}_{s-})-f(X_{\eta_{nm}(s-)}^{nm})\Big)dY_s\notag\\&\hspace{5cm}-\int_0^{\eta_{n}(t)}  G(X^{n}_{\eta_n(s-)}, Y_{\eta_{nm}(s-)}-Y_{\eta_n(s-)})dY_s.
\end{align}
\begin{remark}\label{rmk:4}
Under assumption \ref{A},	 using  Taylor's expansion we write 
	\begin{displaymath}
	G(x,y)=yff'(x)+y^2k(x,y),
	\end{displaymath}
	where $k$ is a $\mathcal{C}^1$ function that vanishes outside $K\times\mathbb{R}$ for some compact subset $K\subset \mathbb{R}$. Also, we note that  $ff'$ has compact support.
\end{remark}
Consequently,  given  a deterministic rate of convergence $u_{n,m}$, we write
\begin{align}\label{decUn}
	u_{n,m}U^{n,m}_{\eta_n(t)}=&u_{n,m}\int_0^{\eta_{n}(t)}\Big(f(X_{\eta_{nm}(s-)}^{nm}+U^{n,m}_{s-})-f(X_{\eta_{nm}(s-)}^{nm})\Big)dY_s-u_{n,m}Z^{n,m}_t,
\end{align}
where
\begin{multline}\label{Znm}
	Z^{n,m}_t:=\int_0^{\eta_{n}(t)}ff'(X_{\eta_n(s-)}^n)(Y_{\eta_{nm}(s-)}-Y_{\eta_n(s-)})dY_s\\+\int_0^{\eta_{n}(t)}k(X^n_{\eta_n(s-)},Y_{\eta_{nm}(s-)}-Y_{\eta_n(s-)})(Y_{\eta_{nm}(s-)}-Y_{\eta_n(s-)})^2dY_s.
\end{multline}
Now, recalling the notation $\overline{Y}^{n}_t=Y_{\eta_{n}(t)}$, for $t\in[0,1]$, we follow the same arguments as in    \cite[Theorem 9.3 page 40]{h} to prove the following result.
\begin{proposition}\label{thm:final}
For $n\to\infty$,  if  $(\overline{Y}^{n},u_{n,m}Z^{n,m})\stackrel{\mathcal{L}}{\rightarrow}(Y,Z)$ then $(\overline{Y}^{n},u_{n,m}U^{n,m}_{\eta_n(.)})\stackrel{\mathcal{L}}{\rightarrow}(Y,U)$, where the limiting process $U$ is solution to  $\eqref{eq:1.6}$.
\end{proposition}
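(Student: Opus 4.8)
The plan is to view the decomposition \eqref{decUn} as a linear SDE driven by $\overline{Y}^{n}$ with an inhomogeneous perturbation $-u_{n,m}Z^{n,m}$, and to pass to the limit using a continuity (stability) theorem for SDEs with respect to the Skorokhod topology, in the spirit of \cite[Theorem 9.3]{h}. Concretely, set $\zeta^{n}_t := u_{n,m}U^{n,m}_{\eta_n(t)}$. Then \eqref{decUn} together with Remark \ref{rmk:4} (Taylor's expansion of $f(x+u)-f(x) = f'(x)u + R(x,u)u^2$ with $R$ bounded with compact support, since $f\in\mathcal C^3$ with compact support under \ref{A}) lets us rewrite
\begin{equation*}
\zeta^{n}_t = \int_0^{\eta_n(t)} f'\!\big(X^{nm}_{\eta_{nm}(s-)}\big)\,\zeta^{n}_{s-}\,dY_s + \int_0^{\eta_n(t)} \tfrac{1}{u_{n,m}}\,R\!\big(X^{nm}_{\eta_{nm}(s-)},U^{n,m}_{s-}\big)\,\big(\zeta^{n}_{s-}\big)^2\,dY_s - u_{n,m}Z^{n,m}_t.
\end{equation*}
The first step is to record that the coefficient $x\mapsto f'(x)$ is Lipschitz and that, by the tightness of $(u_{n,m}U^{n,m})$ (Theorem \ref{thm:tight}) together with the known $L^q$–rates for $\sup_t|U^{n,m}_t|\to 0$, the quadratic term is asymptotically negligible: $\tfrac{1}{u_{n,m}}(\zeta^{n}_{s-})^2 = u_{n,m}(U^{n,m}_{s-})^2 \to 0$ uniformly in a suitable sense. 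Thus $\zeta^n$ is, up to a uniformly small correction, the solution of the \emph{linear} equation driven by $\overline Y^n$ with input $-u_{n,m}Z^{n,m}$ and with frozen/discretized coefficient $f'(X^{nm}_{\eta_{nm}(\cdot-)})$.

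The second step is to identify the joint weak limit of the \emph{driving data} $\big(\overline{Y}^{n},\ X^{nm}_{\eta_{nm}(\cdot)},\ u_{n,m}Z^{n,m}\big)$. By hypothesis $(\overline{Y}^{n},u_{n,m}Z^{n,m})\Rightarrow (Y,Z)$; since $X^{nm}$ is the Euler scheme, the classical convergence $X^{nm}\to X$ (uniformly, in probability) holds, and $\eta_{nm}(t)\to t$, so $X^{nm}_{\eta_{nm}(\cdot)}\to X$. One assembles these into joint convergence $\big(\overline Y^n, X^{nm}_{\eta_{nm}(\cdot-)}, u_{n,m}Z^{n,m}\big)\Rightarrow (Y, X_{\cdot-}, Z)$ (or stably, which is what is really available and what one should state), using that the marginal convergences hold with the same coupling on the path space. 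This is where care is needed: the convergence $\overline Y^n\to Y$ is only pointwise for the Skorokhod topology, not uniform, and one must check that the functional $(\alpha,\gamma,\beta)\mapsto$ ``solution of $d\xi = g(\gamma_{-})\xi_{-}\,d\alpha - d\beta$'' is continuous at the limit point $(Y,X_{-},Z)$ — the limiting driver $Y$ and the limiting input $Z$ may have common jump times with $X$, so one cannot invoke a bare continuous-mapping argument and must instead quote the SDE–stability result of \cite{h} in the form that accommodates discretized integrands and simultaneous jumps.

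The third step is to invoke that stability theorem: the map sending a driving triple to the solution of the linear equation
\begin{equation*}
U_t = \int_0^t f'(X_{s-})\,U_{s-}\,dY_s - Z_t
\end{equation*}
is continuous (for the Skorokhod topology, along the relevant sequences), so $(\overline Y^n,\zeta^n)\Rightarrow(Y,U)$ with $U$ the unique solution of \eqref{eq:1.6}; uniqueness and non-explosion of this linear equation follow from $f\in\mathcal C^3$ Lipschitz by a standard Gronwall/Picard argument, which also justifies the claim that the quadratic remainder does not perturb the limit (a Gronwall estimate controls $\sup_t|\zeta^n_t|$ by $\sup_t|u_{n,m}Z^{n,m}_t|$ plus the negligible quadratic contribution). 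The main obstacle is precisely the second/third step combined: establishing the \emph{joint} (stable) convergence of the inputs and verifying that the SDE solution map is continuous at the limit despite the discretization in the integrand $f'(X^{nm}_{\eta_{nm}(\cdot-)})$ and despite jump times of $Y$, $Z$ and $X$ coinciding — this is exactly the situation handled by \cite[Theorem 9.3]{h}, and the work consists in checking that its hypotheses apply to our triangular-array setting, in particular that replacing $f'(X_{s-})$ by $f'(X^{nm}_{\eta_{nm}(s-)})$ is a legitimate ``good'' approximation of the coefficient.
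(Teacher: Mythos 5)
Your overall strategy coincides with the paper's: both reduce \eqref{decUn} to a linear equation driven by $Y$ with input $-u_{n,m}Z^{n,m}$ and invoke the SDE--stability machinery of \cite[Theorem 9.3]{h} (together with the $(UT)$ property) to pass to the limit. The paper's actual proof is shorter than your plan: it does not Taylor-expand, but writes the coefficient as the difference quotient $\frac{f(x+u)-f(x)}{u}\1_{\{u\neq0\}}$ evaluated at $u=U^{n,m}_{s-}$, so there is no separate quadratic remainder to dispose of.

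The one step in your write-up that does not survive scrutiny is the final Gronwall claim: for an equation $d\xi = g\,\xi_-\,dY - dZ$ driven by a general jump semimartingale, a pathwise Gronwall estimate bounding $\sup_t|\zeta^n_t|$ by $\sup_t|u_{n,m}Z^{n,m}_t|$ is not available (the stochastic integral against $Y$ cannot be dominated pathwise by $\int \sup|\xi|\,$ of anything deterministic). This is precisely why the paper, following \cite{h}, localizes: it introduces $T^{n,a}=\inf\{t:|u_{n,m}U^{n,m}_t|>a\}$, proves relative compactness of the stopped sequence $(u_{n,m}U^{n,m}_{t\wedge T^{n,a}})$, identifies any limit point as the solution of \eqref{eq:1.6} on $[0,T^a]$ with $T^a=\inf\{t:|U_t|>a\}$, and concludes by letting $a\uparrow\infty$ using $T^a\to\infty$ a.s. Relatedly, appealing to Theorem \ref{thm:tight} for the tightness of $(u_{n,m}U^{n,m})$ inside this proof is logically delicate, since in the paper that tightness is obtained \emph{from} the present argument (via Corollary \ref{Cor:tight}); the localization renders this appeal unnecessary. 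With the Gronwall step replaced by the stopping-time localization, your proof matches the paper's.
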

\begin{proof}
	By \eqref{decUn}, we have
	\begin{multline*} 
	\hspace{-0.4cm}	u_{n,m}U^{n,m}_{\eta_n(t)}=\int_0^{\eta_{n}(t)}\frac{f(X_{\eta_{nm}(s-)}^{nm}+U^{n,m}_{s-})-f(X_{\eta_{nm}(s-)}^{nm})}{U^{n,m}_{s-}}(u_{n,m}U^{n,m}_{s-})\1_{\{U^{n,m}_{s-}\neq0\}}dY_s-u_{n,m}Z^{n,m}_t.
	\end{multline*}
	Let $T^{n,a}=\inf{\{t>0:|u_{n,m}U^{n,m}_{t}|>a\}}$. Then the sequence $(u_{n,m}U_{t\wedge T^{n,a}}^{n,m})$ is relatively compact, and any limit point will satisfy $\eqref{eq:1.6}$ on $[0,T^a]$, where $T^a=\inf{\{t>0:|U_t|>a\}}$. But $\lim_{a\uparrow\infty}T^a=\infty$ a.s., so $u_{n,m}U^{n,m}_{\eta_n(.)}\stackrel{\mathcal{L}}{\rightarrow} U$.
\end{proof}
\begin{corollary}\label{Cor:tight} The tightness of the sequence $(\overline{Y}^{n},u_{n,m}U^{n,m}_{\eta_n(.)})$  is a straight-forward consequence of  the tightness of the sequence $(\overline{Y}^{n},u_{n,m}Z^{n,m})$. 
\end{corollary}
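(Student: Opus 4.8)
The plan is to deduce tightness of $(\overline{Y}^{n},u_{n,m}U^{n,m}_{\eta_n(\cdot)})$ directly from the decomposition \eqref{decUn} together with the assumed tightness of $(\overline{Y}^{n},u_{n,m}Z^{n,m})$, by viewing $u_{n,m}U^{n,m}_{\eta_n(\cdot)}$ as the solution of a stochastic integral equation driven by the tight pair $(\overline{Y}^{n}, u_{n,m}Z^{n,m})$ and appealing to a stability/continuity result for such SDEs (the same Jacod--Shiryaev type argument as in \cite[Theorem 9.3 page 40]{h} that is already invoked in Proposition \ref{thm:final}). First I would rewrite \eqref{decUn} in the form $u_{n,m}U^{n,m}_{\eta_n(t)}=\int_0^{\eta_n(t)} H_n(s) \,(u_{n,m}U^{n,m}_{s-})\,\1_{\{U^{n,m}_{s-}\neq 0\}}\,dY_s - u_{n,m}Z^{n,m}_t$, where $H_n(s)=\big(f(X^{nm}_{\eta_{nm}(s-)}+U^{n,m}_{s-})-f(X^{nm}_{\eta_{nm}(s-)})\big)/U^{n,m}_{s-}$ is bounded uniformly in $n$ and $s$ by the Lipschitz constant of $f$ (assumption \eqref{f}), exactly as in the proof of Proposition \ref{thm:final}.

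Next I would introduce the stopping times $T^{n,a}=\inf\{t>0:|u_{n,m}U^{n,m}_t|>a\}$ and work with the stopped processes $u_{n,m}U^{n,m}_{t\wedge T^{n,a}}$. On the stochastic interval $[0,T^{n,a}]$ the integrand $H_n(s)(u_{n,m}U^{n,m}_{s-})\1_{\{U^{n,m}_{s-}\neq 0\}}$ is bounded, so standard moment estimates for stochastic integrals against the semimartingale $Y$ (with its characteristics $(b,0,F)$, using Remark \ref{rmk:1} for integrability of the jump part) give a uniform-in-$n$ modulus-of-continuity / Aldous-type tightness criterion for the stopped family; this is the content of the cited stability theorem, which asserts that if the driving pair $(\overline{Y}^{n},u_{n,m}Z^{n,m})$ is tight (hence C-tight in the $\overline{Y}^n$ coordinate up to the pointwise Skorohod convergence already noted, and tight in the $Z^{n,m}$ coordinate by hypothesis), then the solution family of the linear equation is relatively compact, and every limit point solves \eqref{eq:1.6} on $[0,T^a]$ with $T^a=\inf\{t>0:|U_t|>a\}$. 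Since $\lim_{a\uparrow\infty}T^a=\infty$ a.s.\ and the limiting equation \eqref{eq:1.6} has a unique (hence non-exploding) solution, a localization argument removes the stopping times and yields tightness of the full sequence $(\overline{Y}^{n},u_{n,m}U^{n,m}_{\eta_n(\cdot)})$.

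The only genuinely delicate point is the uniform control near $t=0$ / the localization step: one must check that $\sup_n \mathbb{P}(T^{n,a}\leq t)\to 0$ as $a\to\infty$ uniformly in $t\leq 1$, so that the stopping times do not escape to $0$ and the relative compactness of the stopped processes upgrades to relative compactness of the unstopped ones. This is handled by the Gronwall-type a priori bound coming from the boundedness of $H_n$ and the tightness of $u_{n,m}Z^{n,m}$ (which bounds the inhomogeneous term in probability, uniformly in $n$), exactly as in \cite[Theorem 9.3]{h}; everything else is a direct transcription of the argument already used for Proposition \ref{thm:final}, now reading ``tightness'' in place of ``convergence in law''. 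In fact, once Proposition \ref{thm:final} and its proof are in hand, the corollary is immediate: the map sending the driver $(\overline{Y}^{n},u_{n,m}Z^{n,m})$ to the solution $u_{n,m}U^{n,m}_{\eta_n(\cdot)}$ is (asymptotically) continuous for the Skorokhod topology, and continuous images of tight sequences are tight. \dpcm
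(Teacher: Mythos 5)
Your argument is correct and follows exactly the route the paper intends: the paper gives no separate proof of Corollary \ref{Cor:tight}, presenting it as an immediate byproduct of the stopping-time/stability argument of Proposition \ref{thm:final} (the decomposition \eqref{decUn} with the bounded Lipschitz quotient and the appeal to \cite[Theorem 9.3]{h}), which is precisely what you have written out. Your expansion of the localization step is a faithful, slightly more detailed transcription of that same reasoning, so there is nothing to add.
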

Thus, the aim now is to study the asymptotic behavior of  the couple $(\overline{Y}^{n},u_{n,m}Z^{n,m})$. To do so, on the one hand,  we set  
\begin{equation}\label{dec:M+R}
u_{n,m}Z^{n,m}_t=:\mathcal{M}^{n,m}_t+\mathcal{R}^{n,m}_t,
\end{equation}
where $\mathcal{M}^{n,m}$ stands for the main term contributing in the limit behavior and 
$\mathcal{R}^{n,m}_t$ stands for the rest term that will tends to zero. In the sequel, the expression of the above decomposition  has to be specified for each  case \ref{1}-\ref{5}. 
It is worth noticing that the second term in \eqref{Znm} will not contribute  in the limit and will be considered as a part of $\mathcal{R}^{n,m}$ except for  \ref{1} where it will be considered as a part of ${\mathcal{M}}^{n,m}$. On the other hand,  we also need to rewrite the process $\overline{Y}^{n}$  in a triangular array form. To do so, recalling our notations given in subsection \ref{1.2}, with the formula \eqref{eq:3.7} and taking into account the number of jumps $K(t_i^k)$ occuring in the time interval $I(nm,i,k)$,  with a truncating sequence $(\beta_n)_{n\ge 0}$, we write
\begin{align}\label{Y}\begin{array}{cl}
\overline{Y}^{n}:=\overline{Y}^{n}(1)+\overline{Y}^{n}(2),&\mbox{where}\\
\overline{Y}^{n}_t(1)=\sum_{i=1}^{[nt]}y^{n,m}_i(1),&y^{n,m}_i(1):=\sum_{k=1}^m(M^{\beta_n}_{t_i^k,t_i^{k+1}}+\sum_{j\geq2}\Delta Y_{T^{\beta_n}_{ \s j}( t_i^k)}\1_{\{K({\s t_i^k})\geq j\}}),\\
\overline{Y}^{n}_t(2)=\sum_{i=1}^{[nt]}y^{n,m}_i(2),&y^{n,m}_i(2):=\sum_{k=1}^m\left(\frac{d_{n}}{nm}+\Delta Y_{T^{\beta_n}_{ \s 1}( t_i^k)}\1_{\{K({\s t_i^k})\geq 1\}}\right).\end{array}
\end{align}
 In particular, $(\overline{Y}^n(1))_{n\ge0}$ converges uniformly in probability to zero for all cases except case \ref{3} and $(\overline{Y}^n(2))_{n\ge0}$ is tight for all cases. 
 Each subsection below is dedicated to study separately each case. Note that from now on, we denote $C$ as some positive generic constant that can be changed from line to line. Moreover, by the notation $\Gamma^n\stackrel{\mathds{P}}{\longrightarrow}0$, we mean that $\sup_{s\leq t}|\Gamma_s^n|$ goes to $0$ in probability for all $t$ as $n$ tends to infinity.
\subsection{Asymptotic behavior of the couple $(\overline{Y}^{n},u_{n,m}Z^{n,m})$ for case \ref{1}.}\label{section1}	
For this subsection, we first need to introduce some complementary notations. Following subsection \ref{subsec:not}, 
let  $(T^{\beta_n}_{ \s p}(t_i^1, t_i^k))_{p\ge 1}$ denotes the sequence of jump times and $K({\s t_i^1,t_i^k})$ the random number of jumps that occur on the interval $(t_i^1,t_i^k]$, where we recall that $t_i^1=\frac{i-1}{n}$ and $t_i^k=\frac{m(i-1)+k-1}{nm}$.
Then,  each $\Delta Y_{T^{\beta_n}_{ \s p}( t_i^1,t_i^k)}$ has the density  $\frac{1}{\theta(\beta_{n})}\1_{\{|x|>\beta_{n}\}}F(dx)$ and $K({\s t_i^1,t_i^k})$  has a Poisson distribution with parameter $(k-1)\lambda_{n,m}$.	By the representation formula  $\eqref{eq:3.7}$, we obtain a decomposition for the normalized error term $u_{n,m}Z^{n,m}_t$. More precisely, from \eqref{Znm}, we write
	\begin{align*}
		u_{n,m}Z^{n,m}_t	
		&=\sum_{i=1}^5\int_0^{\eta_n(t)}ff'(X_{\eta_n(s-)}^n)d\Gamma_s^n(i)\\&+\overline{\Gamma}^n_t(1)+\int_0^{\eta_n(t)}k(X^n_{\eta_n(s-)},Y_{\eta_{nm}(s-)}-Y_{\eta_n(s-)})d\overline{\Gamma}^n_s(3)+\overline{\Gamma}^n_t(4)+\overline{\Gamma}^n_t(5),
	\end{align*}
	where
	\begin{align*}\left\{\begin{array}{rl}
		\Gamma^n_t(1)=&u_{n,m}\int_0^t(A^{\beta_n}_{\eta_{nm}(s-)}-A^{\beta_n}_{\eta_{n}(s-)})dA^{\beta_n}_s,\\
		\Gamma^n_t(2)=&u_{n,m}\int_0^{t}(A^{\beta_n}_{\eta_{nm}(s-)}-A^{\beta_n}_{\eta_{n}(s-)})dN^{\beta_n}_s+u_{n,m}\int_0^t(N^{\beta_n}_{\eta_{nm}(s-)}-N^{\beta_n}_{\eta_{n}(s-)})dA^{\beta_n}_s,\\
		\Gamma^n_t(3)=& u_{n,m}\int_0^t(M^{\beta_n}_{\eta_{nm}(s-)}-M^{\beta_n}_{\eta_{n}(s-)})dY_s,\\ 	
		\Gamma^n_t(4)=&u_{n,m}\int_0^t(N^{\beta_n}_{\eta_{nm}(s-)}-N^{\beta_n}_{\eta_{n}(s-)})dN^{\beta_n}_s,\\
			\Gamma^n_t(5)=& u_{n,m}\int_0^t[(A^{\beta_n}_{\eta_{nm}(s-)}-A^{\beta_n}_{\eta_{n}(s-)})+(N^{\beta_n}_{\eta_{nm}(s-)}-N^{\beta_n}_{\eta_{n}(s-)})]dM^{\beta_n}_s,\\
		\overline{\Gamma}^n_t(1)=&\frac{u_{n,m}d_n}{nm}\sum_{i=1}^{[nt]}\sum_{k=2}^mk(X^n_{t_i^1},\Delta Y_{T^{\beta_n}_1(t_i^1,t_i^k)})(N^{\beta_n}_{t_i^k}-N^{\beta_n}_{t_i^1})^2,\\
		\overline{\Gamma}^n_t(2)=&u_{n,m}\int_0^t(N^{\beta_n}_{\eta_{nm}(s-)}-N^{\beta_n}_{\eta_{n}(s-)})^2dA^{\beta_n}_s,\\
		\overline{\Gamma}^n_t(3)=&u_{n,m}\int_0^t(Y_{\eta_{nm}(s-)}-Y_{\eta_{n}(s-)})^2dY_s-\overline{\Gamma}^n_t(2),\\
	\overline{\Gamma}^n_t(4)=&\int_0^{\eta_n(t)}(k(X^n_{\eta_n(s-)},Y_{\eta_{nm}(s-)}-Y_{\eta_n(s-)})-k(X^n_{\eta_n(s-)},N^{\beta_{n}}_{\eta_{nm}(s-)}-N^{\beta_{n}}_{\eta_n(s-)}))d\overline{\Gamma}^n_s(2),\\
\overline{\Gamma}^n_t(5)=&\frac{u_{n,m}d_n}{nm}\sum_{i=1}^{[nt]}\sum_{k=2}^m(k(X^n_{t_i^1},N^{\beta_{n}}_{t_i^k}-N^{\beta_{n}}_{t_i^1})-k(X^n_{t_i^1},\Delta Y_{T^{\beta_n}_1(t_i^1,t_i^k)}))(N^{\beta_n}_{t_i^k}-N^{\beta_n}_{t_i^1})^2.\end{array}\right.
	\end{align*}
 Now, we rewrite $
\overline{\Gamma}^n_t(1)=\overline{\Gamma}^n_t(1,1)+\overline{\Gamma}^n_t(1,2),
$
where 
\begin{align*}
	\overline{\Gamma}^n_t(1,1)&=\frac{u_{n,m}d_n}{nm}\sum_{i=1}^{[nt]}\sum_{k=2}^mk(X^n_{t_i^1},\Delta Y_{T^{\beta_n}_1(t_i^1,t_i^k)})(\Delta Y_{T^{\beta_n}_1(t_i^1,t_i^k)})^2\1_{\{K({\s t_i^1,t_i^k})\geq1\}},\\
 \overline{\Gamma}^n_t(1,2)&=\frac{u_{n,m}d_n}{nm}\sum_{i=1}^{[nt]}\sum_{k=2}^mk(X^n_{t_i^1},\Delta Y_{T^{\beta_n}_1(t_i^1,t_i^k)})(\sum_{h=2}^{K({\s t_i^1,t_i^k})}(\Delta Y_{T^{\beta_n}_h(t_i^1,t_i^k)})^2+\sum_{\substack{h,h'=2\\h\neq h'}}^{K({\s t_i^1,t_i^k})}\Delta Y_{T^{\beta_n}_h(t_i^1,t_i^k)}\Delta Y_{T^{\beta_n}_{h'}(t_i^1,t_i^k)}),\\
 \end{align*}
with the convention $\sum\limits_{i}^{j}$ for $j< i$ equals to zero. For the term driven by $\Gamma^n(2)$, we rewrite
 \begin{align*}
	\int_0^{\eta_n(t)}ff'(X_{\eta_n(s-)}^n)d\Gamma^n_s(2)&=\Gamma^n_t(2,1)+\Gamma^n_t(2,2)+\Gamma^n_t(2,3),
\end{align*}
where 
\begin{align*}
	\Gamma^n_t(2,1)&=\frac{u_{n,m}d_n}{nm}\sum_{i=1}^{[nt]}\sum_{k=2}^mff'(X_{t_i^1}^n)\left[(k-1)\Delta Y_{T^{\beta_n}_1(t_i^k)}\1_{\{K({\s t_i^k})\geq1\}}\1_{\{K({\s t_i^1,t_i^k})=0\}}+\Delta Y_{T^{\beta_n}_1(t_i^1,t_i^k)}
\1_{\{K({\s t_i^1,t_i^k})\geq1\}}\right],\\
	\Gamma^n_t(2,2)&=\frac{u_{n,m}d_n}{nm}\sum_{i=1}^{[nt]}\sum_{k=2}^mff'(X_{t_i^1}^n)\Big[(k-1)\sum_{h=2}^{K({\s t_i^k})}\Delta Y_{T^{\beta_n}_h(t_i^k)}+\sum_{h=2}^{K({\s t_i^1,t_i^k})}\Delta Y_{T^{\beta_n}_h(t_i^1,t_i^k)}\Big],\\
	\Gamma^n_t(2,3)&=\frac{u_{n,m}d_n}{nm}\sum_{i=1}^{[nt]}\sum_{k=2}^mff'(X_{t_i^1}^n)(k-1)\Delta Y_{T^{\beta_n}_1(t_i^k)}\1_{\{K({\s t_i^k})\geq1\}}\1_{\{K({\s t_i^1,t_i^k})\geq1\}}.
\end{align*}
Then we rewrite $\Gamma^n_{t}(2,1)+\overline{\Gamma}^n_{t}(1,1)
:=\sum_{i=1}^{[nt]}(\tilde{z}^n_i(1)+\tilde{z}^n_i(2))$ where
\begin{align*}
	\tilde{z}^n_i(1)=&\frac{u_{n,m}d_n}{nm}\sum_{k=2}^mff'(X^n_{t^{1}_i})(k-1)\Delta Y_{T^{\beta_n}_1(t_i^k)}\1_{\{K(t_i^k)\geq1\}}\1_{\{K({\s t_i^1,t_i^k})=0\}},\\
	\tilde{z}^n_i(2)=&\frac{u_{n,m}d_n}{nm}\sum_{k=2}^mG(X^n_{t^{1}_i},\Delta Y_{T^{\beta_n}_1(t^{1}_i,t_i^k)})\1_{\{K(t^{1}_i,t_i^k)\geq1\}}.
\end{align*}	
Now, concerning $\tilde{z}^n_i(2)$, we rewrite the sum $\sum_{k=2}^mG(X^n_{t^{1}_i},\Delta Y_{T^{\beta_n}_1(t^{1}_i,t_i^k)})\1_{\{K(t^{1}_i,t_i^k)\geq1\}}$ as follows
\begin{align*}
	&\sum_{k=2}^mG(X^n_{t^{1}_i},\Delta Y_{T^{\beta_n}_1(t^{1}_i,t_i^k)})\1_{\{K(t^{1}_i,t_i^k)\geq1\}}\1_{\{K(t^{1}_i)\geq1\}}+\sum_{k=2}^mG(X^n_{t^{1}_i},\Delta Y_{T^{\beta_n}_1(t^{1}_i,t_i^k)})\1_{\{K(t^{1}_i,t_i^k)\geq1\}}1_{K(t^{1}_i)=0}\\=&(m-1)G(X^n_{t^{1}_i},\Delta Y_{T^{\beta_n}_1(t^{1}_i)})\1_{\{K(t^{1}_i)\geq1\}}+\sum_{k=2}^mG(X^n_{t^{1}_i},\Delta Y_{T^{\beta_n}_1(t^{1}_i,t_i^k)})\1_{\{K(t^{1}_i,t_i^k)\geq1\}}\1_{\{K(t^{1}_i)=0\}}\1_{\{K(t^{2}_i)\geq1\}}\\
	&+\sum_{k=2}^mG(X^n_{t^{1}_i},\Delta Y_{T^{\beta_n}_1(t^{1}_i,t_i^k)})\1_{\{K(t^{1}_i,t_i^k)\geq1\}}\1_{\{K(t^{1}_i)=0\}}\1_{\{K(t^{2}_i)=0\}}\end{align*}
As $K(t^{k}_i)=K(t^{k}_i,t_i^{k+1})$ for $k\in\{1,\hdots,m\}$, the first term in the first for $k=2$ vanishes  and the 2 first terms in the second sum vanishes also. Then, we have
\begin{multline*}
	\sum_{k=2}^mG(X^n_{t^{1}_i},\Delta Y_{T^{\beta_n}_1(t^{1}_i,t_i^k)})\1_{\{K(t^{1}_i,t_i^k)\geq1\}}=(m-1)G(X^n_{t^{1}_i},\Delta Y_{T^{\beta_n}_1(t^{1}_i)})\1_{\{K(t^{1}_i)\geq1\}}\\+(m-2)G(X^n_{t^{1}_i},\Delta Y_{T^{\beta_n}_1(t^{2}_i)})\1_{\{K(t^{1}_i,t^{2}_i)=0\}}\1_{\{K(t^{2}_i)\geq1\}}
	+\sum_{k=4}^mG(X^n_{t^{1}_i},\Delta Y_{T^{\beta_n}_1(t^{1}_i,t_i^k)})\1_{\{K(t^{1}_i,t_i^k)\geq1\}}\1_{\{K(t^{1}_i,t^{2}_i)=0\}}.
\end{multline*}	
Then, by induction, we deduce $$\tilde{z}^n_i(2)=\frac{u_{n,m}d_n}{nm}	\sum_{k=1}^{m-1} (m-k)G(X^n_{t^{1}_i},\Delta Y_{T^{\beta_n}_1(t^{k}_i)})\1_{\{K(t^{1}_i,t^{k}_i)=0\}}\1_{\{K(t^{k}_i)\geq1\}}.$$
Therefore, we can rewrite  $\Gamma^n_{t}(2,1)+\overline{\Gamma}^n_{t}(1,1)=\frac{u_{n,m}d_n}{nm}\sum_{k=1}^m\sum_{i=1}^{[nt]}{\tilde{z}}_{i,k}^n,$ where \begin{multline}\label{eq:z}{\tilde{z}}_{i,k}^n=[ff'(X^n_{t^{1}_i})(k-1)\Delta Y_{T^{\beta_n}_1(t^{k}_i)}+(m-k)G(X^n_{t^{1}_i},\Delta Y_{T^{\beta_n}_1(t_i^k)})]\1_{\{K(t^{1}_i,t_i^k)=0\}}\1_{\{K(t_i^k)\geq1\}}.\end{multline}
 In this case, we have  $u_{n,m}Z^{n,m}_t=\mathcal{M}^{n,m}_t+\mathcal{R}^{n,m}_t,$ with
 \begin{align}\label{R1}
 	\mathcal{M}^{n,m}_t&=\int_0^{\eta_n(t)}ff'(X_{\eta_n(s-)}^n)d\Gamma_s^n(1)+{\Gamma}^n_t(2,1)+	\overline{\Gamma}^n_t(1,1)\mbox{ and}\nonumber\\
	\mathcal{R}^{n,m}_t	
	&={\Gamma}^n_t(2,2)+{\Gamma}^n_t(2,3)+\sum_{i=3}^5\int_0^{\eta_n(t)}ff'(X_{\eta_n(s-)}^n)d\Gamma_s^n(i)\\&+	\overline{\Gamma}^n_t(1,2)+\int_0^{\eta_n(t)}k(X^n_{\eta_n(s-)},Y_{\eta_{nm}(s-)}-Y_{\eta_n(s-)})d\overline{\Gamma}^n_s(3)+\overline{\Gamma}^n_t(4)+\overline{\Gamma}^n_t(5).\nonumber
\end{align}
The proof of the following lemma is postponed in Appendix \ref{app:A} below.
\begin{lemma}\label{lem:rest1}
For case \ref{1}, we have as $n\rightarrow\infty$ 
	the sequences of processes $(\overline{Y}^n(1))_{n\ge0}$ and  $(\mathcal{R}^{n,m})_{n\ge0}$ converge uniformly in probability to $0$.
\end{lemma}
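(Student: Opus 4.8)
\textbf{Proof plan for Lemma \ref{lem:rest1}.}

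The plan is to show that each of the pieces constituting $\overline{Y}^n(1)$ and $\mathcal{R}^{n,m}$ converges to zero uniformly in probability, handling them one family at a time and exploiting that the rate here is $u_{n,m}=\frac{nm}{m-1}$ with $\beta_n=\frac{(\log n)^2}{n}$, so that $\lambda_{n,m}=\frac{\theta(\beta_n)}{nm}\le \frac{C}{nm\beta_n^\alpha}=\frac{C}{m\,(\log n)^{2\alpha}\,n^{1-\alpha}}\to 0$. For $\overline{Y}^n(1)=\sum_{i=1}^{[nt]}y_i^{n,m}(1)$, the martingale part $\sum_k M^{\beta_n}_{t_i^k,t_i^{k+1}}$ is a martingale increment with conditional variance $c(\beta_n)/(nm)$ per block, so Doob's inequality gives an $L^2$ bound of order $n\cdot c_n/n = c_n = c(\beta_n)\le C\beta_n^{2-\alpha}\to 0$; the multiple-jump part $\sum_{j\ge 2}\Delta Y_{T^{\beta_n}_j}(t_i^k)\1_{\{K(t_i^k)\ge j\}}$ is controlled by noting that $\mathbb{P}(K(t_i^k)\ge 2)=O(\lambda_{n,m}^2)$ uniformly, so the expected number of blocks with two or more jumps over the whole grid is $O(nm\lambda_{n,m}^2)=O(\theta(\beta_n)^2/(nm))\to 0$, and on those blocks the jump sizes are bounded by $p$ under \ref{A}.

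Next, for $\mathcal{R}^{n,m}$ I would treat the $\Gamma^n(i)$, $i=3,4,5$, and the $\overline\Gamma$'s separately. The key scaling facts are: $A^{\beta_n}$ has Lipschitz constant $|d_n|\le C$ under \eqref{h1} with $\alpha<1$ (Remark \ref{rmk:2}); the increment $Y_{\eta_{nm}(s-)}-Y_{\eta_n(s-)}$ over a coarse block splits as $A^{\beta_n}$-part of size $O(1/n)$, $M^{\beta_n}$-part of conditional size $O(\sqrt{c_n/n})$, and $N^{\beta_n}$-part which is nonzero only with the small probability $O(n\lambda_{n,m})=O(\theta(\beta_n)/m)$ per coarse block. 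For $\Gamma^n_t(3)=u_{n,m}\int (M^{\beta_n}_{\eta_{nm}}-M^{\beta_n}_{\eta_n})dY_s$, decompose $dY=dA^{\beta_n}+dM^{\beta_n}+dN^{\beta_n}$; against $dA^{\beta_n}$ and $dN^{\beta_n}$ use that $M^{\beta_n}$ differences are martingale increments with the right conditional variance, and against $dM^{\beta_n}$ use the bracket, getting bounds like $u_{n,m}\cdot c_n/(nm)\cdot n = \frac{c_n}{m-1}\to 0$ (times $ff'$ bounded). For $\Gamma^n_t(4)$ involving two $N^{\beta_n}$ differences, the probability of a jump in both the relevant sub-intervals is $O(\lambda_{n,m}^2 k)$ per term, summing to $O(nm^2\lambda_{n,m}^2)=O(\theta(\beta_n)^2/n)\to 0$. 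The $\overline\Gamma^n(1,2)$, $\overline\Gamma^n(2)$, $\overline\Gamma^n(3)$, $\overline\Gamma^n(4)$, $\overline\Gamma^n(5)$, $\Gamma^n(2,2)$ and $\Gamma^n(2,3)$ terms are all either quadratic in the jump count on a block (so negligible by the $\lambda_{n,m}^2$ bound), or carry an extra factor $(Y_{\eta_{nm}}-Y_{\eta_n})$ of vanishing size, or measure a difference $k(\cdot,Y\text{-incr})-k(\cdot,N^{\beta_n}\text{-incr})$ which by the $\mathcal C^1$ regularity of $k$ (Remark \ref{rmk:4}) is Lipschitz in the difference $M^{\beta_n}+A^{\beta_n}$-increment, again of vanishing $L^1$ or $L^2$ size. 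In each case I would bound $\mathbb{E}\sup_{s\le t}|\cdot|$ via Doob/Burkholder–Davis–Gundy for the martingale pieces and via crude $L^1$ estimates plus Markov's inequality for the jump-indicator pieces, using that $ff'$ and $k(x,\cdot)$ have compact support in $x$.

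The main obstacle is bookkeeping rather than depth: one must carefully separate, inside each $\Gamma^n$, the contribution of a single big jump per coarse block (which is \emph{not} in the rest term — it feeds into $\mathcal M^{n,m}$) from the genuinely negligible contributions of multiple jumps, of the compensated small-jump martingale, and of the drift. The delicate point is $\Gamma^n(2,2)$ and $\Gamma^n(2,3)$: there $\Delta Y_{T^{\beta_n}_h(t_i^k)}$ for $h\ge 2$ and the product indicators $\1_{\{K(t_i^k)\ge 1\}}\1_{\{K(t_i^1,t_i^k)\ge 1\}}$ must be shown to contribute, after multiplication by $\frac{u_{n,m}d_n}{nm}=\frac{d_n}{m-1}$ and summation over $i\le n$, $k\le m$, only $O(nm\lambda_{n,m}^2)\to 0$, which works precisely because $\lambda_{n,m}\to 0$ fast enough under the choice $\beta_n=(\log n)^2/n$ with $\alpha<1$. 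I would also invoke that, by Proposition \ref{thm:final}/Corollary \ref{Cor:tight} and the Lipschitz structure, uniform-in-probability convergence of each summand transfers to the process level since the number of summands is $[nt]\le n$ and the estimates are uniform in $t\in[0,1]$.
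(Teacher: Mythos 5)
Your overall architecture (treat $\overline{Y}^n(1)$ and each piece of $\mathcal{R}^{n,m}$ as a triangular array, bound conditional moments block by block, and sum) matches the paper's, but two of your key quantitative claims fail for part of the parameter range covered by \ref{1}, which only assumes \eqref{h1} for \emph{some} $\alpha<1$. First, for the multiple-jump part of $\overline{Y}^n(1)$ you bound the jumps by $p$ and argue that the expected number of blocks carrying at least two large jumps, $O(nm\lambda_{n,m}^2)=O(\theta(\beta_n)^2/(nm))$, tends to zero. With $\beta_n=(\log n)^2/n$ and $\theta(\beta_n)\le C\beta_n^{-\alpha}$ this quantity is of order $n^{2\alpha-1}/(\log n)^{4\alpha}$, which \emph{diverges} for $\alpha\in(1/2,1)$. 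The paper avoids this by keeping the actual jump magnitudes: each large jump has conditional mean absolute size $\delta_n/\theta(\beta_n)$, so the conditional $L^1$ norm of the multiple-jump part of a block is $\frac{\delta_n}{\theta(\beta_n)}(\lambda_{n,m}+e^{-\lambda_{n,m}}-1)\le \frac{\delta_n\lambda_{n,m}}{nm}$, and the total is $\delta_n\lambda_{n,m}\to0$ since $\delta_n$ is bounded when $\alpha<1$. The extra factor $1/\theta(\beta_n)$ relative to your worst-case bound is exactly what saves the estimate; the same correction is needed everywhere you invoke ``probability of $\ge 2$ jumps'' (e.g.\ in $\overline{\Gamma}^n(1,2)$, $\Gamma^n(2,2)$, $\Gamma^n(2,3)$).

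Second, your treatment of $\Gamma^n(4)$ (the $N^{\beta_n}$-against-$dN^{\beta_n}$ term) by ``probability of a jump in both sub-intervals'' gives $O(\theta(\beta_n)^2/n)$, which again diverges for $\alpha>1/2$; and even the honest $L^1$ norm of this term is $u_{n,m}\cdot O(\delta_n^2/n)=O(1)$, not $o(1)$, so no untruncated first-moment argument can work. The paper must use the truncated criterion \eqref{eq:2.9}: the truncated conditional mean involves $\int_{\beta_n<|y|<1/(u_{n,m}\beta_n)}|y|F(dy)$, which vanishes by dominated convergence because $u_{n,m}\beta_n^2\to0$ and $\int|y|F(dy)<\infty$ under \eqref{h1} with $\alpha<1$, while the tail is controlled by $\mathbb{P}(|\zeta^n_{i,k}(4)|>y\,|\,\mathcal{F}_{t_i^1})\le Cu_{n,m}^{\alpha}(1+\rho_n)/(n^2y^{\alpha})$. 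This truncation step is a genuinely necessary idea that your plan does not contain. (A minor further point: for $\Gamma^n(3)$ the correct variance bound is of order $u_{n,m}^2c_n/n\sim nc_n$, so you need $nc_n\to0$ — true here since $c_n\le C\beta_n^{2-\alpha}$ — rather than merely $c_n\to0$ as you state.)
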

 \begin{lemma}\label{lem:tight1}
For case \ref{1}, we have the sequences  $(\overline{Y}^n(2))_{n\ge0}$ and $({\mathcal{M}}^{n,m})_{n\ge0}$ are tight.  
\end{lemma}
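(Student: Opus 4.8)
The plan is to establish tightness of the two sequences separately, using the classical Aldous–Rebolledo type criterion for the Skorokhod space: it suffices to check that the processes have uniformly bounded increments in an $L^1$ (or $L^2$) sense over small time intervals, controlled uniformly in $n$ and in the random instant, and that the one‑dimensional marginals are tight. Since both $\overline{Y}^n(2)$ and $\mathcal{M}^{n,m}$ are built from the triangular arrays $y^{n,m}_i(2)$ and the summands $\tilde z^n_{i,k}$ / the $\Gamma$-pieces that survive into $\mathcal M^{n,m}$, the natural route is to bound, for a stopping time $S$ and $\theta>0$, quantities of the form $\mathds E\big(\big|\overline{Y}^n_{(S+\theta)\wedge 1}(2)-\overline{Y}^n_{S\wedge 1}(2)\big|\wedge 1\big)$ by $C\theta$ plus a vanishing term, and similarly for $\mathcal M^{n,m}$; together with $\mathds E(\sup_{t\le1}|\cdot|)<\infty$ this yields tightness.

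First I would handle $\overline{Y}^n(2)$. Recalling \eqref{Y}, $y^{n,m}_i(2)=\sum_{k=1}^m\big(\tfrac{d_n}{nm}+\Delta Y_{T^{\beta_n}_1(t_i^k)}\1_{\{K(t_i^k)\ge1\}}\big)$, so $\overline Y^n(2)$ is, up to the deterministic drift $\tfrac{[nt]d_n}{n}$ (bounded since $|d_n|\le Cs(\beta_n)=C$ in case \ref{1} by \eqref{eq:3.13}), a sum of the ``first jump in each sub‑interval''. Using property \ref{I} — conditionally on $\mathcal F_{t_i^k}$ the variable $\Delta Y_{T^{\beta_n}_1(t_i^k)}\1_{\{K(t_i^k)\ge1\}}$ has mean and second moment controlled by $\lambda_{n,m}=\theta(\beta_n)/(nm)$ times $\int|x|F(dx)$, resp. $\int|x|^2F(dx)$ — I would compute the conditional mean and variance of the increment over $(S,S+\theta]$ and bound them by $C\theta$. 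Here Remark \ref{rmk:2} (finiteness of $\int|x|F(dx)$ when $\alpha<1$) and Remark \ref{rmk:1} ($\int|x|^aF(dx)<\infty$) are exactly what makes the moments finite. Since $\overline Y^n(2)=\overline Y^n-\overline Y^n(1)$ and $\overline Y^n$ converges (Skorokhod) to $Y$ while $\overline Y^n(1)\stackrel{\mathds P}{\to}0$ by Lemma \ref{lem:rest1}, an even cheaper argument is available: $\overline Y^n(2)\stackrel{\mathds P}{\to}Y$, hence is tight.

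Next, and this is the genuine content, I would treat $\mathcal M^{n,m}$ as given in \eqref{R1}, namely $\int_0^{\eta_n(\cdot)}ff'(X^n_{\eta_n(s-)})d\Gamma^n(1)+\Gamma^n(2,1)+\overline\Gamma^n(1,1)$, equivalently the triangular sum $\tfrac{u_{n,m}d_n}{nm}\sum_{k=1}^m\sum_{i=1}^{[n\cdot]}\tilde z^n_{i,k}$ from \eqref{eq:z} together with the $\Gamma^n(1)$ drift–drift term. With $u_{n,m}=\tfrac{nm}{m-1}$ the prefactor is $\tfrac{d_n}{m-1}$, which is $O(1)$. For $\Gamma^n(1)$, one has $A^{\beta_n}_t=d_n t$, so $\int_0^{\eta_n(t)}ff'(X^n_{\eta_n(s-)})d\Gamma^n_s(1)=u_{n,m}d_n^2\int_0^{\eta_n(t)}ff'(X^n_{\eta_n(s-)})(\eta_{nm}(s)-\eta_n(s))ds$; since $0\le\eta_{nm}(s)-\eta_n(s)\le 1/n$ and $ff'$ is bounded with compact support (Remark \ref{rmk:4}), this term is Lipschitz in $t$ with constant $\le C u_{n,m}d_n^2/n=C d_n^2 m/(m-1)$, uniformly in $n$, hence equicontinuous and a fortiori tight. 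For the jump part $\tfrac{u_{n,m}d_n}{nm}\sum_k\sum_i\tilde z^n_{i,k}$, I would again use \ref{I}: conditionally on $\mathcal F_{t_i^1}$ the summand $\tilde z^n_{i,k}$ involves $\Delta Y_{T^{\beta_n}_1(t_i^k)}\1_{\{K(t_i^k)\ge1\}}$ on the event $\{K(t_i^1,t_i^k)=0\}$; its conditional $L^1$ norm is $\le C\cdot\mathds P(K(t_i^k)\ge1)\cdot\mathds E(|\Delta Y_{T_1}|\mid\cdot)\le C\lambda_{n,m}\int|x|F(dx)$, the factors $(k-1)$ and $(m-k)$ being bounded by $m$, and $G(x,y)=yff'(x)+y^2k(x,y)$ being Lipschitz in $y$ with bounded coefficients. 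Summing over $i\le[n\theta]$ and $k\le m$ gives, for an increment over $(S,S+\theta]$, an $L^1$ bound $\le C\,u_{n,m}\tfrac{|d_n|}{nm}\cdot n\theta\cdot m\cdot\lambda_{n,m}\int|x|F(dx)=C\theta\,|d_n|^2\tfrac{m}{m-1}\big(\tfrac{\theta(\beta_n)}{n}\big)$, and since $\theta(\beta_n)/n\to0$ (indeed $\to0$ under \ref{h1} with the chosen $\beta_n$, as remarked after \ref{M}) this is $o(\theta)$, so the conditional — hence unconditional — increments are $\le C\theta+o(1)$; combined with a uniform $L^1$ bound on $\sup_{t\le1}|\mathcal M^{n,m}_t|$ obtained the same way (taking $\theta=1$), the Aldous criterion applies.

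The main obstacle is the bookkeeping in the jump part: one must be careful that the indicators $\1_{\{K(t_i^1,t_i^k)=0\}}$ and $\1_{\{K(t_i^k)\ge1\}}$ are, conditionally on $\mathcal F_{t_i^1}$, measurable w.r.t.\ disjoint portions of the path (the intervals $(t_i^1,t_i^k]$ and $I(nm,i,k)$ are disjoint), so independence from \ref{I}/\ref{M} can be invoked to factor expectations cleanly, and that the telescoping–by–induction identity leading to \eqref{eq:z} is respected when one passes to increments over an arbitrary random time $S$ rather than over full blocks $i$. Once the conditional moment bounds are in place uniformly in $(i,k,n)$, assembling them into the Aldous–Rebolledo inequality is routine; the only quantitative input needed is $u_{n,m}\lambda_{n,m}/(nm)\cdot nm = u_{n,m}\lambda_{n,m}=\tfrac{m}{m-1}\cdot\tfrac{\theta(\beta_n)}{n}\to0$, which holds by the choice $\beta_n=(\log n)^2/n$ and \eqref{h1} with $\alpha<1$.
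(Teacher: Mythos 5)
Your argument is in substance the paper's own: both reduce tightness to conditional first/second moment bounds on the generic terms of the triangular arrays and then invoke a standard criterion — the paper bounds $\mathbb{E}(|y^n_i(2)|\,|\mathcal{F}_{t_i^1})$, $\mathbb{E}(|y^n_i(2)|^2|\mathcal{F}_{t_i^1})$, $\mathbb{E}(|\zeta_i^n(1)||\mathcal{F}_{t_i^1})\le Cu_{n,m}d_n^2/n^2$, $\mathbb{E}(|z_i^n(1)||\mathcal{F}_{t_i^1})\le Cu_{n,m}|d_n|/n^2$ and $\mathbb{E}(|z_i^n(2)||\mathcal{F}_{t_i^1})\le Cu_{n,m}|d_n|\delta_n/n^2$ and cites the packaged criterion of Lemma \ref{lem:tightrest}, where you re-derive Aldous--Rebolledo by hand and note Lipschitz equicontinuity for the drift--drift piece; your shortcut $\overline{Y}^n(2)=\overline{Y}^n-\overline{Y}^n(1)$ is also legitimate given Lemma \ref{lem:rest1}. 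One quantitative slip worth fixing: you bound $\mathbb{P}(K(t_i^k)\ge1)\,\mathbb{E}(|\Delta Y_{T_1^{\beta_n}(t_i^k)}|)$ by $C\lambda_{n,m}\int|x|F(dx)$, whereas the exact value is $\frac{1-e^{-\lambda_{n,m}}}{\theta(\beta_n)}\,\delta_n\le\frac{\delta_n}{nm}$ with $\delta_n$ bounded in case \ref{1} (Remark \ref{rmk:2}); your bound is larger by a factor $\theta(\beta_n)$ and only closes because $\theta(\beta_n)/n\to0$ under \eqref{h1} with $\alpha<1$ and $\beta_n=(\log n)^2/n$, so the paper's sharper estimate is the one that survives in the other cases.
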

\begin{proof}
	First, we consider  $\overline{Y}^n_t(2)$ given by \eqref{Y}.
	By the Property \ref{I},  the relation \eqref{3.1} in particular $d_n=b'-d'_n$, the fact that $1-\lambda_{n,m}-e^{-\lambda_{n,m}}\le \frac{\lambda_{n,m}^2}{2}$, we have
		\begin{align}\label{eq:y2a}
			|\mathbb{E}(y^n_i(2)|\mathcal{F}_{t^{1}_i})|&= \left|\frac{d_{n}}{n}+(1-e^{-\lambda_{n,m}})\frac{{d'}_n}{n\lambda_{n,m}}\right|
			\le C\frac{1+\lambda_{n,m}|{d'}_n|}{n}.
			\end{align}
		Further, by the Jensen's inequality,  $\int_{\R}|x|^2F(dx)<+\infty$ (see Remark \ref{rmk:1}) and  the inequality $1-e^{-\lambda_{n,m}}\le \lambda_{n,m}$, we have
		\begin{align}\label{eq:y2b}
			\mathbb{E}(|y^n_i(2)|^2|\mathcal{F}_{t^{1}_i})&\le C(\frac{d_{n}^2}{n^2}+(1-e^{-\lambda_{n,m}})\frac{1}{n\lambda_{n,m}})\le C\frac{1}{n}(\frac{d_n^2}{n}+1).
		\end{align}
		Then, in case \ref{1}, using the boundedness of $|d'_n|$ and $|d_n|$ (see \eqref{eq:3.3}), $\lambda_{n,m}\rightarrow0$ as $n\rightarrow\infty$,  $y^n_i(2)$ satisfies \eqref{eq:2.8} and \eqref{eq:2.11} ensuring the tightness from the second part of Lemma \ref{lem:tightrest}.\\
		 Now, we consider the tightness of the sequence $(\mathcal{M}^{n,m})_{n\ge0}$. In this case, we recall that
		$
			\mathcal{M}^{n,m}_t=\int_0^{\eta_n(t)}ff'(X_{\eta_n(s-)}^n)d\Gamma_s^n(1)+\Gamma^n_{t}(2,1)+\overline{\Gamma}^n_{t}(1,1).
		$
		We rewrite 
	$
				\int_0^{\eta_n(t)}ff'(X_{\eta_n(s-)}^n)d\Gamma_s^n(1)=\sum_{i=1}^{[nt]}\zeta_i^n(1),$ $\zeta_i^n(1)=ff'(X_{t_i^k}^n)\frac{m(m-1)u_{n,m}d_n^2}{2n^2m^2},
		$
	\begin{align*}\begin{array}{ll}
				\overline{\Gamma}^n_{t}(1,1)=\sum_{i=1}^{[nt]}z_i^n(1),&z_i^n(1)=\frac{u_{n,m}d_n}{nm}\sum_{k=2}^mk(X^n_{t^{1}_i},\Delta Y_{T^{\beta_n}_1(t^{1}_i,t_i^k)})(\Delta Y_{T^{\beta_n}_1(t^{1}_i,t_i^k)})^2\1_{\{K(t^{1}_i,t_i^k)\geq1\}},\\
				\Gamma^n_{t}(2,1)=\sum_{i=1}^{[nt]}z_i^n(2),&z_i^n(2)=\frac{u_{n,m}d_n}{nm}\sum_{k=2}^mff'(X_{t_i^1}^n)\left[(k-1)\Delta Y_{T^{\beta_n}_1(t_i^k)}\1_{\{K(t_i^k)\geq1\}}\1_{\{K(t^{1}_i,t_i^k)=0\}}\right.\\&\left.\hspace{7cm}+\Delta Y_{T^{\beta_n}_1(t^{1}_i,t_i^k)}\1_{\{K(t^{1}_i,t_i^k)\geq1\}}\right].\end{array}
		\end{align*}
	By similar arguments and the fact that the functions $ff'$ and $k$  are bounded, we have $\mathbb{E}(|\zeta_i^n(1)||\mathcal{F}_{t^{1}_i})\le C\frac{u_{n,m}d_n^2}{n^2}$, $\mathbb{E}(|z_i^n(1)||\mathcal{F}_{t^{1}_i})\le C\frac{u_{n,m}|d_n|}{n^2}$ and $\mathbb{E}(|z_i^n(2)||\mathcal{F}_{t^{1}_i})\le C\frac{u_{n,m}|d_n|\delta_n}{n^2}$.
		Then, we are in case \ref{1}, using the boundedness $|d_n|$ and  $\delta_n$ (see \eqref{eq:3.3}) and $u_{n,m}=\frac{nm}{m-1}$, we obtain that $\zeta_i^n(1)$, $z_i^n(1)$ and $z_i^n(2)$ satisfy \eqref{eq:2.7} ensuring the tightness from Lemma \ref{lem:tightrest}.
\end{proof}	
\begin{theorem}\label{thm:noise1}
	For case \ref{1},  we have  
	$$(\overline{Y}^{n},{\mathcal{M}}^{n,m})\stackrel{\mathcal{L}}{\longrightarrow}(Y,Z),$$
	where $Z$ is the limit process given in $\eqref{eq:01}$.
\end{theorem}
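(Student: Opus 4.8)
The plan is to establish the joint convergence $(\overline{Y}^n,\mathcal{M}^{n,m})\stackrel{\mathcal{L}}{\to}(Y,Z)$ by working with the triangular-array representation already set up in Section \ref{section1}. Recall that $\overline{Y}^n=\overline{Y}^n(1)+\overline{Y}^n(2)$ with $\overline{Y}^n(1)\stackrel{\mathds{P}}{\to}0$ by Lemma \ref{lem:rest1}, so it suffices to prove $(\overline{Y}^n(2),\mathcal{M}^{n,m})\stackrel{\mathcal{L}}{\to}(Y,Z)$ and then add back the negligible term. Next, since $\mathcal{M}^{n,m}=\int_0^{\eta_n(\cdot)}ff'(X^n_{\eta_n(s-)})\,d\Gamma^n_s(1)+\Gamma^n_\cdot(2,1)+\overline{\Gamma}^n_\cdot(1,1)$, and we have the exact rewriting $\Gamma^n_t(2,1)+\overline{\Gamma}^n_t(1,1)=\frac{u_{n,m}d_n}{nm}\sum_{k=1}^m\sum_{i=1}^{[nt]}\tilde z^n_{i,k}$ from \eqref{eq:z}, together with the purely deterministic-drift contribution $\int_0^{\eta_n(t)}ff'(X^n_{\eta_n(s-)})\,d\Gamma^n_s(1)=\sum_{i=1}^{[nt]}ff'(X^n_{t_i^1})\frac{(m-1)u_{n,m}d_n^2}{2n^2m}$, I would first handle this last drift term: with $u_{n,m}=\frac{nm}{m-1}$ and $d_n\to d$ (Lemma \ref{lem:estimation2} via \ref{2} not applying here, but $d_n=d(\beta_n)\to b-\int_{|x|\le1}xF(dx)=d$ under \ref{h1}, $\alpha<1$, by Remark \ref{rmk:2}), a Riemann-sum / dominated-convergence argument gives its uniform-in-probability convergence to $\frac{d^2}{2}\int_0^t f(X_{s-})f'(X_{s-})\,ds$, the second piece of \eqref{eq:01}.

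The heart of the matter is the jump part. Conditionally on $\mathcal{F}_{t_i^1}$, by Property \ref{I} the event $\{K(t_i^1,t_i^k)=0,\ K(t_i^k)\ge1\}$ means that the first ``large'' jump after time $t_i^1$ falls precisely in the subinterval $(t_i^k,t_i^{k+1}]$; this happens with probability $e^{-(k-1)\lambda_{n,m}}(1-e^{-\lambda_{n,m}})\approx\lambda_{n,m}$ for each $k$, and on this event $\Delta Y_{T^{\beta_n}_1(t_i^k)}$ has law $\theta(\beta_n)^{-1}\1_{\{|x|>\beta_n\}}F(dx)$. So the summand $\tilde z^n_{i,k}$ in \eqref{eq:z}, multiplied by $\frac{u_{n,m}d_n}{nm}$, contributes — after summing over $i$ and $k$ and using $u_{n,m}\lambda_{n,m}/(nm)=\frac{\theta(\beta_n)}{(m-1)n^2}$ together with $n\lambda_{n,m}=\theta(\beta_n)/m$ — precisely the point-mass sum in \eqref{eq:01}. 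The strategy is the ``jump-time localization'' argument: partition according to which of the finitely many jumps $R_p\le t$ of $Y$ (which are exactly the large jumps for $n$ large, since $\beta_n\to0$) falls in which coarse cell and which subcell; the index $k\in\{1,\dots,m\}$ of the subcell containing $R_p$, once normalized, converges in law to $\lceil m\Upsilon_p\rceil$ where $\Upsilon_p$ is uniform on $[0,1]$, and by the strong Markov / independence structure the $\Upsilon_p$ are i.i.d.\ and independent of $Y$. Substituting $(k-1)\to m\Upsilon_p-1$-type terms and $\Delta Y_{T^{\beta_n}_1(t_i^k)}\to\Delta Y_{R_p}$, and using the Taylor expansion $G(x,y)=yff'(x)+y^2k(x,y)$ of Remark \ref{rmk:4} to see that $(m-k)G(X^n_{t_i^1},\Delta Y)\to (m-\lceil m\Upsilon_p\rceil)(f(X_{R_p})-f(X_{R_p-}))$ up to a vanishing $y^2$-term, yields that the jump part of $\mathcal{M}^{n,m}$ converges jointly with $\overline{Y}^n(2)$ to $d\sum_{p:R_p\le t}\big(ff'(X_{R_p-})\Delta Y_{R_p}\frac{\lfloor m\Upsilon_p\rfloor}{m-1}+(f(X_{R_p})-f(X_{R_p-}))(1-\frac{\lfloor m\Upsilon_p\rfloor}{m-1})\big)$; note $\frac{k-1}{m-1}$ and $\frac{m-k}{m-1}=1-\frac{k-1}{m-1}$ match the claimed coefficients with $\lfloor m\Upsilon\rfloor=k-1$ in the limit. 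The joint convergence with $\overline{Y}^n(2)\Rightarrow Y$ is obtained by a stable-convergence / Skorokhod-coupling argument as in \cite[Ch.~9]{h}: on the finitely many large jumps $Y$ and the ``position within the cell'' decouple asymptotically, which is exactly where the independence of $\Upsilon$ from $Y$ comes from.

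The main obstacle, and the place where this paper differs from \cite{a}, is controlling the \emph{joint} behavior of the $m$ coupled conditional contributions indexed by $k$ inside one coarse cell — in \cite{a} there is a single triangular array per cell. Here one must verify that, conditionally on $\mathcal{F}_{t_i^1}$, the relevant events $\{K(t_i^1,t_i^k)=0\}$ for $k=2,\dots,m$ are \emph{nested} (the $k=2$ term in $\tilde z^n_i(2)$ vanishes, etc., as exploited in the induction leading to \eqref{eq:z}), so that at most one subcell per coarse cell carries the first large jump, and the telescoping identity $\tilde z^n_i(2)=\frac{u_{n,m}d_n}{nm}\sum_{k=1}^{m-1}(m-k)G(X^n_{t_i^1},\Delta Y_{T^{\beta_n}_1(t_i^k)})\1_{\{K(t_i^1,t_i^k)=0\}}\1_{\{K(t_i^k)\ge1\}}$ indeed captures the whole contribution. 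Once this structural reduction is in place, the convergence of the finite-dimensional distributions follows from the law of $(K(t_i^k))_k$ and the density of $\Delta Y_{T^{\beta_n}_1}$, and tightness is already granted by Lemma \ref{lem:tight1}; passing to the functional limit then uses the usual argument that a tight sequence whose finite-dimensional distributions converge, and whose jumps are asymptotically carried by the finitely many jumps of $Y$, converges in the Skorokhod sense.
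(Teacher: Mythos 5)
Your reduction matches the paper's: split off $\overline{Y}^n(1)$ and the rest terms, treat the deterministic drift term $\int_0^{\eta_n(\cdot)}ff'(X^n_{\eta_n(s-)})d\Gamma^n_s(1)$ by a Riemann-sum argument giving $\frac{d^2}{2}\int_0^t ff'(X_{s-})ds$, and isolate the jump contribution through the telescoping identity \eqref{eq:z}. Where you diverge is the core step. The paper freezes the coefficient $z=X^n_{t_i^1}$, then proves convergence in law of the $(\R^2)^m$-vector $\bigl(\bigl(\sum_i y'^n_{i,k}(2),\sum_i\tilde z^n_{i,k}(z)\bigr)\bigr)_{k\le m}$ by computing, for each pair of components, the limit of $n$ times the law applied to test functions (Lemma \ref{lem:cv}), showing the limiting L\'evy measures of the cross pairs are supported on the coordinate axes (hence independence, via Sato's Ex.~12.8--12.10), identifying the within-pair L\'evy measure $K^k$, and only then unfreezing $z$ via \cite[Theorem 1.2(d)]{a}. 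You instead propose a direct ``jump-time localization'': attribute the limit to the finitely many jumps of $Y$, with the subcell index producing the uniform variables $\Upsilon_p$.

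The gap is in that localization step. Under the standing hypotheses $F(\R)=\infty$, so $Y$ has a.s.\ \emph{infinitely} many jumps in $[0,t]$; your parenthetical claim that the jumps of $Y$ ``are exactly the large jumps for $n$ large'' is false, and the limit \eqref{eq:01} is itself an infinite (though finite-variation) sum over \emph{all} jump times $R_n$. To make your argument work you would need to introduce a fixed truncation level $\varepsilon>0$, run the localization only over the finitely many jumps of size exceeding $\varepsilon$, and then prove a uniform-in-$n$ estimate showing that the contribution to $\mathcal{M}^{n,m}$ of jumps with size in $(\beta_n,\varepsilon]$ is negligible uniformly as $\varepsilon\downarrow0$. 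That estimate is precisely the hard analytic content that the paper's L\'evy-measure computations (the convergence of $nK^{k}_{n,m}(h)$, $nK^{k,k'}_{n,m}(h)$, $nL^{k,k'}_{n,m}(h)$ for the various test functions $h$) deliver, and it is absent from your plan. Likewise, the mutual independence of the $(\Upsilon_p)$ and their independence from $Y$ are asserted via ``strong Markov / independence structure'' but not established; in the paper this independence is read off from the support of the limiting L\'evy measures, which is exactly the point of invoking Sato and Kallenberg. So the skeleton is right, but the central convergence step as written would not go through without substantial additional estimates.
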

\begin{proof}Since $ff'$ is Lipschitz-continuous, by virtue of Lemma \ref{lem:cv1tot}, in order to prove the convergence in law of $(\overline{Y}^{n},{\mathcal{M}}^{n,m})$ it suffices to prove the convergence of $(\overline{Y}^{n}_1(2),\Gamma^n_1(1),\Gamma^n_{1}(2,1)+\overline{\Gamma}^n_{1}(1,1))$. \\
 First as $u_{n,m}=\frac{nm}{m-1}$, we use \eqref{eq:3.3} to get $d_n\searrow d=b-\int_{|x|<1}xF(dx)$ and then  we have
		\begin{align*}
		\Gamma^n_1(1)=u_{n,m}\sum_{i=1}^{n}\sum_{k=2}^m  (A^{\beta_n}_{t_i^k}-A^{\beta_n}_{t^1_i})(A^{\beta_n}_{t^{k+1}_i}-A^{\beta_n}_{t_i^k})
		=\frac{(m-1)u_{n,m} d_n^2}{2mn}\underset{n\rightarrow\infty}{\longrightarrow}\frac{d^2}{2}.
		\end{align*}
		Thus, it is enough to prove the convergence in law of the pair $(\overline{Y}^{n}_1(2),\Gamma^n_{1}(2,1)+\overline{\Gamma}^n_{1}(1,1))$. To do so, we recall that $\Gamma^n_{1}(2,1)+\overline{\Gamma}^n_{1}(1,1)=\frac{u_{n,m}d_n}{nm}\sum_{k=1}^m\sum_{i=1}^{n}{\tilde{z}}_{i,k}^n,$ where ${\tilde{z}}_{i,k}^n$ is given by \eqref{eq:z}. 
		 Now, following Jacod \cite{a}, we first study the above triangular array with freezing the component  $X_{t^{1}_i}^n$. Thus, we treat the triangular array with  \begin{align*}{\tilde{z}}^n_{i,k}(z)=[ff'(z)(k-1)\Delta Y_{T^{\beta_n}_1(t_i^k)}+(m-k)G(z,\Delta Y_{T^{\beta_n}_1(t_i^k)})]\1_{\{K(t^{1}_i,t_i^k)=0\}}\1_{\{K(t_i^k)\geq1\}},\end{align*} $i\in\{1,\hdots,n \}$, $k\in\{1,\hdots,m\}$ with an arbitrary value $z\in\R$.  Next, based on the denotations in \eqref{Y}, we introduce $\overline{Y'}^{n}_t(2)=\sum_{i=1}^{[nt]}\sum_{k=1}^m{y'}_{i,k}^n(2)$, where ${y'}_{i,k}^n(2)=\Delta Y_{T^{\beta_n}_1(t_i^k)}\1_{\{K(t_i^k)\geq 1\}}$. In this case, we observe that $\overline{Y}^{n}_1(2)-\overline{Y'}^{n}_1(2)=\sum_{i=1}^{n}\sum_{k=1}^m\frac{d_n}{nm}\underset{n\rightarrow\infty}{\longrightarrow}d$ and $\frac{u_{n,m}d_n}{nm}\underset{n\rightarrow\infty}{\longrightarrow}\frac{d}{(m-1)}.$ 
		 Then instead of working with $(\overline{Y}^{n}_1(2),\Gamma^n_{1}(2,1)+\overline{\Gamma}^n_{1}(1,1))$, it is enough to prove the convergence in law of $((\sum_{i=1}^{n}{y'}_{i,k}^n(2),\sum_{i=1}^{n}{\tilde{z}}_{i,k}^n(z)), k\in\{1,\hdots,m\})\in(\R^2)^m$  to $m$ independent $\R^2$-L\'evy processes. 
		 First, since this $(\R^2)^m$-vector  is tight, it is enough to prove that every weakly convergent subsequence has the same limit. In what follows, we omit the notation for the subsequence for more readability. For the independence of the components of the limit vector, by using Ex.12.8-12.10 in \cite{d}, we only need to prove the independence between the limit marginals  of $(\sum_{i=1}^{n}{y'}_{i,k}^n(2),\sum_{i=1}^{n}{y'}_{i,k'}^n(2))$, $(\sum_{i=1}^n{y'}_{i,k}^n(2),\sum_{i=1}^n\tilde{z}^n_{i,k'}(z))$ and $(\sum_{i=1}^n\tilde{z}_{i,k}^n(z),\sum_{i=1}^n\tilde{z}^n_{i,k'}(z))$, for any $k,k'\in\{1,\dots,m\}, k\not =k'$.
		 		 
	$\bullet$ First, by the independent structure of  the subsequence marginals $\sum_{i=1}^{n}({y'}_{i,k}^n(2),{y'}_{i,k'}^n(2))$ for any $k,k'\in\{1,\dots,m\}, k\not =k'$, it is obvious that the limit marginals are independent. 
	
		$\bullet$ Second, for fixed $k,k'\in\{1,\dots,m\},k\not = k'$, we consider  the sequence $\sum_{i=1}^n({y'}_{i,k}^n(2),\tilde{z}^n_{i,k'}(z))$ whose variables $({y'}_{i,k}^n(2),\tilde{z}^n_{i,k'}(z))$, ${1\leq i\leq n}$ are i.i.d. For $k>k'$,  ${y'}_{i,k}^n(2)$ and  $\tilde{z}^n_{i,k'}(z)$ are obviously independent and the independence of the limit marginals is straightforward. For $k<k'$, we use Lemma \ref{lem:cv} to identify the limit characteristics.  Let us denote the law of $({y'}_{i,k}^n(2),\tilde{z}^n_{i,k'}(z))$ by $K^{k,k'}_{n,m}$. 
	We will study the convergence of $nK^{k,k'}_{n,m}(h)$    to $K^{k,k'}(h)$ with some function $h$  to be precised later where 
	\begin{align*}
	K^{k,k'}(h):=\frac{1}{m}\int h(x,0) F(dx)+\frac{1}{m}\int h(0,ff'(z)(k'-1)y+(m-k')G(z,y)) F(dy).
\end{align*}
	Therefore, we have $nK^{k,k'}_{n,m}(h)=n\mathbb{E}(h({y'}_{i,k}^n(2),\tilde{z}^n_{i,k'}(z))|\mathcal{F}_{t^{1}_i})$ 
	\begin{align*}
	=& n\mathbb{E}(h({y'}_{i,k}^n(2),\tilde{z}^n_{i,k'}(z)); K(t_i^k)=0,K(t^{k'}_i)=0|\mathcal{F}_{t^{1}_i})\\&+ n\mathbb{E}(h({y'}_{i,k}^n(2),\tilde{z}^n_{i,k'}(z));K(t^{k'}_i)\geq1,K(t^{1}_i,t^{k'}_i)=0|\mathcal{F}_{t^{1}_i})\\
	&+ n\mathbb{E}(h({y'}_{i,k}^n(2),\tilde{z}^n_{i,k'}(z));K(t_i^k)=0,K(t^{k'}_i)\geq1,K(t^{1}_i,t^{k'}_i)\geq1|\mathcal{F}_{t^{1}_i})\\
	&+n\mathbb{E}(h({y'}_{i,k}^n(2),\tilde{z}^n_{i,k'}(z));K(t_i^k)\geq1,K(t^{k'}_i)=0|\mathcal{F}_{t^{1}_i})\\
	&+n\mathbb{E}(h({y'}_{i,k}^n(2),\tilde{z}^n_{i,k'}(z));K(t_i^k)\geq1,K(t^{k'}_i)\geq1|\mathcal{F}_{t^{1}_i})\\
	=
	&ne^{-2\lambda_{n,m}}h(0,0)+e^{-\lambda_{n,m}(k'-1)}\frac{1-e^{-\lambda_{n,m}}}{m\lambda_{n,m}}\int_{|y|>\beta_n}h(0,ff'(z)(k'-1)y+(m-k')G(z,y))F(dy)\\
	&+n\mathbb{P}(K(t_i^k)=0,K(t^{1}_i,t^{k'}_i)\geq1|\mathcal{F}_{t^{1}_i})(1-e^{-\lambda_{n,m}})h(0,0)\\
	&+e^{-\lambda_{n,m}}\frac{(1-e^{-\lambda_{n,m}})}{m\lambda_{n,m}}\int_{|x|>\beta_n}h(x,0)F(dx)
	+\frac{(1-e^{-\lambda_{n,m}})^2}{m\lambda_{n,m}}\int_{|x|>\beta_n}h(x,0)F(dx).	\end{align*}
	Three following headings demonstrate three elements in  Lemma \ref{lem:cv}, each corresponding to some specific choices of function $h$.
	
		\paragraph{\textbf{Concerning assertion (i)}:}
		Since we work with bounded jumps, we observe that $|ff'(z)(k-1)x+(m-k)G(z,x)|\le C|x|$ and $|ff'(z)(k-1)x|\le C|x|$ on $x\in[-p,p]$.  We shall choose $h=h_{u,v}$ where $h_{u,v}(x,y)=\1_{\{|x|\ge u,|y|\ge v\}}$  bounded and vanishing on a neighborhood of $0$, with $(u,v)\not =(0,0)$. For any function $P$ satisfying $|P(y)|\le C|y|$, if  we have $C|y|\le v$ then $|P(y)|\le v$, which yields  $h_{u,v}(x,P(y))=h_{u,v}(x,P(y)){\1}_{\{C|y|>v\}}$ for any $x,y\in[-p,p]$.
		Then, we have $nK^{k,k'}_{n,m}(h_{u,v})\underset{n\rightarrow\infty}{\longrightarrow}K^{k,k'}({h_{u,v}})$.
		\paragraph{\textbf{Concerning assertion (ii)}:} For this case, we choose  $h={h'},{h''}$ where  ${h'}(x,y)=x\1_{\{x^2+y^2\le1\}}$ and ${h''}(x,y)=y\1_{\{x^2+y^2\le1\}}$. 	As $|ff'(z)(k-1)x+(m-k)G(z,x)|\le C|x|$ and $|ff'(z)(k-1)x|\le C|x|$ on $x\in[-p,p]$, using the dominated convergence theorem, $\int |x|F(dx)<C$ and $\frac{\theta(\beta_n)}{n}=m\lambda_{n,m}$ converges to $0$ as $n$ tends to infinity, we obtain  that
		 $nK^{k,k'}_{n,m}({h'})$ and $nK^{k,k'}_{n,m}({h''})$ converge respectively to $K^{k,k'}({h'})$ and $K^{k,k'}({h''})$ when $n$ goes to infinity.	
		\paragraph{\textbf{Concerning assertion (iii)}:} Here we take $h=h_1,h_2,h_3$ where ${h}_{1}(x,y)=x^2\1_{\{x^2+y^2\le1\}}$, ${h}_{2}(x,y)=xy\1_{\{x^2+y^2\le1\}}$ and ${h}_{3}(x,y)=y^2\1_{\{x^2+y^2\le1\}}$ and apply similar arguments as in (ii), we get $nK^{k,k'}_{n,m}({h_1})$, $nK^{k,k'}_{n,m}({h_2})$ and $nK^{k,k'}_{n,m}({h_3})$ converge respectively to $K^{k,k'}({h_1})$, $K^{k,k'}({h_2})$ and $K^{k,k'}({h_3})$ when $n$ goes to infinity.
In conclusion, for any fixed $k<k'$ the obtained limit pair has independent marginals, since it has no Gaussian part and its L\'evy measure $K^{k,k'}$ is supported on the union of the coordinate axes (see e.g. Ex.12.8  in \cite{d}).
		
			$\bullet$
		Third, we consider the pair  $\sum_{i=1}^n(\tilde{z}^n_{i,k}(z),\tilde{z}_{i,k'}^n(z))$ for fixed $k,k'\in\{1,\dots,m\}$, $k\not =k'$ and  by symmetry it is enough to study only the case $k<k'$. Let us denote the law of $(\tilde{z}^n_{i,k}(z),\tilde{z}_{i,k'}^n(z))$ by $L^{k,k'}_{n,m}$.  We will prove that $nL^{k,k'}_{n,m}(h)$ converges to  $L^{k,k'}(h)$ with some function $h$ to be precised later and where 
		\begin{multline*}
			L^{k,k'}(h):=\frac{1}{m}\int h(ff'(z)(k-1)x+(m-k)G(z,x),0) F(dx)\\+\frac{1}{m}\int h(0,ff'(z)(k'-1)y+(m-k')G(z,y)) F(dy).
		\end{multline*}
	Therefore, we have $nL^{k,k'}_{n,m}(h)=n\mathbb{E}(h({\zeta}_{i,k}^n(z),{\zeta}_{i,k'}^n(z))|\mathcal{F}_{t^{1}_i})$
		\begin{align*}
			=& n\mathbb{E}(h(\tilde{z}^n_{i,k}(z),\tilde{z}_{i,k'}^n(z)); K(t_i^k)=0,K(t^{k'}_i)=0|\mathcal{F}_{t^{1}_i})\\&+ n\mathbb{E}(h(\tilde{z}^n_{i,k}(z),\tilde{z}_{i,k'}^n(z));
			K(t^{k'}_i)\geq1,K(t^{1}_i,t^{k'}_i)=0|\mathcal{F}_{t^{1}_i})
			\\&+ n\mathbb{E}(h(\tilde{z}^n_{i,k}(z),\tilde{z}_{i,k'}^n(z));
			K(t_i^k)=0,
			K(t^{k'}_i)\geq1,K(t^{1}_i,t^{k'}_i)\ge1|\mathcal{F}_{t^{1}_i})\\
			&+n\mathbb{E}(h(\tilde{z}^n_{i,k}(z),\tilde{z}_{i,k'}^n(z));K(t_i^k)\geq1,K(t^{k'}_i)=0,K(t^{1}_i,t^{k}_i)=0|\mathcal{F}_{t^{1}_i})\\
			&+n\mathbb{E}(h(\tilde{z}^n_{i,k}(z),\tilde{z}_{i,k'}^n(z));K(t_i^k)\geq1,K(t^{k'}_i)=0,K(t^{1}_i,t_i^k)\geq1|\mathcal{F}_{t^{1}_i})\\
			&+n\mathbb{E}(h(\tilde{z}^n_{i,k}(z),\tilde{z}_{i,k'}^n(z));K(t_i^k)\geq1,K(t^{k'}_i)\geq1,K(t^{1}_i,t_i^k)=0|\mathcal{F}_{t^{1}_i})\\
			&+n\mathbb{E}(h(\tilde{z}^n_{i,k}(z),\tilde{z}_{i,k'}^n(z));K(t_i^k)\geq1,K(t^{k'}_i)\geq1,K(t^{1}_i,t_i^k)\geq1|\mathcal{F}_{t^{1}_i})\\
			=&ne^{-2\lambda_{n,m}}h(0,0)\\
			&+e^{-\lambda_{n,m}(k'-1)}\frac{1-e^{-\lambda_{n,m}}}{m\lambda_{n,m}}\int_{|y|>\beta_n}h(0,ff'(z)(k'-1)y+(m-k')G(z,y))F(dy)\\
			&+n\mathbb{P}(K(t_i^k)=0,K(t^{1}_i,t^{k'}_i)\ge1|\mathcal{F}_{t^{1}_i})(1-e^{-\lambda_{n,m}})h(0,0)\\
				&+e^{-\lambda_{n,m}-\lambda_{n,m}(k-1)}\frac{1-e^{-\lambda_{n,m}}}{m\lambda_{n,m}}\int_{|x|>\beta_n}h(ff'(z)(k-1)x+(m-k)G(z,x),0)F(dx)\\
			&+ne^{-\lambda_{n,m}}(1-e^{-\lambda_{n,m}})(1-e^{-\lambda_{n,m}(k-1)})h(0,0)\\
			&+\frac{(1-e^{-\lambda_{n,m}})^2e^{-\lambda_{n,m}(k-1)}}{m\lambda_{n,m}}\int_{|x|>\beta_n}h(ff'(z)(k-1)x+(m-k)G(z,x),0)F(dx)\\
			&+n(1-e^{-\lambda_{n,m}})^2(1-e^{-\lambda_{n,m}(k-1)})h(0,0).
		\end{align*}
Now, to check the three conditions of  Lemma \ref{lem:cv}, we use similar arguments as in the second point above .

\paragraph{\textbf{Concerning assertion (i)}:}
Since  $|ff'(z)(k-1)x+(m-k)G(z,x)|\le C|x|$ and $|ff'(z)(k-1)x|\le C|x|$ on $x\in[-p,p]$, we choose $h=h_{u,v}$ where $h_{u,v}(x,y)=\1_{\{|x|\ge u,|y|\ge v\}}$  bounded and vanishing on a neighborhood of $0$, with $(u,v)\not =(0,0)$. For any functions $P_1$ and $P_2$ satisfying $|P_1(x)|\le C|x|$ and $|P_2(y)|\le C|y|$, we have  $h_{u,v}(P_1(x),P_2(y))=h_{u,v}(P_1(x),P_2(y)){\1}_{\{C|x|\ge u,C|y|\ge v\}}$ for any $x,y\in[-p,p]$.
By similar arguments, we have $nL^{k,k'}_{n,m}(h_{u,v})\underset{n\rightarrow\infty}{\longrightarrow}L^{k,k'}({h_{u,v}})$.
\paragraph{\textbf{Concerning assertion (ii)}:} For this case, we choose  $h={h'},{h''}$ where  ${h'}(x,y)=x\1_{\{x^2+y^2\le1\}}$ and ${h''}(x,y)=y\1_{\{x^2+y^2\le1\}}$. 	As $|ff'(z)(k-1)x+(m-k)G(z,x)|\le C|x|$ and $|ff'(z)(k-1)x|\le C|x|$ on $x\in[-p,p]$, and applying similar arguments used in (ii) of the second point, we obtain  that
$nL^{k,k'}_{n,m}({h'})$ and $nL^{k,k'}_{n,m}({h''})$ converge respectively to $L^{k,k'}({h'})$ and $L^{k,k'}({h''})$ when $n$ goes to infinity.
\paragraph{\textbf{Concerning assertion (iii)}:} Here we take ${h}_{1}(x,y)=x^2\1_{\{x^2+y^2\le1\}}$, ${h}_{2}(x,y)=xy\1_{\{x^2+y^2\le1\}}$ and ${h}_{3}(x,y)=y^2\1_{\{x^2+y^2\le1\}}$ and apply similar arguments as in (iii) of the second point, we get $nL^{k,k'}_{n,m}({h_1})$, $nL^{k,k'}_{n,m}({h_2})$ and $nL^{k,k'}_{n,m}({h_3})$ converge respectively to $L^{k,k'}({h_1})$, $L^{k,k'}({h_2})$ and $L^{k,k'}({h_3})$ when $n$ goes to infinity.	In conclusion, for any fixed $k<k'$,  the obtained limit pair has independent marginals, no Gaussian part and its L\'evy measure $L^{k,k'}$ is supported on the union of the coordinate axes (see e.g. Ex.12.8 in \cite{d}).\\
Hence, combining the three above points, the independence of the $m$ $\R^2$-Lévy limits of $\{(\sum_{i=1}^{n}{y'}_{i,k}^n(2),\sum_{i=1}^{n}{\tilde{z}}_{i,k}^n(z)), k\in\{1,\hdots,m\}\}\in(\R^2)^m$ is shown (see e.g. Ex.12.9-Ex.12.10 in \cite{d}).
Now,  we turn to  identify the limit marginals of our vector. 
 Let us denote the law of $({y'}_{i,k}^n(2),\tilde{z}^n_{i,k}(z))$ by $K^{k}_{n,m}$. 	 We will prove that $nK^{k}_{n,m}(h)$ converges to  $K^{k}(h)$  where
\begin{align*}
	K^{k}(h)=\frac{1}{m}\int h(x,ff'(z)(k-1)x+(m-k)G(z,x)) F(dx).
\end{align*}
Therefore, we have $nK^{k}_{n,m}(h)=n\mathbb{E}(h({y'}_{i,k}^n(2),\tilde{z}^n_{i,k}(z))|\mathcal{F}_{t^{1}_i})$
	\begin{align*}
		=&n\mathbb{E}(h({y'}_{i,k}^n(2),\tilde{z}^n_{i,k}(z)); K(t_i^k)=0|\mathcal{F}_{t^{1}_i}) +n\mathbb{E}(h({y'}_{i,k}^n(2),\tilde{z}^n_{i,k}(z));K(t_i^k)\geq1,K(t^{1}_i,t_i^k)\geq1|\mathcal{F}_{t^{1}_i})\\
		&+n\mathbb{E}(h({y'}_{i,k}^n(2),\tilde{z}^n_{i,k}(z));K(t_i^k)\geq1,K(t^{1}_i,t_i^k)=0|\mathcal{F}_{t^{1}_i})\\
		=&ne^{-\lambda_{n,m}}h(0,0)+\frac{(1-e^{-\lambda_{n,m}})(1-e^{-\lambda_{n,m}(k-1)})}{m\lambda_{n,m}}\int_{|x|>\beta_n}h(x,0)F(dx)+\\
		&e^{-\lambda_{n,m}(k-1)}\frac{1-e^{-\lambda_{n,m}}}{m\lambda_{n,m}}\int_{|x|>\beta_n}h(x,ff'(z)(k-1)x+(m-k)G(z,x))F(dx).
	\end{align*}
	By the same arguments and specific choices of function $h$ as above, we easily verify the three elements in  Lemma \ref{lem:cv}. In this case,  for any fixed $k\in\{1,\dots,m\}$, the obtained limit pair does not have independent marginals since its Lévy measure $K^k$ is not supported on the union of the coordinate axes.	
	Finally,  the vector $((\sum_{i=1}^n{y'}_{i,k}^n(2),\sum_{i=1}^n\tilde{z}_{i,k}^n(z)),k\in\{1,\dots,m\})$ is convergent in law to $(({Y'}_1^{k},V_1^{k}(z)),k\in\{1,\dots,m\})$ and the sequence $(\overline{Y'}^{n}_1(2),\sum_{i=1}^n\sum_{k=1}^m\tilde{z}_{i,k}^n(z))$ weakly converges to $(Y_1-d,V_1(z))$ where $d=b-\int_{|x|\leq1}xF(dx)$ and by independence $	V_1(z)=\sum_{k=1}^mV^{k}_1(z)$ with Lévy measure $	K(h)= \sum_{k=1}^m\frac{1}{m}\int h(ff'(z)(k-1)x+(m-k)G(z,x)) F(dx)$, the drift part equal to $K(x\1_{|x|\le1})$ and no Gaussian part (see (ii) and (iii) right above). Since its Lévy measure can also be rewritten as    $K(h)= \int_{\R}\int_{0}^{1} h(ff'(z)\lfloor mu\rfloor x+(m-1-\lfloor mu\rfloor)G(z,x)) F(dx)du$, 
similarly to Jacod \cite[(5.23)]{a}, a possible representation of the limit process is given by
$$V_1(z)=\sum\limits_{\substack{n:R_n\le1}}\left(ff'(z)\lfloor m\Upsilon_n\rfloor \Delta Y_{R_n}+(m-1-\lfloor m\Upsilon_n\rfloor)(f(z+\Delta Y_{R_n}f(z))-f(z))\right)
$$
	where $(R_n)_{n\geq1}$ denotes an enumeration of the jump times of $Y$ (or of $X$) and $(\Upsilon_n)_{n\ge1}$ is a sequence of i.i.d. variables, uniform on $[0,1]$ and independent of $Y$. It is worth to note that this sum is of finite variation.
Now, as said at the beginning, we go back to consider the convergence related to our original term $\tilde{z}_{i,k}^n$ defined in \eqref{eq:z} where $z=X_{t_i^1}^n$ is no longer fixed. As $z\mapsto\tilde{z}_{i,k'}^n(z)$ is continous, by following step by step the proof's arguments of  \cite[Theorem 1.2(d)]{a}, we obtain $(\overline{Y'}^{n}(2),\sum_{i=1}^{[n.]}\sum_{k=1}^m\tilde{z}_{i,k'}^n)$ converges in law to $(Y_1-d,V_1)$ where 
\begin{multline*}V_1=\hspace{-0.3cm}\sum\limits_{\substack{n:R_n\le1}}\hspace{-0.2cm}[ff'(X_{R_n-})\lfloor m\Upsilon_n\rfloor \Delta Y_{R_n}+(m-1-\lfloor m\Upsilon_n\rfloor)(f(X_{R_n-}+\Delta Y_{R_n}f(X_{R_n-}))-f(X_{R_n-}))].
\end{multline*}
This completes the proof.  
\end{proof}

\subsection{Asymptotic behavior of the couple $(\overline{Y}^{n},u_{n,m}Z^{n,m})$ for case \ref{2} and \ref{4}.} For the first component $\overline{Y}^n$, we use the same decomposition given by the relation \eqref{Y}. For the second one, we consider the formula of $u_{n,m}Z^{n,m}$ given in \eqref{Znm}. In these cases, the second term from this formula does not contribute to the limit. Therefore, we need only to give the analysis for its first term and by using the classical decomposition \eqref{eq:3.7} of $Y$, we have 
	\begin{align}\label{eq:decomp}
		&u_{n,m}\int_0^{\eta_n(t)}ff'(X_{\eta_n(s-)}^n)(Y_{\eta_{nm}(s-)}-Y_{\eta_n(s-)})dY_s=\sum_{i=1}^4\Gamma^n_t(i),
	\end{align}
	where
	\begin{align*}\left\{\begin{array}{rl}
		\Gamma_t^n(1)=&u_{n,m}\int_0^{\eta_n(t)}ff'(X_{\eta_n(s-)}^n)(N^{\beta_{n}}_{\eta_{nm}(s-)}-N^{\beta_{n}}_{\eta_n(s-)})dN^{\beta_{n}}_s,\\
		\Gamma_t^n(2)=&u_{n,m}\int_0^{\eta_n(t)}ff'(X_{\eta_n(s-)}^n)(A^{\beta_{n}}_{\eta_{nm}(s-)}-A^{\beta_{n}}_{\eta_n(s-)})dA^{\beta_{n}}_s\\&+u_{n,m}\int_0^{\eta_n(t)}ff'(X_{\eta_n(s-)}^n)(N^{\beta_{n}}_{\eta_{nm}(s-)}-N^{\beta_{n}}_{\eta_n(s-)})dA^{\beta_{n}}_s\\
		&+u_{n,m}\int_0^{\eta_n(t)}ff'(X_{\eta_n(s-)}^n)(A^{\beta_{n}}_{\eta_{nm}(s-)}-A^{\beta_{n}}_{\eta_n(s-)})dN^{\beta_{n}}_s,\\
		\Gamma_t^n(3)=&u_{n,m}\int_0^{\eta_n(t)}ff'(X_{\eta_n(s-)}^n)(M^{\beta_{n}}_{\eta_{nm}(s-)}-M^{\beta_{n}}_{\eta_n(s-)})dY_s,\\
		\Gamma_t^n(4)=&u_{n,m}\int_0^{\eta_n(t)}ff'(X_{\eta_n(s-)}^n)(A^{\beta_{n}}_{\eta_{nm}(s-)}-A^{\beta_{n}}_{\eta_n(s-)})dM^{\beta_{n}}_s\\&+u_{n,m}\int_0^{\eta_n(t)}ff'(X_{\eta_n(s-)}^n)(N^{\beta_{n}}_{\eta_{nm}(s-)}-N^{\beta_{n}}_{\eta_n(s-)})dM^{\beta_{n}}_s.		\end{array}\right.
	\end{align*}
The three last terms in the above decomposition do not contribute on the limit. Then we only have to study  $\Gamma_t^n(1)$ and we seperate it into 2 terms: the first term that will be the essential term of the limit contains only the first jumps and the drifts and the second term that will be sort in the rest terms contains all the other jumps. More precisely, we rewrite
	\begin{align*}
	\Gamma_t^n(1)&=u_{n,m}\sum_{i=1}^{[nt]}\sum_{k=2}^m\int_{I(nm,i,k)}ff'(X^n_{t_i^1})(N^{\beta_{n}}_{t_i^k}-N^{\beta_{n}}_{t^{1}_i})dN^{\beta_{n}}_s\\&=u_{n,m}\sum_{i=1}^{[nt]}\sum_{k=2}^mff'(X^n_{t_i^1})\sum_{j=1}^{k-1}(N^{\beta_{n}}_{t^{j+1}_i}-N^{\beta_{n}}_{t^{j}_i})(N^{\beta_{n}}_{t^{k+1}_i}-N^{\beta_{n}}_{t_i^k})={\Gamma}_t^{n}(1,1)+{\Gamma}_t^{n}(1,2),
	\end{align*}
	where 
	\begin{align}\label{eq:gamma1}
	{\Gamma}_t^{n}(1,1)=&u_{n,m}\sum_{i=1}^{[nt]}\sum_{k=2}^mff'(X^n_{t_i^1})\Delta Y_{T^{\beta_n}_1(t_i^k)}\1_{\{K(t_i^k)\geq1\}}\sum_{j=1}^{k-1}\Delta Y_{T^{\beta_n}_1(t^{j}_i)}\1_{\{K(t^{j}_i)\geq1\}},\nonumber\\
	{\Gamma}_t^{n}(1,2)=&u_{n,m}\sum_{i=1}^{[nt]}\sum_{k=2}^mff'(X^n_{t_i^1})\Delta Y_{T^{\beta_n}_1(t_i^k)}\1_{\{K(t_i^k)\geq1\}}\sum_{j=1}^{k-1}\sum_{h=2}^{K(t^{j}_i)}\Delta Y_{T^{\beta_n}_h(t^{j}_i)}\\
		&+u_{n,m}\sum_{i=1}^{[nt]}\sum_{k=2}^mff'(X^n_{t_i^1})\sum_{h=2}^{K(t_i^k)}\Delta Y_{T^{\beta_n}_h(t_i^k)}\sum_{j=1}^{k-1}\Delta Y_{T^{\beta_n}_1(t^{j}_i)}\1_{\{K(t^{j}_i)\geq1\}}\nonumber\\&+u_{n,m}\sum_{i=1}^{[nt]}\sum_{k=2}^mff'(X^n_{t_i^1})\sum_{h=2}^{K(t_i^k)}\Delta Y_{T^{\beta_n}_h(t_i^k)}\sum_{j=1}^{k-1}\sum_{h=2}^{K(t^{j}_i)}\Delta Y_{T^{\beta_n}_h(t^{j}_i)}.\nonumber
	\end{align}
In this case, $u_{n,m}Z^{n,m}_t=\mathcal{M}^{n,m}_t+\mathcal{R}^{n,m}_t,$ with 
\begin{align}\label{eq:MR2}
{\mathcal{M}}^{n,m}_{t}={\Gamma}_t^{n}(1,1)\quad \textrm{and}\quad\mathcal{R}^{n,m}_t=&{\Gamma}_t^{n}(1,2)+\sum_{i=2}^5\Gamma^n_t(i),
\end{align}
where $\Gamma^n_t(5)=u_{n,m}\int_0^{\eta_n(t)}k(X_{\eta_n(s-)}^n,Y_{\eta_{nm}(s-)}-Y_{\eta_n(s-)})(Y_{\eta_{nm}(s-)}-Y_{\eta_n(s-)})^2dY_s$.
The proof of the following lemma is postponed to Appendix \ref{app:A}.
\begin{lemma}\label{lem:rest2}
	For the cases \ref{2} and \ref{4},  we have as $n\rightarrow\infty$ the sequences $(\overline{Y}^n(1))_{n\ge0}$ and $(\mathcal{R}^{n,m})_{n\ge0}$  converge uniformly in probability to $0$.
\end{lemma}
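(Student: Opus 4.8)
\medskip
\noindent\textit{Proof proposal.}
The plan is to treat $\overline{Y}^{n}(1)$ and the five blocks of $\mathcal{R}^{n,m}=\Gamma^n(1,2)+\sum_{i=2}^{5}\Gamma^n(i)$ one by one. The organising observation is that, under the symmetry hypothesis \eqref{h3}, every increment of $M^{\beta_n}$, every increment of $N^{\beta_n}$ (since then $\int_{|x|>\beta_n}xF(dx)=0$) and every first--jump variable $\Delta Y_{T^{\beta_n}_1(\cdot)}\1$ has zero conditional mean given the past; hence --- with the single exception of $\Gamma^n(1,2)$ --- all these blocks, once re--written as sums over the coarse grid, become discrete martingales, and I would bound them by Doob's $L^2$ maximal inequality together with the estimates of Section~\ref{subsec:not} (notably $c_n\le C\beta_n^{2-\alpha}$ and $\delta_n+|d_n|\le Cs(\beta_n)$ from \eqref{eq:3.13}, $\int|x|^2F(dx)<\infty$ by Remark~\ref{rmk:1}, and in case~\ref{2} also $\int|x|F(dx)<\infty$ by Remark~\ref{rmk:2}), using that for the chosen sequences $u_{n,m}\asymp\beta_n^{-1}$, $\theta(\beta_n)\asymp\beta_n^{-\alpha}\asymp n/\log n$ and $\lambda_{n,m}\to0$.

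\medskip
\noindent\textbf{The $L^2$--negligible terms.}
For $\overline{Y}^{n}(1)$, the $M^{\beta_n}$--part telescopes to $M^{\beta_n}_{\eta_n(\cdot)}$, so $\mathbb{E}[\sup_{t\le1}|M^{\beta_n}_{\eta_n(t)}|^2]\le 4c_n\to0$, while the higher--jump part is a martingale whose summed conditional second moments are $\le nm\cdot\tfrac{\lambda_{n,m}^2}{2}\cdot\tfrac{\int|x|^2F(dx)}{\theta(\beta_n)}=\tfrac12\lambda_{n,m}\int|x|^2F(dx)\to0$. The drift cross--terms in $\Gamma^n(2)$ vanish identically in case~\ref{2} (there $d_n=0$) and in case~\ref{4} (there $d_n=b$) are $O(1/\log n)$; $\Gamma^n(3)$ and $\Gamma^n(4)$, after expanding $dY=dA^{\beta_n}+dM^{\beta_n}+dN^{\beta_n}$ and using \eqref{h3} to kill the $dA^{\beta_n}$--cross pieces and to compensate $dN^{\beta_n}$, are martingale integrals against $dM^{\beta_n}$ with predictable brackets $O(u_{n,m}^2 c_n^2/n)$ and $O(u_{n,m}^2 c_n/n)$, both $\to0$; and $\Gamma^n(5)$ (with $k$ bounded) is controlled through $\mathbb{E}[(Y_{\eta_{nm}(s-)}-Y_{\eta_n(s-)})^p]\le C/n$ for $p\in\{2,4\}$, giving again an $O(u_{n,m}^2 c_n/n)=O(1/\log n)$ bound. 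In every case Doob's inequality yields the required locally uniform convergence in probability.

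\medskip
\noindent\textbf{The main obstacle: $\Gamma^n(1,2)$.}
Here an $L^2$ (even $L^1$) estimate is hopeless, because the companion main term $\mathcal M^{n,m}=\Gamma^n(1,1)$ converges --- this is the substance of Theorem~\ref{thm:main}(a) --- to $Z=\int ff'(X_{s-})\,dV_s$ with $V$ symmetric $\alpha$--stable (Cauchy when $\alpha=1$; see \eqref{eq:1.10}), an infinite--variance process, and indeed the naive bound already diverges, $\mathbb{E}[(\mathcal M^{n,m}_1)^2]\asymp u_{n,m}^2\big(\int|x|^2F(dx)\big)^2/n\to\infty$, and so does $\mathbb{E}[|\Gamma^n(1,2)|]$. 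The smallness of $\Gamma^n(1,2)$ is purely structural: each of its three blocks carries a factor $\sum_{h\ge2}\Delta Y_{T^{\beta_n}_h(t_i^j)}$, so it requires at least two jumps of size $>\beta_n$ in the \emph{same} subinterval, an event of probability $\asymp\lambda_{n,m}^2$ --- one power of $\lambda_{n,m}\to0$ more than for the main term. I would therefore write $\Gamma^n(1,2)=\sum_i W_i^n$, with $W_i^n$ the contribution of coarse interval $i$: $W_i^n$ is $\mathcal F_{t_{i+1}^1}$--measurable, and since first jumps and higher--jump sums are conditionally symmetric about $0$ given $\mathcal F_{t_i^1}$ by \eqref{h3}, each $W_i^n$ is conditionally centred and symmetric, so $(\sum_{i\le j}W_i^n)_j$ is a martingale. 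I would then check the classical sufficient conditions for a null array to converge to zero in probability,
\begin{align*}
\sum_{i=1}^{[nt]}\mathbb{P}\big(|W_i^n|>\varepsilon\,\big|\,\mathcal F_{t_i^1}\big)\ \stackrel{\mathds{P}}{\longrightarrow}\ 0,\qquad
\sum_{i=1}^{[nt]}\mathbb{E}\big[(W_i^n\1_{\{|W_i^n|\le1\}})^2\,\big|\,\mathcal F_{t_i^1}\big]\ \stackrel{\mathds{P}}{\longrightarrow}\ 0,\qquad \forall t,
\end{align*}
the truncated--mean term being $0$ by symmetry. For the first condition I would truncate the jumps at a fixed level $\varepsilon'$: jumps exceeding $\varepsilon'$ form a Poisson process of finite rate $\theta(\varepsilon')$, hence with probability $\to1$ no coarse interval carries two of them, on that event every ``double--jump'' subinterval involves at most one jump larger than $\varepsilon'$, and then $\|ff'\|_\infty<\infty$, $|\Delta Y|\le p$, $u_{n,m}\asymp\beta_n^{-1}$ and the $\lambda_{n,m}^2$--smallness give $n\,\mathbb{P}(|W_1^n|>\varepsilon)=o(1)$. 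For the second condition the decisive point is that truncating $W_i^n$ at $1$ truncates the relevant \emph{product} of two jumps at level $\asymp u_{n,m}^{-1}\asymp\beta_n$; since a product of two jumps, each $\ge\beta_n$, exceeds $\beta_n$ only when one of them is atypically large, the truncated second moment gains several extra powers of $\beta_n$, and one computes $n\,\mathbb{E}[(W_1^n\1_{\{|W_1^n|\le1\}})^2]\asymp \beta_n^{-2\alpha}\log(1/\beta_n)/(n^2 m)=O(1/\log n)\to0$, whereas the un--truncated moment diverges. Granted these, $\sum_{i\le[nt]}W_i^n\to0$ in probability (locally uniformly in $t$), and on the event $\{\max_i|W_i^n|\le1\}$ (probability $\to1$ by the first condition) $\sup_t|\sum_{i\le[nt]}W_i^n|=\sup_t|\sum_{i\le[nt]}W_i^n\1_{\{|W_i^n|\le1\}}|$, so Doob's $L^2$ inequality on the truncated martingale combined with the second condition upgrades convergence to the uniform (in $t$) mode.

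\medskip
\noindent\textbf{Where the difficulty lies.}
All the terms except $\Gamma^n(1,2)$ need only bookkeeping of the estimates of Section~\ref{subsec:not} and of the symmetry \eqref{h3}. The genuinely delicate point --- and the one absent from the single--array analysis of Jacod~\cite{a} --- is $\Gamma^n(1,2)$: one must not try to bound it by a moment, must extract its smallness solely from the ``two jumps in one of the $m$ subintervals'' structure, tame the resulting heavy tails by truncation, and keep the logarithmic factors hidden in $u_{n,m}$, $\beta_n$ and $\theta(\beta_n)$ under control so that all the estimates close.
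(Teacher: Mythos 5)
Your overall strategy coincides with the paper's: term-by-term conditional (truncated) moment and tail estimates, concluded by the null-array criteria (the paper's \eqref{eq:2.7}--\eqref{eq:2.9} together with Lemma \ref{lem:tightrest}), with the symmetry \eqref{h3} supplying the centring. You also correctly single out $\Gamma^n(1,2)$ as the term where plain moments diverge and a truncation-plus-tail analysis exploiting the ``at least two jumps of size $>\beta_n$ in one subinterval'' structure is needed, and the orders you quote there (truncated second moments and tail sums of order $1/\log n$) agree with the paper's computations.

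There is, however, a genuine gap in your treatment of $\Gamma^n(5)$. You claim it is controlled by $\mathbb{E}[(Y_{\eta_{nm}(s-)}-Y_{\eta_n(s-)})^p]\le C/n$, $p\in\{2,4\}$, giving an $O(u_{n,m}^2c_n/n)$ bound; this fails for the pure big-jump component $u_{n,m}(N^{\beta_n}_{t_i^k}-N^{\beta_n}_{t^1_i})^2\,|N^{\beta_n}_{t^{k+1}_i}-N^{\beta_n}_{t_i^k}|$ (the paper's $\zeta^n_{i,k}(5,8)$). By independence of the two increments its conditional first moment is of order $u_{n,m}\cdot\tfrac1n\cdot\tfrac{\delta_n}{n}$, so the sum over $i\le n$ is of order $u_{n,m}\delta_n/n$: in case \ref{2} ($\alpha<1$, $\delta_n$ bounded, $u_{n,m}=(nm/((m-1)\log n))^{1/\alpha}\gg n$) this diverges like $n^{1/\alpha-1}(\log n)^{-1/\alpha}$, and in case \ref{4} ($\alpha=1$, $\delta_n\asymp\log n$, $u_{n,m}\asymp n/\log n$) it is only $O(1)$, not $o(1)$. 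This cubic-in-$N$ term has exactly the same pathology you diagnose for $\Gamma^n(1,2)$ --- it lives on the event of at least two large jumps in a coarse interval and has heavy tails after normalisation --- and must be handled by the same machinery: the paper reduces it via Jensen to products over pairs of subintervals and applies \eqref{eq:2.9}, obtaining truncated moments and tails of order $u_{n,m}^{\alpha/2}\beta_n^{-\alpha/2}/n^2=Cu_{n,m}^{\alpha}/n^2=C/(n\log n)$ per term. As written, your proposal does not close on this term. A secondary remark: for the tail condition on $\Gamma^n(1,2)$, the device ``truncate at a fixed $\varepsilon'$ and note that no coarse interval carries two $\varepsilon'$-large jumps'' is not by itself sufficient, since $nm\lambda_{n,m}^2\asymp n/(\log n)^2\to\infty$; one still needs the quantitative estimate $\int_{|x|>\beta_n}\theta\bigl(y/(u_{n,m}|x|)\bigr)F(dx)\le Cu_{n,m}^{\alpha}\rho_n y^{-\alpha}$ to extract the missing factor $1/\log n$, which is what the paper computes. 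Your stated conclusion for that term is nonetheless correct.
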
 
\begin{lemma}\label{lem:tight2}
	For the cases \ref{2} and \ref{4},   the sequences  $(\overline{Y}^n(2))_{n\ge0}$ and $({\mathcal{M}}^{n,m})_{n\ge0}$ are tight.  
\end{lemma}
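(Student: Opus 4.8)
The plan is to follow the same two‑part scheme as in the proof of Lemma~\ref{lem:tight1}: treat the sequences $(\overline{Y}^{n}(2))_{n\ge0}$ and $(\mathcal{M}^{n,m})_{n\ge0}$ separately, and in each case reduce the statement to the triangular–array tightness criteria of Lemma~\ref{lem:tightrest}, i.e.\ to checking \eqref{eq:2.8} and \eqref{eq:2.11} for the array defining $\overline{Y}^{n}(2)$ and \eqref{eq:2.7} for the one defining $\mathcal{M}^{n,m}$. Throughout write $\theta_n:=\theta(\beta_n)$.

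For $\overline{Y}^{n}(2)$ the argument is identical to case~\ref{1}. Writing $\overline{Y}^{n}_t(2)=\sum_{i=1}^{[nt]}y^n_i(2)$ as in \eqref{Y}, the conditional bounds \eqref{eq:y2a} and \eqref{eq:y2b} hold verbatim, so it only remains to see that their right‑hand sides are small enough in cases~\ref{2} and \ref{4}. By \eqref{h3} the measure $F$ is symmetric, hence $d'_n=\int_{|x|>\beta_n}x\,F(dx)=0$, $b'=b$, and $d_n=b'-d'_n=b$ (with $d_n=0$ in case~\ref{2}, where in addition $b=0$ by \eqref{h4}); in particular $|d_n|$ and $|d'_n|$ are bounded. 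Together with $\lambda_{n,m}\to0$ this makes $y^n_i(2)$ satisfy \eqref{eq:2.8} and \eqref{eq:2.11}, and the second part of Lemma~\ref{lem:tightrest} gives the tightness of $(\overline{Y}^{n}(2))_{n\ge0}$.

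For the main term, recall from \eqref{eq:gamma1} and \eqref{eq:MR2} that $\mathcal{M}^{n,m}_t=\Gamma^{n}_t(1,1)=\sum_{i=1}^{[nt]}\zeta^n_i$, where
\[\zeta^n_i=u_{n,m}\,ff'(X^n_{t^1_i})\sum_{1\le j<k\le m}\xi^n_{i,j}\xi^n_{i,k},\qquad \xi^n_{i,k}:=\Delta Y_{T^{\beta_n}_1(t_i^k)}\1_{\{K(t_i^k)\geq1\}}.\]
By Property~\ref{I}, conditionally on $\mathcal{F}_{t^1_i}$ the variables $\xi^n_{i,1},\dots,\xi^n_{i,m}$ are i.i.d., each $0$ with probability $e^{-\lambda_{n,m}}$ and otherwise distributed as $\theta_n^{-1}\1_{\{|x|>\beta_n\}}F(dx)$, so that $\mathbb{E}(|\xi^n_{i,k}|^q\mid\mathcal{F}_{t^1_i})=(1-e^{-\lambda_{n,m}})\theta_n^{-1}\int_{|x|>\beta_n}|x|^q\,F(dx)$; since $ff'$ is bounded there is no need to freeze $X^n_{t^1_i}$ at this stage. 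Using \eqref{h3}, $\mathbb{E}(\xi^n_{i,k}\mid\mathcal{F}_{t^1_i})=(1-e^{-\lambda_{n,m}})\theta_n^{-1}\int_{|x|>\beta_n}x\,F(dx)=0$, hence $\mathbb{E}(\zeta^n_i\mid\mathcal{F}_{t^1_i})=0$. One then verifies \eqref{eq:2.7} by estimating, summed over $i\le[nt]$, the conditional tails $\mathbb{P}(|\zeta^n_i|>\varepsilon\mid\mathcal{F}_{t^1_i})$, the truncated means $|\mathbb{E}(\zeta^n_i\1_{\{|\zeta^n_i|>1\}}\mid\mathcal{F}_{t^1_i})|$, and the truncated second moments $\mathbb{E}((\zeta^n_i)^2\1_{\{|\zeta^n_i|\le1\}}\mid\mathcal{F}_{t^1_i})$. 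The inputs are the moment identity above, the estimates of Lemma~\ref{lem:estimation2} ($\theta_n\sim\theta\beta_n^{-\alpha}$; $\delta_n\le C$, $\rho_n\le C\log(1/\beta_n)$ in case~\ref{2}; $\delta_n,\rho_n\le C\log(1/\beta_n)$ in case~\ref{4}), and the calibrations $\beta_n=(\log n/n)^{1/\alpha}$, $u_{n,m}=(m/(m-1))^{1/\alpha}\beta_n^{-1}$ (case~\ref{2}) resp.\ $\beta_n=\log n/n$, $u_{n,m}=(m/(m-1))\beta_n^{-1}$ (case~\ref{4}). The structural point is that $\zeta^n_i$ vanishes unless at least two of the $m$ subintervals of block $i$ carry a $\beta_n$-jump — an event of $\mathcal{F}_{t^1_i}$-probability of order $(m\lambda_{n,m})^2=(\theta_n/n)^2$ — and that on it $|\zeta^n_i|$ is of order $u_{n,m}$ times the product of two jumps of typical size $\beta_n$; the above $\beta_n,u_{n,m}$ are chosen precisely so that, after multiplying by the number $[nt]\le n$ of blocks and absorbing the $\log$-corrections, each of the three sums in \eqref{eq:2.7} stays bounded, whence Lemma~\ref{lem:tightrest} gives the tightness of $(\mathcal{M}^{n,m})_{n\ge0}$.

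The delicate step is exactly this last balancing. Because the limit $V$ in Theorem~\ref{thm:main}(a) is an $\alpha$-stable process and has no finite second moment (nor first moment, since $\alpha\le1$), the \emph{untruncated} sums $\sum_{i\le[nt]}\mathbb{E}((\zeta^n_i)^2\mid\mathcal{F}_{t^1_i})$ and $\sum_{i\le[nt]}\mathbb{E}(|\zeta^n_i|\mid\mathcal{F}_{t^1_i})$ diverge, so the truncation in \eqref{eq:2.7} is indispensable and one must track the logarithmic factors produced by $\theta_n$, $\delta_n$ and $\rho_n$ so that they cancel the powers of $n$ coming from $u_{n,m}$. This is also where \eqref{h3} is used essentially, both to kill the conditional mean of the $\xi^n_{i,k}$ (hence of $\zeta^n_i$) and to avoid, at the level of tightness, a genuinely $m$-dimensional analysis of the joint law of the blocks.
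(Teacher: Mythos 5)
Your plan matches the paper's proof in all structural respects: the same treatment of $(\overline{Y}^n(2))_{n\ge0}$ via \eqref{eq:y2a}--\eqref{eq:y2b} and the symmetry \eqref{h3}, and the same reduction of $\mathcal{M}^{n,m}=\Gamma^n(1,1)$ to the triangular array of products of first jumps $\xi^n_{i,j}\xi^n_{i,k}$ over pairs $j<k$ (the paper simply treats each fixed pair $(j,k)$ separately, which is equivalent). Your structural observations — vanishing truncated conditional mean by \eqref{h3}, and the indispensability of truncation because the stable limit has no first or second moment — are also correct. Two repairs are needed, however. First, you consistently cite criterion \eqref{eq:2.7} for the main term while actually describing the three conditions of \eqref{eq:2.9} (truncated mean, truncated second moment, conditional tail). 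This is not merely a label slip: \eqref{eq:2.7} is the \emph{untruncated} conditional first-moment bound, and by your own (correct) remark the sum $\sum_{i\le n}\mathbb{E}(|\zeta^n_i|\,|\,\mathcal{F}_{t^1_i})$ diverges here, so \eqref{eq:2.7} cannot hold with admissible constants; the argument must go through \eqref{eq:2.9} together with \eqref{eq:2.11}.

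Second, the decisive content of the lemma is exactly the computation you leave as ``one then verifies.'' The paper bounds the truncated second moment by $C u_{n,m}^2 n^{-2}\int_{|x|>\beta_n}x^2\,c\bigl(1/(|ff'|u_{n,m}|x|)\bigr)F(dx)\le C u_{n,m}^{\alpha}\rho_n/n^2$ using $c(\beta)\le C\beta^{2-\alpha}$, and the conditional tail by $C n^{-2}\int_{|x|>\beta_n}\theta\bigl(y/(|ff'|u_{n,m}|x|)\bigr)F(dx)\le C u_{n,m}^{\alpha}\rho_n/(n^2 y^{\alpha})$ using \eqref{h1}. Your heuristic that $|\zeta^n_i|$ is ``of order $u_{n,m}$ times the product of two jumps of typical size $\beta_n$'' does not substitute for these integral estimates: the relevant contribution comes from the entire heavy tail of $F$ above $\beta_n$, and it is precisely the resulting factor $\rho_n\le C\log(1/\beta_n)$, combined with $u_{n,m}^{\alpha}=\tfrac{nm}{(m-1)\log n}$, that makes $n$ times each bound stay bounded so that \eqref{eq:2.11} applies and Lemma \ref{lem:tightrest} yields tightness. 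With the criterion corrected to \eqref{eq:2.9} and these two estimates actually carried out, your proof coincides with the paper's.
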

\begin{proof}
	First,  we consider  $\overline{Y}^n_t(2)$ given by \eqref{Y}. In these cases \ref{2} and \ref{4}, thanks to assumption \eqref{h3}, we have $d'_n=0$ and $|d_n|\le C$. Then from \eqref{eq:y2a} and \eqref{eq:y2b}, $y^n_i(2)$ satisfies \eqref{eq:2.8} ensuring the tightness of $(\overline{Y}^n(2))_{n\ge0}$ from the second part of Lemma \ref{lem:tightrest}.
		 Now, we rewrite that ${\Gamma}_t^{n}(1,1)=\sum_{k=1}^m\sum_{j=1}^{k-1}\sum_{i=1}^{[nt]}\zeta_{i,k,j}^n$ with
	\begin{displaymath}
		\zeta_{i,k,j}^n=u_{n,m}ff'(X_{t_i^1}^n)\Delta Y_{T^{\beta_n}_1(t_i^k)}\1_{\{K(t_i^k)\geq1\}}\Delta Y_{T^{\beta_n}_1(t^{j}_i)}\1_{\{K(t^{j}_i)\geq1\}}.
	\end{displaymath}
	For $k\in\{1,\dots,m\}$ and $j\in\{1,\dots,k-1\}$ fixed, by using \eqref{h3}, Property \ref{I}, $ff'$ is bounded,  the inequality $1-e^{-\lambda_{n,m}}\le \lambda_{n,m}$, $c(\beta)=\int_{|x|\le\beta}x^2F(dx)$ where $c(\beta)\le C\beta^{2-\alpha}$ (see \eqref{eq:3.3}) and $\theta(\beta)\le C\beta^{-\alpha}$ (see \eqref{h1}), we get  
	\begin{align*}
	\left\{\begin{array}{l}
		\mathbb{E}(\zeta_{i,k,j}^n\1_{|\zeta_{i,k,j}^n|\le1}|\mathcal{F}_{t^{1}_i})=ff'(X_{t_i^1}^n) u_{n,m}\frac{(1-e^{-\lambda_{n,m}})^2}{n^2m^2\lambda_{n,m}^2}\int_{|x|>\beta_n}\int_{\frac{1}{|ff'(X_{t_i^1}^n)| u_{n,m}|x|}\ge|y|>\beta_n}xyF(dx)F(dy)=0,\\
		\mathbb{E}(|\zeta_{i,k,j}^n|^2\1_{|\zeta_{i,k,j}^n|\le1}|\mathcal{F}_{t^{1}_i})\le Cu_{n,m}^2\frac{(1-e^{-\lambda_{n,m}})^2}{n^2\lambda_{n,m}^2}\int_{|x|>\beta_n}x^2c(\frac{1}{|ff'(X_{t_i^1}^n)|u_{n,m}|x|})F(dx)\le C\frac{u_{n,m}^{\alpha}\rho_n}{n^2},\\
	\mathbb{P}(|\zeta_{i,k,j}^n|>y|\mathcal{F}_{t^{1}_i})\le C\frac{(1-e^{-\lambda_{n,m}})^2}{n^2\lambda_{n,m}^2}\int_{|x|>\beta_n}\theta(\frac{y}{|ff'(X_{t_i^1}^n)|u_{n,m}|x|})F(dx)\le C\frac{u_{n,m}^{\alpha}\rho_n}{n^2y^{\alpha}} ,\quad \forall y>1\end{array}\right..
	\end{align*}
	Then, we conclude the tightness of $({\mathcal{M}}^{n,m})_{n\ge0}$ by  $\rho_n\le C\log{1/\beta_{n}}$ (see \eqref{eq:3.13}), the choice $u_{n,m}=\left[\frac{nm}{(m-1)\log{n}}\right]^{1/\alpha}$ with $\alpha\le1$, criteria \eqref{eq:2.9} and Lemma \ref{lem:tightrest}.
	\end{proof}
	
	\begin{theorem}\label{thm:noise24}
		For cases \ref{2} and \ref{4},  we have
		$$(\overline{Y}^{n},{\mathcal{M}}^{n,m})\stackrel{\mathcal{L}}{\longrightarrow} (Y,Z),$$ where $Z$ is the limit process given in $\eqref{eq:1.10}$.
	\end{theorem}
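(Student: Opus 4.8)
Here is a plan, modeled on the proof of Theorem~\ref{thm:noise1}.

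First I would use Lemma~\ref{lem:rest2} to replace $\overline{Y}^n$ by $\overline{Y}^n(2)$ and to discard $\mathcal{R}^{n,m}$, which reduces the claim to the convergence of $(\overline{Y}^n(2),\mathcal{M}^{n,m})$ with $\mathcal{M}^{n,m}=\Gamma^n(1,1)$ as in~\eqref{eq:gamma1}--\eqref{eq:MR2}. By Lemma~\ref{lem:cv1tot} it then suffices to prove the convergence of the time-$1$ pair $(\overline{Y}^n_1(2),\Gamma^n_1(1,1))$, and following Jacod~\cite{a} I would first treat the associated triangular arrays with the component $X^n_{t_i^1}$ frozen at an arbitrary $z\in\R$, the Lipschitz continuity of $ff'$ and \cite[Theorem~1.2(d)]{a} taking care of the unfreezing at the end. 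Writing $\xi^n_{i,k}:=\Delta Y_{T^{\beta_n}_1(t_i^k)}\1_{\{K(t_i^k)\ge1\}}$ and $\zeta^n_{i,j,k}:=u_{n,m}\,ff'(z)\,\xi^n_{i,j}\xi^n_{i,k}$, this leaves the joint convergence in law of the $1+\binom{m}{2}$ triangular arrays $\sum_i y^n_i(2)$ and $\big(\sum_i\zeta^n_{i,j,k}\big)_{1\le j<k\le m}$, since $\Gamma^n_1(1,1)=\sum_{1\le j<k\le m}\sum_i\zeta^n_{i,j,k}$; tightness is provided by Lemma~\ref{lem:tight2}, so it is enough to identify the limit along an arbitrary weakly convergent subsequence.

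Next I would follow the identification strategy of Theorem~\ref{thm:noise1}: the joint limit of these $1+\binom{m}{2}$ arrays is infinitely divisible, and by Sato~\cite[Ex.~12.8--12.10]{d} it has independent components as soon as all of its two-dimensional projections do; hence for every pair of components I would compute $\lim_n n\,\mathbb{E}\big(h(\cdot,\cdot)\mid\mathcal{F}_{t_i^1}\big)$ for the usual families of test functions (vanishing near $0$; equal near $0$ to $x$ or $y$; equal near $0$ to $x^2$, $xy$ or $y^2$) and conclude by Lemma~\ref{lem:cv}. The inputs are that, conditionally on $\mathcal{F}_{t_i^1}$, the variables $\xi^n_{i,1},\dots,\xi^n_{i,m}$ are i.i.d., each equal to $0$ with probability $e^{-\lambda_{n,m}}$ and otherwise of density $\theta(\beta_n)^{-1}\1_{\{|x|>\beta_n\}}F(dx)$; for a single array $\sum_i\zeta^n_{i,j,k}$ this makes the computation exact, $n\,\mathbb{E}\big(g(\zeta^n_{i,j,k})\mid\mathcal{F}_{t_i^1}\big)=n\,(1-e^{-\lambda_{n,m}})^2\,\mathbb{E}\big[g(u_{n,m}ff'(z)\,\xi\xi')\big]$ with $\xi,\xi'$ i.i.d.\ of that density.

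The heart of the matter, and the point where the present cases genuinely differ from (C1), is then a dichotomy governing these bivariate limits. On the one hand, a rescaled product $u_{n,m}ff'(z)\,\xi\xi'$ is of order one only if $|\xi\xi'|$ is of order $u_{n,m}^{-1}$ (note $u_{n,m}\beta_n\to(m/(m-1))^{1/\alpha}$, so $u_{n,m}^{-1}$ and $\beta_n$ are comparable); using the Lévy-measure estimates of Lemmas~\ref{lem:estimation1}--\ref{lem:estimation2} (in particular $\rho(\beta)\le C\log(1/\beta)$ and the equivalences under~\eqref{h2}), the configurations in which one of the two jumps is of order one — which are the only ones that could make $\sum_i y^n_i(2)$, or a second rescaled product, simultaneously non-negligible — carry total mass of order $\theta(\beta_n)/n\to0$ and vanish, while in the surviving configurations both jumps are infinitesimal. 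Hence all the relevant two-dimensional Lévy measures are carried by the union of the coordinate axes, so that the limit of $\mathcal{M}^{n,m}$ is independent of $Y$ and the $\binom{m}{2}$ product-limits are mutually independent. On the other hand, the surviving configurations give a heavy-tailed convolution in which the logarithmic factor produced by the product of two $\alpha$-regularly-varying laws is exactly absorbed by the $\log n$ of $u_{n,m}=[mn/((m-1)\log n)]^{1/\alpha}$, via $u_{n,m}^\alpha\log(1/\beta_n)\sim mn/(\alpha(m-1))$; I would then compute that each fixed product-array converges to a symmetric $\alpha$-stable Lévy process with Lévy measure $\tfrac{\alpha\theta^2}{2m(m-1)}\,|ff'(z)|^\alpha|v|^{-1-\alpha}\,dv$, so that summing the $\binom{m}{2}$ independent pieces yields Lévy measure $\tfrac{\alpha\theta^2}{4}\,|ff'(z)|^\alpha|v|^{-1-\alpha}\,dv$, exactly the image under $x\mapsto ff'(z)x$ of the Lévy measure of~\eqref{eq:1.10}.

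Finally, the one-jump events contribute $F(dx)$ to the first-coordinate Lévy measure, which recovers $\overline{Y}^n(2)\to Y$; so the frozen pair converges in law to $(Y,ff'(z)V)$ with $V$ the symmetric stable process of~\eqref{eq:1.10}, independent of $Y$. Unfreezing $z=X^n_{t_i^1}$ by continuity in $z$ of the summands and the argument of~\cite[Theorem~1.2(d)]{a}, and then invoking Corollary~\ref{Cor:tight} and Proposition~\ref{thm:final}, gives $(\overline{Y}^n,\mathcal{M}^{n,m})\stackrel{\mathcal L}{\longrightarrow}(Y,Z)$ with $Z_t=\int_0^t f(X_{s-})f'(X_{s-})\,dV_s$. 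I expect the main obstacle to be exactly this dichotomy: one must simultaneously kill the off-axis mass — which is what makes $V$ independent of $Y$, in contrast with case (C1) — and extract the precise stable exponent of~\eqref{eq:1.10}, both of which rest on a delicate estimate for the law of a product of two truncated $\alpha$-regularly-varying jump sizes and on the cancellation of its logarithmic correction against the normalisation $u_{n,m}$, with a fair amount of bookkeeping over which sub-intervals carry a jump and of which size.
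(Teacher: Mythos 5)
Your plan is correct and follows essentially the same route as the paper: reduction to the time-$1$ pair via Lemma \ref{lem:cv1tot}, joint convergence of the family of product arrays $\sum_i u_{n,m}\xi^n_{i,j}\xi^n_{i,k}$ together with the $Y$-array, pairwise independence through Sato's Ex.~12.8--12.10 by showing the bivariate limiting L\'evy measures sit on the coordinate axes (the off-axis mass being $O(\theta(\beta_n)/n)=O(1/\log n)$), and identification of each product limit as a symmetric $\alpha$-stable law with L\'evy measure $\tfrac{\alpha\theta^2}{2m(m-1)}|v|^{-1-\alpha}dv$ via the cancellation $u_{n,m}^{\alpha}\log(1/\beta_n)/n\to m/(\alpha(m-1))$, which sums over the $\binom{m}{2}$ pairs to \eqref{eq:1.10}. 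The only (harmless) deviations are that you freeze $ff'$ at $z$ and unfreeze via \cite[Theorem 1.2(d)]{a}, whereas the paper strips the factor off and reinstates it directly with Lemma \ref{lem:cv1tot} since the dependence is purely multiplicative here, and that you keep $\overline{Y}^n(2)$ as a single array rather than splitting it into its $m$ sub-arrays as the paper does.
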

	\begin{proof}
		Let us first introduce ${\Gamma'}_t^{n}(1,1)=\sum_{k=1}^m\sum_{j=1}^{k-1}\sum_{i=1}^{[nt]}\zeta_{i,j,k}^n(1)$, where for any $1\leq k\leq m$ and $1\leq j\leq k-1$, 
		\begin{displaymath}
		\zeta_{i,j,k}^n(1)=u_{n,m}\Delta Y_{T^{\beta_n}_1(t_i^k)}\1_{\{K(t_i^k)\geq1\}}\Delta Y_{T^{\beta_n}_1(t^{j}_i)}\1_{\{K(t^{j}_i)\geq1\}}.
		\end{displaymath} 
	Since $ff'$ is  Lipschitz-continuous, by virtue of Lemma \ref{lem:cv1tot}, in order to prove the convergence in law of $(\overline{Y}^{n},{\mathcal{M}}^{n,m})$ it suffices to prove that $(\overline{Y}^{n}_1(2),{\Gamma'}_1^{n}(1,1))$ converges in law to $(Y_1,V_1)$ where $V$ is a L\'evy process independent of $Y$ and characterized by $\eqref{eq:1.10}$. Now, let us denote  $$\overline{Y'}^{n}_t(2)=\sum_{i=1}^{[nt]}\sum_{k=1}^m{y'}_{i,k}^n(2), \quad\textrm{where}\quad {y'}_{i,k}^n(2)=\Delta Y_{T^{\beta_n}_1(t_i^k)}\1_{\{K(t_i^k)\geq 1\}}.$$ From the hypothesis \eqref{h3},  $d_n=b$ in this case, then $\overline{Y}^{n}_1(2)-\overline{Y'}^{n}_1(2)=\sum_{i=1}^{n}\sum_{k=1}^m\frac{d_n}{nm}=b$ which allows us to prove instead the couple $(\overline{Y'}^{n}_1(2),{\Gamma'}_1^{n}(1,1))$ converges to the limit process $(Y_1-b,V_1)$ with no drift and no continuous martingale part.		
			To do so, we choose the strategy of  proving the convergence of the $\mathbb{R}^{\frac{m(m-1)}{2}+m}$-vector $$\left(\left(\sum_{i=1}^n{y'}_{i,k}^n(2)\right)_{k\in\{1,\hdots,m\}},\left(\sum_{i=1}^n\zeta_{i,j',k'}^n(1)\right)_{\substack{k',j'\in\{1,\hdots,m\}\\j'<k'}}\right)$$ to the limit vector whose components are pairwise independent L\'evy processes. 
		First, since this $\mathbb{R}^{\frac{m(m-1)}{2}+m}$-vector  is tight thanks to Lemma \ref{lem:tight2}, it is enough to prove that every weakly convergent subsequence has the same limit. In what follows, we omit the notation for the subsequence for more readability. For the independence of the components of the limit vector, by using Ex.12.8-12.10 in \cite{d}, we only need to prove the independence between the limit marginals  of the pairs $((\sum_{i=1}^{n}{y'}_{i,k}^n(2),\sum_{i=1}^{n}{y'}_{i,k'}^n(2));{k\neq k'})$, $(\sum_{i=1}^n{y'}_{i,k}^n(2),\sum_{i=1}^n\zeta_{i,j',k'}^n(1))$ and $((\sum_{i=1}^n\zeta_{i,j,k}^n(1),\sum_{i=1}^n\zeta_{i,j',k'}^n(1));(k,j)\neq (k',j'))$, for any $k,k',j,j'\in\{1,\dots,m\}$, and $j<k$, $j'<k'$, then we obtain the Fourrier transform of the limit vector.
		 	
		$\bullet$ First, by the independent structure of  the subsequence marginals $\sum_{i=1}^{n}({y'}_{i,k}^n(2),{y'}_{i,k'}^n(2))$ for any $k,k'\in\{1,\dots,m\}, k\not =k'$, it is obvious that the limit marginals are i.i.d. 
		 	
		$\bullet$ Second, for fixed $k,k',j'\in\{1,\dots,m\}$ and $1\le j'\le k'-1$ we consider the convergence of the triangular array  $\sum_{i=1}^n({y'}_{i,k}^n(2),\zeta_{i,j',k'}^n(1))$ whose generic terms $(({y'}_{i,k}^n(2),\zeta_{i,j',k'}^n(1)))_{1\leq i\leq n}$ are i.i.d. When $k,k'$ and  $j'$ are different, the independence between the marginals is obvious.  By symmetry of the roles played by $j'$ and $k'$ in $\zeta_{i,j',k'}^n(1)$, it is sufficient to consider the case when $k=k'$ and $j'\neq k'$.  Note that the law of $({y'}_{i,k}^n(2),\zeta_{i,j',k}^n(1))$ does not depend on parameters $k$ and $j'$, then we denote it by $K^1_{n,m}$. We will prove that $nK^1_{n,m}(h)$ converges to $K^1(h)$ as $n$ tends to infinity for some suitable function $h$  where $$K^1(dx,dy)=\frac{1}{m}\delta_0(dy)F(dx)+\frac{\theta^2\alpha}{2m(m-1)}\delta_0(dx)\frac{1}{|y|^{1+\alpha}}dy$$ and $\delta_0$ is Dirac measure sitting at point $0$. Therefore, we have
		\begin{align*}
		nK^1_{n,m}(h)&=n\mathbb{E}(h(\Delta Y_{T^{\beta_n}_1(t_i^k)}\1_{\{K(t_i^k)\geq 1\}},u_{n,m}\Delta Y_{T^{\beta_n}_1(t_i^k)}\1_{\{K(t_i^k)\geq 1\}}\Delta Y_{T^{\beta_n}_1(t^{j'}_i)}\1_{\{K(t^{j'}_i)\geq 1\}})|\mathcal{F}_{t_i^1})	\\
		&=n\mathbb{E}(h(0,0);K(t^{k}_i)=0|\mathcal{F}_{t_i^1})+n\mathbb{E}(h(\Delta Y_{T^{\beta_n}_1(t_i^k)},0);K(t_i^k)\geq 1,K(t^{j'}_i)=0|\mathcal{F}_{t_i^1})\\
		&+n\mathbb{E}(h(\Delta Y_{T^{\beta_n}_1(t_i^k)},u_{n,m}\Delta Y_{T^{\beta_n}_1(t_i^k)}\Delta Y_{T^{\beta_n}_1(t^{j'}_i)});K(t_i^k)\geq 1,K(t^{j'}_i)\geq1|\mathcal{F}_{t_i^1})\\
		&=ne^{-\lambda_{n,m}}h(0,0)+\frac{e^{-\lambda_{n,m}}(1-e^{-\lambda_{n,m}})}{m\lambda_{n,m}}\int_{|x|>\beta_{n}}\hspace{-0.6cm}h(x,0)F(dx)\\&+\frac{(1-e^{-\lambda_{n,m}})^2}{n(m\lambda_{n,m})^2}\int_{|x|>\beta_{n}}\hspace{-0.6cm}F(dx)\int_{|y|>\beta_{n}}\hspace{-0.6cm}h(x,u_{n,m}xy)F(dy).
			\end{align*}
		Three following headings demonstrate three elements in  Lemma \ref{lem:cv}, each corresponding to some specific choices of function $h$.
	
	\paragraph{\textbf{Concerning assertion (i)}:} 
	We choose $h=h_{u,v}$ where $h_{u,v}(x,y)=\1_{\{|x|\ge u,|y|\ge v\}}$ for all $u,v\in\mathbb{R}_+$ such that $(u,v)\neq(0,0)$.
				\begin{enumerate}[a)]
	\item For $u>0$ and $v=0$, 
	\begin{align*}
	nK_{n,m}^1(h_{u,0})=
	\frac{e^{-\lambda_{n,m}}(1-e^{-\lambda_{n,m}})}{m\lambda_{n,m}}\int_{|x|>\beta_{n}}\hspace{-0.5cm}\1_{\{|x|\ge u\}}F(dx)+\frac{(1-e^{-\lambda_{n,m}})^2}{m\lambda_{n,m}}\int_{|x|>\beta_{n}}\hspace{-0.5cm}\1_{\{|x|\ge u\}}F(dx).
				\end{align*}
As soon as $u>\beta_n$, we have 			 
$
	\int_{|x|>\beta_{n}}\1_{\{|x|\ge u\}}F(dx)=\theta(u-),
$
where $\theta(u-)$ denotes the left limit at point $u$ of the decreasing and right-continuous function $\theta(.)$. 
Then we get $	nK_{n,m}^1(h_{u,0})\underset{n\rightarrow\infty}{\longrightarrow}K^1(h_{u,0})=\frac{\theta(u-)}{m}$.
\item 
For $u=0$ and $v>0$, we have
\begin{align*}
	nK_{n,m}^1(h_{0,v})=
	\frac{(1-e^{-\lambda_{n,m}})^2}{nm^2\lambda_{n,m}^2}\int_{|x|>\beta_{n}}\int_{|y|>\beta_{n}}\1_{\{|u_{n,m}xy|\ge v\}}F(dy)F(dx).
\end{align*}
Now, we denote the constant $v_m=v(\frac{m-1}{m})^{1/\alpha}$,  using $u_{n,m}=\left[\frac{mn}{(m-1)\log{n}}\right]^{1/{\alpha}}$, $\beta_{n}=\left(\frac{\log{n}}{n}\right)^{1/{\alpha}}$ and $u_{n,m}\beta_n=(\frac{m-1}{m})^{1/\alpha}$, then as soon as $\beta_n<v_m$, we have 
\begin{multline*}
	\frac{1}{nm^2}\int_{|x|>\beta_{n}}\int_{|y|>\beta_{n}}\1_{\{|u_{n,m}xy|\ge v\}}F(dx)F(dy)\\=\frac{1}{nm^2}\int_{|x|> \beta_n}\theta(\beta_{n}\vee \frac{v_m\beta_n}{|x|}) F(dx)
	=\frac{1}{nm^2}(\theta(\beta_n)\theta(v_m-)+\int_{\beta_{n}<|x|\le  v_m}\theta(\frac{v_m\beta_n}{|x|})F(dx)).
\end{multline*}
 By \eqref{h2}, the first term is equivalent to $\frac{\theta}{nm^2\beta_n^{\alpha}}\theta(v_m-)$ which converges to $0$. 
Considering the second term, let us denote $y_n=\frac{1}{nm^2}\int_{\beta_{n}<|x|\le  v_m}\theta(\frac{v_m\beta_n}{|x|})F(dx)$. Let $\varepsilon>0$, by \eqref{h2}, there exists $\varepsilon'\in(0,v_m)$ such that for $\beta\in(0,\varepsilon')$ we have $|\frac{\beta^{\alpha}\theta(\beta)}{\theta}-1|\le\varepsilon$. Then, we denote $y_n=y_n^{1,\varepsilon'}+y_n^{2,\varepsilon'},$ where $y_n^{1,\varepsilon'}=\frac{1}{nm^2}\int_{\frac{v_m\beta_{n}}{\varepsilon'}<|x|\le  v_m}\theta(\frac{v_m\beta_n}{|x|})F(dx)$ and $y_n^{2,\varepsilon'}=\frac{1}{nm^2}\int_{\beta_n<|x|\le  \frac{v_m\beta_{n}}{\varepsilon'}}\theta(\frac{v_m\beta_n}{|x|})F(dx)$. On the one hand, by the fact that $\theta(.)$ is decreasing and \eqref{h1}, we have 
$
	y_n^{2,\varepsilon'}=\frac{1}{nm^2}\int_{\beta_n<|x|\le  \frac{v_m\beta_{n}}{\varepsilon'}}\theta(\frac{v_m\beta_n}{|x|})F(dx)\le\frac{\theta(\beta_n)\theta(\varepsilon')}{nm^2}\underset{n\rightarrow\infty}{\longrightarrow}0.
$
On the other hand, if we denote ${y'}_n^{1,\varepsilon'}=\frac{\theta}{nm^2v_m^{\alpha}\beta_n^{\alpha}}\int_{\frac{v_m\beta_{n}}{\varepsilon'}<|x|\le  v_m}|x|^{\alpha}F(dx)$, thanks to $\rho_n\sim\alpha\theta\log{1/\beta_n}$ (see \eqref{eq:3.5}) we have ${y'}_n^{1,\varepsilon'}=\frac{\theta}{nm^2v_m^{\alpha}\beta_n^{\alpha}}(\rho(\frac{v_m\beta_{n}}{\varepsilon'})-\rho(v_m-))\sim\frac{\alpha\theta^2}{nm^2v_m^{\alpha}\beta_n^{\alpha}}\log{1/\beta_n}$.
From \eqref{h2}, we have ${y'}_n^{1,\varepsilon'}(1-\varepsilon)\le{y}_n^{1,\varepsilon'}\le{y'}_n^{1,\varepsilon'}(1+\varepsilon)$ and since $\varepsilon$ is arbitrarily small, ${y}_n^{1,\varepsilon'}\sim\frac{\alpha\theta^2}{nm^2v_m^{\alpha}\beta_n^{\alpha}}\log{1/\beta_n}$. Then we get $	nK_{n,m}^1(h_{0,v})\underset{n\rightarrow\infty}{\longrightarrow}K^1(h_{0,v})=\frac{\theta^2}{m(m-1)v^{\alpha}}$. In what follows, we will reused the obtained result
\begin{align}\label{aid1}
	\frac{(1-e^{-\lambda_{n,m}})^2}{nm^2\lambda_{n,m}^2}\int_{|x|>\beta_{n}}\int_{|y|>\beta_{n}}\1_{\{|u_{n,m}xy|\ge v\}}F(dy)F(dx)\underset{n\rightarrow\infty}{\longrightarrow}\frac{\theta^2}{m(m-1)v^{\alpha}}.
\end{align}

\item For $u>0$ and $v>0$, as soon as $u>\beta_n$, by the inequality $1-e^{-\lambda_{n,m}}\le\lambda_{n,m}$, we have 
\begin{multline*}
nK_{n,m}^1(h_{u,v})
=\frac{(1-e^{-\lambda_{n,m}})^2}{nm^2\lambda_{n,m}^2}\int_{|x|\ge u}\int_{|y|>\beta_{n}}\1_{\{|u_{n,m}xy|\ge v\}}F(dy)F(dx)
\le \frac{\theta(u-)\theta(\beta_n)}{nm^2}
\underset{n\rightarrow\infty}{\longrightarrow}0.
\end{multline*}
Then we get $	nK_{n,m}^1(h_{u,v})\underset{n\rightarrow\infty}{\longrightarrow}K^1(h_{u,v})=0$.
\end{enumerate}
\paragraph{\textbf{Concerning assertion (ii)}:} We choose $h={h'},{h''}$ where  ${h'}(x,y)=x\1_{\{x^2+y^2\le1\}}$ and ${h''}(x,y)=y\1_{\{x^2+y^2\le1\}}$.
 Since \eqref{h3} holds, the laws $K_{n,m}^1$ and $K^1$ are invariant under the map $(x,y)\mapsto(-x,y)$ and $(x,y)\mapsto(x,-y)$. Then in this case, we get immediately $nK_{n,m}^1(h)=K^1(h)=0.$ 
 
\paragraph{\textbf{Concerning assertion (iii)}:}
 Here we take $h={h}_{1},{h}_{2},{h}_{3}$ where ${h}_{1}(x,y)=x^2\1_{\{x^2+y^2\le1\}}$, ${h}_{2}(x,y)=xy\1_{\{x^2+y^2\le1\}}$ and ${h}_{3}(x,y)=y^2\1_{\{x^2+y^2\le1\}}$. First of all, as above in (ii), by hypothesis \eqref{h3}, we have $nK_{n,m}^1(h_2)=K^1(h_2)=0$. Now, we consider
\begin{multline*}
nK_{n,m}^1(h_1)
=\frac{e^{-\lambda_{n,m}}(1-e^{-\lambda_{n,m}})}{m\lambda_{n,m}}\int_{\beta_n<|x|\le1}x^2F(dx)\\+\frac{(1-e^{-\lambda_{n,m}})^2}{nm^2\lambda_{n,m}^2}\int_{1\ge|x|>\beta_{n}}x^2F(dx)\int_{|y|>\beta_{n}}\1_{\{|y|\leq \frac{\sqrt{x^{-2}-1}}{u_{n,m}}\}}F(dy)
\end{multline*}
and
\begin{align*}
nK_{n,m}^1(h_3)=\frac{(1-e^{-\lambda_{n,m}})^2}{nm^2\lambda_{n,m}^2}\int_{1\ge|x|>\beta_{n}}\int_{|y|>\beta_{n}}u_{n,m}^2x^2y^2\1_{\{|y|\leq \frac{\sqrt{x^{-2}-1}}{u_{n,m}}\}}F(dx)F(dy).
\end{align*}
Concerning the term $nK_{n,m}^1(h_1)$,  it is clear that the first term   converges to $\frac{1}{m}\int_{|x|\le1}x^2F(dx)$ as $n\rightarrow\infty$ and
as $\int_{\R}x^2F(dx)<\infty$, its second term is bounded by $\frac{C\theta(\beta_n)}{n}$ which goes to $0$ as $n\rightarrow\infty$. Therefore, we get $nK_{n,m}^1(h_1)\underset{n\rightarrow\infty}{\longrightarrow}nK^1(h_1)$. Concerning the term $nK_{n,m}^1(h_3)$, let $a_m=(\frac{m-1}{m})^{1/\alpha}$ and ${a'}_m=\frac{a_m}{\sqrt{a^2_m+1}}$, we have for $n$ large enough $\beta_n\le {a'}_m$ and
\begin{align*}
	nK_{n,m}^1(h_3)&=\frac{(1-e^{-\lambda_{n,m}})^2}{nm^2\lambda_{n,m}^2}\int_{1\ge|x|>{a'}_m}\int_{|y|>\beta_{n}}u_{n,m}^2x^2y^2\1_{\{|y|\leq \frac{\sqrt{1-x^2}a_m\beta_n}{|x|}\}}F(dx)F(dy)\\
	&+\frac{(1-e^{-\lambda_{n,m}})^2}{nm^2\lambda_{n,m}^2}\int_{{a'}_m\ge|x|>\beta_{n}}u_{n,m}^2x^2(c(\frac{\sqrt{1-x^2}a_m\beta_n}{|x|})-c_n)F(dx).
\end{align*}
Using $\lim_{n\rightarrow\infty}\frac{1-e^{\lambda_{n,m}}}{\lambda_{n,m}}=1$ and  $c_{n}\le C\beta_n^{2-\alpha}$, it is easy to check that the first term in the right-hand side is bounded by $C\frac{u_{n,m}^2c_n}{n}$ which converges to $0$ and the term $\frac{u_{n,m}^2}{nm^2}\int_{{a'}_m\ge|x|>\beta_{n}}x^2c_nF(dx)$ converges also to $0$. Hence, we have $
nK_{n,m}^1(h_3)\sim\frac{u_{n,m}^2}{nm^2}\int_{{a'}_m\ge|x|>\beta_{n}}x^2c(\frac{\sqrt{1-x^2}a_m\beta_n}{|x|})F(dx).
$
Now, using  $c(\beta)\sim\frac{\alpha\theta}{2-\alpha}\beta^{2-\alpha}$ for $\beta\rightarrow0$, then for $\varepsilon>0$, there exists $\varepsilon'\in(0,1)$ such that for $\beta\in(0,{a}_m\varepsilon')$ we have $|\frac{(2-\alpha)\beta^{\alpha-2}c(\beta)}{\alpha\theta}-1|\le\varepsilon$ and for $n$ large enough such that $\beta_n< {a'}_m\varepsilon'$,  $\frac{u_{n,m}^2}{nm^2}\int_{{a'}_m\ge|x|>\beta_{n}}x^2c(\frac{\sqrt{1-x^2}a_m\beta_n}{|x|})F(dx)=x_n+y_n$   where
\begin{align*}
x_n=&\frac{u_{n,m}^2}{nm^2}\int_{\beta_{n}<|x|\le\beta_{n}/\varepsilon'}x^2c(\frac{\sqrt{1-x^2}a_m\beta_n}{|x|})F(dx)\\
y_n=&\frac{u_{n,m}^2}{nm^2}\int_{{a'}_m\ge|x|>\beta_{n}/\varepsilon'}x^2c(\frac{\sqrt{1-x^2}a_m\beta_n}{|x|})F(dx).
\end{align*}
On the one hand, for $x_n$, as $c(.)$ is increasing, we use that $c(\frac{\sqrt{1-x^2}a_m\beta_n}{|x|})$ is bounded to deduce an upper bound equal to $\frac{Cu_{n,m}^2c(\beta_{n}/\varepsilon')}{n}$ which converges to $0$. On the other hand, for $y_n$, if we denote ${y'}_n=\frac{u_{n,m}^2a_m^{2-\alpha}\beta_n^{2-\alpha}}{nm^2}\int_{{a'}_m\ge|x|>\beta_{n}/\varepsilon'}|x|^{\alpha}(1-x^2)^{\frac{2-\alpha}{2}}F(dx)$, we have $(1-\varepsilon){y'}_n\le y_n\le(1+\varepsilon){y'}_n$ which gives ${y}_n\sim{y'}_n$ since $\varepsilon$ is arbitrarily small. In what follows, we rewrite ${y'}_n={y'}_n^1+{y'}_n^2$ where
\begin{align*}
	{y'}_n^1=&\frac{\alpha\theta u_{n,m}^2a_m^{2-\alpha}\beta_n^{2-\alpha}}{(2-\alpha)nm^2}\int_{{a'}_m\ge|x|>\beta_{n}/\varepsilon'}|x|^{\alpha}F(dx),\\{y'}_n^2=&\frac{\alpha\theta u_{n,m}^2a_m^{2-\alpha}\beta_n^{2-\alpha}}{(2-\alpha)nm^2}\int_{{a'}_m\ge|x|>\beta_{n}/\varepsilon'}|x|^{\alpha}[(1-x^2)^{\frac{2-\alpha}{2}}-1]F(dx).
\end{align*}	
Then by $\rho_n{\sim}\alpha\theta\log{1/\beta_n}$  (see \eqref{eq:3.5}), $u_{n,m}=\left[\frac{mn}{(m-1)\log{n}}\right]^{1/{\alpha}}$ and  $\beta_{n}=\left(\frac{\log{n}}{n}\right)^{1/{\alpha}}$, we get that $${y'}_n^1\sim\frac{\alpha\theta u_{n,m}^2a_m^{2-\alpha}\beta_n^{2-\alpha}}{(2-\alpha)nm^2}(\rho(\beta_{n}/\varepsilon')-\rho({a'}_m))\sim\frac{\alpha^2\theta^2 u_{n,m}^2a_m^{2-\alpha}\beta_n^{2-\alpha}}{(2-\alpha)nm^2}\log{(\varepsilon'/\beta_{n})}\underset{n\rightarrow\infty}{\longrightarrow}\frac{\alpha\theta^2}{m(m-1)(2-\alpha)}$$
and that ${y'}_n^2\le [(1-{\beta_n}^2)^{\frac{2-\alpha}{2}}-1]{y'}_n^1$ which converges to $0$. Therefore, clearly, $nK_{n,m}^1(h_3)\underset{n\rightarrow\infty}{\longrightarrow}K^1(h_3)=\frac{\alpha\theta^2}{m(m-1)(2-\alpha)}$. Thanks to this proof, in particular we have proved that
\begin{align}\label{aid2}
\frac{(1-e^{-\lambda_{n,m}})^2}{nm^2\lambda_{n,m}^2}\int_{1\ge|x|>\beta_{n}}\int_{|y|>\beta_{n}}u_{n,m}^2x^2y^2\1_{\{|y|\leq \frac{\sqrt{x^{-2}-1}}{u_{n,m}}\}}F(dx)F(dy)\underset{n\rightarrow\infty}{\longrightarrow}\frac{\alpha\theta^2}{m(m-1)(2-\alpha)}.
\end{align}	
  In conclusion, the obtained limit pair has independent  marginals since it has no Gaussian part and its L\'evy measure $K^{1}$ is supported on the union of the coordinate axes (see e.g. \cite[Ex.12.8 ]{d}). 		
  	
		$\bullet$
		Third, for fixed $k,k',j,j'\in\{1,\dots,m\}$, $(j,k)\neq(j',k')$ and $1\le j\le k-1$, $1\le j'\le k'-1$ we consider the convergence of  $\sum_{i=1}^n(\zeta_{i,j,k}^n(1),\zeta_{i,j',k'}^n(1))$ whose variables $(\zeta_{i,j,k}^n(1),\zeta_{i,j',k'}^n(1))$, ${1\leq i\leq n}$ are i.i.d. When $k$, $k'$, $j$ and $j'$ are different, we have straightforward the independence between the marginals of the limit pair. Otherwise, by symmetry of the roles played by $j$ and $k$ and the roles played by  $j'$ and $k'$, it is enough to consider the particular case where $k=k'$ and $j\neq j'$. Note that the law of $(\zeta_{i,j,k}^n(1),\zeta_{i,j',k}^n(1))$ does not depend on parameters $k$ and $j'$,  we denote its law  by $K^2_{n,m}$. We will prove that $nK_{n,m}^2\underset{n\rightarrow\infty}{\longrightarrow}K^2$, where$$K^2(dx,dy)=\frac{\theta^2\alpha}{2m(m-1)}\delta_0(dy)\frac{1}{|x|^{1+\alpha}}dx+\frac{\theta^2\alpha}{2m(m-1)}\delta_0(dx)\frac{1}{|y|^{1+\alpha}}dy$$ and $\delta_0$ is Dirac measure sitting at point $0$. 	Therefore, we have
	\begin{align*}
	&nK^2_{n,m}(h)=n\mathbb{E}((\zeta_{i,j,k}^n(1),\zeta_{i,j',k}^n(1))|\mathcal{F}_{t_i^1})	\\
&=n\mathbb{E}(h(0,0); K(t_i^k)=0|\mathcal{F}_{t_i^1})+n\mathbb{E}(h(0,0);K(t_i^k)\geq 1,K(t^{j}_i)=0,K(t^{j'}_i)=0|\mathcal{F}_{t_i^1})\\
&+n\mathbb{E}(h(0,u_{n,m}\Delta Y_{T^{\beta_n}_1(t_i^k)}\Delta Y_{T^{\beta_n}_1(t^{j'}_i)});K(t_i^k)\geq 1,K(t^{j}_i)=0,K(t^{j'}_i)\geq 1|\mathcal{F}_{t_i^1})\\
&+n\mathbb{E}(h(u_{n,m}\Delta Y_{T^{\beta_n}_1(t_i^k)}\Delta Y_{T^{\beta_n}_1(t^{j}_i)},0);K(t_i^k)\geq 1,K(t^{j}_i)\geq 1,K(t^{j'}_i)=0|\mathcal{F}_{t_i^1})\\
&+n\mathbb{E}(h(u_{n,m}\Delta Y_{T^{\beta_n}_1(t_i^k)}\Delta Y_{T^{\beta_n}_1(t^{j}_i)},u_{n,m}\Delta Y_{T^{\beta_n}_1(t_i^k)}\Delta Y_{T^{\beta_n}_1(t^{j'}_i)});K(t_i^k)\geq 1,K(t^{j}_i)\geq1,K(t^{j'}_i)\geq 1|\mathcal{F}_{t_i^1})\\
&=ne^{-\lambda_{n,m}}(1+e^{-\lambda_{n,m}}(1-e^{-\lambda_{n,m}}))h(0,0)\\&+\frac{e^{-\lambda_{n,m}}(1-e^{-\lambda_{n,m}})^2}{nm^2\lambda_{n,m}^2}\int_{|x|>\beta_{n}}\int_{|y|>\beta_{n}}h(0,u_{n,m}xy)F(dx)F(dy)\\
&+\frac{e^{-\lambda_{n,m}}(1-e^{-\lambda_{n,m}})^2}{nm^2\lambda_{n,m}^2}\int_{|x|>\beta_{n}}\int_{|y|>\beta_{n}}h(u_{n,m}xy,0)F(dx)F(dy)\\
&+\frac{(1-e^{-\lambda_{n,m}})^3}{n^2m^3\lambda_{n,m}^3}\int_{|x|>\beta_{n}}\int_{|y|>\beta_{n}}\int_{|z|>\beta_{n}}h(u_{n,m}xy,u_{n,m}xz)F(dx)F(dy)F(dz).
\end{align*}
Now, we verify the three elements in  Lemma \ref{lem:cv} with suitable choices of function $h$.

\paragraph{\textbf{Concerning assertion (i)}:}
We choose $h=h_{u,v}$ where $h_{u,v}(x,y)=\1_{\{|x|\geq u,|y|\geq v\}}$ for all $u,v\in\mathbb{R}_+$ such that $(u,v)\neq(0,0)$.
\begin{enumerate}[a)]
	\item For $u>0$ and $v=0$, 
		\begin{multline*}
	nK_{n,m}^2(h_{u,0})
	=\frac{e^{-\lambda_{n,m}}(1-e^{-\lambda_{n,m}})^2}{nm^2\lambda_{n,m}^2}\int_{|x|>\beta_{n}}\int_{|y|>\beta_{n}}\1_{\{|u_{n,m}xy|\geq u\}}F(dx)F(dy)\\
+\frac{(1-e^{-\lambda_{n,m}})^3}{nm^2\lambda_{n,m}^2}\int_{|x|>\beta_{n}}\int_{|y|>\beta_{n}}\1_{\{|u_{n,m}xy|\geq u\}}F(dx)F(dy).
	\end{multline*}
By \eqref{aid1}, the first term contributes at the limit and the second term vanishes when $n\rightarrow\infty$. Then we get $	nK_{n,m}^2(h_{u,0})\underset{n\rightarrow\infty}{\longrightarrow}K^2(h_{u,0})=\frac{\theta^2}{m(m-1)u^{\alpha}}$.
\item For $u=0$ and $v>0$, we have $nK_{n,m}^2(h_{0,v})=nK_{n,m}^2(h_{v,0})$. Then, by a) we get that $	nK_{n,m}^2(h_{0,v})\underset{n\rightarrow\infty}{\longrightarrow}K^2(h_{0,v})=\frac{\theta^2}{m(m-1)v^{\alpha}}$.
\item For $u>0$ and $v>0$, 
	\begin{align*}
nK_{n,m}^2(h_{u,v})=\frac{(1-e^{-\lambda_{n,m}})^3}{nm^2\lambda_{n,m}^2}\int_{|x|>\beta_{n}}\int_{|y|>\beta_{n}}\int_{|z|>\beta_{n}}\hspace{-0.5cm}\1_{\{|u_{n,m}xy|\geq u,|u_{n,m}xz|\geq v\}}F(dx)F(dy)F(dz)\\
\le\frac{(1-e^{-\lambda_{n,m}})^3}{nm^2\lambda_{n,m}^2}\int_{|x|>\beta_{n}}\int_{|y|>\beta_{n}}\1_{\{|u_{n,m}xy|\geq u\}}F(dx)F(dy).
\end{align*}
Again, by \eqref{aid1}, we have $nK_{n,m}^2(h_{u,v})
\underset{n\rightarrow\infty}{\longrightarrow}K^2(h_{u,v})
=0$.
	\end{enumerate}
\paragraph{\textbf{Concerning assertion (ii)}:} We choose $h={h'},{h''}$ where  ${h'}(x,y)=x\1_{\{x^2+y^2\le1\}}$ and ${h''}(x,y)=y\1_{\{x^2+y^2\le1\}}$. Similarly as above, by \eqref{h3}, $nK_{n,m}^2(h)=K^2(h)=0$.	

\paragraph{\textbf{Concerning assertion (iii)}:} We choose $h={h}_{1},{h}_{2},{h}_{3}$ where ${h}_{1}(x,y)=x^2\1_{\{x^2+y^2\le1\}}$, ${h}_{2}(x,y)=xy\1_{\{x^2+y^2\le1\}}$ and ${h}_{3}(x,y)=y^2\1_{\{x^2+y^2\le1\}}$. First of all, as above in (ii), by hypothesis \eqref{h3}, we have $nK_{n,m}^2(h_2)=K^2(h_2)=0$. Now, by symmetry, we have  
\begin{multline*}
	nK_{n,m}^1(h_1)=nK_{n,m}^1(h_3)
	=\frac{e^{-\lambda_{n,m}}(1-e^{-\lambda_{n,m}})^2}{nm^2\lambda_{n,m}^2}\int_{|x|>\beta_n}\int_{|y|>\beta_n}\hspace{-0.6cm}u_{n,m}^2x^2y^2\1_{\{|u_{n,m}xy|\le1\}}F(dx)F(dy)\\
	\\+\frac{(1-e^{-\lambda_{n,m}})^3}{n^2m^3\lambda_{n,m}^3}\int_{|x|>\beta_{n}}\int_{|y|>\beta_{n}}\int_{|z|>\beta_{n}}u_{n,m}^2x^2y^2\1_{\{|xy|^2\leq 1/u_{n,m}^2-|xz|^2\}}F(dx)F(dy)F(dz).
\end{multline*}
By similar estimations as in  \eqref{aid2}, we can easily deduce
\begin{align*}
	\frac{1}{nm^2}\int_{|x|>\beta_{n}}\int_{|y|>\beta_{n}}u_{n,m}^2x^2y^2\1_{\{u_{n,m}|xy|\leq 1\}}F(dx)F(dy)\underset{n\rightarrow\infty}{\longrightarrow}\frac{\alpha\theta^2}{m(m-1)(2-\alpha)}
\end{align*}
which gives the limit of the first term  and that the second term vanishes as it is bounded by
$
\frac{(1-e^{-\lambda_{n,m}})^3}{nm^2\lambda_{n,m}^2}\int_{|x|>\beta_{n}}\int_{|y|>\beta_{n}}u_{n,m}^2x^2y^2\1_{\{u_{n,m}|xy|\leq 1\}}F(dx)F(dy)
$ converging to $0$ as $n\rightarrow\infty$. Therefore, $nK_{n,m}^2(h_1)\underset{n\rightarrow\infty}{\longrightarrow}K^2(h_1)$ and $nK_{n,m}^2(h_3)\underset{n\rightarrow\infty}{\longrightarrow}K^2(h_3)$. In conclusion, the obtained limit pair has i.i.d.  marginals since it has no Gaussian part and its L\'evy measure $K^{2}$ is supported on the union of the coordinate axes (see e.g. Ex.12.8 in \cite{d}). 

Overall, by the pairwise independence proven above, we can realize the limit of the vector $(\overline{Y'}^{n}_1(2),{\Gamma'}_1^{n}(1,1))$  as some vector $(\sum_{k=1}^m\overline{Y'}^k_1,\sum_{k=2}^m\sum_{j=1}^{k-1}V^{j,k}_1)$ Lévy process with no drift, no Gaussian part and  where
\begin{align*}
\mathbb{E}(e^{i(u\sum_{k=1}^m\overline{Y'}^k_1+v\sum_{k=2}^m\sum_{j=1}^{k-1}V^{j,k}_1)})=[\mathbb{E}(e^{iu\overline{Y'}^1_1})]^m\times[\mathbb{E}(e^{ivV^{1,2}_1})]^{\frac{m(m-1)}{2}}
\end{align*}  
which is equal to 
$$\exp{\left(\int F(dx)(e^{iux}-1-iux\1_{\{|x|\le1\}})+\frac{\alpha\theta^2}{2m(m-1)}\int\frac{1}{|x|^{1+\alpha}}(e^{ivx}-1-ivx\1_{\{|x|\le1\}})dx\right)}.$$
This completes the proof.
\end{proof}

\subsection{Asymptotic behavior of the couple $(\overline{Y}^{n},u_{n,m}Z^{n,m})$ for case \ref{3}.}
 For the first component $\overline{Y}^n$, we use the same decomposition given by the relation \eqref{Y}. For the second one, we consider the formula of $u_{n,m}Z^{n,m}$ given in \eqref{Znm}. In this case, the second term from this formula does not contribute to the limit. Therefore, we need only to give the analysis for its first term. To do so, we consider the same decomposition given in \eqref{eq:decomp}.
The two last terms in this decomposition do not contribute on the limit. Then we only have to study  $\Gamma_t^n(1)$ and $\Gamma_t^n(2)$. For the first one, we use the same decomposition as cases \ref{2} and \ref{4}, namely we have $\Gamma_t^n(1)=\Gamma_t^n(1,1)+\Gamma_t^n(1,2)$  as given in \eqref{eq:gamma1}.
Now, similarly, we separate $\Gamma^n(2)$ into two terms: the first term that will be the essential term of the limit contains only the first jumps and the drifts and the second term that will be sort in the rest terms contains all the other jumps. Then, we have
	\begin{align*}
	\Gamma_t^n(2)=&u_{n,m}\sum_{i=1}^{[nt]}\sum_{k=2}^m\left[\int_{I(nm,i,k)}\hspace{-0.5cm}ff'(X_{t_{i}^1}^n)(A^{\beta_{n}}_{t_i^k}-A^{\beta_{n}}_{t^{1}_i})dA^{\beta_{n}}_s+\int_{I(nm,i,k)}ff'(X_{t_{i}^1}^n)(N^{\beta_{n}}_{t_i^k}-N^{\beta_{n}}_{t^{1}_i})dA^{\beta_{n}}_s\right.\\
	&\left.+\int_{I(nm,i,k)}ff'(X_{t_{i}^1}^n)(A^{\beta_{n}}_{t_i^k}-A^{\beta_{n}}_{t^{1}_i})dN^{\beta_{n}}_s\right]\\
	=&u_{n,m}\sum_{i=1}^{[nt]}\sum_{k=2}^mff'(X_{t_{i}^1}^n)\left[\frac{d_{n}^2(k-1)}{n^2m^2}+\frac{d_{n}}{nm}\sum_{j=1}^{k-1}(N^{\beta_{n}}_{t^{j+1}_i}-N^{\beta_{n}}_{t^{j}_i})+\frac{d_{n}(k-1)}{nm}(N^{\beta_{n}}_{t^{k+1}_i}-N^{\beta_{n}}_{t_i^k})\right]\\=&\Gamma_t^n(2,1)+\Gamma_t^n(2,2),
\end{align*}
where 
\begin{align*}
	\Gamma_t^n(2,1)=&u_{n,m}\sum_{i=1}^{[nt]}\sum_{k=2}^mff'(X_{t_{i}^1}^n)\left[\frac{d_{n}^2(k-1)}{n^2m^2}+\frac{d_{n}}{nm}\sum_{j=1}^{k-1}\Delta Y_{T^{\beta_n}_1(t^{j}_i)}\1_{\{K(t^{j}_i)\geq1\}}\right.\\&\left.\hspace{8cm}+\frac{d_{n}(k-1)}{nm}\Delta Y_{T^{\beta_n}_1(t_i^k)}\1_{\{K(t_i^k)\geq1\}}\right],\\
	\Gamma_t^n(2,2)=&u_{n,m}\sum_{i=1}^{[nt]}\sum_{k=2}^mff'(X_{t_{i}^1}^n)\left[\frac{d_{n}}{nm}\sum_{j=1}^{k-1}\sum_{h=2}^{K(t_{j}^i)}\Delta Y_{T^{\beta_n}_h(t^{j}_i)}+\frac{d_{n}(k-1)}{nm}\sum_{h=2}^{K(t^{j}_i)}\Delta Y_{T^{\beta_n}_h(t^{j}_i)}\right].
\end{align*}
In this case, $u_{n,m}Z^{n,m}_t=\mathcal{M}^{n,m}_t+\mathcal{R}^{n,m}_t,$ where  
\begin{align}\label{eq:MR3}
\mathcal{M}^{n,m}_t=\Gamma_t^n(1,1)+\Gamma_t^n(2,1)\quad \textrm{and}\quad	\mathcal{R}^{n,m}_t={\Gamma}^n_t(1,2)+{\Gamma}^n_t(2,2)+\sum_{i=3}^5\Gamma^n_t(i)
\end{align}
where $\Gamma^n_t(5)=u_{n,m}\int_0^{\eta_n(t)}k(X_{\eta_n(s-)}^n,Y_{\eta_{nm}(s-)}-Y_{\eta_n(s-)})(Y_{\eta_{nm}(s-)}-Y_{\eta_n(s-)})^2dY_s$.
The proof of the following lemma is postponed to Appendix \ref{app:A}. 
\begin{lemma}\label{lem:rest3}
	For the case \ref{3}, we have  as $n\rightarrow\infty$, the sequence   $(\mathcal{R}^{n,m})_{n\ge0}$  converges uniformly to $0$ in probability.	
\end{lemma}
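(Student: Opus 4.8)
The plan is to bound separately each of the five processes entering the decomposition $\mathcal R^{n,m}=\Gamma^n(1,2)+\Gamma^n(2,2)+\sum_{i=3}^{5}\Gamma^n(i)$ of \eqref{eq:MR3}, to show that each tends to $0$ uniformly in probability, and then to add up the estimates. Throughout, I would use the splitting $Y=A^{\beta_n}+M^{\beta_n}+N^{\beta_n}$ from \eqref{eq:3.7}, treating the pieces driven by $M^{\beta_n}$ as martingales via Doob's maximal inequality together with $\langle M^{\beta_n},M^{\beta_n}\rangle_t=c(\beta_n)t$ and $c_n\le C\beta_n$ (Lemma \ref{lem:estimation1} with $\alpha=1$); the pieces driven by $A^{\beta_n}$ by a direct $L^1$ bound using $|d_n|\le Cs(\beta_n)=C\log(1/\beta_n)$; and the pieces driven by $N^{\beta_n}$ by conditioning through Properties \ref{I}--\ref{M}, that is, using that conditionally on $\mathcal F_{t_i^k}$ the count $K(t_i^k)$ is Poisson with parameter $\lambda_{n,m}=\theta(\beta_n)/(nm)\to0$ and the jumps $\Delta Y_{T^{\beta_n}_h(t_i^k)}$ are i.i.d.\ with law $1/\theta(\beta_n)\,\1_{\{|x|>\beta_n\}}F$. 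All constants are allowed to depend on the fixed integer $m$, and the recurring bookkeeping facts are $\rho_n+|d_n|+|d'_n|+\delta_n\le C\log(1/\beta_n)\le C\log n$, $\int_{\mathbb R}|x|^{a}F(dx)<\infty$ for $a\ge2$ (Remark \ref{rmk:1}), $\lambda_{n,m}\le C/(m\log n)$, and the identity $\frac{u_{n,m}}{n}(\log n)^{2}=\frac{m}{m-1}$, which is exactly what dictates the choices $u_{n,m}=\frac{mn}{(m-1)(\log n)^{2}}$ and $\beta_n=\frac{\log n}{n}$ in case \ref{3}.

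The terms $\Gamma^n(1,2)$ and $\Gamma^n(2,2)$ are the ``higher-order jump'' remainders: each collects the products involving at least one jump $\Delta Y_{T^{\beta_n}_h(\cdot)}$ with $h\ge2$. Conditionally, such a block has $L^1$-norm at most $(\lambda_{n,m}-1+e^{-\lambda_{n,m}})\,\delta_n/\theta(\beta_n)\le\frac{\lambda_{n,m}^{2}\delta_n}{2\theta(\beta_n)}=\frac{\lambda_{n,m}\delta_n}{2nm}$, by $\lambda-1+e^{-\lambda}\le\lambda^{2}/2$; summing the $n\,O(m^{2})$ contributing blocks and factoring out $u_{n,m}$ --- and $|d_n|$ in $\Gamma^n(2,2)$ --- gives $\mathbb E\big(\sup_{t\le1}|\Gamma^n_t(1,2)|\big)\le C\frac{u_{n,m}}{n}\delta_n^{2}\lambda_{n,m}$ and $\mathbb E\big(\sup_{t\le1}|\Gamma^n_t(2,2)|\big)\le C\frac{u_{n,m}}{n}|d_n|\delta_n\lambda_{n,m}$, both $\le C_m\lambda_{n,m}\to0$ by the facts above. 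The terms $\Gamma^n(3)$ and $\Gamma^n(4)$, together with the $A^{\beta_n}$- and $M^{\beta_n}$-components of $\Gamma^n(5)$ --- after writing the latter on the grid as $u_{n,m}\sum_i\sum_{k\ge2}k(X^n_{t_i^1},W_{i,k})W_{i,k}^{2}\,\Delta_{i,k}Y$ with $W_{i,k}=Y_{t_i^k}-Y_{t_i^1}$, and using $|k(\cdot,\cdot)W_{i,k}^{2}|\le CW_{i,k}^{2}$, $\mathbb E(W_{i,k}^{2})\le C(k-1)/(nm)$ and $\mathbb E(W_{i,k}^{4})\le C(k-1)/(nm)$ --- are then handled directly by the martingale/drift scheme of the first paragraph, each giving a bound that vanishes. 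The only slightly delicate piece is the $N^{\beta_n}$-component of $\Gamma^n(5)$, for which I would note that $W_{i,k}^{2}$ is negligible unless a jump of $N^{\beta_n}$ falls in $[t_i^1,t_i^k)$ while $\Delta_{i,k}N^{\beta_n}$ vanishes unless one falls in $I(nm,i,k)$ --- two independent events of probabilities $O((k-1)\lambda_{n,m})$ and $O(\lambda_{n,m})$ --- so the product carries a factor $O(\lambda_{n,m}^{2})$ and an $L^1$ sum-of-moduli bound closes it.

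The architecture is parallel to that of Lemma \ref{lem:rest2} for the cases \ref{2} and \ref{4}, but with one point that prevents a verbatim quotation: there the symmetry \eqref{h3} forces $d'_n=0$ and $d_n=b'$, hence $|d_n|=O(1)$, whereas here, $F$ being non-symmetric, $|d_n|$ and $|d'_n|$ are of exact order $\log(1/\beta_n)\sim\log n$ (Lemmas \ref{lem:estimation1}--\ref{lem:estimation3}); this is offset by taking $u_{n,m}$ smaller by a factor $\log n$ than in case \ref{4}, and because in $\mathcal R^{n,m}$ the drift $d_n$ enters at most linearly --- the quadratic term $d_n^{2}$ having been absorbed into the main part $\Gamma^n(2,1)$ of \eqref{eq:MR3} that produces the limit \eqref{eq:1.9} --- all the rerun estimates close. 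I expect the genuine obstacle to be not any individual bound but the bookkeeping: keeping control of the $m$ mutually interacting sub-intervals $I(nm,i,1),\dots,I(nm,i,m)$ inside each coarse step $[t_i^1,t_{i+1}^1]$, the feature absent from the single triangular array of Jacod \cite{a}, which is dealt with throughout by the conditional independence in Properties \ref{I}--\ref{M}.
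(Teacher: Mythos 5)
Your architecture is the same as the paper's: you take the decomposition $\mathcal R^{n,m}=\Gamma^n(1,2)+\Gamma^n(2,2)+\sum_{i=3}^{5}\Gamma^n(i)$ of \eqref{eq:MR3}, bound each piece by conditional first or second moments of the row-wise i.i.d.\ generic terms, and conclude via the criteria \eqref{eq:2.7}--\eqref{eq:2.8} and Lemma \ref{lem:tightrest}; your bounds for $\Gamma^n(1,2)$, $\Gamma^n(2,2)$, $\Gamma^n(3)$, $\Gamma^n(4)$ and the $A^{\beta_n}$/$M^{\beta_n}$ components of $\Gamma^n(5)$ match those of the appendix (the paper splits $\Gamma^n(5)$ into nine elementary products of increments of $A^{\beta_n},M^{\beta_n},N^{\beta_n}$ rather than working with $W_{i,k}$ directly, but this is cosmetic).

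There is, however, one quantitative gap, and it sits exactly at the step you single out as delicate: the $N^{\beta_n}$--$N^{\beta_n}$ component of $\Gamma^n(5)$, i.e.\ $u_{n,m}(N^{\beta_n}_{t_i^k}-N^{\beta_n}_{t_i^1})^2|N^{\beta_n}_{t_i^{k+1}}-N^{\beta_n}_{t_i^k}|$. You argue that the two independent jump events carry probabilities $O((k-1)\lambda_{n,m})$ and $O(\lambda_{n,m})$, so the product "carries a factor $O(\lambda_{n,m}^2)$". In case \ref{3} one has $\beta_n=\frac{\log n}{n}$, hence $\theta(\beta_n)\sim \theta n/\log n$ and $\lambda_{n,m}\sim C/(m\log n)$, so a per-term bound of order $u_{n,m}\lambda_{n,m}^2$ summed over the $n$ coarse steps gives $n\,u_{n,m}\lambda_{n,m}^2\sim n^2/(\log n)^4\to\infty$: the event-probability count alone does not close the estimate. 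What does close it is not the smallness of $\lambda_{n,m}$ but the smallness of the time increments weighted by moments of $F$: by Lemma \ref{lem:bigj} and $\int_{\mathbb R}x^2F(dx)<\infty$ one has $\mathbb E\big[(N^{\beta_n}_{t_i^k}-N^{\beta_n}_{t_i^1})^2\big]\le C\frac{k-1}{nm}$ and $\mathbb E\big|N^{\beta_n}_{t_i^{k+1}}-N^{\beta_n}_{t_i^k}\big|\le \frac{\delta_n}{nm}$, whence by independence the per-term conditional $L^1$ bound $C u_{n,m}\delta_n/n^2$ of \eqref{eq:gamma5i}, and $u_{n,m}\delta_n/n\le C/\log n\to0$. (Your final clause "an $L^1$ sum-of-moduli bound closes it" is compatible with this, but as written the decisive factor is misattributed to $\lambda_{n,m}^2$.) A second, minor, imprecision: $d_n$ does not enter $\mathcal R^{n,m}$ "at most linearly" --- the terms $\zeta^n_{i,k}(5,1)$ and $\zeta^n_{i,k}(5,3)$ carry $|d_n|^3$ and $|d_n|^2\delta_n$, and $\Gamma^n(3)$ carries $d_n^2$ --- but these come with extra powers of $n^{-1}$ (e.g.\ $u_{n,m}|d_n|^3/n^2\sim\log n/n$), so the conclusion is unaffected.
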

\begin{lemma}\label{lem:tight3}
		For the case \ref{3}, the sequences $(\overline{Y}^n(1))_{n\ge0 }$, $(\overline{Y}^n(2))_{n\ge0}$ and $({\mathcal{M}}^{n,m})_{n\ge0}$ are tight.
\end{lemma}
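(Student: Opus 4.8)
The plan is to verify, for each of the three sequences, the conditional-moment tightness criteria of Lemma~\ref{lem:tightrest} on an appropriate triangular-array decomposition, following the template of Lemma~\ref{lem:tight2}. The feature specific to case~\ref{3} is that $\alpha=1$ and, $F$ being non-symmetric, the quantities $|d_n|$, $|d_n'|$, $\delta_n$ and $\rho_n$ all grow like $\log(1/\beta_n)\sim\log n$ (see \eqref{eq:3.13}), so that conditional first moments which vanished by \eqref{h3} in cases~\ref{2} and~\ref{4} now survive; the crux is that these logarithmic blow-ups are exactly balanced by $\theta(\beta_n)\le Cn/\log n$ and by the extra $(\log n)^2$ in $u_{n,m}$. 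Throughout I would keep at hand the following elementary facts for $\beta_n=\log n/n$ and $u_{n,m}=\frac{mn}{(m-1)(\log n)^2}$ (so $u_{n,m}^\alpha=u_{n,m}$): $\lambda_{n,m}=\theta(\beta_n)/(nm)\le C/(m\log n)\to0$, $u_{n,m}\beta_n=\frac{m}{(m-1)\log n}\to0$, $c_n\le C\beta_n^{2-\alpha}=C\log n/n$, and $\rho_n+\delta_n+|d_n|+|d_n'|\le C\log n$.

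For $\overline Y^n(2)$ I would argue exactly as in the earlier cases: from \eqref{eq:y2a}--\eqref{eq:y2b}, using $\lambda_{n,m}|d_n'|\le C/m$ and $d_n^2/n\le C(\log n)^2/n\to0$, one gets $|\mathbb E(y_i^n(2)\mid\mathcal{F}_{t_i^1})|\le C/n$ and $\mathbb E(|y_i^n(2)|^2\mid\mathcal{F}_{t_i^1})\le C/n$, so $y_i^n(2)$ satisfies \eqref{eq:2.8}--\eqref{eq:2.11} and the second part of Lemma~\ref{lem:tightrest} gives tightness. For $\overline Y^n(1)$ I would split $y_i^n(1)$ into the compensated small-jump part $\sum_{k=1}^m M^{\beta_n}_{t_i^k,t_i^{k+1}}=M^{\beta_n}_{t_{i+1}}-M^{\beta_n}_{t_i}$, a martingale increment with zero $\mathcal{F}_{t_i^1}$-conditional mean and conditional second moment $c_n/n$, plus the multiple-jump part $\sum_{k=1}^m\sum_{j\ge2}\Delta Y_{T^{\beta_n}_j(t_i^k)}\1_{\{K(t_i^k)\ge j\}}$. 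Using Property~\ref{I}, the identity $\sum_{j\ge2}\mathbb P(K(t_i^k)\ge j)=\mathbb E((K(t_i^k)-1)^+)\le\lambda_{n,m}^2/2$ and $\int_{\R}|x|^2F(dx)<\infty$ (Remark~\ref{rmk:1}), one obtains $|\mathbb E(y_i^n(1)\mid\mathcal{F}_{t_i^1})|\le C\,|d_n'|\,\theta(\beta_n)/(n^2m)$ and a bound on $\mathbb E(|y_i^n(1)|^2\mid\mathcal{F}_{t_i^1})$ whose sum over $i$ is $c_n+O(\lambda_{n,m})+o(1)$; inserting the numerology, $\sum_{i=1}^{n}|\mathbb E(y_i^n(1)\mid\mathcal{F}_{t_i^1})|\le C\delta_n\theta(\beta_n)/(nm)\le C/m$ stays bounded — this is exactly why $\overline Y^n(1)$ does not vanish here, unlike cases~\ref{1},~\ref{2},~\ref{4} — while $\sum_{i=1}^{n}\mathbb E(|y_i^n(1)|^2\mid\mathcal{F}_{t_i^1})\to0$, so again \eqref{eq:2.8} holds and Lemma~\ref{lem:tightrest} applies.

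For $\mathcal M^{n,m}=\Gamma^n(1,1)+\Gamma^n(2,1)$ I would treat the two pieces separately through their array forms. For $\Gamma^n(1,1)=\sum_{k=1}^m\sum_{j=1}^{k-1}\sum_{i=1}^{[nt]}\zeta_{i,k,j}^n$, with $\zeta_{i,k,j}^n=u_{n,m}ff'(X^n_{t_i^1})\Delta Y_{T^{\beta_n}_1(t_i^k)}\1_{\{K(t_i^k)\ge1\}}\Delta Y_{T^{\beta_n}_1(t_i^j)}\1_{\{K(t_i^j)\ge1\}}$, I would run the truncation bookkeeping of Lemma~\ref{lem:tight2} (controlling $\1_{\{|\zeta_{i,k,j}^n|\le1\}}$ by the constraint $|y|\le(|ff'|u_{n,m}|x|)^{-1}$, then applying $c(\beta)\le C\beta^{2-\alpha}$ and \eqref{h1}), which yields $\mathbb E(|\zeta_{i,k,j}^n|^2\1_{\{|\zeta_{i,k,j}^n|\le1\}}\mid\mathcal{F}_{t_i^1})\le Cu_{n,m}^\alpha\rho_n/n^2$ and $\mathbb P(|\zeta_{i,k,j}^n|>y\mid\mathcal{F}_{t_i^1})\le Cu_{n,m}^\alpha\rho_n/(n^2y^\alpha)$; the novelty is the conditional mean, no longer zero but bounded by $Cu_{n,m}\delta_n^2/(nm)^2$, so its sum over $i$ is $O(1/m^2)$ while the $\rho_n$-terms sum to $O\big(u_{n,m}\rho_n/n\big)=O\big((m-1)^{-1}(\log n)^{-1}\big)\to0$; hence $\zeta_{i,k,j}^n$ meets \eqref{eq:2.9}. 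For $\Gamma^n(2,1)=\sum_{i=1}^{[nt]}\xi_i^n$, the deterministic part contributes $u_{n,m}ff'(X^n_{t_i^1})\frac{d_n^2(m-1)}{2n^2m}$, whose sum over $i$ is $\frac{d_n^2}{2(\log n)^2}\cdot\frac{m}{m-1}=O(1)$ (the term producing the drift in \eqref{eq:1.9}), while the two jump contributions $\frac{u_{n,m}d_n}{nm}ff'(X^n_{t_i^1})\sum_{j}\Delta Y_{T^{\beta_n}_1(t_i^j)}\1_{\{K(t_i^j)\ge1\}}$ and $\frac{u_{n,m}d_n}{nm}ff'(X^n_{t_i^1})\sum_{k}(k-1)\Delta Y_{T^{\beta_n}_1(t_i^k)}\1_{\{K(t_i^k)\ge1\}}$, after the same truncation bookkeeping together with $|d_n|\le C\log n$, have conditional $L^1$-moments summing to $O\big(u_{n,m}|d_n|\delta_n/n\big)=O(1)$ and conditional second moments summing to $o(1)$; so $\xi_i^n$ satisfies \eqref{eq:2.7} for its finite-variation part and \eqref{eq:2.9} for its jump part. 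By Lemma~\ref{lem:tightrest}, both $(\Gamma^n(1,1))$ and $(\Gamma^n(2,1))$ are tight, hence so is $(\mathcal M^{n,m})$.

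The main obstacle is precisely this bookkeeping together with the non-vanishing of $\overline Y^n(1)$: since $|d_n|,|d_n'|,\delta_n,\rho_n$ now diverge logarithmically, one must match them carefully against $\theta(\beta_n)\sim\theta n/\log n$, $c_n\sim\theta\log n/n$ and the $(\log n)^2$ in $u_{n,m}$ so that every partial sum remains bounded (and the relevant second-moment sums vanish); losing a single factor of $\log n$ anywhere would break tightness.
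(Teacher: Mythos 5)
Your proposal is correct and follows essentially the same route as the paper: decompose each sequence into row-wise triangular arrays, bound the conditional first/second moments using Property \ref{I}, \eqref{eq:3.13} and the case-(C3) numerology $\theta(\beta_n)\le Cn/\log n$, $\delta_n,|d_n|,|d'_n|,\rho_n\le C\log n$, $u_{n,m}=\frac{mn}{(m-1)(\log n)^2}$, and conclude by Lemma \ref{lem:tightrest}. The only (harmless) difference is that for $\Gamma^n(1,1)$ and the jump parts of $\Gamma^n(2,1)$ you invoke the truncated criteria \eqref{eq:2.9} as in Lemma \ref{lem:tight2}, whereas the extra $(\log n)^2$ in $u_{n,m}$ already makes the raw conditional $L^1$ bounds $Cu_{n,m}\delta_n^2/n^2$ and $Cu_{n,m}|d_n|(|d_n|+\delta_n)/n^2$ of order $1/n$, so the paper gets tightness directly from the simpler criterion \eqref{eq:2.7}.
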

\begin{proof}
First, instead of working with  $\overline{Y}^{n}_t(1)=\sum_{i=1}^{[nt]}\sum_{k=1}^m(M^{\beta_n}_{t_i^k,t_i^{k+1}}+\sum_{j\geq2}\Delta Y_{T^{\beta_n}_{ \s j}( t_i^k)}\1_{\{K({\s t_i^k})\geq j\}})$, it is enough to prove that for each $k\in\{1,\dots,m\}$ the triangular arrays with generic terms $y^{n,m}_{i,k}(1,1)=M^{\beta_n}_{t_i^k,t_i^{k+1}}$ and $y^{n,m}_{i,k}(1,2)=\sum_{j\geq2}\Delta Y_{T^{\beta_n}_{ \s j}( t_i^k)}\1_{\{K({\s t_i^k})\geq j\}}$ are tight. 
By property \ref{I}, \eqref{eq:3.13} and Lemma \ref{lem:smallj}, for the first one,  we have
\begin{align*}
	\mathbb{E}(y^{n,m}_{i,k}(1,1)|\mathcal{F}_{t^{1}_i})
	=0,\quad
	\mathbb{E}((y^{n,m}_{i,k}(1,1))^2|\mathcal{F}_{t^{1}_i})=\frac{c_{n}}{nm}.
\end{align*}
and we conclude by using  \eqref{eq:3.13}, $c_n\le C\beta_n^{2-\alpha}$, the criteria \eqref{eq:2.8} and Lemma \ref{lem:tightrest}. For the second one,  we have
\begin{multline}\label{eq:y2case3}
	\mathbb{E}(|y^{n,m}_{i,k}(1,2)||\mathcal{F}_{t^{1}_i})
	\le\frac{1}{\theta(\beta_{n})}\int_{|x|>\beta_n}|x|F(dx)\sum_{j\ge2}\mathbb{P}(K({\s t_i^k})\geq j|\mathcal{F}_{t^{1}_i})\\=\frac{\delta_n}{\theta(\beta_n)}(\mathbb{E}(K({\s t_i^k})|\mathcal{F}_{t^{1}_i})-\mathbb{P}(K({\s t_i^k})\ge1|\mathcal{F}_{t^{1}_i}))=\frac{\delta_n}{\theta(\beta_n)}(\lambda_{n,m}+e^{-\lambda_{n,m}}-1)\le \frac{\delta_n\lambda_{n,m}^2}{\theta(\beta_{n})}= \frac{\delta_n\lambda_{n,m}}{nm}
\end{multline}
and we conclude by using \eqref{eq:3.13}, $\delta_n\le C\log{1/\beta_n}$, $\lambda_{n,m}\le\frac{C}{n\beta_n}$ (see \eqref{h1}), $\beta_n=\frac{\log{n}}{n}$, the criteria \eqref{eq:2.7} and from the second part of Lemma \ref{lem:tightrest}.
Therefore, it is clear that for case \ref{3},  $(\overline{Y}^n(1))_{n\ge0}$ is tight.
Next, considering $\overline{Y}^n(2)$, from \eqref{eq:y2a} and \eqref{eq:y2b}, as $d_n$ and ${d'}_n$ are bounded by $C\log{1/\beta_n}$ from \eqref{eq:3.13}, $\lambda_{n,m}\le\frac{C}{n\beta_n}$ from \eqref{h1}, $\beta_n=\frac{\log{n}}{n}$, $y^n_i(2)$ satisfies \eqref{eq:2.8} ensuring the tightness of $(\overline{Y}^n(2))_{n\ge0}$ from Lemma \ref{lem:tightrest}.
		Finally, we consider $(\mathcal{M}^{n,m})_{n\ge0}$, equivalently, we prove that $(\Gamma^n(1,1))_{n\ge0}$ and $(\Gamma^n(2,1))_{n\ge0}$ are tight. Because $ff'$ is bounded,  for $k\in\{2,\dots,m\}$ and $j<k$ fixed, it is enough to prove that the triangular arrays corresponding to generic terms  $\zeta_{i,k,j}^n(1)$ and $\zeta_{i,k}^n(2)$ are tight where
		\begin{align}\label{eq:mt}
			&\zeta_{i,j,k}^n(1)=u_{n,m}\Delta Y_{T^{\beta_n}_1(t_i^k)}\1_{K(t_i^k)\geq1}\Delta Y_{T^{\beta_n}_1(t^{j}_i)}\1_{\{K(t^{j}_i)\geq1\}},\\
			&\zeta_{i,k}^n(2)=u_{n,m}\left[\frac{d_{n}^2(k-1)}{n^2m^2}+\frac{d_{n}}{nm}\sum_{j=1}^{k-1}\Delta Y_{T^{\beta_n}_1(t^{j}_i)}\1_{\{K(t^{j}_i)\geq1\}}+\frac{d_{n}(k-1)}{nm}\Delta Y_{T^{\beta_n}_1(t_i^k)}\1_{\{K(t_i^k)\geq1\}}\right].\nonumber
		\end{align} 
For the first term, by similar arguments, we have
	\begin{align*}
		\mathbb{E}(|\zeta_{i,k,j}^n(1)||\mathcal{F}_{t^{1}_i})=&u_{n,m}\frac{(1-e^{-\lambda_{n,m}})^2}{(\theta(\beta_n))^2}\delta_n^2\le C\frac{u_{n,m}\delta_n^2}{n^2}.	
	\end{align*}
	Then, the tightness is obtained by \eqref{eq:3.13} that ${\delta}_n$ are bounded by $C\log{1/\beta_n}$,  $\beta_n=\frac{\log{n}}{n}$, $u_{n,m}=\frac{mn}{(m-1)(\log{n})^2}$, \eqref{eq:2.7} and Lemma \ref{lem:tightrest}.
	For the second term, by using property \ref{I} and $1-e^{\lambda_{n,m}}\le \lambda_{n,m}$, we have 
	$$	\mathbb{E}(|\zeta_{i,k}^n(2)||\mathcal{F}_{t^{1}_i})\le \frac{Cu_{n,m}d_n}{n^2}(d_n+2\delta_n),$$
	and we conclude by  $d_n$ and ${\delta}_n$ are bounded by $C\log{1/\beta_n}$ (see \eqref{eq:3.13}),  $\beta_n=\frac{\log{n}}{n}$, $u_{n,m}=\frac{mn}{(m-1)(\log{n})^2}$, criteria \eqref{eq:2.7} and Lemma \ref{lem:tightrest}. Therefore, we get $(\mathcal{M}^{n,m})_{n\ge0}$ is tight.
\end{proof}
\begin{theorem}\label{thm:noise3}
		For the case \ref{3}, we have 
	$$(\overline{Y}^{n},{\mathcal{M}}^{n,m})\stackrel{\mathbb{P}}{\longrightarrow}(Y,Z),$$
	where $Z$ is defined as $\eqref{eq:1.9}$.
\end{theorem}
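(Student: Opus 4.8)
The plan is to follow the scheme of the previous cases, exploiting that the target $Z_t=-\frac{{\theta'}^2}{4}\int_0^tff'(X_{s-})\,ds$ is \emph{deterministic}, so the $(\R^2)^m$‑vector analysis degenerates. Since $ff'$ is bounded and Lipschitz, I would first invoke Lemma \ref{lem:cv1tot} to reduce the claim to the statement for the main term with the coefficient frozen: fix $z\in\R$, replace $ff'(X^n_{t_i^1})$ by $ff'(z)$ in $\Gamma^n(1,1)$ and $\Gamma^n(2,1)$, and prove that the frozen process converges in probability, locally uniformly in $t$, to $-\frac{{\theta'}^2}{4}ff'(z)\,t$. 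The passage back from $ff'(z)\,t$ to $\int_0^{\cdot}ff'(X_{s-})\,ds$ is then identical to the end of the proof of Theorem \ref{thm:noise1} (following \cite[Theorem 1.2(d)]{a}), and joint convergence with $\overline Y^n\to Y$ is automatic, the limit being $\sigma(Y)$‑measurable and the convergence being in probability. Because the limit is non‑random, convergence in law of the frozen arrays suffices.

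For $\Gamma^n(2,1)(z)$ I would first show its conditional variance vanishes: writing $a^n_{i,k}=\Delta Y_{T^{\beta_n}_1(t_i^k)}\1_{\{K(t_i^k)\ge1\}}$, one has $\mathbb E((a^n_{i,k})^2\mid\mathcal F_{t_i^1})\le C/(nm)$ by Remark \ref{rmk:1}, and with $1-e^{-\lambda_{n,m}}\le\lambda_{n,m}$, $|d_n|\le C\log(1/\beta_n)$ (see \eqref{eq:3.13}), $\beta_n=\log n/n$ and $u_{n,m}=mn/((m-1)(\log n)^2)$, the conditional second moments of the generic terms $\zeta^n_{i,k}(2)$ of \eqref{eq:mt} sum to something going to $0$. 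Hence $\Gamma^n(2,1)(z)$ converges to its mean, which I would compute using $d_n+d'_n=b'$, $\mathbb E(a^n_{i,k}\mid\mathcal F_{t_i^1})=(1-e^{-\lambda_{n,m}})d'_n/\theta(\beta_n)\sim d'_n/(nm)$ and $d'_n\sim\theta'\log(1/\beta_n)\sim\theta'\log n$ (Lemma \ref{lem:estimation2}); the three contributions recombine to give $\mathbb E(\Gamma^n(2,1)(z))\to-\frac{{\theta'}^2}{2}ff'(z)\,t$, so $\Gamma^n(2,1)(z)\xrightarrow{\mathbb P}-\frac{{\theta'}^2}{2}ff'(z)\,t$.

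The delicate part is $\Gamma^n(1,1)(z)=ff'(z)\sum_{1\le j<k\le m}u_{n,m}\sum_{i\le[nt]}a^n_{i,k}a^n_{i,j}$. For each fixed pair $j<k$ the summands $(a^n_{i,k}a^n_{i,j})_i$ are i.i.d., and I would apply Lemma \ref{lem:cv} (with the test functions $h_{u,v},h',h'',h_1,h_2,h_3$ already used in the other cases) to identify the limit as a \emph{degenerate} L\'evy process: thanks to the extra factor $(\log n)^{-1}$ in $u_{n,m}$ relative to case \ref{4}, the L\'evy measure $n\,\mathbb P(|u_{n,m}a^n_{i,k}a^n_{i,j}|\ge v)\to0$ and the Gaussian variance $n\,u_{n,m}^2\,\mathbb E((a^n_{i,k}a^n_{i,j})^2\1_{\{|u_{n,m}a^n_{i,k}a^n_{i,j}|\le\delta\}})\to0$ for all $\delta,v>0$, so the limit is a pure drift. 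The crux is the value of that drift: writing $L_n=1/u_{n,m}$ and integrating first in $y$, one is led to
\[
n\,u_{n,m}\,\mathbb E\!\left(a^n_{i,k}a^n_{i,j}\1_{\{|u_{n,m}a^n_{i,k}a^n_{i,j}|\le1\}}\right)\sim\frac{1}{m(m-1)(\log n)^2}\Big((d'_n)^2-\int_{L_n/p<|x|\le p}x\,d'(L_n/|x|)\,F(dx)\Big),
\]
and since $d'(L_n/|x|)\sim\theta'\log(|x|/L_n)$ the last integral splits into $\theta'\log(1/L_n)\,d'(L_n/p)\sim{\theta'}^2(\log n)^2$ plus $\theta'\int_{L_n/p<|x|\le p}x\log|x|\,F(dx)\sim-\frac{{\theta'}^2}{2}(\log n)^2$ by Lemma \ref{lem:estimation3}; together with $(d'_n)^2\sim{\theta'}^2(\log n)^2$ this yields the drift $\frac{{\theta'}^2}{2m(m-1)}$ per pair. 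Summing over the $\binom m2$ pairs (the joint limit being trivial, as each marginal limit is deterministic) gives $\Gamma^n(1,1)(z)\xrightarrow{\mathbb P}\frac{{\theta'}^2}{4}ff'(z)\,t$. Note this is only \emph{half} of $\lim\mathbb E(\Gamma^n(1,1)(z))=\frac{{\theta'}^2}{2}ff'(z)\,t$: $\Gamma^n(1,1)$ is not uniformly integrable and half of its mass escapes in the tail, which is the critical $\alpha=1$ effect. Adding the two pieces, $\mathcal M^{n,m}(z)\xrightarrow{\mathbb P}\big(\frac{{\theta'}^2}{4}-\frac{{\theta'}^2}{2}\big)ff'(z)\,t=-\frac{{\theta'}^2}{4}ff'(z)\,t$, and unfreezing gives the assertion with $Z$ as in \eqref{eq:1.9}.

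The main obstacle is precisely the drift computation for $\Gamma^n(1,1)$ in the critical case: because $d_n$ and $d'_n$ diverge logarithmically, the naive leading‑order estimates cancel (indeed $\mathbb E(\mathcal M^{n,m})\to0$), and one must carry out the truncated L\'evy–Khintchine computation above, extracting the sub‑leading $x\log|x|$ asymptotics through Lemma \ref{lem:estimation3}, and then check carefully that the genuinely random ``stable‑$1$'' part of $\Gamma^n(1,1)$ is killed by the smaller normalization $u_{n,m}=mn/((m-1)(\log n)^2)$ — this is exactly why that normalization is the right one and why the limit is deterministic rather than stable. The dependence among the $m$ blocks plays only a cosmetic role here, since the joint limit is degenerate.
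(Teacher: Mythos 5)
Your proposal is correct and follows essentially the same route as the paper: reduction via Lemma \ref{lem:cv1tot} to the coefficient-free arrays, a first/second conditional moment argument for $\Gamma^n(2,1)$ using $d_n=b'-d'_n$ and $d'_n\sim\theta'\log(1/\beta_n)$, and for $\Gamma^n(1,1)$ a pairwise application of Lemma \ref{lem:cv} in which the L\'evy measure and Gaussian part vanish while the truncated drift is extracted exactly as you describe, via the inner integral $d'(1/(u_{n,m}|x|))$ and the $x\log|x|$ asymptotics of Lemma \ref{lem:estimation3}, giving $\frac{{\theta'}^2}{2m(m-1)}$ per pair and $\frac{{\theta'}^2}{4}-\frac{{\theta'}^2}{2}=-\frac{{\theta'}^2}{4}$ in total. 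Your observation that the limit in probability of $\Gamma^n(1,1)$ is only half of its expectation (the tail mass escaping at the critical index $\alpha=1$) is a correct reading of the same computation the paper performs, the only cosmetic difference being that the paper handles the non-uniformity of $d'(1/(u_{n,m}|x|))\sim\theta'\log(u_{n,m}|x|)$ near $|x|\approx 1/(pu_{n,m})$ by an explicit $\varepsilon'$-splitting whose small-$|x|$ piece is shown to be $O(1/\log n)$.
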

\begin{proof}
		Since $\overline{Y}^{n}$ converges pointwise to $Y$ when $n\rightarrow\infty$ for the Skorokhod topology, then we only need to prove ${\mathcal{M}}^{n,m}\stackrel{\mathbb{P}}{\longrightarrow}Z$. Since $ff'$ is Lipschitz-continuous, by virtue of Lemma \ref{lem:cv1tot} or Lemma \ref{lem:t1}, it is enough to prove ${\Gamma'}^n_1(1,1)+{\Gamma'}^n_1(2,1)\stackrel{\mathbb{P}}{\longrightarrow}-\frac{{\theta'}^2}{4}$ where ${\Gamma'}^n_1(1,1)=\sum_{i=1}^{n}\sum_{k=1}^{m}\sum_{j=1}^{k-1}\zeta_{i,j,k}^n(1)$ and ${\Gamma'}^n_1(2,1)=\sum_{k=2}^m\sum_{i=1}^{n}\zeta_{i,k}^n(2)$ with $\zeta_{i,j,k}^n(1)$ and $\zeta_{i,k}^n(2)$ given by \eqref{eq:mt}.	First, concerning ${\Gamma'}^n_1(2,1)$, for $k\in\{2,\dots,m\}$ fixed, on the one hand, by using property \ref{I}, we have 
		\begin{align*}
			\mathbb{E}(\zeta_{i,k}^n(2)|\mathcal{F}_{t^{1}_i})=&u_{n,m}\left[\frac{d_{n}^2(k-1)}{n^2m^2}+\frac{2(k-1)d_{n}}{nm}\frac{1-e^{-\lambda_{n,m}}}{\theta(\beta_{n})}\int_{|x|>\beta_{n}}xF(dx)\right]\\
			\sim&\frac{(k-1)u_{n,m}}{n^2m^2}(d_{n}^2+2d_{n}{d'}_{n}).
				\end{align*}
		From \eqref{3.1} and \eqref{eq:3.5}, we
		have 
		$
	 {d'}_n\sim \theta'\log{\frac{1}{\beta_{n}}}$ and  $d_{n}=b'-{d'}_{n}$. 
		 Therefore, using $u_{n,m}=\frac{nm}{(m-1)(\log{n})^2}$, $\beta_{n}=\frac{\log{n}}{n}$ and    $\mathbb{E}(\zeta_{i,k}^n(2)|\mathcal{F}_{t^{1}_i})$ is non random and independent of $i$, we get
		$
			n\mathbb{E}(\zeta_{1,k}^n(2))				\underset{n\rightarrow\infty}{\longrightarrow}-\frac{(k-1){\theta'}^2}{m(m-1)}.
	$
		On the other hand, using property \ref{I}, the inequality $(a+b)^2\le 2(a^2+b^2)$ and  $\int_{\R}x^2F(dx)<\infty$ (see Remark \ref{rmk:1}), we have 
	$
			\mathbb{E}(|\zeta_{i,k}^n(2)|^2|\mathcal{F}_{t^{1}_i})	
			\le\frac{Cu_{n,m}^2}{n^3}(\frac{d_{n}^4}{n}+d_{n}^2).	
	$
	Since $d_n\le C\log{1/\beta_n}$ (see \eqref{eq:3.13}), we proceed similarly as above to get  $	n\mathbb{E}(|\zeta_{1,k}^n(2)|^2)
	\underset{n\rightarrow\infty}{\longrightarrow}0$.
Then, since
		$\mathbb{V}\left(	\sum_{i=1}^n\zeta_{i,k}^n(2)\right)\le n\mathbb{E}((	\zeta_{1,k}^n(2))^2)$, 
	we get $
		\sum_{i=1}^n\zeta_{i,k}^n(2)\stackrel{\mathbb{P}}{\rightarrow}-\frac{(k-1){\theta'}^2}{m(m-1)}
		$ and  we deduce that
		$
		{\Gamma'}_1^n(2,1)\stackrel{\mathbb{P}}{\rightarrow}-\frac{{\theta'}^2}{2}.
		$
	Secondly, concerning ${\Gamma'}^n_1(1,1)$, we prove its uniform convergence in probability by considering for  $k\in\{2,\dots,m\}$ and $j<k$ fixed, the generic term  $\zeta_{i,j,k}^n(1)$. To do so, we apply Lemma \ref{lem:cv} to this sequence in which $(\zeta_{i,j,k}^n(1))_{1\le i\le n}$ are  i.i.d. Note that the law of $\zeta_{i,j,k}^n(1)$ does not depend on parameters $j$ and $k$, then we denote it by $K_{n,m}$. We will prove that $nK_{n,m}(h)$ converges as $n\rightarrow\infty$ for some suitable function $h$ corresponding to each assertions of Lemma \ref{lem:cv}.
	Therefore, we have $$nK_{n,m}(h)=\frac{(1-e^{-\lambda_{n,m}})^2}{nm^2\lambda_{n,m}^2}\int_{|x|>\beta_{n}}\int_{\{|y|>\beta_n\}}h(u_{n,m}xy)F(dy)F(dx).$$ 
	\paragraph{\textbf{Concerning assertion (i)}:} Here, we choose $h=h_{\omega}$ where $h_{\omega}(x)=\1_{\{|x|>\omega\}}$ with some $\omega>0$. Using $1-e^{-\lambda_{n,m}}\le\lambda_{n,m}$, it is easy to check that 
		$
				nK_{n,m}(h_{\omega})
								\le\frac{1}{nm^2}\int_{|x|>\beta_{n}}\theta\left(\frac{\omega}{u_{n,m}|x|}\right)F(dx).
		$
	By hypothesis \eqref{h1} for $\alpha=1$, $\theta(\beta)\le\frac{C}{\beta}$ and $\delta_n\equiv\rho_n$, we have
		$
				nK_{n,m}(h_{\omega})\le\frac{Cu_{n,m}\rho_n}{n\omega}.
	$
		 Then, by our choices of $u_{n,m}$, $\beta_n$ and $\rho(\beta)\le C\log{1/\beta}$ (see \eqref{eq:3.3}), we get $nK_{n,m}(h_{\omega})\underset{n\rightarrow\infty}{\longrightarrow}0$.
		\paragraph{\textbf{Concerning assertion (ii)}:} We choose $h=h'$ where $h'(x)=x\1_{\{|x|\le1\}}$. Using $1-e^{-\lambda_{n,m}}\sim\lambda_{n,m}$, $u_{n,m}\beta_n\underset{n\rightarrow\infty}{\longrightarrow}0$ and assumption \ref{A}, namely $F$ vanishes outside $[-p,p]$,  we have for $n$ large enough $\beta_{n}\le\frac{1}{u_{n,m}p}\le\frac{1}{u_{n,m}\mid x\mid}$ and 
			\begin{align*}
				nK_{n,m}(h')\sim&\frac{u_{n,m}}{nm^2}\int_{\mid x\mid>\beta_{n}}\hspace{-0.3cm}x\int_{\beta_{n}<\mid y\mid\le\frac{1}{u_{n,m}\mid x\mid}}\hspace{-1cm}yF(dy)F(dx)
				=\frac{u_{n,m}}{nm^2}\left({d'}_{n}^2-\int_{\mid x\mid>\beta_{n}}\hspace{-0.3cm}x\int_{\mid y\mid>\frac{1}{u_{n,m}\mid x\mid}}\hspace{-0.5cm}yF(dy)F(dx)\right).
			\end{align*}
	Since $u_{n,m}=\frac{nm}{(m-1)(\log{n})^2}$ and using ${d'}_n\sim\theta'\log{1/\beta_n}$ (see \eqref{eq:3.5}), the first term in the r.h.s. is equivalent to $\frac{\theta'^2}{m(m-1)}$.  Now, taking $\varepsilon>0$,  there exists $\varepsilon'\in(0,1)$ such that for $\beta\in(0,\varepsilon')$ we have $|\frac{{d'}(\beta)}{\log{(1/\beta)}\theta'}-1|\le\varepsilon$. Considering the second term in the r.h.s., as for $n$ large enough $\frac{1}{u_{n,m}\varepsilon'}\ge\beta_n$, we rewrite it as the sum of $x_n$ and $y_n$ with
	\begin{align*}
	\left\{\begin{array}{l}
		x_n=\frac{u_{n,m}}{nm^2}\int_{1/(u_{n,m}\varepsilon')\ge\mid x\mid>\beta_{n}}x\int_{\mid y\mid>\frac{1}{u_{n,m}\mid x\mid}}yF(dy)F(dx)\\
		y_n=\frac{u_{n,m}}{nm^2}\int_{\mid x\mid>1/(u_{n,m}\varepsilon')}x\int_{\mid y\mid>\frac{1}{u_{n,m}\mid x\mid}}yF(dy)F(dx).
	\end{array}\right.	
	\end{align*} 
 First, using $\frac{1}{u_{n,m}|x|}\ge\varepsilon'$,  $\delta(.)$ is decreasing and $\delta_n\le C\log{n}$  from \eqref{eq:3.13} we derive that
		\begin{align*}
		|x_n|\le& 	\frac{u_{n,m}}{nm^2}\int_{1/(u_{n,m}\varepsilon')\ge\mid x\mid>\beta_{n}}|x|F(dx)\delta(\varepsilon')\le \frac{Cu_{n,m}\delta_n}{n}\le  \frac{C}{\log{n}},  
		\end{align*} 
	 which converges to $0$. 
	Second, on the one hand, if we denote $${y'}_n=\frac{u_{n,m}{\theta'}}{nm^2}\int_{\mid x\mid>1/(u_{n,m}\varepsilon')}x\log{(u_{n,m}\mid x\mid)}F(dx),$$ then  we have $(1-\varepsilon){y'}_n\le y_n\le(1+\varepsilon){y'}_n$ which gives ${y}_n\sim{y'}_n$ since $\varepsilon$ is arbitrarily small. On the other hand, using ${d'}(\beta)\sim\theta'\log{1/\beta}$ (see \eqref{eq:3.5}) we have that $\frac{u_{n,m}{\theta'}}{nm^2}\log{(u_{n,m})}{d'}(\frac{1}{u_{n,m}\varepsilon'})$ converges to $\frac{{\theta'}^2}{m(m-1)}$ and  thanks to \eqref{eq:3.6} we have that $\frac{u_{n,m}{\theta'}}{nm^2}\int_{\mid x\mid>1/(u_{n,m}\varepsilon')}x\log{(\mid x\mid)}F(dx)$ converges to $-\frac{{\theta'}^2}{2m(m-1)}$.
			Then, it is clear that $nK_{n,m}(h')\underset{n\rightarrow\infty}{\longrightarrow} \frac{{\theta'}^2}{2m(m-1)}.$
	\paragraph{\textbf{Concerning assertion (iii)}:} Choosing $h=h_1$ where $h_1(x)=x^2\1_{\{|x|\le1\}}$, using the inequality $1-e^{-\lambda_{n,m}}\le\lambda_{n,m}$ and $c(\beta)\le C\beta$ (see \eqref{eq:3.3}),  we have 
		\begin{align*}
			nK_{n,m}(h_1)\le&\frac{u_{n,m}^2}{nm^2} \int_{|x|>\beta_{n}}\int_{|y|>\beta_n} x^2y^2\1_{\{u_{n,m}|xy|\leq1\}}F(dx)F(dy)\\
			\le&\frac{u_{n,m}^2}{nm^2}\int_{|x|>\beta_{n}}x^2c\left( \frac{1}{u_{n,m}|x|}\right)F(dx)\le \frac{Cu_{n,m}\rho_n}{n}.
		\end{align*}
		Therefore,  thanks to our choices of $u_{n,m}$, $\beta_n$ and using $\rho(\beta)\le C\log{1/\beta}$ (see \eqref{eq:3.3}), we get that $nK_{n,m}(h_1)$ converges to $0$ as $n\rightarrow\infty$.
		In conclusion, the limit processes have no Gaussian part, a Lévy measure equal to $0$ and a drift part equal to $\frac{{\theta'}^2}{2m(m-1)}$. Finally, we get
		$
		\sum_{i=1}^{n}\zeta_{i,j,k}^{n}(1)\stackrel{\mathbb{P}}{\rightarrow}\frac{{\theta'}^2}{2m(m-1)}
		$ and
		$
		{\Gamma'}_1^{n}(1,1)\stackrel{\mathbb{P}}{\rightarrow}\sum_{k=2}^m\sum_{j=1}^{k-1}\frac{{\theta'}^2}{2m(m-1)}=\frac{{\theta'}^2}{4}.$ This completes the proof.
\end{proof}
\subsection{Asymptotic behavior of the couple $(\overline{Y}^{n},u_{n,m}Z^{n,m})$ for case \ref{5}.}
For the first component $\overline{Y}^n$, we use the same decomposition given by the relation \eqref{Y}. For the second one, we consider the formula of $u_{n,m}Z^{n,m}$ given in \eqref{Znm}. In this case, the second term from this formula does not contribute to the limit. Therefore, we need only to give the analysis for its first term. We have
	\begin{align*}
		u_{n,m}\int_0^{\eta_n(t)}ff'(X_{\eta_n(s-)}^n)(Y_{\eta_{nm}(s-)}-Y_{\eta_n(s-)})dY_s
		=\sum_{i=1}^4\Gamma^n_t(i),
	\end{align*}
	where
	\begin{align*}
		\left\{\begin{array}{l}
			\Gamma^n_t(1)=u_{n,m}(\int_0^{\eta_n(t)}ff'(X_{\eta_n(s-)}^n)(M_{\eta_{nm}(s-)}^{\beta_{n}}-M_{\eta_n(s-)}^{\beta_{n}})dN_s^{\beta_{n}}\\\hspace{5cm}+\int_0^{\eta_n(t)}ff'(X_{\eta_n(s-)}^n)(N_{\eta_{nm}(s-)}^{\beta_{n}}-N_{\eta_n(s-)}^{\beta_{n}})dM^{\beta_{n}}_s),\\
			\Gamma^n_t(2)=u_{n,m}(\int_0^{\eta_n(t)}ff'(X_{\eta_n(s-)}^n)(Y_{\eta_{nm}(s-)}^{\beta_{n}}-Y_{\eta_n(s-)}^{\beta_{n}})dM_s^{\beta_{n}}\\\hspace{5cm}+\int_0^{\eta_n(t)}ff'(X_{\eta_n(s-)}^n)(Y_{\eta_{nm}(s-)}^{\beta_{n}}-Y_{\eta_n(s-)}^{\beta_{n}})dA_s^{\beta_{n}}),\\
			\Gamma^n_t(3)=u_{n,m}(\int_0^{\eta_n(t)}ff'(X_{\eta_n(s-)}^n)(A_{\eta_{nm}(s-)}^{\beta_{n}}-A_{\eta_n(s-)}^{\beta_{n}})dN_s^{\beta_{n}}\\\hspace{5cm}+\int_0^{\eta_n(t)}ff'(X_{\eta_n(s-)}^n)(N_{\eta_{nm}(s-)}^{\beta_{n}}-N_{\eta_n(s-)}^{\beta_{n}})dA^{\beta_{n}}_s),\\
			\Gamma^n_t(4)=u_{n,m}\int_0^{\eta_n(t)}ff'(X_{\eta_n(s-)}^n)(N_{\eta_{nm}(s-)}^{\beta_{n}}-N_{\eta_n(s-)}^{\beta_{n}})dN^{\beta_{n}}_s.\end{array}\right.
	\end{align*}
 In this case, the three last terms do not contribute to the limit and we only have to study the first term $\Gamma^n(1)$. Let us first rewrite 
	$
		\Gamma^n_{t}(1)=\sum_{i=1}^{[nt]}\zeta^n_i(1),
	$
	with row-wise i.i.d. random variables $\zeta_i^n,i=1,2,\hdots$ defined by
		\begin{align*}
	\zeta^n_{i}(1)=u_{n,m}ff'(X_{t_i^1}^n)\sum_{k=2}^m\left[(M_{t_i^k}^{\beta_{n}}-M_{t^{1}_i}^{\beta_{n}})(N_{t_i^{k+1}}^{\beta_{n}}-N_{t^{k}_i}^{\beta_{n}})+(N_{t_i^k}^{\beta_{n}}-N_{t^{1}_i}^{\beta_{n}})(M_{t_i^{k+1}}^{\beta_{n}}-M_{t^{k}_i}^{\beta_{n}})\right].
	\end{align*}
	Now, using Fubini for the second term, we have that
	\begin{align*}
		\sum_{k=2}^m[\sum_{j=1}^{k-1}(N_{t_i^{j+1}}^{\beta_{n}}-N_{t^{j}_i}^{\beta_{n}})](M^{\beta_{n}}_{t^{k+1}_i}-M^{\beta_{n}}_{t_i^k})
		=\sum_{k=1}^{m-1}(N^{\beta_{n}}_{t^{k+1}_i}-N^{\beta_{n}}_{t_i^k})(M_{t^{m+1}_i}^{\beta_{n}}-M_{t^{k+1}_i}^{\beta_{n}}).
	\end{align*}
	Then we can rewrite our triangular array as follows
	\begin{align*}
	\zeta_i^n(1)=&u_{n,m}ff'(X_{t_i^1}^n)\left[\sum_{k=2}^m(M_{t_i^k}^{\beta_{n}}-M_{t^{1}_i}^{\beta_{n}})(N_{t^{k+1}_i}^{\beta_{n}}-N_{t_i^k}^{\beta_{n}})+\sum_{k=1}^{m-1}(N^{\beta_{n}}_{t^{k+1}_i}-N^{\beta_{n}}_{t_i^k})(M_{t^{m+1}_i}^{\beta_{n}}-M_{t^{k+1}_i}^{\beta_{n}})\right]\\
			=&u_{n,m}ff'(X_{t_i^1}^n)\sum_{k=1}^{m}(N_{t^{k+1}_i}^{\beta_{n}}-N_{t_i^k}^{\beta_{n}})[(M_{t^{m+1}_i}^{\beta_{n}}-M_{t^{1}_i}^{\beta_{n}})-(M_{t^{k+1}_i}^{\beta_{n}}-M_{t_i^k}^{\beta_{n}})]\\
		=&u_{n,m}ff'(X_{t_i^1}^n)\sum_{k=1}^{m}\left(\Delta Y_{T^{\beta_n}_1(t_i^k)}\1_{\{K(t_i^k)\geq1\}}+\sum_{j=2}^{K(t_i^k)}\Delta Y_{T^{\beta_n}_j(t_i^k)}\right)\tilde{M}^{n,m}_{i,k},
	\end{align*}
	where $\tilde{M}^{n,m}_{i,k}=(M_{t^{m+1}_i}^{\beta_{n}}-M_{t^{1}_i}^{\beta_{n}})-(M_{t^{k+1}_i}^{\beta_{n}}-M_{t^{k}_i}^{\beta_{n}})=\sum\limits_{j=1, j\neq k}^m(M_{t^{j+1}_i}^{\beta_{n}}-M_{t^{j}_i}^{\beta_{n}})$. Now,  we separate $\Gamma^n(1)$ into two terms: the first term which is the essential term of the limit corresponds to the part with the first jumps and the second term which will be sort in the rest terms corresponds to the part of all the other jumps. In particular, we have $\Gamma^n(1)={\Gamma}^n(1,1)+\Gamma^n(1,2)$ where  
	\begin{align}\label{eq:gamma}\left\{\begin{array}{l}
		{\Gamma}^n_t(1,1)=u_{n,m}\sum_{i=1}^{[nt]}\sum_{k=1}^{m}ff'(X_{t_i^1}^n)\Delta Y_{T^{\beta_n}_1(t_i^k)}\1_{\{K(t_i^k)\geq1\}}\tilde{M}^{n,m}_{i,k},\\
		{\Gamma}^n_t(1,2)=u_{n,m}\sum_{i=1}^{[nt]}\sum_{k=1}^{m}ff'(X_{t_i^1}^n)\sum_{j=2}^{K(t_i^k)}\Delta Y_{T^{\beta_n}_j(t_i^k)}\tilde{M}^{n,m}_{i,k}.\end{array}\right.
	\end{align}
 In this case, we have  \begin{align}\label{eq:MR5}
 \mathcal{M}^{n,m}_t={\Gamma}^n_t(1,1)\quad \textrm{and}\quad \mathcal{R}^{n,m}_t={\Gamma}^n_t(1,2)+\sum_{i=2}^5\Gamma^n_t(i),
 \end{align}
with $\Gamma^n_t(5)=u_{n,m}\int_0^{\eta_n(t)}k(X_{\eta_n(s-)}^n,Y_{\eta_{nm}(s-)}-Y_{\eta_n(s-)})(Y_{\eta_{nm}(s-)}-Y_{\eta_n(s-)})^2dY_s$.
The proof of the following lemma is postponed to Appendix \ref{app:A}.
\begin{lemma}\label{lem:rest5}
For case \ref{5}, we have as $n\rightarrow\infty$, the sequences $(\overline{Y}^n(1))_{n\ge0 }$ and    $(\mathcal{R}^{n,m})_{n\ge0}$ converge uniformly to $0$ in probability.	
\end{lemma}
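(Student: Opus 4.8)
\textbf{The plan} is to treat the two sequences separately and, for each, to rewrite the process in the triangular‑array form $\sum_{i=1}^{[nt]}\chi^n_i$ over the coarse grid, split each cell contribution $\chi^n_i$ along the decomposition $Y=A^{\beta_n}+M^{\beta_n}+N^{\beta_n}$ of \eqref{eq:3.7}, control each resulting piece by a conditional first‑ or second‑moment estimate coming from Property \ref{I}, Property \ref{M} and Lemmas \ref{lem:estimation1}--\ref{lem:estimation2}, and finally substitute $u_{n,m}=\left[\frac{mn}{(m-1)\log n}\right]^{1/\alpha}$ and $\beta_n=\frac{\log n}{n^{1/(2\alpha)}}$. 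The resulting pieces will be of four kinds: (a) stochastic integrals against $M^{\beta_n}$, to be treated by It\^o's isometry ($\langle M^{\beta_n}\rangle_t=c_nt$) and Doob's inequality; (b) sums with zero $\mathcal{F}_{t_i^1}$‑conditional mean, to be treated by the orthogonality bound $\mathbb{E}\big[\sup_t|\sum_{i\le[nt]}\chi^n_i|^2\big]\le4\sum_i\mathbb{E}[(\chi^n_i)^2]$; (c) sums that are $L^1$‑negligible, to be treated via $\sum_i\mathbb{E}[|\chi^n_i|]\to0$, which dominates the running supremum; and (d) purely deterministic drift$\times$drift pieces, of order $u_{n,m}d_n^2/n$.

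\textbf{The term $\overline Y^n(1)$.} I would decompose $y^{n,m}_i(1)=\sum_k M^{\beta_n}_{t_i^k,t_i^{k+1}}+\sum_k\sum_{j\ge2}\Delta Y_{T^{\beta_n}_j(t_i^k)}\1_{\{K(t_i^k)\ge j\}}$ as in the proof of Lemma \ref{lem:tight3}. The first part telescopes, $\sum_{i=1}^{[nt]}\sum_{k=1}^m M^{\beta_n}_{t_i^k,t_i^{k+1}}=M^{\beta_n}_{\eta_n(t)}$, so Doob's inequality together with Property \ref{M} gives $\mathbb{E}[\sup_{s\le t}(M^{\beta_n}_{\eta_n(s)})^2]\le4c_nt\le C\beta_n^{2-\alpha}\to0$ by \eqref{eq:3.13}. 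For the second part, the estimate \eqref{eq:y2case3} yields a conditional $L^1$‑bound of order $\delta_n\lambda_{n,m}$ for the whole sum over $i\le[nt]$, and since $\delta_n\le C\beta_n^{1-\alpha}$ and $\lambda_{n,m}\le C(nm\beta_n^\alpha)^{-1}$ (by \eqref{eq:3.13} and \eqref{h1}) this is $O\big((m\,n^{1/(2\alpha)}(\log n)^{2\alpha-1})^{-1}\big)\to0$. Both contributions then converge uniformly to $0$ in probability.

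\textbf{The term $\mathcal R^{n,m}=\Gamma^n(1,2)+\sum_{i=2}^5\Gamma^n(i)$ (see \eqref{eq:MR5}).} The $dM^{\beta_n}$‑integral in $\Gamma^n(2)$ is of kind (a): with $\mathbb{E}[(Y^{\beta_n}_\tau)^2]\le c_n\tau+d_n^2\tau^2$ for $\tau\le1/n$, its $L^2$ norm is at most $Cu_{n,m}^2c_n^2/n$, whose power of $n$ is $0$ and whose power of $\log n$ is $-2(\sqrt\alpha-1/\sqrt\alpha)^2<0$ because $\alpha\ne1$. The term $\Gamma^n(1,2)$ (see \eqref{eq:gamma}) and the $M^{\beta_n}\times$drift parts of $\Gamma^n(2),\Gamma^n(3)$ are of kind (b) — indeed $\mathbb{E}[\tilde M^{n,m}_{i,k}\mid\mathcal{F}_{t_i^1}]=0$ and $\tilde M^{n,m}_{i,k}$ is independent of the jump data by Property \ref{I} — and, using $\mathbb{E}[|\Delta Y_{T^{\beta_n}_p(t_i^k)}|^2]\le C/\theta(\beta_n)$ (Remark \ref{rmk:1}) and $\langle M^{\beta_n}\rangle$, one gets $\sum_i\mathbb{E}[(\chi^n_i)^2]\le Cu_{n,m}^2\lambda_{n,m}c_n/n\to0$. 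The drift$\times N^{\beta_n}$ and $N^{\beta_n}\times$drift parts of $\Gamma^n(3)$ and the big‑jump term $\Gamma^n(4)$ are of kind (c): by independence of increments over disjoint sub‑intervals, $\mathbb{E}[|N^{\beta_n}_\tau|]\le\tau\delta_n$ and $\mathbb{E}[|\Delta Y_{T^{\beta_n}_p(t_i^k)}|]=\delta_n/\theta(\beta_n)$, so $\sum_i\mathbb{E}[|\chi^n_i|]\le Cu_{n,m}\delta_n^2/n\le Cu_{n,m}\beta_n^{2-2\alpha}/n$, again with vanishing power of $n$ and with logarithmic exponent $2-2\alpha-1/\alpha<0$ for $\alpha\in(1,2)$; the drift$\times$drift pieces are of kind (d), of order $u_{n,m}d_n^2/n$, which vanishes for the same reason.

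\textbf{Main obstacle: the Taylor remainder} $\Gamma^n(5)=u_{n,m}\int_0^{\eta_n(\cdot)}k(X^n_{\eta_n(s-)},\Delta_s)\Delta_s^2\,dY_s$, with $\Delta_s=Y_{\eta_{nm}(s-)}-Y_{\eta_n(s-)}$. A naive $L^2$ bound diverges, because $\mathbb{E}[\Delta_s^4]\asymp1/n$ (the fourth moment is controlled by the big jumps). The remedy is an $L^1$ estimate: the $i$‑th cell contributes $u_{n,m}\sum_{l=2}^m k(X^n_{t_i^1},\Delta_{(l)})\Delta_{(l)}^2(Y_{t_i^{l+1}}-Y_{t_i^l})$ with $\Delta_{(l)}=Y_{t_i^l}-Y_{t_i^1}$, and since $Y_{t_i^{l+1}}-Y_{t_i^l}$ is independent of $\Delta_{(l)}$ one gets $\mathbb{E}[|\chi^n_i|]\le u_{n,m}\|k\|_\infty\sum_{l=2}^m\mathbb{E}[\Delta_{(l)}^2]\,\mathbb{E}[|Y_{t_i^{l+1}}-Y_{t_i^l}|]$, where $\mathbb{E}[\Delta_{(l)}^2]\le C/n$ (Remark \ref{rmk:1}) and — the crucial point — $\mathbb{E}[|Y_\tau-\tau b'|]\le C\tau^{1/\alpha}$ for $\tau\le1/(nm)$, the sharp rate obtained by truncating the jumps of $Y$ at level $\tau^{1/\alpha}$ and bounding the part below in $L^2$ and the part above by its total variation (finite since $\alpha\in(1,2)$). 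Summing over $l\le m$ and $i\le[nt]$ gives $\sum_i\mathbb{E}[|\chi^n_i|]\le Cu_{n,m}n^{-1/\alpha}\asymp C(\log n)^{-1/\alpha}\to0$ — the decay comes precisely from the logarithm built into $u_{n,m}$, which is the subtle point of the whole argument; the finer splittings of $\Gamma^n(5)$ (and of $\Gamma^n(4)$) along $A^{\beta_n}+M^{\beta_n}+N^{\beta_n}$ are handled by the mechanisms (a)--(d) above. Collecting all pieces, $\overline Y^n(1)$ and $\mathcal R^{n,m}$ converge to $0$ uniformly on compact time intervals in probability, which is Lemma \ref{lem:rest5}.
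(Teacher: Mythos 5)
Your proposal is correct, and for the bulk of the lemma it follows the same skeleton as the paper: rewrite each sequence as a triangular array over the coarse grid, split along the decomposition $Y=A^{\beta_n}+M^{\beta_n}+N^{\beta_n}$ of \eqref{eq:3.7}, bound conditional first or second moments using Property \ref{I}, Property \ref{M} and \eqref{eq:3.13}, and conclude via criteria \eqref{eq:2.7}--\eqref{eq:2.8} and Lemma \ref{lem:tightrest}; your bounds for $\overline Y^n(1)$, $\Gamma^n(1,2)$, $\Gamma^n(2)$, $\Gamma^n(3)$ and $\Gamma^n(4)$ coincide with those in the paper, including the observation that the $L^2$ bound for $\Gamma^n(2)$ decays only logarithmically, with exponent $-2(\sqrt\alpha-1/\sqrt\alpha)^2$. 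Where you genuinely depart from the paper is in the treatment of the Taylor remainder $\Gamma^n(5)$. The paper splits it into nine triangular arrays according to the type of each of the three factors, disposes of eight by Cauchy--Schwarz and moment estimates, and is then forced into a characteristic-function argument (Lemma \ref{lem:add} and condition \eqref{eq:2.12}) for the recalcitrant piece $u_{n,m}(N^{\beta_n}_{t_i^k}-N^{\beta_n}_{t_i^1})^2|M^{\beta_n}_{t_i^{k+1}}-M^{\beta_n}_{t_i^k}|$, after a further three-fold splitting of the squared big-jump factor. Your single conditional $L^1$ bound, exploiting the independence of the forward increment $Y_{t_i^{l+1}}-Y_{t_i^l}$ from $\mathcal F_{t_i^l}$ together with $\mathbb{E}[\Delta_{(l)}^2]\le C/n$ (valid under \ref{A} by Remark \ref{rmk:1}) and the optimal-truncation estimate $\mathbb{E}|Y_\tau|\le C\tau^{1/\alpha}$ (which indeed follows from \eqref{eq:3.3} under \eqref{h1} with $\alpha>1$ by cutting the jumps at level $\tau^{1/\alpha}$), handles all nine pieces at once and yields the clean rate $u_{n,m}n^{-1/\alpha}\asymp(\log n)^{-1/\alpha}$; since the bound is uniform in the conditioning on $\mathcal F_{t_i^1}$ (the only dependence is through the bounded function $k$), criterion \eqref{eq:2.7} applies directly. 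This is a genuine and worthwhile simplification for case \ref{5}; what the paper's heavier route buys is that it stays entirely within the fixed truncation level $\beta_n$ used throughout the error analysis and reuses machinery (the nine-fold table and Lemma \ref{lem:add}) that is needed anyway in the other cases, where $\alpha\le1$ and your $\tau^{1/\alpha}$ estimate would no longer beat the drift term.
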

\begin{lemma}\label{lem:tight5}
For case \ref{5}, the sequences  $(\overline{Y}^n(2))_{n\ge0}$ and $({\mathcal{M}}^{n,m})_{n\ge0}$ are tight. 
\end{lemma}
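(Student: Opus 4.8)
The plan is to follow the template already used for cases \ref{1}--\ref{4}: treat the two pieces $\overline{Y}^{n}(2)$ and $\mathcal{M}^{n,m}=\Gamma^n_\cdot(1,1)$ separately, reducing each to a finite number of row-wise i.i.d.\ triangular arrays and verifying one of the sufficient tightness criteria collected in Lemma \ref{lem:tightrest}.

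For $\overline{Y}^{n}(2)$, whose increments are $y^n_i(2)=\sum_{k=1}^m\big(\tfrac{d_n}{nm}+\Delta Y_{T^{\beta_n}_1(t_i^k)}\1_{\{K(t_i^k)\ge1\}}\big)$, I would reuse verbatim the conditional estimates \eqref{eq:y2a}--\eqref{eq:y2b}, namely $|\mathbb{E}(y^n_i(2)\mid\mathcal F_{t^1_i})|\le C(1+\lambda_{n,m}|d'_n|)/n$ and $\mathbb{E}(|y^n_i(2)|^2\mid\mathcal F_{t^1_i})\le C(d_n^2/n+1)/n$. In case \ref{5} we have $|d_n|+|d'_n|\le Cs(\beta_n)=C\beta_n^{1-\alpha}$ by \eqref{eq:3.13}, $\lambda_{n,m}=\theta(\beta_n)/(nm)\to0$, and a direct computation with $\beta_n=\log n/n^{1/(2\alpha)}$ gives $d_n^2/n\to0$ and $\lambda_{n,m}|d'_n|\to0$; hence $y^n_i(2)$ satisfies \eqref{eq:2.8} and \eqref{eq:2.11}, and the tightness of $(\overline{Y}^{n}(2))_{n\ge0}$ follows from the second part of Lemma \ref{lem:tightrest}.

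For $\mathcal{M}^{n,m}=u_{n,m}\sum_i\sum_{k=1}^m ff'(X^n_{t^1_i})\,\Delta Y_{T^{\beta_n}_1(t_i^k)}\1_{\{K(t_i^k)\ge1\}}\,\tilde M^{n,m}_{i,k}$, the boundedness of $ff'$ and $\tilde M^{n,m}_{i,k}=\sum_{j\neq k}(M^{\beta_n}_{t_i^{j+1}}-M^{\beta_n}_{t_i^j})$ reduce the task (via Lemma \ref{lem:tightrest}) to proving, for every ordered pair $j\neq k$ in $\{1,\dots,m\}$, the tightness of the row-wise i.i.d.\ triangular array with generic term
\[
\zeta^n_{i,j,k}=u_{n,m}\,\Delta Y_{T^{\beta_n}_1(t_i^k)}\1_{\{K(t_i^k)\ge1\}}\,\big(M^{\beta_n}_{t_i^{j+1}}-M^{\beta_n}_{t_i^j}\big).
\]
The crucial inputs are: conditionally on $\mathcal F_{t^1_i}$ the big jump $\Delta Y_{T^{\beta_n}_1(t_i^k)}$ (law $\theta(\beta_n)^{-1}\1_{\{|x|>\beta_n\}}F$), the indicator $\1_{\{K(t_i^k)\ge1\}}$ (mean $1-e^{-\lambda_{n,m}}\le\lambda_{n,m}$) and the small-jump martingale increment $M^{\beta_n}_{t_i^{j+1}}-M^{\beta_n}_{t_i^j}$ (mean $0$) are mutually independent; and, crucially, over the short interval $I(nm,i,j)$ of length $1/(nm)$ that increment has a \emph{stable-type} — not diffusive — tail, $\mathbb{P}(|M^{\beta_n}_{t_i^{j+1}}-M^{\beta_n}_{t_i^j}|>a\mid\mathcal F_{t^1_i})\le C\,\theta(a)/(nm)\le C/(nm\,a^{\alpha})$, obtained from a one-large-jump estimate together with $\theta(\beta)\le C\beta^{-\alpha}$ from \eqref{h1}. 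Combining these with the layer-cake formula for the conditional truncated moments of $\zeta^n_{i,j,k}$ (using $\mathbb{E}(M^{\beta_n}_{t_i^{j+1}}-M^{\beta_n}_{t_i^j}\mid\mathcal F_{t^1_i})=0$ to control the drift), one obtains that the conditional truncated mean $\mathbb{E}(\zeta^n_{i,j,k}\1_{\{|\zeta^n_{i,j,k}|\le1\}}\mid\mathcal F_{t^1_i})$, the conditional truncated second moment, and the conditional tail $\mathbb{P}(|\zeta^n_{i,j,k}|>y\mid\mathcal F_{t^1_i})$ for $y\ge1$ are each bounded by $C\,u_{n,m}^{\alpha}\rho_n/(n^2m^2)$ (times $y^{-\alpha}$ for the tail), with $\rho_n=\rho(\beta_n)$. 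Summing over the $n$ rows and using $u_{n,m}^{\alpha}=\tfrac{mn}{(m-1)\log n}$ together with $\rho_n\le C\log(1/\beta_n)\le\tfrac{C}{2\alpha}\log n$ (from \eqref{eq:3.13} and $\beta_n=\log n/n^{1/(2\alpha)}$) keeps all these quantities bounded; hence criterion \eqref{eq:2.9} applies and $(\mathcal{M}^{n,m})_{n\ge0}$ is tight by Lemma \ref{lem:tightrest}.

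The main obstacle is the term $\mathcal{M}^{n,m}$: unlike in cases \ref{1}--\ref{4}, its limit in case \ref{5} is an $\alpha$-stable process with $\alpha\in(1,2)$, so $\mathcal{M}^{n,m}_1$ has no bounded second moment and no naive $L^2$ bound is available; tightness must be read off the heavy-tail criterion, which forces one to identify the correct (stable rather than Gaussian) tail behaviour of the small-jump martingale increments and then to check that the divergence of $u_{n,m}^{2}c_n$ is exactly offset by $\theta(\beta_n)^{-1}$ under the prescribed scaling, so that the accumulated truncated variance and tail mass stay $O(1)$.
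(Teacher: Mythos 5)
Your treatment of $(\overline{Y}^n(2))_{n\ge0}$ coincides with the paper's: the same estimates \eqref{eq:y2a}--\eqref{eq:y2b} together with $|d_n|+|d'_n|\le C\beta_n^{1-\alpha}$ and the choice $\beta_n=\log n/n^{1/(2\alpha)}$ give \eqref{eq:2.8}, and Lemma \ref{lem:tightrest} applies. For $\mathcal{M}^{n,m}$ you reduce to the same row-wise i.i.d.\ array $\zeta^n_{i,k,j}=u_{n,m}\Delta Y_{T^{\beta_n}_1(t_i^k)}\1_{\{K(t_i^k)\ge1\}}(M^{\beta_n}_{t_i^{j+1}}-M^{\beta_n}_{t_i^j})$ as the paper, but you then verify \eqref{eq:2.9} directly through truncated moments and a tail estimate, whereas the paper works with the conditional characteristic function: using Property \ref{M} it writes $\mathbb{E}(e^{\texttt{i}u\zeta^n_{i,k,j}}\mid\mathcal F_{t_i^1})-1$ in terms of $z_{n,m}(x,u)=\frac{1}{nm}\int_{|y|\le\beta_n}(e^{\texttt{i}uu_{n,m}xy}-1-\texttt{i}uu_{n,m}xy)F(dy)$, bounds $|z_{n,m}(x,u)|\le \frac{C}{n}|uu_{n,m}x|^{\alpha}$ via \eqref{eq:tool}, and then invokes criterion \eqref{eq:2.12} and Lemma \ref{lem:add} to pass to \eqref{eq:2.9}. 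Both routes land on the same quantity $u_{n,m}^{\alpha}\rho_n/n=O(1)$, so your conclusion is right; the paper's Fourier argument is shorter because the $|x|^{\alpha}$ scaling is extracted in one line from the L\'evy--Khintchine exponent, while your argument is more probabilistically transparent about why the increment behaves stably rather than diffusively.

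The one step you assert without proof is the tail bound $\mathbb{P}(|M^{\beta_n}_{t_i^{j+1}}-M^{\beta_n}_{t_i^j}|>a\mid\mathcal F_{t^1_i})\le C\theta(a)/(nm)$, and it does not follow from a literal ``one large jump'' argument, since for the relevant levels one must also control the accumulation of jumps smaller than $a$ and the compensator shift. The correct derivation splits $M^{\beta_n}$ at level $a$: the probability of at least one jump of size in $(a,\beta_n]$ over an interval of length $1/(nm)$ is at most $\theta(a)/(nm)$; the compensated sum of jumps below $a$ has variance $c(a)/(nm)\le Ca^{2-\alpha}/(nm)$, so Chebyshev gives $C/(nm\,a^{\alpha})$; and the compensator of the intermediate jumps is bounded by $\delta(a)/(nm)\le Ca^{1-\alpha}/(nm)$, which is $o(a)$ precisely because the relevant levels satisfy $a\ge 1/(u_{n,m}p)$, i.e.\ $a^{\alpha}\ge c\log n/n\gg 1/(nm)$. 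With this lemma supplied, your layer-cake computations of the truncated mean (using $\mathbb{E}(M^{\beta_n}_{t_i^{j+1}}-M^{\beta_n}_{t_i^j}\mid\mathcal F_{t^1_i})=0$ and $\alpha>1$ for integrability of the tail), the truncated second moment, and the tail all yield $Cu_{n,m}^{\alpha}\rho_n/(n^2m^2)$, and the conclusion via \eqref{eq:2.9}, \eqref{eq:2.11} and Lemma \ref{lem:tightrest} is sound. You are also right that a naive $L^2$ bound fails here: $u_{n,m}^2c_n/n\to\infty$ in case \ref{5}, which is exactly why either the truncation or the characteristic-function route is unavoidable.
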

\begin{proof}
		First, we consider the sequence $(\overline{Y}^n(2))_{n\ge0}$ given by \eqref{Y}. From \eqref{eq:y2a} and \eqref{eq:y2b}, using hypothesis \eqref{h1},  $\lambda_{n,m}\le\frac{C}{n\beta_n}$, $d_n$ and ${d'}_n$ are bounded by $C\beta_n^{1-\alpha}$ from \eqref{eq:3.3} with the choice $\beta_n=\frac{\log{n}}{n^{1/(2\alpha)}}$, then $y^n_i(2)$ satisfies \eqref{eq:2.8} ensuring the tightness from the second part of Lemma \ref{lem:tightrest}.
		Now, we recall that $\mathcal{M}^{n,m}_t={\Gamma}^n_t(1,1)$ given by \eqref{eq:gamma} and as $ff'$ is bounded, for $k,j\in\{1,\dots,m\}$ fixed and $j\neq k$, it is enough to prove that the triangular array with the generic term $\zeta_{i,k,j}^n(1,1)=u_{n,m}\Delta Y_{T^{\beta_n}_1(t_i^k)}\1_{\{K(t_i^k)\geq1\}}(M_{t^{j+1}_i}^{\beta_{n}}-M_{t^{j}_i}^{\beta_{n}})$ is tight. 
		By using Property \ref{M} and the independence of the increments, for $|u|\le1$ we have
		\begin{align*}
			\mathbb{E}(e^{\texttt{i}u\zeta^n_{i,k,j}(1,1)}|\mathcal{F}_{t^{1}_i})&=e^{-\lambda_{n,m}}+\frac{1-e^{-\lambda_{n,m}}}{\theta(\beta_n)}\int_{|x|>\beta_n}	F(dx)\mathbb{E}\left(e^{\texttt{i}uu_{n,m}x(M^{\beta_n}_{t^{j+1}_i}-M^{\beta_n}_{t^{j}_i})}\right)\\
			&=e^{-\lambda_{n,m}}+\frac{1-e^{-\lambda_{n,m}}}{\theta(\beta_n)}\int_{|x|>\beta_n}	F(dx)e^{z_{n,m}(x,u)}\\
			&=1+\frac{1-e^{-\lambda_{n,m}}}{nm\lambda_{n,m}}\int_{|x|>\beta_n}	F(dx)(e^{z_{n,m}(x,u)}-1),
		\end{align*}
	where $z_{n,m}(x,u)=\frac{1}{nm}\int_{|y|\le\beta_n}(e^{\texttt{i}uu_{n,m}xy}-1-\texttt{i}uu_{n,m}xy)F(dy)$. 
		By applying \cite[Lemma 8.6]{d}  for first and second orders, we get $|e^{\texttt{i}uu_{n,m}xy}-1-\texttt{i}uu_{n,m}xy|\le C|uu_{n,m}xy|\wedge|uu_{n,m}xy|^2$. As $u_{n,m}\beta_n\underset{n\rightarrow\infty}{\longrightarrow}\infty$, for $n$ large enough, combining these results with $\delta(\beta)\le C\beta^{1-\alpha}$ and $c(\beta)\le C\beta^{2-\alpha}$ (see \eqref{eq:3.3}), we have
		\begin{multline}\label{eq:tool}
			|z_{n,m}(x,u)|\le\frac{C}{n}\int_{|y|\le\beta_n} (|uu_{n,m}xy|\wedge|uu_{n,m}xy|^2)F(dy)\\=	\frac{C}{n}|uu_{n,m}x|\int_{\beta_n\ge|y|>1/uu_{n,m}|x|)}\hspace{-1cm}|y|F(dy)+	\frac{C}{n}|uu_{n,m}x|^2	\int_{|y|\le1/(uu_{n,m}|x|)}\hspace{-1cm}y^2F(dy)\le 	\frac{C}{n}|uu_{n,m}x|^{\alpha}.
		\end{multline}
		Then, for $u_{n,m}=\left[\frac{mn}{(m-1)\log{n}}\right]^{1/\alpha}$, the suprema of $|z_{n,m}(x,u)|$ over all  $|x|\le p$ and $|u|\le 1$ goes to $0$ as $n$ tends to infinity. 
		Therefore, using $|e^{z_{n,m}(x,u)}-1|\le C |z_{n,m}(x,u)|$ for $n$ large enough  by \eqref{eval}, $\rho_n\le C\log{\frac{1}{\beta_n}}$ (see \eqref{eq:3.3}) and $1-e^{-\lambda_{n,m}}\le\lambda_{n,m}$,  we deduce that
		$
			|\mathbb{E}(e^{\texttt{i}u\zeta^n_{i,k,j}(1,1)}|\mathcal{F}_{t_i^1})-1|\le \frac{C|u|^{\alpha}u_{n,m}^{\alpha}\log{\frac{1}{\beta_n}}}{n^2}.$
		Then, $\zeta^n_{i,k,j}(1,1)$ satisfies \eqref{eq:2.12} with $\xi{'''}_{n,u}=\frac{C|u|^{\alpha}u_{n,m}^{\alpha}}{n}\log{\frac{1}{\beta_n}}$ which is bounded by $C$ for all $|u|\le1$. Thus, combining Lemma \ref{lem:add} and the second part of  Lemma \ref{lem:tightrest}, we get the tightness of $(\mathcal{M}^{n,m})_{n\ge0}$.
\end{proof}
	 \begin{theorem}\label{thm:noise5}For case \ref{5}, we have
	 	\begin{align}\label{cv5}(\overline{Y}^{n}(2),{\mathcal{M}}^{n,m})\stackrel{\mathcal{L}}{\longrightarrow}(Y,Z),
	 	\end{align}
	 	where $Z$ is defined as $\eqref{eq:1.8}$.
	 \end{theorem}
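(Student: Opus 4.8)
The plan is to follow the blueprint of the proof of Theorem~\ref{thm:noise24}, adapted to the martingale--driven structure of the main term. Recall from \eqref{eq:MR5}--\eqref{eq:gamma} that $\mathcal{M}^{n,m}_t={\Gamma}^n_t(1,1)=u_{n,m}\sum_{i=1}^{[nt]}\sum_{k=1}^{m}ff'(X_{t_i^1}^n)\,\Delta Y_{T^{\beta_n}_1(t_i^k)}\1_{\{K(t_i^k)\ge1\}}\tilde{M}^{n,m}_{i,k}$ with $\tilde{M}^{n,m}_{i,k}=\sum_{j=1,\,j\ne k}^m(M^{\beta_n}_{t_i^{j+1}}-M^{\beta_n}_{t_i^j})$. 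Since $ff'$ is Lipschitz, Lemma~\ref{lem:cv1tot} reduces the convergence of $(\overline{Y}^n(2),\mathcal{M}^{n,m})$ to that of the \emph{frozen} array ${\Gamma'}^n_t(1,1)=\sum_{k=1}^m\sum_{j=1,\,j\ne k}^m\sum_{i=1}^{[nt]}\zeta^n_{i,k,j}(1,1)$, with $\zeta^n_{i,k,j}(1,1)=u_{n,m}\Delta Y_{T^{\beta_n}_1(t_i^k)}\1_{\{K(t_i^k)\ge1\}}(M^{\beta_n}_{t_i^{j+1}}-M^{\beta_n}_{t_i^j})$; the coefficient $X^n_{t_i^1}$ is reinstated at the end by the continuity argument of \cite[Theorem~1.2(d)]{a}. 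Since $\overline{Y}^n(1)\to0$ uniformly in probability in case~\ref{5}, it is moreover equivalent to work with $\overline{Y}^n$ in the first coordinate, whose jump components $\sum_i y^{n,m}_{i,k}(2)$ reconstruct $Y$ (the deterministic drift $d_n/(nm)$ being absorbed as in the proof of Lemma~\ref{lem:tight5}). The genuinely new difficulty — not present in \cite{a} nor in cases \ref{2},~\ref{4} — is that within each block $i$ the small--jump martingale increments are \emph{shared} by all the components $k$, so the $m(m-1)$ ordered sub--arrays indexed by the pairs $(k,j)$, $j\ne k$, together with the $m$ jump components, form a dependent family.

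Next I would introduce the $\R^{\,m(m-1)+m}$--valued process whose coordinates are the partial sums $\bigl(\sum_i y^{n,m}_{i,k}(2)\bigr)_k$ and $\bigl(\sum_i\zeta^n_{i,k,j}(1,1)\bigr)_{j\ne k}$. By Lemma~\ref{lem:tight5}, Lemma~\ref{lem:add} and the second part of Lemma~\ref{lem:tightrest} this vector is tight, so by the subsequence principle it suffices to identify the law of every weakly convergent subsequence. Following \cite[Ex.~12.8--12.10]{d}, that law is pinned down once one knows that the limit has no Gaussian part — which follows from the estimate $|z_{n,m}(x,u)|\le\frac Cn|uu_{n,m}x|^\alpha$ of \eqref{eq:tool}, used exactly as in the proof of Lemma~\ref{lem:tight5} — together with the pairwise limit laws. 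For each pair of coordinates I would invoke Lemma~\ref{lem:cv}, computing $nK^{\cdot}_{n,m}(h)$ for the test functions $h_{u,v}=\1_{\{|x|\ge u,\,|y|\ge v\}}$, $h'(x,y)=x\1_{\{x^2+y^2\le1\}}$, $h''(x,y)=y\1_{\{x^2+y^2\le1\}}$ and $h_1,h_2,h_3$ the three quadratic monomials times $\1_{\{x^2+y^2\le1\}}$, with Property~\ref{I}, Property~\ref{M}, the estimates $c(\beta)\sim\frac{\alpha\theta}{2-\alpha}\beta^{2-\alpha}$, $\rho_n\sim\alpha\theta\log(1/\beta_n)$, $\theta_\pm(\beta)\sim\theta_\pm\beta^{-\alpha}$ from \eqref{h2} and \eqref{eq:3.5}, and the normalizations $u_{n,m}=[mn/((m-1)\log n)]^{1/\alpha}$, $\beta_n=\log n/n^{1/(2\alpha)}$, $\lambda_{n,m}=\theta(\beta_n)/(nm)$, $u_{n,m}\beta_n\to\infty$. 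The outcome should be that \emph{every} pairwise limit L\'evy measure is carried by the union of the coordinate axes: for $(\sum y_{\cdot,k}(2),\sum y_{\cdot,k'}(2))$ this is obvious; for a mixed pair $(\sum y_{\cdot,k}(2),\sum\zeta_{\cdot,k',j'}(1,1))$ and for a pair $(\sum\zeta_{\cdot,k,j}(1,1),\sum\zeta_{\cdot,k',j'}(1,1))$ with $(k,j)\ne(k',j')$ it holds because the event making a $\zeta$--coordinate of order one forces the associated big jump to have size $O(\beta_n)\to0$, so the other coordinate vanishes in the limit. By \cite[Ex.~12.8--12.10]{d} the limiting vector thus has independent coordinates, and the criteria of Kallenberg \cite[Theorem~15.14 and Corollary~15.16]{e} upgrade this pairwise information to weak convergence of the whole vector.

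It then remains to identify each marginal. Conditioning on the size $x$ of the big jump in interval $k$ and using Property~\ref{M} together with \eqref{eq:tool}, one shows that $\sum_i\zeta^n_{i,k,j}(1,1)$ converges to an $\alpha$--stable L\'evy process with L\'evy density $\frac{\alpha}{2m(m-1)}\bigl[(\theta_+^2+\theta_-^2)\1_{\{y>0\}}+2\theta_+\theta_-\1_{\{y<0\}}\bigr]|y|^{-1-\alpha}$: the power $|y|^{-1-\alpha}$ comes from the heavy tail of $u_{n,m}x$ aggregated over $\sim n$ blocks against $\rho_n\sim\alpha\theta\log(1/\beta_n)$, while the asymmetric weights come from tracking signs — the sign of $x\,(M^{\beta_n}_{t_i^{j+1}}-M^{\beta_n}_{t_i^j})$ is governed jointly by the sign of the big jump (one--sided tails $\theta_\pm$) and by the sign of the compensated small--jump increment (whose one--sided tails also scale with $\theta_\pm$), so same--sign pairings produce $\theta_+^2+\theta_-^2$ and opposite--sign ones $2\theta_+\theta_-$. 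Summing the $m(m-1)$ independent contributions and using the independence from the jump components (which give $\sum_i y^{n,m}_{i,k}(2)\to Y_1$ up to the drift) yields $(\overline{Y}^n(2),{\Gamma'}^n(1,1))\stackrel{\mathcal{L}}{\longrightarrow}(Y,V)$ with $V$ characterized by \eqref{eq:1.8}; reinstating the coefficient via \cite[Theorem~1.2(d)]{a} gives $(\overline{Y}^n(2),\mathcal{M}^{n,m})\stackrel{\mathcal{L}}{\longrightarrow}(Y,Z)$ with $Z_t=\int_0^t f(X_{s-})f'(X_{s-})\,dV_s$. I expect the main obstacle to be the pairwise step: controlling the joint law of the $m(m-1)$ sub--arrays that all feed on the same small--jump martingale, which is precisely why the argument has to go through the pairwise reduction of \cite{d} and the Kallenberg criteria rather than a direct multidimensional computation.
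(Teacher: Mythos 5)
Your reduction (Lemma \ref{lem:cv1tot}, freezing $ff'$, working with ${\Gamma'}^n(1,1)$ and $\overline{Y}^n(2)$) matches the paper, but from there you take a genuinely different route. The paper does \emph{not} decompose $\mathcal{M}^{n,m}$ into the $m(m-1)$ sub-arrays indexed by $(k,j)$ and does not invoke the Sato/Kallenberg pairwise-independence machinery in case \ref{5}. Instead it computes the joint characteristic function $\mathbb{E}(e^{\texttt{i}(u\overline{Y}^n_1(2)+v{\Gamma'}^n_1(1,1))})$ in closed form: conditioning on $\sigma(M^{\beta_n}_{t_i^{j+1}}-M^{\beta_n}_{t_i^j},\,j\le m)$ and using Properties \ref{I} and \ref{M}, the whole pair reduces to $e^{\texttt{i}ud_n}\bigl(1+\tfrac{1-e^{-\lambda_{n,m}}}{n\lambda_{n,m}}\int_{|x|>\beta_n}(e^{\texttt{i}ux+(m-1)z_{n,m}(x,v)}-1)F(dx)+\text{remainders}\bigr)^n$, after which the cross term $C_{n,m}(u,v)\to0$ gives independence of $Y$ and $V$ directly, with no pairwise step at all. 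This is precisely why the shared small-jump martingale, which you correctly identify as the new difficulty, causes no trouble in the paper: it is never split apart.

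Within your route there is a concrete gap at the identification step. To apply Lemma \ref{lem:cv} to a single sub-array $\zeta^n_{i,k,j}(1,1)=u_{n,m}\Delta Y_{T_1^{\beta_n}(t_i^k)}\1_{\{K(t_i^k)\ge1\}}(M^{\beta_n}_{t_i^{j+1}}-M^{\beta_n}_{t_i^j})$ you must compute $n\mathbb{E}(h(\zeta^n_{1,k,j}))$ for the truncation test functions, and the second factor is the full compensated small-jump increment, whose law is available only through its characteristic function. The tool you cite, the bound $|z_{n,m}(x,v)|\le \tfrac{C}{n}|vu_{n,m}x|^{\alpha}$ from \eqref{eq:tool}, is an upper bound: it suffices for tightness and for killing remainders, but it cannot identify any limit, let alone produce the asymmetric weights $(\theta_+^2+\theta_-^2)$ and $2\theta_+\theta_-$. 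What is actually needed is the exact vague-convergence analysis of the measure $h\mapsto\tfrac{1}{nm}\int_{|x|>\beta_n}\int_{|y|\le\beta_n}h(u_{n,m}xy)F(dy)F(dx)$ against $h_\omega=\1_{(\omega,\infty)}$, ${h'}_\omega=\1_{(-\infty,-\omega)}$, $x^2\1_{\{|x|\le1\}}$ and $x\1_{\{|x|>1\}}$ (via Theorem \ref{thm:cv}), using $\theta_\pm(\beta)\sim\theta_\pm\beta^{-\alpha}$, $\rho_\pm(\beta)\sim\alpha\theta_\pm\log(1/\beta)$ and $d'(\beta)\sim\tfrac{\alpha}{\alpha-1}\theta'\beta^{1-\alpha}$ — this is the ${B'}_{n,m}$ computation that forms the bulk of the paper's proof, and your sketch replaces it with a heuristic about "signs of the compensated increment". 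You would also need to justify passing from the characteristic-function description of $M^{\beta_n}_{t_i^{j+1}}-M^{\beta_n}_{t_i^j}$ to the tail asymptotics required by assertion (i) of Lemma \ref{lem:cv} (a "one dominant small jump" principle), a step the paper avoids entirely by staying in characteristic-function form. Your pairwise-independence claims (axis-supported pairwise Lévy measures) do look correct, since simultaneous excesses cost an extra factor $\lambda_{n,m}$ or an extra $O(1/n)$ tail event, but without the marginal identification the argument is incomplete.
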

	 \begin{proof}
	 First, we denote ${\Gamma'}^n_t(1,1)=u_{n,m}\sum_{i=1}^{[nt]}\sum_{k=1}^{m}\Delta Y_{T^{\beta_n}_1(t_i^k)}\1_{\{K(t_i^k)\geq1\}}\tilde{M}^{n,m}_{i,k}$. Then, as $ff'$ is Lipschitz-continuous, by virtue of Lemma \ref{lem:cv1tot},  in order to prove the convergence in law of the pair $(\overline{Y}^n(2),\mathcal{M}^{n,m})$, it is enough to consider the convergence of the pair $(\overline{Y}^n_1(2),{\Gamma'}^n_1(1,1))$ with  $\overline{Y}^n_1(2)$ given in \eqref{Y}. 
	 By  the independence structure, for $u$ and $v$ in $\R$, we have
	\begin{align*}
		&\mathbb{E}(e^{\texttt{i}(u\overline{Y}^n_1(2)+v{\Gamma'}^n_1(1,1))})
		=e^{\texttt{i}ud_{n}}\mathbb{E}(e^{\texttt{i}\sum_{i=1}^{n}\sum_{k=1}^{m}\Delta Y_{T^{\beta_n}_1(t_i^k)}\1_{\{K(t_i^k)\geq1\}}(u+vu_{n,m}\tilde{M}^{n,m}_{i,k})})\\
		=&e^{\texttt{i}ud_{n}}\prod_{i=1}^n\mathbb{E}(e^{\texttt{i}\sum_{k=1}^m\Delta Y_{T^{\beta_n}_1(t_i^k)}\1_{\{K(t_i^k)\geq1\}}(u+vu_{n,m}\tilde{M}^{n,m}_{i,k})}).
	\end{align*}
For $i\in\{1,\dots,n\}$ fixed, again by tower property, $\mathbb{E}(e^{\texttt{i}\sum_{k=1}^{m}\Delta Y_{T^{\beta_n}_1(t_i^k)}\1_{\{K(t_i^k)\geq1\}}(u+vu_{n,m}\tilde{M}^{n,m}_{i,k})})$ equals to
	\begin{align*}
&\mathbb{E}(\mathbb{E}(e^{\texttt{i}\sum_{k=1}^{m}\Delta Y_{T^{\beta_n}_1(t_i^k)}\1_{\{K(t_i^k)\geq1\}}(u+vu_{n,m}\tilde{M}^{n,m}_{i,k})}|\sigma(M_{t^{j+1}_i}^{\beta_{n}}-M_{t^{j}_i}^{\beta_{n}},j\in\{1,\dots,m\})))\\
			=&\mathbb{E}(\prod_{k=1}^{m}\mathbb{E}(e^{\texttt{i}\Delta Y_{T^{\beta_n}_1(t_i^k)}\1_{\{K(t_i^k)\geq1\}}(u+vu_{n,m}\tilde{M}^{n,m}_{i,k})}|\sigma(M_{t^{j+1}_i}^{\beta_{n}}-M_{t^{j}_i}^{\beta_{n}},j\in\{1,\dots,m\}))).
	\end{align*}
	For  $k\in\{1,\dots,m\}$ fixed, 	note that $\tilde{M}^{n,m}_{i,k}$ is the martingale part of the small jumps which is $\sigma(M_{t^{j+1}_i}^{\beta_{n}}-M_{t^{j}_i}^{\beta_{n}},j\in\{1,\dots,m\})$-measurable, independent of $K(t_i^k)$ and $\Delta Y_{T^{\beta_n}_1(t^{k}_i)}$  then by  \ref{I}  we get that  equals to  
	\begin{multline*}	
	\mathbb{E}(e^{\texttt{i}\Delta Y_{T^{\beta_n}_1(t^{k}_i)}\1_{\{K(t_i^k)\geq1\}}(u+vu_{n,m}\tilde{M}^{n,m}_{i,k})}|\sigma(M_{t^{j+1}_i}^{\beta_{n}}-M_{t^{j}_i}^{\beta_{n}},j\in\{1,\dots,m\})\vee\mathcal{F}_{t_i^k})\\	=e^{-\lambda_{n,m}}+\frac{1-e^{-\lambda_{n,m}}}{\theta(\beta_{n})}\int_{|x|>\beta_{n}}e^{\texttt{i}x(u+vu_{n,m}\tilde{M}^{n,m}_{i,k})}F(dx).
	\end{multline*}
	Therefore, using the independence structure of $(\tilde{M}^{n,m}_{i,k})_{i\in\{1,\dots,n\}}$ we can easily see by \ref{M} that for all $i\in\{1,\dots,n\}$  $\tilde{M}^{n,m}_{i,k}$ has the same distribution as $\tilde{M}^{n,m}_{1,k}$. Thus, we get
	\begin{align*}
	&	\mathbb{E}(e^{\texttt{i}(u\overline{Y}^n_1(2)+v{\Gamma'}^n_1(1,1))})
				=e^{\texttt{i}ud_{n}}\left[\mathbb{E}(\prod_{k=1}^{m}(1+\frac{1-e^{-\lambda_{n,m}}}{\theta(\beta_{n})}\int_{|x|>\beta_{n}}(e^{\texttt{i}x(u+vu_{n,m}\tilde{M}^{n,m}_{1,k})}-1))F(dx))\right]^n\\
		=&e^{\texttt{i}ud_{n}}\left[\mathbb{E}\left(\prod_{k=1}^{m}\exp{\left(\log{(1+\frac{1-e^{-\lambda_{n,m}}}{\theta(\beta_{n})}\int_{|x|>\beta_{n}}(e^{\texttt{i}x(u+vu_{n,m}\tilde{M}^{n,m}_{1,k})}-1)F(dx))}\right)}\right)\right]^n\\
		=&e^{\texttt{i}ud_{n}}\left[\mathbb{E}\left(\exp{\left(\sum_{k=1}^{m}\log{(1+\frac{1-e^{-\lambda_{n,m}}}{\theta(\beta_{n})}\int_{|x|>\beta_{n}}(e^{\texttt{i}x(u+vu_{n,m}\tilde{M}^{n,m}_{1,k})}-1)F(dx))}\right)}\right)\right]\\
	\nonumber
		=&e^{\texttt{i}ud_{n}}\left[\mathbb{E}\left(\exp{\left(\sum_{k=1}^{m}\frac{1-e^{-\lambda_{n,m}}}{\theta(\beta_{n})}\int_{|x|>\beta_{n}}F(dx)(e^{\texttt{i}x(u+vu_{n,m}\tilde{M}^{n,m}_{1,k})}-1)+\sum_{k=1}^{m}\mathcal{R}_{1,k}^{n,m}\right)}\right)\right]^n\\\nonumber
		=&e^{\texttt{i}ud_{n}}\left[\mathbb{E}\left(1+\sum_{k=1}^{m}\frac{1-e^{-\lambda_{n,m}}}{\theta(\beta_{n})}\int_{|x|>\beta_{n}}F(dx)(e^{\texttt{i}x(u+vu_{n,m}\tilde{M}^{n,m}_{1,k})}-1)+\sum_{k=1}^{m}\mathcal{R}_{1,k}^{n,m}+\mathcal{R}_1^n\right)\right]^n,
	\end{align*}
where
\begin{align*}
	&\mathcal{R}_{1,k}^{n,m}=\log{\left(1+\frac{1-e^{-\lambda_{n,m}}}{\theta(\beta_{n})}\int_{|x|>\beta_{n}}F(dx)(e^{\texttt{i}x(u+vu_{n,m}\tilde{M}^{n,m}_{1,k})}-1)\right)}\\
	&\hspace{5cm}-\frac{1-e^{-\lambda_{n,m}}}{\theta(\beta_{n})}\int_{|x|>\beta_{n}}F(dx)(e^{\texttt{i}x(u+vu_{n,m}\tilde{M}^{n,m}_{1,k})}-1),\\
	&\mathcal{R}_1^n=\exp{\left(\sum_{k=1}^{m}\frac{1-e^{-\lambda_{n,m}}}{\theta(\beta_{n})}\int_{|x|>\beta_{n}}F(dx)(e^{\texttt{i}x(u+vu_{n,m}\tilde{M}^{n,m}_{1,k})}-1)+\sum_{k=1}^{m}\mathcal{R}_{1,k}^{n,m}\right)}-1\\&\hspace{5cm}-\sum_{k=1}^{m}\frac{1-e^{-\lambda_{n,m}}}{\theta(\beta_{n})}\int_{|x|>\beta_{n}}F(dx)(e^{\texttt{i}x(u+vu_{n,m}\tilde{M}^{n,m}_{1,k})}-1)-\sum_{k=1}^{m}\mathcal{R}_{1,k}^{n,m}.
\end{align*}
Now, thanks to property \ref{M} we have that $\mathbb{E}(e^{\texttt{i}(u\overline{Y}^n_1(2)+v{\Gamma'}^n_1(1,1))})$ equals to
	\begin{align}\label{eq:main}
	\nonumber
	&e^{\texttt{i}ud_{n}}\left(1+\sum_{k=1}^{m}\frac{1-e^{-\lambda_{n,m}}}{\theta(\beta_{n})}\int_{|x|>\beta_{n}}(\mathbb{E}(e^{\texttt{i}x(u+vu_{n,m}\tilde{M}^{n,m}_{1,k})})-1)F(dx)+\sum_{k=1}^{m}\mathbb{E}(\mathcal{R}_{1,k}^{n,m})+\mathbb{E}(\mathcal{R}_1^n)\right)^n\\\nonumber
	=&e^{\texttt{i}ud_{n}}\left(1+\sum_{k=1}^{m}\frac{1-e^{-\lambda_{n,m}}}{\theta(\beta_{n})}\int_{|x|>\beta_{n}}(e^{\texttt{i}ux}\prod_{j=1,j\neq k}^{m}\mathbb{E}(e^{\texttt{i}vu_{n,m}x(M^{\beta_{n}}_{t_{j+1}^1}-M^{\beta_{n}}_{t_{j}^1})})-1)F(dx)\right.\\\nonumber
	&\left.\hspace{11cm}+\sum_{k=1}^{m}\mathbb{E}(\mathcal{R}_{1,k}^{n,m})+\mathbb{E}(\mathcal{R}_1^n)\right)^n\\
	=&e^{\texttt{i}ud_{n}}\left(1+\frac{1-e^{-\lambda_{n,m}}}{n\lambda_{n,m}}\int_{|x|>\beta_{n}}(e^{\texttt{i}ux+(m-1)z_{n,m}(x,v)}-1)F(dx)+\sum_{k=1}^{m}\mathbb{E}(\mathcal{R}_{1,k}^{n,m})+\mathbb{E}(\mathcal{R}_1^n)\right)^n,
\end{align}
	where 
$z_{n,m}(x,v)=\frac{1}{nm}\int_{|y|\leq\beta_{n}}(e^{\texttt{i} vu_{n,m} xy}-1-\texttt{i}vu_{n,m}xy)F(dy).$ 	Now, using $|e^{\texttt{i}x(u+vu_{n,m}\tilde{M}^{n,m}_{1,k})}-1|\le2$ and $1-e^{-\lambda_{n,m}}\le\lambda_{n,m}$, then, it is easy to check that for $n$ large enough, we have
\begin{align*}
	\left|\frac{1-e^{-\lambda_{n,m}}}{\theta(\beta_{n})}\int_{|x|>\beta_{n}}(e^{\texttt{i}x(u+vu_{n,m}\tilde{M}^{n,m}_{1,k})}-1)F(dx)\right|\le C\lambda_{n,m}\leq \frac{1}{2}.
\end{align*}
	From this, for any $k\in\{1,\dots,m\}$, using  the first evaluation in \ref{eval} and \eqref{h1}, we have
	\begin{align*}
		n|\mathbb{E}(\mathcal{R}_{1,k}^{n,m})|&\leq n\mathbb{E} \left[\frac{1-e^{-\lambda_{n,m}}}{nm\lambda_{n,m}}\int_{|x|>\beta_{n}}|e^{\texttt{i}x(u+vu_{n,m}\tilde{M}^{n,m}_{1,k})}-1|F(dx)\right]^2\leq C\frac{(\theta(\beta_n))^2}{n} \le \frac{C}{n\beta_{n}^{2\alpha}}
	\end{align*}
which converges to $0$ as $n\rightarrow\infty$ by the choice $\beta_n=\frac{\log{n}}{n^{1/(2\alpha)}}$.
Similarly, by the second evaluation in \ref{eval}, we have
	\begin{align*}
	&	n|\mathbb{E}(\mathcal{R}_1^n)|\leq n\mathbb{E}\left(\sum_{k=1}^{m}\frac{1-e^{-\lambda_{n,m}}}{\theta(\beta_{n})}\int_{|x|>\beta_{n}}|e^{\texttt{i}x(u+vu_{n,m}\tilde{M}^{n,m}_{1,k})}-1|F(dx)+\sum_{k=1}^{m}|\mathcal{R}_{1,k}^{n,m}|\right)^2\\
		&\leq C \left(n\mathbb{E}(\frac{1-e^{-\lambda_{n,m}}}{nm\lambda_{n,m}}\int_{|x|>\beta_{n}}\hspace{-0.5cm}|e^{\texttt{i}x(u+vu_{n,m}\tilde{M}^{n,m}_{1,k})}-1|F(dx))^2+n\mathbb{E}(|\mathcal{R}_{1,k}^{n,m}|)^2\right)
		\le \frac{C}{n\beta_{n}^{2\alpha}}+\frac{C}{n^3\beta_{n}^{4\alpha}}
	\end{align*}
which converges to $0$ as $n\rightarrow\infty$ by the choice $\beta_n=\frac{\log{n}}{n^{1/(2\alpha)}}$.
	Now, concerning the main term inside the bracket of \eqref{eq:main}, we have $\int_{|x|>\beta_{n}}(e^{\texttt{i}ux+(m-1)z_{n,m}(x,v)}-1)F(dx)=A_{n,m}(u)+B_{n,m}(v)+C_{n,m}(u,v)$ where
	\begin{align*}\left\{\begin{array}{l}
		A_{n,m}(u)=\int_{|x|>\beta_{n}}(e^{\texttt{i}ux}-1)F(dx),\quad
		B_{n,m}(v)=\int_{|x|>\beta_{n}}(e^{(m-1)z_{n,m}(v,x)}-1)F(dx),\\
		C_{n,m}(u,v)=\int_{|x|>\beta_{n}}(e^{\texttt{i}ux}-1)(e^{(m-1)z_{n,m}(x,v)}-1)F(dx).
		\end{array}\right.
	\end{align*}
  Since $1-e^{-\lambda_{n,m}}\sim\lambda_{n,m}$, it is enough to prove that the three terms $A_{n,m}(u)+iud_n$, $C_{n,m}(u,v)$ and $B_{n,m}(v)$ converge.
  	\paragraph{\textbf{Concerning $A_{n,m}(u)+\texttt{i}ud_{n}$.}} As $d_{n}=b-\int_{\beta_n<|x|\le1}xF(dx)$, this term is equal to
  \begin{align*}
  	\texttt{i}ub+\int_{|x|>\beta_n} (e^{\texttt{i}ux}-1-\texttt{i}ux\1_{|x|\leq 1})F(dx)\underset{n\rightarrow\infty}{\longrightarrow} \texttt{i}ub+\int (e^{\texttt{i}ux}-1-\texttt{i}ux1_{|x|\leq 1})F(dx).
  \end{align*} 
	\paragraph{\textbf{Concerning $C_{n,m}(u,v)$.}}By \eqref{eq:tool},  $|z_{n,m}(x,v)|\le \frac{C}{n}|vu_{n,m}x|^{\alpha}$ and the suprema of $|z_{n,m}(x,v)|$ over all $|x|\le p$ and $|v|\le 1$ go to $0$ as $n$ tends to $0$. 
Now, using \ref{eval}, we have  $|e^{(m-1)z_{n,m}(x,v)}-1|\le C |z_{n,m}(x,v)|$ and $|e^{iux}-1|\le C|ux|$. Then, since $x\mapsto|x|^{\alpha+1}$ is $F$-integrable as $\alpha>1$ (see Remark \ref{rmk:1}), we get $|C_{n,m}(u,v)|\leq \frac{C}{nm}|u||v|^{\alpha}u_{n,m}^{\alpha}\underset{n\rightarrow\infty}{\longrightarrow}0.
$
	\paragraph{\textbf{Concerning $B_{n,m}(v)$}}
	We rewrite $B_{n,m}(v)={B'}_{n,m}(v)+{B''}_{n,m}(v)$, with
	\begin{align*}\left\{\begin{array}{l}
		{B'}_{n,m}(v)=\int_{|x|>\beta_{n}}(m-1)z_{n,m}(x,v)F(dx),\\
		{B''}_{n,m}(v)=\int_{|x|>\beta_{n}}(e^{(m-1)z_{n,m}(x,v)}-1-(m-1)z_{n,m}(x,v))F(dx).
		\end{array}\right.
	\end{align*}
First, by same arguments as above, we get $|{B''}_{n,m}(v)|\leq \frac{C}{n^2}|v|^{2\alpha}u_{n,m}^{2\alpha}$, hence, 
	$
	{B''}_{n,m}(v)\underset{n\rightarrow\infty}{\longrightarrow}0.
	$
	Second, note that ${B'}_{n,m}(v)=\int (e^{ivx}-1-ivx)K_{n,m}(dx)$, where 
	\begin{align*}
		K_{n,m}(h)&=\frac{m-1}{nm}\int_{|x|>\beta_{n}}\int_{|y|\leq\beta_{n}}h(u_{n,m}xy)F(dy)F(dx)
	\end{align*}
with some function $h$.
	We will prove that 
	$
	\int (e^{\texttt{i}vx}-1-\texttt{i}vx)K_{n,m}(dx)\underset{n\rightarrow\infty}{\longrightarrow}\int (e^{\texttt{i}vx}-1-\texttt{i}vx)K(dx),
	$
	with $$K(dx)=\frac{\alpha}{2}((\theta_+^2+\theta_-^2)\1_{\{x>0\}}+2\theta_+\theta_-\1_{\{x<0\}})\frac{1}{|x|^{1+\alpha}}dx.$$ To do so, we use Theorem \ref{thm:cv}, then it is reduced to prove that $K_{n,m}(h)\underset{n\rightarrow\infty}{\longrightarrow} K(h)$ for $h$ equal either to $h_{\omega}=\1_{(\omega,\infty)}$ for $\omega>0$, or ${h'}_{\omega}=\1_{(-\infty,-\omega)}$ for $\omega>0$, or $h'(x)=x^2\1_{\{|x|\leq1\}}$, or $h''(x)=x\1_{\{|x|>1\}}$.
	\paragraph*{$\bullet$ \textit{First case $h=h_{\omega}$}}
	Since $u_{n,m}\beta_{n}^2\underset{n\rightarrow\infty}{\longrightarrow}\infty$, then for $n$ large enough such that $\frac{\omega}{\beta_{n} u_{n,m}}<\beta_{n}$.
	 Then, we rewrite $K_{n,m}(h_{\omega})=y_{n,m}^1+y_{n,m}^2,$ where
	 \begin{align*}\left\{\begin{array}{l} y_{n,m}^1=\frac{m-1}{nm}\int_{x>\beta_{n}}(\theta_+(\frac{\omega}{u_{n,m}x})-\theta_+(\beta_{n}))F(dx)\\ y_{n,m}^2=\frac{m-1}{nm}\int_{x<-\beta_{n}}(\theta_-(\frac{-\omega}{u_{n,m}x})-\theta_-(\beta_{n}))F(dx).\end{array}\right.\end{align*} 
	 Note that by \eqref{h1} we have $\frac{(\theta_\pm(\beta_n))^2}{n}\le\frac{C}{n\beta_n^{2\alpha}}\underset{n\rightarrow\infty}{\longrightarrow}0$.
	 Further, as $\frac{\omega}{\beta_{n}u_{n,m}x}\underset{n\rightarrow\infty}{\longrightarrow}0$ uniformly on $\{x>\beta_{n}\}$, using \eqref{h2}, $\frac{\rho_+(\beta_{n})}{\log{(1/\beta_{n})}}\underset{n\rightarrow\infty}{\longrightarrow}\alpha\theta_+$ (see $\eqref{eq:3.5}$), we get 
	$
	y_{n,m}^1\sim\frac{m-1}{nm}\int_{x>\beta_{n}}\frac{\theta_+u_{n,m}^{\alpha}x^{\alpha}}{\omega^{\alpha}}F(dx)\sim\frac{\alpha\theta_+^2}{\omega^{\alpha}}\frac{(m-1)u_{n,m}^{\alpha}}{nm}\log{\frac{1}{\beta_{n}}}\underset{n\rightarrow\infty}{\longrightarrow} \frac{\theta_+^2}{2\omega^{\alpha}}$ by the choices $u_{n,m}=\left[\frac{mn}{(m-1)\log{n}}\right]^{1/\alpha}$ and $\beta_n=\frac{\log{n}}{n^{1/(2\alpha)}}$.
 Similarly, we have ${y}_{n,m}^2\underset{n\rightarrow\infty}{\longrightarrow}\frac{\theta_-^2}{2\omega^{\alpha}}$, so $K_{n,m}(h_{\omega})\underset{n\rightarrow\infty}{\longrightarrow} K(h_{\omega})$.
	\paragraph*{$\bullet$ \textit{Second case $h={h'}_{\omega}$}}
	At first, similarly, we rewrite $K_{n,m}(h_{\omega})={y'}_{n,m}^1+{y'}_{n,m}^2,$ where
	\begin{align*}\left\{\begin{array}{l} {y'}_{n,m}^1=\frac{m-1}{nm}\int_{x>\beta_{n}}(\theta_-(\frac{\omega}{u_{n,m}x})-\theta_-(\beta_{n}))F(dx)\\ {y'}_{n,m}^2=\frac{m-1}{nm}\int_{x<-\beta_{n}}(\theta_+(\frac{-\omega}{u_{n,m}x})-\theta_+(\beta_{n}))F(dx).\end{array}\right.\end{align*}
	Using the same arguments as above, we easily get  ${y'}_{n,m}^1\sim \frac{m-1}{nm}\int_{x>\beta_{n}}\frac{\theta_-u_{n,m}^{\alpha}x^{\alpha}}{(-\omega)^{\alpha}}F(dx)\sim\frac{\alpha\theta_-\theta_+u_{n,m}^{\alpha}(m-1)}{(-\omega)^{\alpha}nm}\log{1/\beta_n}\underset{n\rightarrow\infty}{\longrightarrow}\frac{\theta_+\theta_-}{2(-\omega)^{\alpha}}$ and also similarly ${y'}_{n,m}^2\underset{n\rightarrow\infty}{\longrightarrow}\frac{\theta_+\theta_-}{2(-\omega)^{\alpha}}$. Therefore, we have that $K_{n,m}({h'}_{\omega})\underset{n\rightarrow\infty}{\longrightarrow} K({h'}_{\omega})$.
	\paragraph*{$\bullet$ \textit{Third case $h=h'$}}For $n$ large enough such that $\frac{1}{\beta_{n} u_{n,m}}<\beta_{n}$, as $\frac{1}{u_{n,m}|x|}\underset{n\rightarrow\infty}{\longrightarrow}0$ uniformly on $\{|x|>\beta_{n}\}$, by $	c(\beta)\sim\frac{\alpha\theta}{2-\alpha}\beta^{2-\alpha}$ and $\rho(\beta)\sim\alpha\theta\log{(1/\beta)}$ for $\beta\rightarrow0$ (see $\eqref{eq:3.5}$), we have 
	\begin{multline*}
		K_{n,m}(h')=\frac{m-1}{nm}\int_{|x|>\beta_{n}}\int_{|y|\leq\frac{1}{u_{n,m}|x|}}u_{n,m}^2x^2y^2F(dy)F(dx)\\
		\sim\frac{m-1}{nm}\int_{|x|>\beta_{n}}\frac{\alpha\theta}{2-\alpha}u_{n,m}^{\alpha}|x|^{\alpha}F(dx)\sim\frac{\alpha^2\theta^2}{2-\alpha}\frac{(m-1)u_{n,m}^{\alpha}}{nm}\log{\frac{1}{\beta_{n}}}\underset{n\rightarrow\infty}{\longrightarrow}\frac{\alpha\theta^2}{2(2-\alpha)}=K(h').
	\end{multline*}
	\paragraph*{$\bullet$ \textit{Last case $h=h''$}}For $n$ large enough such that $\frac{1}{\beta_{n} u_{n,m}}<\beta_{n}$, we have
	\begin{align*}
		K_{n,m}(h'')&=\frac{m-1}{nm}\int_{|x|>\beta_{n}}\int_{\frac{1}{u_{n,m}|x|}<|y|\leq\beta_{n}}u_{n,m}xyF(dy)F(dx)\\
		&=\frac{(m-1)u_{n,m}}{nm}\int_{|x|>\beta_{n}}x\left(\int_{|y|>\frac{1}{u_{n,m}x}}yF(dy)-{d'}_n\right)F(dx).
	\end{align*}
At first, note that by $	{d'}_n\le C\beta_n^{1-\alpha}$ (see  \eqref{eq:3.3}), we have $\frac{(m-1)u_{n,m}{d'}_n^2}{mn}\underset{n\rightarrow\infty}{\longrightarrow}0$ and $K_{n,m}(h'')\sim\frac{(m-1)u_{n,m}}{nm}\int_{|x|>\beta_{n}}\int_{|y|>\frac{1}{u_{n,m}x}}xyF(dy)F(dx)$ for $n\rightarrow\infty$. Then, as $\frac{1}{u_{n,m}|x|}\underset{n\rightarrow\infty}{\longrightarrow}0$ uniformly on $\{|x|>\beta_{n}\}$, using $	{d'}(\beta)\sim\frac{\alpha}{\alpha-1}\theta'\beta^{1-\alpha}$, $\rho_+(\beta)\sim\alpha\theta_+\log{(1/\beta)}$ and $\rho_-(\beta)\sim\alpha\theta_-\log{(1/\beta)}$ for $\beta\rightarrow0$ (see $\eqref{eq:3.5}$),  we get for $n\to\infty$
	\begin{align*}
	K_{n,m}(h'')&
	\sim \frac{(m-1)\alpha\theta'u_{n,m}^{\alpha}}{nm(\alpha-1)}\big(\int_{x>\beta_{n}}|x|^{\alpha}F(dx)-\int_{x<-\beta_{n}}|x|^{\alpha}F(dx)\big)\\
	&\sim\frac{m-1}{nm}\frac{\alpha^2{\theta'}^2}{\alpha-1}u_{n,m}^{\alpha}\log{\frac{1}{\beta_{n}}}\underset{n\rightarrow\infty}{\longrightarrow}\frac{\alpha{\theta'}^2}{2(\alpha-1)}=K(h'').
\end{align*}
Therefore, we get
	$$
	{B'}_{n,m}(v)\underset{n\rightarrow\infty}{\longrightarrow}\int\frac{\alpha}{2}[(\theta_+^2+\theta_-^2)\1_{\{x>0\}}+2\theta_+\theta_-\1_{\{x<0\}}]\frac{1}{|x|^{1+\alpha}}(e^{\texttt{i}vx}-1-\texttt{i}vx)dx.
$$
	Finally, we have
	\begin{multline*}
		\mathbb{E}(\exp{(\texttt{i}(u\overline{Y}^n_1(2)+v{\Gamma'}^n_1(1,1)))})\underset{n\rightarrow\infty}{\longrightarrow}\exp\left\{\texttt{i}ub+\int F(dx)(e^{\texttt{i}ux}-1-\texttt{i}ux\1_{\{|x|\leq 1\}})+\right.\\\left.\frac{\alpha}{2}[(\theta_+^2+\theta_-^2)\1_{\{x>0\}}+2\theta_+\theta_-\1_{\{x<0\}}]\frac{1}{|x|^{1+\alpha}}(e^{\texttt{i}vx}-1-\texttt{i}vx)dx\right\},
	\end{multline*} 
which completes the proof.
	\end{proof}
\subsection{Conclusion} The challenge ahead is to apply these approximations to obtain a central limit theorem type for the multilevel Monte Carlo method  with the stochastic differential equation \eqref{eq:1.1} driven by a pure jump L\'evy process, in the spirit of the ones established by Ben Alaya and Kebaier  \cite{k} and Ben Alaya, Kebaier and Ngo  \cite{BAKN} for the case of a diffusion process, Dereich and Li  \cite{DereichLi} for the case of jump-diffusion process and Giorgi et al.\ \cite{Gior} for the case of nested Multilevel Monte Carlo. We keep this work for a future research.  

\appendix
\section{Proof of lemmas \ref{lem:rest1}, \ref{lem:rest2}, \ref{lem:rest3}, \ref{lem:rest5}  concerning the rest terms}\label{app:A}
Note that throughout this section, $C$ is a generic constant (may depending on $m$) which can be changed from line to line.
\subsection{Proof of Lemma \ref{lem:rest1}} 
Here, we prove that 
the sequences of processes $(\overline{Y}^n(1))_{n\ge0}$ and  $(\mathcal{R}^{n,m})_{n\ge0}$ converge uniformly in probability to $0$ as $n\rightarrow\infty$. First, instead of considering the form  $\overline{Y}^{n}_t(1)=\sum_{i=1}^{[nt]}\sum_{k=1}^m(M^{\beta_n}_{t_i^k,t_i^{k+1}}+\sum_{j\geq2}\Delta Y_{T^{\beta_n}_{ \s j}( t_i^k)}\1_{\{K({\s t_i^k})\geq j\}})$, it is enough to prove that for each $k\in\{1,\dots,m\}$ the triangular arrays with generic terms $y^{n,m}_{i,k}(1,1)=M^{\beta_n}_{t_i^k,t_i^{k+1}}$ and $y^{n,m}_{i,k}(1,2)=\sum_{j\geq2}\Delta Y_{T^{\beta_n}_{ \s j}( t_i^k)}\1_{\{K({\s t_i^k})\geq j\}}$ converge in probability to $0$ as $n\rightarrow\infty$. 
By Property \ref{I}, \eqref{eq:3.13} and Lemma \ref{lem:smallj}, for the first one,  we have
	\begin{align}\label{eq:tighty1}
			\mathbb{E}(y^{n,m}_{i,k}(1,1)|\mathcal{F}_{t^{1}_i})
			=0,\quad
			\mathbb{E}((y^{n,m}_{i,k}(1,1))^2|\mathcal{F}_{t^{1}_i})=\frac{c_{n}}{nm}
	\end{align}
 and therefore we conclude  using  \eqref{eq:3.13}, $c_n\le C\beta_n^{2-\alpha}$, the criteria \eqref{eq:2.8} and Lemma \ref{lem:tightrest}. For the second one,  we have
	\begin{multline}\label{eq:tighty2}
	\mathbb{E}(|y^{n,m}_{i,k}(1,2)||\mathcal{F}_{t^{1}_i})
		\le\frac{1}{\theta(\beta_{n})}\int_{|x|>\beta_n}|x|F(dx)\sum_{j\ge2}\mathbb{P}(K({\s t_i^k})\geq j|\mathcal{F}_{t^{1}_i})\\=\frac{\delta_n}{\theta(\beta_n)}(\mathbb{E}(K({\s t_i^k})|\mathcal{F}_{t^{1}_i})-\mathbb{P}(K({\s t_i^k})\ge1|\mathcal{F}_{t^{1}_i}))=\frac{\delta_n}{\theta(\beta_n)}(\lambda_{n,m}+e^{-\lambda_{n,m}}-1)\le \frac{\delta_n\lambda_{n,m}^2}{\theta(\beta_{n})}= \frac{\delta_n\lambda_{n,m}}{nm}
\end{multline}
and therefore we conclude by using \eqref{eq:3.13},  the boundedness of $\delta_n$ in case \ref{1}, the criteria \eqref{eq:2.7} and Lemma \ref{lem:tightrest}.
 Thus, it is clear that for case \ref{1},  $\overline{Y}^n(1)\stackrel{\mathbb{P}}{\rightarrow}0$.
Now, by using the formula of the rest term given by \eqref{R1}, we have 
\begin{multline*}
	\mathcal{R}^{n,m}_t	
	={\Gamma}^n_t(2,2)+{\Gamma}^n_t(2,3)+\sum_{i=3}^5\int_0^{\eta_n(t)}ff'(X_{\eta_n(s-)}^n)d\Gamma_s^n(i)\\+	\overline{\Gamma}^n_t(1,2)+\int_0^{\eta_n(t)}k(X^n_{\eta_n(s-)},Y_{\eta_{nm}(s-)}-Y_{\eta_n(s-)})d\overline{\Gamma}^n_s(3)+\overline{\Gamma}^n_t(4)+\overline{\Gamma}^n_t(5).
\end{multline*}
According to Theorem \ref{thm:ut} (iii), in order to prove the convergence of the third and the fifth terms in  the r.h.s. of the above relation, we only need to prove the convergence of each $\Gamma^n(i)$, $i\in\{3,4,5\}$ and $\overline{\Gamma}^n(3)$ to $0$ as $n\rightarrow\infty$. Now, we  prove that each term converges uniformly in probability to $0$ when $n\rightarrow\infty$.\\
\underline{\textbf{The term ${\Gamma}^n_t(2,2)$}}:
Let us rewrite ${\Gamma}^n_t(2,2)=\sum_{k=2}^m\sum_{i=1}^{[nt]}{\zeta}^n_{i,k}(2,2)$ with
\begin{align*}
	{\zeta}^n_{i,k}(2,2)&=\frac{u_{n,m}d_n}{nm}ff'(X_{t_i^1}^n)((k-1)\sum_{h=2}^{K(t_i^k)}\Delta Y_{T^{\beta_n}_h(t_i^k)}+\sum_{h=2}^{K(t^{1}_i,t_i^k)}\Delta Y_{T^{\beta_n}_h(t^{1}_i,t_i^k)}).
\end{align*}
For each $k\in\{2,\hdots,m\}$,   by property \ref{I}, \eqref{eq:3.13} and the boundedness of $ff'$, similarly to the calculations in \eqref{eq:tighty2}, we have
\begin{align*}
	\mathbb{E}(|{\zeta}^n_{i,k}(2,2)||\mathcal{F}_{t^{1}_i})
	\le&C\frac{u_{n,m}|d_n|\delta_n}{n\theta(\beta_n)}[(k-1)(\lambda_{n,m}+e^{-\lambda_{n,m}}-1)+(k-1)\lambda_{n,m}+e^{-(k-1)\lambda_{n,m}}-1]\\
		\le& C\frac{u_{n,m}|d_n|\delta_n\lambda_{n,m}}{n^2}.
\end{align*}	
 Then we conclude by using \eqref{eq:3.13}, the boundedness of $|d_n|$ and $\delta_n$ in case \ref{1}, $u_{n,m}=\frac{nm}{m-1}$, the criteria \eqref{eq:2.7} and Lemma \ref{lem:tightrest}.\\
\underline{\textbf{The term ${\Gamma}^n_t(2,3)$}}:
Let us rewrite ${\Gamma}^n_t(2,3)=\sum_{k=2}^m\sum_{i=1}^{[nt]}{\zeta}^n_{i,k}(2,3)$ with
$${\zeta}^n_{i,k}(2,3)=\frac{u_{n,m}d_n}{nm}ff'(X_{t_i^1}^n)(k-1)\Delta Y_{T^{\beta_n}_1(t_i^k)}\1_{\{K({\s t_i^k})\geq1\}}\1_{\{K({\s t_i^1,t_i^k})\geq1\}}.$$ 
For each $k\in\{2,\hdots,m\}$, by boundedness of $ff'$, similar as above, we have  $$\mathbb{E}(|{\zeta}^n_{i,k}(2,3)||\mathcal{F}_{t^{1}_i})\le C\frac{u_{n,m}|d_n|\delta_n}{n\theta(\beta_n)}(1-e^{-\lambda_{n,m}})(1-e^{-(k-1)\lambda_{n,m}})\le C\frac{u_{n,m}|d_n|\delta_n\lambda_{n,m}}{n^2}.$$
Then, we conclude similarly that ${\Gamma}^n(2,3)\stackrel{\mathbb{P}}{\rightarrow}0$ by  the criteria \eqref{eq:2.7} and Lemma \ref{lem:tightrest}.\\
\underline{\textbf{The term $\overline{\Gamma}^n_t(1,2)$}}:
Let us rewrite $\overline{\Gamma}^n_t(1,2)=\sum_{k=2}^m\sum_{i=1}^{[nt]}\overline{\zeta}^n_{i,k}(1,2)$ with
\begin{align*}
	\overline{\zeta}^n_{i,k}(1,2)&=\frac{u_{n,m}d_n}{nm}k(X^n_{t_i^1},\Delta Y_{T^{\beta_n}_1(t_i^1,t_i^k)})(\sum_{h=2}^{K(t^{1}_i,t_i^k)}(\Delta Y_{T^{\beta_n}_h(t^{1}_i,t_i^k)})^2+\sum_{\substack{h,h'=2\\h\neq h'}}^{K(t^{1}_i,t_i^k)}\Delta Y_{T^{\beta_n}_h(t^{1}_i,t_i^k)}\Delta Y_{T^{\beta_n}_{h'}(t^{1}_i,t_i^k)}).
\end{align*}
For any fixed $k\in\{2,\hdots,m\}$, by similar calculations as above, using the boundedness of the function $k$ and $\int_{\R}x^2F(dx)<\infty$ (see Remark \ref{rmk:1}), we have
\begin{align*}
	\mathbb{E}(|\overline{\zeta}^n_{i,k}(1,2)||\mathcal{F}_{t^{1}_i})\le& C\frac{u_{n,m}|d_n|}{n\theta(\beta_n)}\mathbb{E}((K(t^{1}_i,t_i^k)-1)\1_{\{K(t^{1}_i,t_i^k)\ge2\}}+\frac{\delta_n^2}{\theta(\beta_n)}K(t^{1}_i,t_i^k)(K(t^{1}_i,t_i^k)-1)|\mathcal{F}_{t^{1}_i})\\
\le& C\frac{u_{n,m}|d_n|}{n^2}(\lambda_{n,m}+\frac{\delta_n^2}{n}).
\end{align*}	
Then, we conclude similarly that $\overline{\Gamma}^n(1,2)\stackrel{\mathbb{P}}{\rightarrow}0$ by  the criteria \eqref{eq:2.7} and Lemma \ref{lem:tightrest}.\\
\underline{\textbf{The term $\int_0^{\eta_n(t)}ff'(X_{\eta_n(s-)}^n)d{\Gamma}^n_s(3)$}}:
This is equal to $\sum_{k=2}^m\sum_{i=1}^{[nt]}\zeta_{i,k}^n(3)$ with
$$
	\zeta_{i,k}^n(3)=u_{n,m}ff'(X_{t_i^1}^n)(M^{\beta_n}_{t_i^k}-M^{\beta_n}_{t^{1}_i})(Y_{t^{k+1}_i}-Y_{t_i^k}).	
$$
For a fixed $k\in\{2,\hdots,m\}$, by the boundedness of $ff'$, Property \ref{I},  \eqref{eq:3.13}, the inequality $(a^2+b^2)\le 2(a^2+b^2)$, $\int_{\R}x^2F(dx)<\infty$, lemmas \ref{lem:smallj} and \ref{lem:bigj},  we have $\mathbb{E}(\zeta_{i,k}^n(3)|\mathcal{F}_{t^{1}_i})=0,$
\begin{align*}
	\mathbb{E}(|\zeta_{i,k}^n(3)|^2|\mathcal{F}_{t^{1}_i})\leq &Cu_{n,m}^2\mathbb{E}((M^{\beta_n}_{t_i^k}-M^{\beta_n}_{t^{1}_i})^2|\mathcal{F}_{t^{1}_i})\mathbb{E}((Y^{\beta_n}_{t_i^{k+1}}-Y^{\beta_n}_{t^{k}_i})^2+(N^{\beta_n}_{t_i^{k+1}}-N^{\beta_n}_{t^{k}_i})^2|\mathcal{F}_{t^{1}_i})\\
	\leq& C \frac{u_{n,m}^2c_n}{n} (\frac{c_n}{nm}+\frac{d_n^2}{n^2m^2}+\frac{1}{nm})
	\leq C \frac{u_{n,m}^2c_n}{n^2} (c_n+\frac{d_n^2}{n}+1).
	\end{align*}
Then as $\alpha<1$, we conclude  using the boundedness of $|d_n|$, $c_n\le C \beta_n^{2-\alpha}$, $\beta_n=\frac{(\log{n})^{2}}{n}$, $u_{n,m}=\frac{nm}{m-1}$, the criteria \eqref{eq:2.8} and Lemma \ref{lem:tightrest}.
 Therefore, we have $\int_0^{\eta_n(.)}ff'(X_{\eta_n(s-)}^n)d{\Gamma}^n_s(3)\stackrel{\mathbb{P}}{\rightarrow}0$.\\
\underline{\textbf{The term $\int_0^{\eta_n(t)}ff'(X_{\eta_n(s-)}^n)d{\Gamma}^n_s(4)$}}:
This is bounded by $C\sum_{k=2}^m\sum_{i=1}^{[nt]}\zeta_{i,k}^n(4)$  with
$$
	\zeta_{i,k}^n(4)=u_{n,m}|N^{\beta_n}_{t_i^k}-N^{\beta_n}_{t^{1}_i}||N^{\beta_n}_{t^{k+1}_i}-N^{\beta_n}_{t_i^k}|.	
$$
 Let $k\in\{1,\dots,m\}$ be fixed. Using the independence of the increments $N^{\beta_n}_{t_i^k}-N^{\beta_n}_{t^{1}_i}$ and $N^{\beta_n}_{t^{k+1}_i}-N^{\beta_n}_{t_i^k}$ and  Lemma \ref{lem:bigj}, we have
\begin{align*}
\mathbb{E}(|\zeta_{i,k}^n(4)|\1_{\{|\zeta_{i,k}^n(4)|\leq1\}}|\mathcal{F}_{t^{1}_i})=&  \frac{u_{n,m}}{nm}\int_{|x|>\beta_n}|x|\mathbb{E}(|N^{\beta_n}_{t_i^k}-N^{\beta_n}_{t_i^1}|\1_{\left\{|N^{\beta_n}_{t_i^k}-N^{\beta_n}_{t_i^1}|\leq\frac{1}{u_{n,m|x|}}\right\}})F(dx)\\
	\le&\frac{Cu_{n,m}\delta_n}{n^2}\int_{\frac{1}{u_{n,m}\beta_n}>|y|>\beta_n}|y|F(dy).
	\end{align*}
Then   using the boundedness of $\delta_n$,  $\beta_n=\frac{(\log{n})^{2}}{n}$, $u_{n,m}=\frac{nm}{m-1}$ and Lebesgue's theorem, we easily check that the two first points of the criteria \eqref{eq:2.9} are satisfied.
In order to prove the third point of this criteria, noticing that $u_{n,m}\beta_n^2{\rightarrow}0$ when ${n\rightarrow\infty}$, then for $y>1$ we have $u_{n,m}\beta_n^2<y\Leftrightarrow\beta_n<\frac{y}{u_{n,m}\beta_n}$.	Similarly, we have
\begin{align*}
	\mathbb{P}(\zeta_{i,k}^n(4)>y|\mathcal{F}_{t^{1}_i})
	\le& \frac{C}{n^2}\int_{|x|>\beta_n}F(dx)\int_{|z|>\beta_n\vee\frac{y}{u_{n,m}|x|}}\hspace{-1cm}F(dz)
	=\frac{C}{n^2}\int_{|x|>\beta_n}F(dx)\theta\left(\frac{y}{u_{n,m}|x|}\vee \beta_n\right)\\
	\le&\frac{C}{n^2}\left(\int_{|x|>\frac{y}{u_{n,m}\beta_n}}F(dx)\theta\left( \beta_n\right)+\int_{\frac{y}{u_{n,m}\beta_n}>|x|>\beta_n}F(dx)\theta\left(\frac{y}{u_{n,m}|x|}\right)\right).
	\end{align*}
Now, using \eqref{h1} and as $\frac{y}{u_{n,m}\beta_n}<1$ for $n$ large enough, we get
\begin{align*}
	\mathbb{P}(\zeta_{i,k}^n(4)>y|\mathcal{F}_{t^{1}_i})\le&\frac{C}{n^2}\left(\theta\left(\frac{y}{u_{n,m}\beta_n}\right)\theta\left( \beta_n\right)+\frac{u_{n,m}^{\alpha}}{y^{\alpha}}\int_{\frac{y}{u_{n,m}\beta_n}>|x|>\beta_n}\hspace{-1cm}|x|^{\alpha}F(dx)\right)
	\le\frac{Cu_{n,m}^{\alpha}}{n^2y^{\alpha}}\left(1+\rho_n\right).
\end{align*}
Then as $\alpha<1$,  using  \eqref{eq:3.13}, $\rho_n\le C\log{\frac{1}{\beta_n}}$,  $\beta_n=\frac{(\log{n})^{2}}{n}$, $u_{n,m}=\frac{nm}{m-1}$, the third point of criteria \eqref{eq:2.9} is satisfied.   Therefore by Lemma \ref{lem:tightrest}, we have $\int_0^{\eta_n(.)}ff'(X_{\eta_n(s-)}^n)d{\Gamma}^n_s(4)\stackrel{\mathbb{P}}{\rightarrow}0$ .\\
\underline{\textbf{The term $\int_0^{\eta_n(t)}ff'(X_{\eta_n(s-)}^n)d{\Gamma}^n_s(5)$}}:
This is equal to $\sum_{k=2}^m\sum_{i=1}^{[nt]}\zeta_{i,k}^n(5)$ where
$$
\zeta_{i,k}^n(5)=u_{n,m}ff'(X_{t_{i}^1}^n)\left[(A^{\beta_n}_{t_i^k}-A^{\beta_n}_{t^{1}_i})+(N^{\beta_n}_{t_i^k}-N^{\beta_n}_{t^{1}_i})\right](M^{\beta_n}_{t^{k+1}_i}-M^{\beta_n}_{t_i^k}).
$$
For any fixed $k\in\{1,\dots,m\}$, by the boundedness of $ff'$, the independence structure, property \ref{I},  \eqref{eq:3.13}, the inequality $(a+b)^2\le 2(a^2+b^2)$, $\int_{\R}x^2F(dx)<\infty$ (see Remark \ref{rmk:1}), lemmas \ref{lem:smallj} and \ref{lem:bigj}, we have   $\mathbb{E}(\zeta_{i,k}^n(5)|\mathcal{F}_{t^{1}_i})=0,$
\begin{align*}
	\mathbb{E}(|\zeta_{i,k}^n(5)|^2|\mathcal{F}_{t^{1}_i})\leq &Cu_{n,m}^2\mathbb{E}((M^{\beta_n}_{t^{k+1}_i}-M^{\beta_n}_{t_i^k})^2)\left[\frac{d_n^2(k-1)^2}{n^2m^2}+\mathbb{E}((N^{\beta_n}_{t_i^k}-N^{\beta_n}_{t^{1}_i})^2)\right]\\
	\leq&\frac{C u_{n,m}^2c_n}{n} (\frac{d_n^2}{n^2}+\frac{1}{n}).
\end{align*}
Then as $\alpha<1$, we conclude  using the boundedness of $|d_n|$, $c_n\le C \beta_n^{2-\alpha}$,  $u_{n,m}=\frac{nm}{m-1}$, the criteria \eqref{eq:2.8} and Lemma \ref{lem:tightrest}.
Therefore, we have $\int_0^{\eta_n(.)}ff'(X_{\eta_n(s-)}^n)d{\Gamma}^n_s(5)\stackrel{\mathbb{P}}{\rightarrow}0$.\\
\underline{\textbf{The term $\overline{\Gamma}^n_t(4)$}}:
We recall that
\begin{multline*}
\overline{\Gamma}^n_t(4)=u_{n,m}\int_0^{\eta_n(t)}(k(X^n_{\eta_n(s-)},Y_{\eta_{nm}(s-)}-Y_{\eta_n(s-)})-k(X^n_{\eta_n(s-)},N^{\beta_{n}}_{\eta_{nm}(s-)}-N^{\beta_{n}}_{\eta_n(s-)}))\\(N^{\beta_n}_{\eta_{nm}(s-)}-N^{\beta_n}_{\eta_{n}(s-)})^2dA^{\beta_n}_s.
\end{multline*}
 Since $\frac{\partial k}{\partial y}(x,y)$ is bounded on  $\mathbb{R}^2$, it is enough to prove that for each $k\in\{1,\dots,m\}$, the triangular array with generic term $\overline{\zeta}^n_{i,k}(4)$ converges to $0$ when $n\rightarrow\infty$ where $$\overline{\zeta}^n_{i,k}(4)=\frac{u_{n,m}|d_n|}{nm}|Y^{\beta_n}_{t^{k+1}_i}-Y^{\beta_n}_{t^{1}_i}|(N^{\beta_n}_{t^{k+1}_i}-N^{\beta_n}_{t^{1}_i})^2.$$
By property \ref{I}, the independence between $Y^{\beta_n}_{t^{k+1}_i}-Y^{\beta_n}_{t^{1}_i}$ and $N^{\beta_n}_{t^{k+1}_i}-N^{\beta_n}_{t^{1}_i}$, Cauchy-Schwarz's inequality, $\int_{\R} x^2F(dx)<\infty$, Lemma \ref{lem:bigj} and Lemma \ref{lem:smallj}, we have
 \begin{align*}
 	\mathbb{E}(|\overline{\zeta}^n_{i,k}(4)||\mathcal{F}_{t^{1}_i})\le& C\frac{u_{n,m}|d_n|}{nm}(\frac{|d_n|}{nm}+\frac{\sqrt{c_n}}{\sqrt{nm}})\frac{1}{nm}\int_{|x|>\beta_n}x^2F(dx)\le C\frac{u_{n,m}|d_n|}{n^2}(\frac{|d_n|}{n}+\frac{\sqrt{c_n}}{\sqrt{n}}).
 	\end{align*}
Then we conclude  using the boundedness of $|d_n|$,  $c_n$,  $u_{n,m}=\frac{nm}{m-1}$, the criteria \eqref{eq:2.7} and Lemma \ref{lem:tightrest}. Therefore, we have $\overline{\Gamma}^n(4)\stackrel{\mathbb{P}}{\rightarrow}0$.  \\
\underline{\textbf{The term $\overline{\Gamma}^n_t(5)$}}:
We recall that
$$
		\overline{\Gamma}^n_t(5)=\frac{u_{n,m}d_n}{nm}\sum_{i=1}^{[nt]}\sum_{k=2}^m(k(X^n_{t^{1}_i},N^{\beta_{n}}_{t_i^k}-N^{\beta_{n}}_{t^{1}_i})-k(X^n_{t^{1}_i},\Delta Y_{T_1^{\beta_n}(t^{1}_i,t_i^k)}))(N^{\beta_n}_{t_i^k}-N^{\beta_n}_{t^{1}_i})^2.
$$
 Similarly as for the term $\overline{\Gamma}^n(4)$, it is enough to prove that for each $k\in\{1,\dots,m\}$, the triangular array with generic term $\overline{\zeta}^n_{i,k}(5)$ converges to $0$ when $n\rightarrow\infty$ with $$\overline{\zeta}^n_{i,k}(5)=\frac{u_{n,m}|d_n|}{nm}\left|\sum_{j=2}^{K(t^{1}_i,t_i^k)}\Delta Y_{T^{\beta_n}_j(t^{1}_i,t_i^k)}\right|(N^{\beta_n}_{t_i^k}-N^{\beta_n}_{t^{1}_i})^2.$$
By using Cauchy-Schwarz's inequality,  property \ref{I}, Lemmas \ref{lem:bigj} and \ref{lem:smallj}, $\int_{\R} x^4F(dx)<\infty$ (see Remark \ref{rmk:1}) and the calculations developped for the term $\overline{\Gamma}^n(1,2)$, we have
\begin{align*}
	\mathbb{E}(|\overline{\zeta}^n_{i,k}(5)||\mathcal{F}_{t^{1}_i})&\le C\frac{u_{n,m}|d_n|}{n}	\left[\mathbb{E}((\sum_{j=2}^{K(t^{1}_i,t_i^k)}\Delta Y_{T^{\beta_n}_j(t^{1}_i,t_i^k)})^2|\mathcal{F}_{t^{1}_i})\times\mathbb{E}((N^{\beta_n}_{t_i^k}-N^{\beta_n}_{t^{1}_i})^4)\right]^{1/2}\\
	&\le C\frac{u_{n,m}|d_n|}{n^{3/2}}	\left[\mathbb{E}(\sum_{j=2}^{K(t^{1}_i,t_i^k)}(\Delta Y_{T^{\beta_n}_j(t^{1}_i,t_i^k)})^2+\sum_{\substack{j,j'=2\\j\neq j'}}^{K(t^{1}_i,t_i^k)}\Delta Y_{T^{\beta_n}_j(t^{1}_i,t_i^k)}\Delta Y_{T^{\beta_n}_{j'}(t^{1}_i,t_i^k)}|\mathcal{F}_{t^{1}_i})\right]^{1/2}\\&\le C\frac{u_{n,m}|d_n|}{n^{3/2}}\left[\frac{1}{n}(\lambda_{n,m}+\frac{\delta_n^2}{n})\right]^{1/2}.
\end{align*}
Then we conclude  using the boundedness of $|d_n|$,  $\delta_n$,  $u_{n,m}=\frac{nm}{m-1}$, $\lambda_{n,m}\rightarrow0$ when $n\rightarrow\infty$, the criteria \eqref{eq:2.7} and Lemma \ref{lem:tightrest}.  Therefore we get $\overline{\Gamma}^n(5)\stackrel{\mathbb{P}}{\rightarrow}0$.\\
\underline{\textbf{The term $\int_0^{\eta_n(t)}k(X^n_{\eta_n(s-)},Y_{\eta_{nm}(s-)}-Y_{\eta_n(s-)})d\overline{\Gamma}^n_s(3)$}}:
Since $k(x,y)$ is bounded on $\R^2$, using the inequality $(a+b+c)^2\le 4(a^2+b^2+c^2)$, it is enough to prove that for $k\in\{1,\dots,m\}$, the following eight  triangular arrays with generic terms $\{\overline{\zeta}_{i,k}^n(3,j),j\in\{1,\dots,8\}$  converge to $0$ as $n\rightarrow\infty$ with
\begin{align}\label{eq:RC1}
	\left\{\begin{array}{ll}
	\overline{\zeta}_{i,k}^n(3,1)=u_{n,m}(A^{\beta_n}_{t_i^k}-A^{\beta_n}_{t^{1}_i})^2|A^{\beta_n}_{t^{k+1}_i}-A^{\beta_n}_{t_i^k}|,&
	\overline{\zeta}_{i,k}^n(3,2)=u_{n,m}(A^{\beta_n}_{t_i^k}-A^{\beta_n}_{t^{1}_i})^2	|M^{\beta_n}_{t^{k+1}_i}-M^{\beta_n}_{t_i^k}|\\
	\overline{\zeta}_{i,k}^n(3,3)=u_{n,m}(A^{\beta_n}_{t_i^k}-A^{\beta_n}_{t^{1}_i})^2	|N^{\beta_n}_{t^{k+1}_i}-N^{\beta_n}_{t_i^k}|,&
	\overline{\zeta}_{i,k}^n(3,4)=u_{n,m}(M^{\beta_n}_{t_i^k}-M^{\beta_n}_{t^{1}_i})^2	|A^{\beta_n}_{t^{k+1}_i}-A^{\beta_n}_{t_i^k}|\\
	\overline{\zeta}_{i,k}^n(3,5)=u_{n,m}(N^{\beta_n}_{t_i^k}-N^{\beta_n}_{t^{1}_i})^2	|M^{\beta_n}_{t^{k+1}_i}-M^{\beta_n}_{t_i^k}|,&
	\overline{\zeta}_{i,k}^n(3,6)=u_{n,m}(M^{\beta_n}_{t_i^k}-M^{\beta_n}_{t^{1}_i})^2	|N^{\beta_n}_{t^{k+1}_i}-N^{\beta_n}_{t_i^k}|\\
	\overline{\zeta}_{i,k}^n(3,7)=u_{n,m}(M^{\beta_n}_{t_i^k}-M^{\beta_n}_{t^{1}_i})^2	|M^{\beta_n}_{t^{k+1}_i}-M^{\beta_n}_{t_i^k}|,&
	\overline{\zeta}_{i,k}^n(3,8)=u_{n,m}(N^{\beta_n}_{t_i^k}-N^{\beta_n}_{t^{1}_i})^2	|N^{\beta_n}_{t^{k+1}_i}-N^{\beta_n}_{t_i^k}|.
\end{array}\right.
\end{align}
For the first four triangular arrays, as $A^{\beta_n}_t=d_nt$ is  deterministic, applying Cauchy-Schwarz's inequality  and Lemma \ref{lem:smallj} for the increment $|M^{\beta_n}_{t^{k+1}_i}-M^{\beta_n}_{t_i^k}|$ and 
Lemma \ref{lem:bigj} for the increment $N^{\beta_n}_{t^{k+1}_i}-N^{\beta_n}_{t_i^k}$, it is easy to check 
\begin{align}\label{eq:RC1.}
	\left\{\begin{array}{ll}
		\mathbb{E}(|	\overline{\zeta}_{i,k}^n(3,1)||\mathcal{F}_{t^{1}_i})
		\le C\frac{u_{n,m}|d_n|^3}{n^3},&
		\mathbb{E}(|	\overline{\zeta}_{i,k}^n(3,2)||\mathcal{F}_{t^{1}_i})
		{\le} C\frac{u_{n,m}|d_n|^2\sqrt{c_n}}{n^2\sqrt{n}}\\
		\mathbb{E}(|	\overline{\zeta}_{i,k}^n(3,3)||\mathcal{F}_{t^{1}_i})
		{\le} C\frac{u_{n,m}|d_n|^2\delta_n}{n^3},&
		\mathbb{E}(|	\overline{\zeta}_{i,k}^n(3,4)||\mathcal{F}_{t^{1}_i})
		\le C\frac{u_{n,m}|d_n|c_n}{n^2}.
	\end{array}\right.
\end{align}
Then, for our choices of $u_{n,m}$ and $\beta_n$, by the boundedness of $d_n$ and $\delta_n$, $c_n\le C\beta_n^{2-\alpha}$ with $\beta_n=\frac{(\log{n})^2}{n}$, the application of the criteria \eqref{eq:2.7} and Lemma \ref{lem:tightrest} is straightforward for these first four triangular arrays. 
Now, by using the independence between the increments of $M^{\beta_n}$ and $N^{\beta_n}$, applying Cauchy-Schwarz's inequality  and Lemma \ref{lem:smallj} for the estimation of the increment $|M^{\beta_n}_{t^{k+1}_i}-M^{\beta_n}_{t_i^k}|$, 
Lemma \ref{lem:bigj} and $\int_{\R}x^2F(dx)<\infty$, we have
\begin{align}\label{eq:RC1.1}
		\begin{array}{ll}
		\mathbb{E}(|	\overline{\zeta}_{i,k}^n(3,5)||\mathcal{F}_{t^{1}_i})
\le C \frac{u_{n,m}\sqrt{c_n}}{n\sqrt{n}},&
		\mathbb{E}(|	\overline{\zeta}_{i,k}^n(3,6)||\mathcal{F}_{t^{1}_i})
	\le C \frac{u_{n,m}c_n\delta_n}{n^2}.
\end{array}
\end{align}
By the same arguments as above, we get $nc_n$ converges to $0$ as $n\rightarrow\infty$ and then we apply the criteria \eqref{eq:2.7} and Lemma \ref{lem:tightrest} to get the convergence of these two triangular arrays to $0$. For the seventh triangular array, we use the independence between the two increments $M^{\beta_n}_{t^{k+1}_i}-M^{\beta_n}_{t_i^k}$ and $M^{\beta_n}_{t^{k}_i}-M^{\beta_n}_{t_i^1}$ and Cauchy-Schwarz's inequality to obtain
\begin{align}\label{eq:RC1.13}
		\mathbb{E}(|	\overline{\zeta}_{i,k}^n(3,7)||\mathcal{F}_{t^{1}_i})
		\le C \frac{u_{n,m}c_n^{3/2}}{n\sqrt{n}}.
\end{align}
Then we conclude similarly the convergence of this triangular arrays to $0$.
Finally, concerning the last triangular array, we use  similar calculations and arguments as for the term $\int_0^{\eta_n(t)}ff'(X_{\eta_{n}(s-)}^n)d\Gamma^n_s(4)$ and we get
\begin{align*}
	\mathbb{E}(|\overline{\zeta}_{i,k}^n(3,8)|\1_{\{|\overline{\zeta}_{i,k}^n(3,8)|\leq1\}}|\mathcal{F}_{t^{1}_i})
	=&  \frac{u_{n,m}}{nm}\int_{|x|>\beta_n}|x|\mathbb{E}(|N^{\beta_n}_{t_i^k}-N^{\beta_n}_{t_i^1}|^2\1_{\left\{|N^{\beta_n}_{t_i^k}-N^{\beta_n}_{t_i^1}|\leq\frac{1}{\sqrt{u_{n,m|x|}}}\right\}})F(dx)\\
	\le&\frac{Cu_{n,m}\delta_n}{n^2}\int_{\frac{1}{\sqrt{u_{n,m}\beta_n}}>|y|>\beta_n}|y|^2F(dy),
\end{align*}
and using  \eqref{h1} and \eqref{eq:3.3}, for $y>1$ and $n$ large enough, we have
\begin{align*}
	\mathbb{P}(\overline{\zeta}_{i,k}^n(3,8)>y|\mathcal{F}_{t^{1}_i})
	\le&\frac{C}{n^2}\int_{|x|>\beta_n}F(dx)\theta\left(\sqrt{\frac{y}{u_{n,m}|x|}}\vee \beta_n\right)\\
	\le&\frac{C}{n^2}\left(\int_{|x|>\sqrt{\frac{y}{u_{n,m}\beta_n}}}F(dx)\theta\left( \beta_n\right)+\int_{\sqrt{\frac{y}{u_{n,m}\beta_n}}>|x|>\beta_n}F(dx)\theta\left(\sqrt{\frac{y}{u_{n,m}|x|}}\right)\right)\\
	\le&\frac{C}{n^2}\left(\theta\left(\sqrt{\frac{y}{u_{n,m}\beta_n}}\right)\theta\left( \beta_n\right)+\frac{u_{n,m}^{\alpha/2}}{y^{\alpha/2}}\int_{\sqrt{\frac{y}{u_{n,m}\beta_n}}>|x|>\beta_n}|x|^{\alpha/2}F(dx)\right)
	\\\le&\frac{Cu_{n,m}^{\alpha/2}}{n^2y^{\alpha/2}}\left(1+\int_{|x|>\beta_n}|x|^{\alpha/2}F(dx)\right)\le\frac{Cu_{n,m}^{\alpha/2}}{n^2y^{\alpha/2}}\left(1+\beta_n^{-\alpha/2}\right).
\end{align*}
Note that, as $\alpha<1$, $\beta_n=\frac{(\log{n})^{2}}{n}$and $u_{n,m}=\frac{nm}{m-1}$, we have that $\frac{u_{n,m}^{\alpha/2}\beta_n^{-\alpha/2}}{n}$ converges to $0$ as $n\rightarrow\infty$. Therefore, we conclude the convergence of our last triangular array by criteria \eqref{eq:2.9} and Lemma \ref{lem:tightrest}.
This completes the proof of Lemma \ref{lem:rest1}.
\subsection{Proof of Lemma \ref{lem:rest2}}
Here, we prove that 
the sequences of processes $(\overline{Y}^n(1))_{n\ge0}$ and  $(\mathcal{R}^{n,m})_{n\ge0}$ converge uniformly in probability to $0$ as $n\rightarrow\infty$. First, instead of considering the form $\overline{Y}^{n}_t(1)$ given in \eqref{Y}, it is enough to prove that for each $k\in\{1,\dots,m\}$ the triangular arrays with generic terms $y^{n,m}_{i,k}(1,1)=M^{\beta_n}_{t_i^k,t_i^{k+1}}$ and $y^{n,m}_{i,k}(1,2)=\sum_{j\geq2}\Delta Y_{T^{\beta_n}_{ \s j}( t_i^k)}\1_{\{K({\s t_i^k})\geq j\}}$ converge uniformly in probability to $0$ as $n\rightarrow\infty$. On the one hand, from the above estimates \eqref{eq:tighty1}, we have
$
	\mathbb{E}(y^{n,m}_{i,k}(1,1)|\mathcal{F}_{t^{1}_i})
	=0,\quad
	\mathbb{E}(|y^{n,m}_{i,k}(1,1)|^2|\mathcal{F}_{t^{1}_i})=\frac{c_{n}}{nm}.
$ Then, $y^{n,m}_{i,k}(1,1)$ satisfies $\eqref{eq:2.8}$ and we conclude using $c_{n}\le C\beta_n^{2-\alpha}$ from \eqref{eq:3.13} and Lemma \ref{lem:tightrest}. On the other hand,  using similar calculations as in \eqref{eq:tighty2}, property \ref{I} and  $\int_{\R}x^2F(dx)<\infty$,
 we have 
	\begin{align}\label{eq:y12}
		\left\{\begin{array}{l}
	\mathbb{E}(y^{n,m}_{i,k}(1,2)|\mathcal{F}_{t^{1}_i})=\frac{{d'}_n}{nm\lambda_{n,m}}(\lambda_{n,m}+e^{-\lambda_{n,m}}-1),\\
		\mathbb{E}(|y^{n,m}_{i,k}(1,2)|^2|\mathcal{F}_{t^{1}_i})=\mathbb{E}(\sum_{j=2}^{K({\s t_i^k})}(\Delta Y_{T^{\beta_n}_{ \s j}( t_i^k)})^2+\sum_{\substack{j,j'=2\\j\neq j'}}^{K({\s t_i^k})}\Delta Y_{T^{\beta_n}_{ \s j}( t_i^k)}\Delta Y_{T^{\beta_n}_{ \s j'}( t_i^k)}|\mathcal{F}_{t^{1}_i})\\\le C \mathbb{E}(\frac{1}{\theta(\beta_n)}K(t_i^k)\1_{\{K(t_i^k)\ge2\}}+\frac{\delta_n^2}{(\theta(\beta_n))^2}K(t_i^k)(K(t_i^k)-1)\1_{\{K(t_i^k)\ge2\}}|\mathcal{F}_{t^{1}_i})
		\le \frac{C}{n}(\lambda_{n,m}+\frac{\delta_n^2}{n}).
	\end{array}\right.
	\end{align}
  Then we conclude using ${d'}_n=0$ by the hypothesis \eqref{h3}, $\delta_n\le C\log{1/\beta_n}$ (see \eqref{eq:3.13}), $\beta_n=\frac{\log{n}}{n}$,  $\lambda_{n,m}\rightarrow0$ as $n\rightarrow\infty$, criteria \eqref{eq:2.8} and Lemma \ref{lem:tightrest}. Therefore, we have $\overline{Y}^n(1)\stackrel{\mathbb{P}}{\rightarrow}0$.
Now, from the formula of the rest term given by \eqref{eq:MR2}, we have 
\begin{align*}
\mathcal{R}^{n,m}_t={\Gamma}_t^{n}(1,2)+\sum_{i=2}^5\Gamma^n_t(i).
\end{align*}
 In what follows, we prove that each term converges uniformly in probability to $0$.\\
\underline{\textbf{The term ${\Gamma}_t^{n}(1,2)$}}: We recall that $${\Gamma}_t^{n}(1,2)=\sum_{i=1}^{[nt]}\sum_{k=2}^m\sum_{j=1}^{k-1}({\zeta}_{i,k,j}^n(1,2)+{\zeta'}_{i,k,j}^n(1,2)+{\zeta''}_{i,k,j}^n(1,2)),$$
with
\begin{align*}
	\left\{\begin{array}{l}
		{\zeta}_{i,k,j}^n(1,2)=u_{n,m}ff'(X_{t_i^1}^n)\Delta Y_{T^{\beta_n}_1(t_i^k)}\1_{\{K(t_i^k)\geq1\}}\sum_{h=2}^{K(t^{j}_i)}\Delta Y_{T^{\beta_n}_h(t^{j}_i)},\\{\zeta'}_{i,k,j}^n(1,2)=u_{n,m}ff'(X_{t_i^1}^n)\sum_{h=2}^{K(t_i^k)}\Delta Y_{T^{\beta_n}_h(t_i^k)}\Delta Y_{T^{\beta_n}_1(t^{j}_i)}\1_{\{K(t^{j}_i)\geq1\}},\\{\zeta''}_{i,k,j}^n(1,2)=u_{n,m}ff'(X_{t_i^1}^n)\sum_{h=2}^{K(t_i^k)}\Delta Y_{T^{\beta_n}_h(t_i^k)}\sum_{h=2}^{K(t^{j}_i)}\Delta Y_{T^{\beta_n}_h(t^{j}_i)}.
	\end{array}\right.
\end{align*}
Instead of working with  ${\Gamma}_t^{n}(1,2)$, it is enough to prove that for each $k\in\{2,\dots,m\}$ and $j\in\{1,\dots,k-1\}$, the three triangular arrays with  generic terms ${\zeta}_{i,k,j}^n(1,2)$, ${\zeta'}_{i,k,j}^n(1,2)$ and ${\zeta''}_{i,k,j}^n(1,2)$ converge uniformly in probability to $0$ as $n\rightarrow\infty$.
Concerning ${\zeta}_{i,k,j}^n(1,2)$, on the one hand, by property \ref{I} and hypothesis \eqref{h3}, we have
		$
	\mathbb{E}({\zeta}_{i,k,j}^n(1,2)\1_{\{|{\zeta}_{i,k,j}^n(1,2)|\le1\}}|\mathcal{F}_{t^{1}_i})=0.$
On the other hand, as $ff'$ is bounded, using property \ref{I}, the inequalities 
$1-e^{\lambda_{n,m}}\le\lambda_{n,m}$ and    
$(\sum_{i=1}^n|x_i|)^{\alpha}\le\sum_{i=1}^n|x_i|^{\alpha}$ for $x_i\in\R$ and $\alpha\le 1$ and $c(\beta)\le C\beta^{2-\alpha}$, we have
	\begin{align*}
	&\mathbb{E}(|{\zeta}_{i,k,j}^n(1,2)|^2\1_{\{|{\zeta}_{i,k,j}^n(1,2)|\le1\}}|\mathcal{F}_{t^{1}_i})\\&= \frac{u_{n,m}^2(1-e^{-\lambda_{n,m}})}{\theta(\beta_n)}\mathbb{E}((\sum_{h=2}^{K(t^{j}_i)}\Delta Y_{T^{\beta_n}_h(t^{j}_i)})^2\int_{|x|>\beta_{n}}\hspace{-0.5cm}x^2\1_{\{|u_{n,m}ff'(X_{t_{i}^1})x\sum_{h=2}^{K(t^{j}_i)}\Delta Y_{T^{\beta_n}_h(t^{j}_i)}|\le1\}}F(dx)|\mathcal{F}_{t^{1}_i})\\
	&\le\frac{Cu_{n,m}^2}{n}\mathbb{E}((\sum_{h=2}^{K(t^{j}_i)}\Delta Y_{T^{\beta_n}_h(t^{j}_i)})^2c(\frac{1}{u_{n,m}|ff'(X_{t_{i}^1})||\sum_{h=2}^{K(t^{j}_i)}\Delta Y_{T^{\beta_n}_h(t^{j}_i)}|})|\mathcal{F}_{t^{1}_i})\\
	&\le \frac{Cu_{n,m}^\alpha}{n}\mathbb{E}((\sum_{h=2}^{K(t^{j}_i)}|\Delta Y_{T^{\beta_n}_h(t^{j}_i)}|)^{\alpha}|\mathcal{F}_{t^{1}_i})
	\le\frac{Cu_{n,m}^\alpha}{n}\mathbb{E}(\sum_{h=2}^{K(t^{j}_i)}|\Delta Y_{T^{\beta_n}_h(t^{j}_i)}|^{\alpha}|\mathcal{F}_{t^{1}_i})\\&\le\frac{Cu_{n,m}^\alpha}{n}\frac{\mathbb{E}(K(t^{j}_i)\1_{\{K(t^{j}_i)\ge2\}}|\mathcal{F}_{t^{1}_i})\rho_n}{\theta(\beta_n)}\le\frac{Cu_{n,m}^\alpha\rho_n\lambda_{n,m}}{n^2}.
\end{align*}
Now, using similar arguments as above, the inequality $\1_{\{|\sum_{j=1}^\ell a_j|> 1\}}\le \sum_{j=1}^\ell\1_{\{|a_j|> 1/\ell\}}$, $\theta(.)$ is decreasing and hypothesis \eqref{h1}, we obtain for all $y>1$
\begin{align*}
	&\mathbb{P}(|{\zeta}_{i,k,j}^n(1,2)|>y|\mathcal{F}_{t^{1}_i})\le \frac{C}{n}\int_{|x|>\beta_{n}}\mathbb{P}(|ff'(X_{t_{i}^1})u_{n,m}x\sum_{h=2}^{K(t^{j}_i)}\Delta Y_{T^{\beta_n}_h(t^{j}_i)}|>y|\mathcal{F}_{t^{1}_i})F(dx)\\
 &\le\frac{C}{n}\int_{|x|>\beta_{n}}\mathbb{E}(K(t^{j}_i)\1_{\{K(t^{j}_i)\ge2\}}\1_{\{|ff'(X_{t_{i}^1})u_{n,m}x\Delta Y_{T^{\beta_n}_2(t^{j}_i)}|>y/(K(t^{j}_i)-1)\}}|\mathcal{F}_{t^{1}_i})F(dx)\\
 &	\le\frac{C}{n}\int_{|x|>\beta_{n}}\frac{1}{\theta(\beta_n)}\mathbb{E}(K(t^{j}_i)\1_{\{K(t^{j}_i)\ge2\}}\int_{|z|>\beta_n}\1_{\{|z|>\frac{y}{|ff'(X_{t_{i}^1})|(K(t^{j}_i)-1) u_{n,m}|x|}\}}F(dz)|\mathcal{F}_{t^{1}_i})F(dx)\\
 	&\le\frac{C}{n}\int_{|x|>\beta_{n}}\frac{1}{\theta(\beta_n)}\mathbb{E}(K(t^{j}_i)\1_{\{K(t^{j}_i)\ge2\}}\theta(\frac{y}{|ff'(X_{t_{i}^1})|(K(t^{j}_i)-1)u_{n,m}|x|})|\mathcal{F}_{t^{1}_i})F(dx)\\&\le\frac{C}{n}\int_{|x|>\beta_{n}}\frac{1}{\theta(\beta_n)y^{\alpha}}\mathbb{E}(K(t^{j}_i)^2\1_{\{K(t^{j}_i)\ge2\}}|\mathcal{F}_{t^{1}_i}){u_{n,m}^{\alpha}|x|^{\alpha}}F(dx)\le\frac{Cu_{n,m}^{\alpha}\lambda_{n,m}\rho_n}{n^2y^{\alpha}}.
 \end{align*}
Then, we conclude the convergence of the triangular array with generic term ${\zeta}_{i,k,j}^n(1,2)$ using $u_{n,m}=(\frac{nm}{(m-1)\log{n}})^{1/\alpha}$, $\beta_n=(\frac{m}{m-1})^{1/\alpha}u_{n,m}^{-1}$,  $\rho_n\le C\log{1/\beta_n}$ (see \eqref{eq:3.13}), $\lambda_{n,m}\rightarrow0$ as $n\rightarrow\infty$, criteria \eqref{eq:2.9} and Lemma \ref{lem:tightrest}. Now, concerning the triangular array with the generic term ${\zeta'}_{i,k,j}^n(1,2)$, noticing that  $j$ and $k$ play a symmetric role in ${\zeta}_{i,k,j}^n(1,2)$ and ${\zeta'}_{i,k,j}^n(1,2)$, the same calculations yield the same bounds for the three conditions of criteria \eqref{eq:2.9} and therefore we obtain the convergence of the triangular array with the generic term ${\zeta'}_{i,k,j}^n(1,2)$ in the same way. Finally, concerning  ${\zeta''}_{i,k,j}^n(1,2)$, by similar arguments, we have $\mathbb{E}({\zeta''}_{i,k,j}^n(1,2)\1_{\{|{\zeta''}_{i,k,j}^n(1,2)|\le1\}}|\mathcal{F}_{t^{1}_i})=0$ and
	\begin{align*}
	&\mathbb{E}({\zeta''}_{i,k,j}^n(1,2)^2\1_{\{|{\zeta''}_{i,k,j}^n(1,2)|\le1\}}|\mathcal{F}_{t^{1}_i})	\le Cu_{n,m}^2\mathbb{E}((\sum_{h=2}^{K(t_i^k)}\Delta Y_{T^{\beta_n}_h(t^{k}_i)})^2(\sum_{h=2}^{K(t_i^j)}|\Delta Y_{T^{\beta_n}_h(t^{j}_i)}|)^{\alpha+(2-\alpha)}\\&\times\1_{\{|\sum_{h=2}^{K(t_i^j)}\Delta Y_{T^{\beta_n}_h(t^{j}_i)}|\le\frac{1}{u_{n,m}|ff'(X_{t_{i}^1})\sum_{h=2}^{K(t_i^k)}\Delta Y_{T^{\beta_n}_h(t^{k}_i)}|}\}}|\mathcal{F}_{t^{1}_i}),\\
	&\le Cu_{n,m}^{\alpha}\mathbb{E}((\sum_{h=2}^{K(t_i^k)}|\Delta Y_{T^{\beta_n}_h(t^{k}_i)}|)^{\alpha}(\sum_{h=2}^{K(t_i^j)}|\Delta Y_{T^{\beta_n}_h(t^{j}_i)}|)^{\alpha}|\mathcal{F}_{t^{1}_i})\\
	&\le\frac{Cu_{n,m}^{\alpha}\mathbb{E}(K(t_i^k) \1_{\{K(t_i^k)\ge2 \}}|\mathcal{F}_{t^{1}_i})\mathbb{E}( K(t_i^j)\1_{\{ K(t_i^j)\ge2\}}|\mathcal{F}_{t^{1}_i})\rho_n^2}{(\theta(\beta_{n}))^2}\le\frac{Cu_{n,m}^{\alpha}\lambda_{n,m}^2\rho_n^2}{n^2}.
\end{align*}
By the same calculations as for ${\zeta}_{i,k,j}^n(1,2)$, where we use the inequality $\1_{\{|\sum_{j'=1}^{\ell'}\sum_{j=1}^\ell a_{jj'}|> 1\}}\le \sum_{j'=1}^{\ell'}\sum_{j=1}^\ell\1_{\{|a_{jj'}|> 1/(\ell+\ell')\}}$, we have for  $y>1$
	\begin{align*}
&\mathbb{P}(|{\zeta''}_{i,k,j}^n(1,2)|>y|\mathcal{F}_{t^{1}_i})=\mathbb{E}(\1_{\{|ff'(X_{t_{i}^1})u_{n,m}\sum_{h=2}^{K(t_i^k)}\Delta Y_{T^{\beta_n}_h(t^{k}_i)}\sum_{h=2}^{K(t_i^j)}\Delta Y_{T^{\beta_n}_h(t^{j}_i)}|>y\}}|\mathcal{F}_{t^{1}_i})\\
&\le\mathbb{E}(K(t_i^k) K(t_i^j)\1_{\{K(t_i^k)\ge2, K(t_i^j)\ge2\}}\1_{\{|ff'(X_{t_{i}^1})u_{n,m}\Delta Y_{T^{\beta_n}_2(t^{k}_i)}\Delta Y_{T^{\beta_n}_2(t^{j}_i)}|>\frac{y}{(K(t_i^k)-1) (K(t_i^j)-1)}\}}|\mathcal{F}_{t^{1}_i})\\
&\le\mathbb{E}(\frac{K(t_i^k) K(t_i^j)\1_{\{K(t_i^k)\ge2, K(t_i^j)\ge2\}}}{(\theta(\beta_n))^2}\int_{|x|> \beta_n}\hspace{-0.5cm}\theta(\frac{y}{ff'(X_{t_{i}^1})u_{n,m}|x|(K(t_i^k)-1) (K(t_i^j)-1)})F(dx)|\mathcal{F}_{t^{1}_i})\\
&\le\frac{\mathbb{E}(K(t_i^k)^2 K(t_i^j)^2\1_{\{K(t_i^k)\ge2, K(t_i^j)\ge2\}}|\mathcal{F}_{t^{1}_i})}{(\theta(\beta_n))^2}\frac{u_{n,m}^{\alpha}\rho_n}{y^{\alpha}}\le\frac{Cu_{n,m}^\alpha\rho_n\lambda_{n,m}^2}{n^2y^{\alpha}}.
\end{align*}
Then, we conclude using $u_{n,m}=(\frac{nm}{(m-1)\log{n}})^{1/\alpha}$, $\beta_n=(\frac{m}{m-1})^{1/\alpha}u_{n,m}^{-1}$,  $\rho_n\le C\log{1/\beta_n}$ (see \eqref{eq:3.13}), $\lambda_{n,m}\rightarrow0$ as $n\rightarrow\infty$, criteria \eqref{eq:2.9} and Lemma \ref{lem:tightrest}.
 Therefore, we get ${\Gamma}^n(1,2)\stackrel{\mathbb{P}}{\rightarrow}0$.\\
\underline{\textbf{The term ${\Gamma}_t^n(2)$}}:
Let us recall that ${\Gamma}_t^n(2)=\sum_{k=2}^m\sum_{i=1}^{[nt]}\zeta_{i,k}^n(2)$ where
\begin{align*}
\zeta_{i,k}^n(2)=u_{n,m}ff'(X_{t_i^1}^n)\frac{d_n}{nm}[\frac{k-1}{nm}d_n+(N^{\beta_{n}}_{t_i^k}-N^{\beta_{n}}_{t^{1}_i})+(k-1)(N^{\beta_{n}}_{t^{k+1}_i}-N^{\beta_{n}}_{t_i^k})].
\end{align*}
In case \ref{2}, from hypotheses \eqref{h3} and \eqref{h4} $d_n=b+\int_{|x|>1}xF(dx)-{d'}_n=0$ then this ${\Gamma}_t^n(2)$ vanishes. In case \ref{4},  hypothesis \eqref{h3} yields ${d'}_n=0$ and then $d_n=b$.  Now, for $k$ fixed, as $ff'$ is bounded,  using Lemma \ref{lem:bigj}  and  $\int_{\R}x^2F(dx)<\infty$, we get $|\mathbb{E}(\zeta_{i,k}^n(2)|\mathcal{F}_{t^{1}_i})|\le \frac{Cu_{n,m}}{n^2}$ and
\begin{align*}
\mathbb{E}(|\zeta_{i,k}^n(2)|^2|\mathcal{F}_{t^{1}_i})&\le \frac{Cu_{n,m}^2}{n^2}(\frac{1}{n^2}+\mathbb{E}((N^{\beta_{n}}_{t_i^k}-N^{\beta_{n}}_{t^{1}_i})^2|\mathcal{F}_{t^{1}_i})+\mathbb{E}((N^{\beta_{n}}_{t^{k+1}_i}-N^{\beta_{n}}_{t_i^k})^2|\mathcal{F}_{t^{1}_i}))\\
&\le \frac{Cu_{n,m}^2}{n^2}(\frac{1}{n^2}+\frac{1}{n})\le \frac{Cu_{n,m}^2}{n^3}.
\end{align*}
Then, for case \ref{4}, we conclude using  $u_{n,m}=\frac{nm}{(m-1)\log{n}}$, criteria \eqref{eq:2.8} and Lemma \ref{lem:tightrest}. Therefore, we get $\Gamma^n(2)\stackrel{\mathbb{P}}{\rightarrow}0$. \\
\underline{\textbf{The term ${\Gamma}_t^n(3)$}}:
 Let us recall  that $\Gamma^n_t(3)=\sum_{k=2}^m\sum_{i=1}^{[nt]}\zeta_{i,k}^n(3)$ where
	$$
		\zeta_{i,k}^n(3)=u_{n,m}ff'(X_{t_i^1}^n)(M^{\beta_n}_{t_i^k}-M^{\beta_n}_{t^{1}_i})(Y_{t^{k+1}_i}-Y_{t_i^k}).	
	$$
	For $k$ fixed, as $ff'$ is bounded, by using property \ref{I}, lemmas \ref{lem:bigj} and \ref{lem:smallj} and  $\int_{\R}x^2 F(dx)<\infty$ (see Remark \ref{rmk:1}),  we have $\mathbb{E}(\zeta_{i,k}^n(3)|\mathcal{F}_{t^{1}_i})=0$ and 
	\begin{align*}
	\mathbb{E}(|\zeta_{i,k}^n(3)|^2|\mathcal{F}_{t^{1}_i})&\leq Cu_{n,m}^2\mathbb{E}((M^{\beta_n}_{t_i^k}-M^{\beta_n}_{t^{1}_i})^2|\mathcal{F}_{t^{1}_i})\left[\mathbb{E}((Y^{\beta_n}_{t^{k+1}_i}-Y^{\beta_n}_{t_i^k})^2|\mathcal{F}_{t^{1}_i})+\mathbb{E}((N^{\beta_n}_{t^{k+1}_i}-N^{\beta_n}_{t_i^k})^2|\mathcal{F}_{t^{1}_i})\right]\\
		&\leq \frac{Cu_{n,m}^2c_n}{n} \left(\frac{c_n}{n}+\frac{d_n^2}{n^2}+\frac{1}{n}\right).
	\end{align*}
	Then,  we conclude using \eqref{h3} that $d_n=b$, $u_{n,m}=(\frac{nm}{(m-1)\log{n}})^{1/\alpha}$, $\beta_n=(\frac{m}{m-1})^{1/\alpha}u_{n,m}^{-1}$,  $c_n\le C\beta_n^{2-\alpha}$ (see \eqref{eq:3.13}), criteria \eqref{eq:2.8} and Lemma \ref{lem:tightrest}. Therefore, we get $\Gamma^{n}(3)\stackrel{\mathbb{P}}{\rightarrow}0$.\\
	\underline{\textbf{The term ${\Gamma}_t^n(4)$}}:
Let us recall that $\Gamma^n_t(4)=\sum_{k=2}^m\sum_{i=1}^{[nt]}\zeta_{i,k}^n(4)$ where
	$$
	\zeta_{i,k}^n(4)=u_{n,m}ff'(X_{t_i^1}^n)[(A^{\beta_n}_{t_i^k}-A^{\beta_n}_{t^{1}_i})+(N^{\beta_n}_{t_i^k}-N^{\beta_n}_{t^{1}_i})](M^{\beta_n}_{t^{k+1}_i}-M^{\beta_n}_{t_i^k}).
$$
	For $k$ fixed, by similar arguments as for the term ${\Gamma}_t^n(3)$,  we have $\mathbb{E}(\zeta_{i,k}^n(4)|\mathcal{F}_{t^{1}_i})=0$ and
	$
		\mathbb{E}(|\zeta_{i,k}^n(4)|^2|\mathcal{F}_{t^{1}_i})\le \frac{C u_{n,m}^2c_n}{n}\left(\frac{d_n^2}{n^2}+\frac{1}{n}\right)$.
		Then,  we conclude similarly that $\Gamma^{n}(4)\stackrel{\mathbb{P}}{\rightarrow}0$.\\
	\underline{\textbf{The  term ${\Gamma}_t^n(5)$}}:
Since $k(x,y)$ is bounded on $\R^2$,  it is enough to prove that for $k\in\{1,\dots,m\}$, the following nine  triangular arrays with generic terms $\{{\zeta}_{i,k}^n(5,j),j\in\{1,\dots,9\}$  converge to $0$ as $n\rightarrow\infty$ with
\begin{align*}
	\left\{\begin{array}{ll}
		{\zeta}_{i,k}^n(5,1)=u_{n,m}(A^{\beta_n}_{t_i^k}-A^{\beta_n}_{t^{1}_i})^2|A^{\beta_n}_{t^{k+1}_i}-A^{\beta_n}_{t_i^k}|,&
		{\zeta}_{i,k}^n(5,2)=u_{n,m}(A^{\beta_n}_{t_i^k}-A^{\beta_n}_{t^{1}_i})^2	|M^{\beta_n}_{t^{k+1}_i}-M^{\beta_n}_{t_i^k}|\\
		{\zeta}_{i,k}^n(5,3)=u_{n,m}(A^{\beta_n}_{t_i^k}-A^{\beta_n}_{t^{1}_i})^2	|N^{\beta_n}_{t^{k+1}_i}-N^{\beta_n}_{t_i^k}|,&
	{\zeta}_{i,k}^n(5,4)=u_{n,m}(M^{\beta_n}_{t_i^k}-M^{\beta_n}_{t^{1}_i})^2	|A^{\beta_n}_{t^{k+1}_i}-A^{\beta_n}_{t_i^k}|\\
	{\zeta}_{i,k}^n(5,5)=u_{n,m}(N^{\beta_n}_{t_i^k}-N^{\beta_n}_{t^{1}_i})^2	|M^{\beta_n}_{t^{k+1}_i}-M^{\beta_n}_{t_i^k}|,&
	{\zeta}_{i,k}^n(5,6)=u_{n,m}(M^{\beta_n}_{t_i^k}-M^{\beta_n}_{t^{1}_i})^2	|N^{\beta_n}_{t^{k+1}_i}-N^{\beta_n}_{t_i^k}|\\
	{\zeta}_{i,k}^n(5,7)=u_{n,m}(M^{\beta_n}_{t_i^k}-M^{\beta_n}_{t^{1}_i})^2	|M^{\beta_n}_{t^{k+1}_i}-M^{\beta_n}_{t_i^k}|,&
	{\zeta}_{i,k}^n(5,8)=u_{n,m}(N^{\beta_n}_{t_i^k}-N^{\beta_n}_{t^{1}_i})^2	|N^{\beta_n}_{t^{k+1}_i}-N^{\beta_n}_{t_i^k}|\\{\zeta}_{i,k}^n(5,9)=u_{n,m}(N^{\beta_n}_{t_i^k}-N^{\beta_n}_{t^{1}_i})^2	|A^{\beta_n}_{t^{k+1}_i}-A^{\beta_n}_{t_i^k}|.&
	\end{array}\right.
\end{align*}
From \eqref{eq:RC1}, \eqref{eq:RC1.} and \eqref{eq:RC1.1} and by same calculations, the triangular arrays with generic terms ${\zeta}_{i,k}^n(5,i)$, $i\in\{1,\dots,7\}$ and ${\zeta}_{i,k}^n(5,9)$ are bounded as follows 
\begin{align}\label{eq:gamma5}
	\left\{\begin{array}{ll}
		\mathbb{E}(|{\zeta}_{i,k}^n(5,1)||\mathcal{F}_{t^{1}_i})\le C\frac{u_{n,m}|d_n|^3}{n^3},
		&\mathbb{E}(|{\zeta}_{i,k}^n(5,2)||\mathcal{F}_{t^{1}_i}){\le} C\frac{u_{n,m}|d_n|^2\sqrt{c_n}}{n^2\sqrt{n}}\\
		\mathbb{E}(|{\zeta}_{i,k}^n(5,3)||\mathcal{F}_{t^{1}_i}){\le} C\frac{u_{n,m}|d_n|^2\delta_n}{n^3},
		&\mathbb{E}(|{\zeta}_{i,k}^n(5,4)||\mathcal{F}_{t^{1}_i})\le C\frac{u_{n,m}|d_n|c_n}{n^2}\\
	\mathbb{E}(|{\zeta}_{i,k}^n(5,5)||\mathcal{F}_{t_i^1})\le C \frac{u_{n,m}\sqrt{c_n}}{n\sqrt{n}},
		&\mathbb{E}(|{\zeta}_{i,k}^n(5,6)||\mathcal{F}_{t_i^1})\le C \frac{u_{n,m}c_n\delta_n}{n^2}\\
	\mathbb{E}(|{\zeta}_{i,k}^n(5,7)||\mathcal{F}_{t^{1}_i})\le C \frac{u_{n,m}c_n^{3/2}}{n\sqrt{n}},
		&\mathbb{E}(|{\zeta}_{i,k}^n(5,9)||\mathcal{F}_{t^{1}_i})\le C\frac{u_{n,m}|d_n|}{n^2}.
	\end{array}\right.
\end{align}
	Then,  we conclude using  $u_{n,m}=(\frac{nm}{(m-1)\log{n}})^{1/\alpha}$, $\beta_n=(\frac{m}{m-1})^{1/\alpha}u_{n,m}^{-1}$,  $c_n\le C\beta_n^{2-\alpha}$ and $\delta_n\le C\log{1/\beta_n}$ (see \eqref{eq:3.13}),  $d_n=0$ for case \ref{2}, $d_n=b$ for case \ref{4}, criteria \eqref{eq:2.7} and Lemma \ref{lem:tightrest}. Therefore, we have the convergence to $0$ of the eight triangular arrays corresponding to these eight generic terms.
Now, concerning ${\zeta}_{i,k}^n(5,8)$, we rewrite  as $u_{n,m}(\sum_{h=1}^{k-1}(N^{\beta_n}_{t^{h+1}_i}-N^{\beta_n}_{t^{h}_i}))^2|N^{\beta_n}_{t^{k+1}_i}-N^{\beta_n}_{t_i^k}|$, then by Jensen's inequality we have 
$$
	{\zeta}_{i,k}^n(5,8)
	\le Cu_{n,m}\sum_{h=1}^{k-1}(\sum_{j=1}^{K(t^{h}_i)}\Delta Y_{T^{\beta_n}_j(t^{h}_i)})^2\sum_{j'=1}^{K(t_i^k)}|\Delta Y_{T^{\beta_n}_{j'}(t_i^k)}|.
$$
Thus, for $h<k$ fixed, thanks to the inequality $(a+b)^2\le2(a^2+b^2)$,  to prove the convergence of the last triangular array, it is enough to consider the two following generic terms \begin{align*}\left\{\begin{array}{l}\zeta_{i,k,h}^n(5,8)=u_{n,m}(\sum_{j=2}^{K(t^{h}_i)}\Delta Y_{T^{\beta_n}_j(t^{h}_i)})^2\sum_{j'=1}^{K(t^{k}_i)}|\Delta Y_{T^{\beta_n}_{j'}(t_i^k)}|,\\
{\zeta'}_{i,k,h}^n(5,8)=u_{n,m}(\Delta Y_{T^{\beta_n}_1(t^{h}_i)}\1_{\{K(t_i^h)\ge1\}})^2\sum_{j=1}^{K(t^{k}_i)}|\Delta Y_{T^{\beta_n}_{j}(t_i^k)}|.
\end{array}\right.
\end{align*}
 First, we consider $\zeta_{i,k,h}^n(5,8)$. By property \ref{I}, the inequality for $\alpha\le 1$,  $(\sum_{i=1}^n|x_i|)^{\alpha}\le\sum_{i=1}^n|x_i|^{\alpha}$ and $\int_{|x|>\beta_n}|x|^{\alpha/2}F(dx)\le\frac{C}{\beta_n^{\alpha/2}}$ (see \eqref{eq:3.3}), we have $\mathbb{E}(|\zeta_{i,k,h}^n(5,8)|\1_{\{|\zeta_{i,k,h}^n(5,8)|\le1\}}|\mathcal{F}_{t^{1}_i})$
\begin{align*}
	=&u_{n,m}\mathbb{E}(|\sum_{j=2}^{K(t^{h}_i)}\Delta Y_{T^{\beta_n}_j(t^{h}_i)}|^{\alpha+(2-\alpha)}|\sum_{j'=1}^{K(t^{k}_i)}\Delta Y_{T^{\beta_n}_{j'}(t^{k}_i)}|\1_{\{|\sum_{j=2}^{K(t^{h}_i)}\Delta Y_{T^{\beta_n}_j(t^{h}_i)}|\le \frac{1}{\sqrt{u_{n,m}\sum_{j'=1}^{K(t^{k}_i)}|\Delta Y_{T^{\beta_n}_{j'}(t^{k}_i)}|}}\}}|\mathcal{F}_{t^{1}_i})\\
	\le&u_{n,m}^{\alpha/2}\mathbb{E}((\sum_{j=2}^{K(t^{h}_i)}|\Delta Y_{T^{\beta_n}_j(t^{h}_i)}|)^{\alpha}(\sum_{j'=1}^{K(t^{k}_i)}|\Delta Y_{T^{\beta_n}_{j'}(t^{k}_i)}|)^{\alpha/2}|\mathcal{F}_{t^{1}_i})\\
	\le&u_{n,m}^{\alpha/2}\mathbb{E}(\sum_{j=2}^{K(t^{h}_i)}|\Delta Y_{T^{\beta_n}_j(t^{h}_i)}|^{\alpha}\sum_{j'=1}^{K(t^{k}_i)}|\Delta Y_{T^{\beta_n}_{j'}(t^{k}_i)}|^{\alpha/2}|\mathcal{F}_{t^{1}_i})\\
	\le&\frac{Cu_{n,m}^{\alpha/2}\rho_n}{(\theta(\beta_n))^2}\int_{|x|>\beta_n}|x|^{\alpha/2}F(dx)\mathbb{E}(K(t^{h}_i)K(t^{k}_i)\1_{\{K(t^{h}_i)\ge2,K(t^{k}_i)\ge1\}}|\mathcal{F}_{t^{1}_i})\le\frac{Cu_{n,m}^{\alpha/2}\lambda_{n,m}\rho_n}{n^2\beta_n^{\alpha/2}}.
\end{align*}
Now, using similar arguments as above,  Jensen's inequality, the inequality $\1_{\{|\sum_{j'=1}^{\ell'}\sum_{j=1}^\ell a_{jj'}|> 1\}}\le \sum_{j'=1}^{\ell'}\sum_{j=1}^\ell\1_{\{|a_{jj'}|> 1/(\ell+\ell')\}}$, $\theta(.)$ is decreasing and \eqref{h1}, for $y>1$, we have
\begin{align*}
	&\mathbb{P}(|\zeta_{i,k,h}^n(5,8)|>y|\mathcal{F}_{t^{1}_i})\le \mathbb{E}(\1_{\{u_{n,m}K(t^{h}_i)\sum_{j=2}^{K(t^{h}_i)}\Delta Y_{T^{\beta_n}_j(t^{h}_i)}^2\sum_{j'=1}^{K(t^{k}_i)}|\Delta Y_{T^{\beta_n}_{j'}(t_i^k)}|>y\}}|\mathcal{F}_{t^{1}_i})\\
	\le& \mathbb{E}(K(t^{h}_i)K(t^{k}_i)\1_{\{K(t^{h}_i)\ge2,K(t^{k}_i)\ge1\}}\1_{\{K(t^{h}_i)\Delta Y^2_{T^{\beta_n}_2(t^{h}_i)}|\Delta Y_{T^{\beta_n}_1(t_i^k)}|>\frac{y}{u_{n,m}K(t^{h}_i)K(t^{k}_i)}\}}|\mathcal{F}_{t^{1}_i})\\
	\le& \frac{1}{(\theta(\beta_n))^2}\mathbb{E}(K(t^{h}_i)K(t^{k}_i)\1_{\{K(t^{h}_i)\ge2,K(t^{k}_i)\ge1\}}\int_{|z|>\beta_n}\theta(\sqrt{\frac{y}{u_{n,m}K(t^{h}_i)^2K(t^{k}_i)|z|}})F(dz)|\mathcal{F}_{t^{1}_i})\\
	\le& \frac{1}{(\theta(\beta_n))^2}\int_{|z|>\beta_n}\frac{u_{n,m}^{\alpha/2}|z|^{\alpha/2}}{y^{\alpha/2}}F(dz)\mathbb{E}(K(t^{h}_i)^2K(t^{k}_i)^2\1_{\{K(t^{h}_i)\ge2,K(t^{k}_i)\ge1\}}|\mathcal{F}_{t^{1}_i})\le \frac{Cu_{n,m}^{\alpha/2}\lambda_{n,m}}{n^2y^{\alpha/2}\beta_{n}^{\alpha/2}}.
\end{align*}
We conclude using $u_{n,m}=(\frac{nm}{(m-1)\log{n}})^{1/\alpha}$, $\beta_n=(\frac{m}{m-1})^{1/\alpha}u_{n,m}^{-1}$,  $\rho_n\le C\log{1/\beta_n}$ (see \eqref{eq:3.13}), $\lambda_{n,m}\rightarrow0$ as $n\rightarrow\infty$, criteria \eqref{eq:2.9} and Lemma \ref{lem:tightrest}. Similarly for ${\zeta'}_{i,k,h}^n(5,8)$,  using $1-e^{-\lambda_{n,m}}\le\lambda_{n,m}$ and that $c(\beta)\le C\beta^{2-\alpha}$, $\mathbb{E}(|{\zeta'}_{i,k,h}^n(5,8)|\1_{\{|{\zeta'}_{i,k,h}^n(5,8)|\le1\}}|\mathcal{F}_{t^{1}_i})$ is equal to
\begin{align*}
	&u_{n,m}(1-e^{-\lambda_{n,m}})\mathbb{E}(\Delta Y_{T^{\beta_n}_1(t^{h}_i)}^2|\sum_{j=1}^{K(t_i^k)}\Delta Y_{T^{\beta_n}_j(t_i^k)}|\1_{\{|\Delta Y_{T^{\beta_n}_1(t^{h}_i)}|\le\frac{1}{\sqrt{u_{n,m}|\sum_{j=1}^{K(t_i^k)}\Delta Y_{T^{\beta_n}_j(t_i^k)}|}}\}}|\mathcal{F}_{t^{1}_i})\\
	&\le \frac{u_{n,m}}{n}\mathbb{E}(c(\frac{1}{\sqrt{u_{n,m}|\sum_{j=1}^{K(t_i^k)}\Delta Y_{T^{\beta_n}_j(t_i^k)}|}})|\sum_{j=1}^{K(t_i^k)}\Delta Y_{T^{\beta_n}_j(t_i^k)}||\mathcal{F}_{t^{1}_i})\\
	&\le \frac{Cu_{n,m}^{\alpha/2}}{n}\mathbb{E}(\sum_{j=1}^{K(t_i^k)}|\Delta Y_{T^{\beta_n}_j(t_i^k)}|^{\alpha/2}|\mathcal{F}_{t^{1}_i})\le \frac{Cu_{n,m}^{\alpha/2}}{n\theta(\beta_n)}\int_{|x|> \beta_n}\hspace{-0.5cm}|x|^{\alpha/2}F(dx)\mathbb{E}(K(t_i^k)\1_{\{K(t_i^k)\ge1\}}|\mathcal{F}_{t^{1}_i})
	\le \frac{Cu_{n,m}^{\alpha/2}}{n^2\beta_n^{\alpha/2}},
\end{align*}
and using the inequality $\1_{\{|\sum_{j=1}^\ell a_j|> 1\}}\le \sum_{j=1}^\ell\1_{\{|a_j|> 1/\ell\}}$, $\theta(.)$ is decreasing and \eqref{h1}, we get for $y>1$, 
\begin{align*}
	\mathbb{P}(|{\zeta'}_{i,k,h}^n(5,8)|>y|\mathcal{F}_{t^{1}_i})
	\le& \lambda_{n,m}\mathbb{E}(K(t^{k}_i)\1_{\{K(t^{k}_i)\ge1\}}\1_{\{\Delta Y^2_{T^{\beta_n}_1(t^{h}_i)}|\Delta Y_{T^{\beta_n}_1(t_i^k)}|>\frac{y}{u_{n,m}K(t^{k}_i)}\}}|\mathcal{F}_{t^{1}_i})\\
	\le& \frac{C\lambda_{n,m}}{(\theta(\beta_n))^2}\mathbb{E}(K(t^{k}_i)\1_{\{K(t^{k}_i)\ge1\}}\int_{|z|>\beta_n}\theta(\sqrt{\frac{y}{u_{n,m}K(t^{k}_i)|z|}})F(dz)|\mathcal{F}_{t^{1}_i})\\
\le&\frac{Cu_{n,m}^{\alpha/2}}{n\theta(\beta_n)y^{\alpha/2}}\int_{|z|> \beta_n}\hspace{-0.5cm}|z|^{\alpha/2}F(dz)\mathbb{E}(K(t_i^k)^2\1_{\{K(t_i^k)\ge1\}}|\mathcal{F}_{t^{1}_i})\le \frac{Cu_{n,m}^{\alpha/2}}{n^2y^{\alpha/2}\beta_{n}^{\alpha/2}}.
\end{align*}
We conclude using $u_{n,m}=(\frac{nm}{(m-1)\log{n}})^{1/\alpha}$, $\beta_n=(\frac{m}{m-1})^{1/\alpha}u_{n,m}^{-1}$, criteria \eqref{eq:2.9} and Lemma \ref{lem:tightrest}.
 Therefore, we get $\Gamma^{n}(5)\stackrel{\mathbb{P}}{\rightarrow}0$.
\subsection{Proof of Lemma \ref{lem:rest3}}	
From the formula of the rest term given by \eqref{eq:MR3}, we have
\begin{align*}
	\mathcal{R}^{n,m}_t={\Gamma}^n_t(1,2)+{\Gamma}^n_t(2,2)+\sum_{i=3}^5\Gamma^n_t(i).
\end{align*}
 The aim is to prove that each term converges uniformly in probability to $0$ when $n\rightarrow\infty$.\\
 \underline{\textbf{The term ${\Gamma}^n_t(1,2)$}}:
 Let us recall that ${\Gamma}^n_t(1,2)=\sum_{k=2}^m\sum_{i=1}^{[nt]}{\zeta}_{i,k}^n(1,2)$ where 
	\begin{displaymath}
	{\zeta}_{i,k}^n(1,2)=\frac{u_{n,m}d_{n}}{nm}ff'(X_{t_i^1}^n)\left[\sum_{j=1}^{k-1}\sum_{h=2}^{K(t_{j}^i)}\Delta Y_{T^{\beta_n}_h(t_{j}^i)}+(k-1)\sum_{h=2}^{K(t_{k}^i)}\Delta Y_{T^{\beta_n}_h(t_{k}^i)}\right].
	\end{displaymath}
	For $k\in\{2,\dots,m\}$, since $ff'$ is bounded, by property \ref{I}, Jensen's inequality and similar calculations as in \eqref{eq:y2case3}, we have
	\begin{align*}
		\mathbb{E}(|{\zeta}_{i,k}^n(1,2)||\mathcal{F}_{t^{1}_i})
		&\leq	\frac{Cu_{n,m}d_{n}}{n}\left[\sum_{j=1}^{k-1}\mathbb{E}(\sum_{h=2}^{K(t^{j}_i)}|\Delta Y_{T^{\beta_n}_h(t^{j}_i)}||\mathcal{F}_{t^{1}_i})+(k-1)\mathbb{E}(\sum_{h=2}^{K(t_i^k)}|\Delta Y_{T^{\beta_n}_h(t_i^k)}||\mathcal{F}_{t^{1}_i})\right]\\
		&\le \frac{Cu_{n,m}d_{n}\delta_n\lambda_{n,m}}{n^2}.
	\end{align*}
	As we choose $u_{n,m}=\frac{nm}{(m-1)(\log{n})^2}$, using $d_{n}\leq C\log{n}$, $\delta_{n}\leq C\log{n}$ (see \eqref{eq:3.13}) and $\lambda_{n,m}\to0$ as $n\to\infty$, we conclude by criteria \eqref{eq:2.7} and Lemma \ref{lem:tightrest} that
	$
	{\Gamma}^n(1,2)\stackrel{\mathbb{P}}{\rightarrow}0.
$\\
	 \underline{\textbf{The term ${\Gamma}^n_t(2,2)$}}:
	Let us recall that ${\Gamma}^n_t(2,2)=\sum_{k=2}^m\sum_{i=1}^{[nt]}{\zeta}_{i,k}^n(2,2)$ where 
	\begin{align*}
		{\zeta}_{i,k}^n(2,2)=&u_{n,m}ff'(X_{t_i^1}^n)\left(\Delta Y_{T^{\beta_n}_1(t_i^k)}\1_{\{K(t_i^k)\geq1\}}\sum_{j=1}^{k-1}\sum_{h=2}^{K(t^{j}_i)}\Delta Y_{T^{\beta_n}_h(t^{j}_i)}\right.\\
		&+\left.\sum_{h=2}^{K(t_i^k)}\Delta Y_{T^{\beta_n}_h(t_i^k)}\sum_{j=1}^{k-1}\Delta Y_{T^{\beta_n}_1(t^{j}_i)}\1_{\{K(t^{j}_i)\geq1\}}+\sum_{h=2}^{K(t_i^k)}\Delta Y_{T^{\beta_n}_h(t_i^k)}\sum_{j=1}^{k-1}\sum_{h=2}^{K(t^{j}_i)}\Delta Y_{T^{\beta_n}_h(t^{j}_i)}\right).
		\end{align*}
		For any fixed $k\in\{2,\dots, m\}$, as $ff'$ is bounded, by   \ref{I}, $1-e^{-\lambda_{n,m}}\le\lambda_{n,m}$ and  using the same calculations in \eqref{eq:y2case3}, we have
	\begin{align*}
		\mathbb{E}(|{\zeta}_{i,k}^n(2,2)||\mathcal{F}_{t^{1}_i})\leq Cu_{n,m}\left(2\times\frac{\delta_n}{n}\frac{\delta_n\lambda_{n,m}}{n}+\frac{\delta_n^2\lambda_{n,m}^2}{n^2}\right)
		\leq C\frac{u_{n,m}\delta_n^2\lambda_{n,m}}{n^2}.
	\end{align*}
	Then we conclude using the same arguments as above to get ${\Gamma}^n(2,2)\stackrel{\mathbb{P}}{\rightarrow}0$.\\
\underline{\textbf{The term $\Gamma_t^n(3)$}}:
 Let us recall that  $\Gamma_t^n(3)=\sum_{k=2}^m\sum_{i=1}^{[nt]}\zeta_{i,k}^n(3)$ with
$$	\zeta_{i,k}^n(3)
	=u_{n,m}ff'(X_{t_i^1}^n)(M_{t_i^k}^{\beta_{n}}-M_{t^{1}_i}^{\beta_{n}})(Y_{t_i^{k+1}}-Y_{t_i^k}).$$
For $k\in\{2,\dots,m\}$, since $ff'$ is bounded, by property \ref{I}, the independence between $M_{t_i^k}^{\beta_{n}}-M_{t^{1}_i}^{\beta_{n}}$ and $Y_{t_i^{k+1}}-Y_{t_i^k}$, decomposition \eqref{eq:3.7},  Lemma \ref{lem:bigj} and Lemma \ref{lem:smallj}, $\int_{\R}x^2F(dx)<\infty$ (see Remark \ref{rmk:1}),we get $	\mathbb{E}(\zeta_{i,k}^n(3)|\mathcal{F}_{t^{1}_i})=0$ and
$$\mathbb{E}(|\zeta_{i,k}^n(3)|^2|\mathcal{F}_{t^{1}_i})\leq Cu_{n,m}^2\mathbb{E}(|M_{t^{k}_i}^{\beta_{n}}-M_{t_i^1}^{\beta_{n}}|^2|\mathcal{F}_{t^{1}_i})\mathbb{E}(|Y_{t_i^{k+1}}-Y_{t^{k}_i}|^2|\mathcal{F}_{t^{1}_i})
	\leq \frac{Cu_{n,m}^2c_{n}}{n}(\frac{d_{n}^2}{n^2}+\frac{c_{n}}{n}+\frac{1}{n}).$$
Then we conclude using $u_{n,m}=\frac{nm}{(m-1)(\log{n})^2}$ and $\beta_{n}=\frac{\log{n}}{n}$, $d_n\le C\log{1/\beta_n}$ and $c_n\le C\beta_n$ from \eqref{eq:3.13}, criteria \eqref{eq:2.8} and Lemma \ref{lem:tightrest}. Therefore, we get $\Gamma^n(3)\stackrel{\mathbb{P}}{\rightarrow}0$.\\
 \underline{\textbf{The term $\Gamma_t^n(4)$}}: Let us recall that $\Gamma_t^n(4)=\sum_{k=2}^m\sum_{i=1}^{[nt]}\zeta_{i,k}^n(4)$ with
$$
	\zeta_{i,k}^n(4)
	=u_{n,m}ff'(X_{t_i^1}^n)(A^{\beta_n}_{t_i^k}-A^{\beta_n}_{t^{1}_i}+N^{\beta_n}_{t_i^k}-N^{\beta_n}_{t^{1}_i})(M^{\beta_{n}}_{t^{k+1}_i}-M^{\beta_{n}}_{t_i^k}).
$$
For $k$ fixed, using the same arguments as for $\Gamma^n(3)$, we get $	\mathbb{E}(\zeta_{i,k}^n(4)|\mathcal{F}_{t^{1}_i})=0$,
$ 
	\mathbb{E}((\zeta_{i,k}^n(4))^2|\mathcal{F}_{t^{1}_i})
	\leq \frac{Cu_{n,m}^2c_{n}}{n}(\frac{d_{n}^2}{n^2}+\frac{1}{n})
$ and therefore, we get $\Gamma^n(4)\stackrel{\mathbb{P}}{\rightarrow}0$.\\
\underline{\textbf{The term $\Gamma_t^n(5)$}}:
Since $k(x,y)$ is bounded on $\R^2$,  it is enough to prove that for $k\in\{1,\dots,m\}$, the following nine  triangular arrays with generic terms $\{{\zeta}_{i,k}^n(5,j),j\in\{1,\dots,9\}$  converge to $0$ as $n\rightarrow\infty$ with
\begin{align*}
	\left\{\begin{array}{ll}
		{\zeta}_{i,k}^n(5,1)=u_{n,m}(A^{\beta_n}_{t_i^k}-A^{\beta_n}_{t^{1}_i})^2|A^{\beta_n}_{t^{k+1}_i}-A^{\beta_n}_{t_i^k}|,&
		{\zeta}_{i,k}^n(5,2)=u_{n,m}(A^{\beta_n}_{t_i^k}-A^{\beta_n}_{t^{1}_i})^2	|M^{\beta_n}_{t^{k+1}_i}-M^{\beta_n}_{t_i^k}|\\
		{\zeta}_{i,k}^n(5,3)=u_{n,m}(A^{\beta_n}_{t_i^k}-A^{\beta_n}_{t^{1}_i})^2	|N^{\beta_n}_{t^{k+1}_i}-N^{\beta_n}_{t_i^k}|,&
		{\zeta}_{i,k}^n(5,4)=u_{n,m}(M^{\beta_n}_{t_i^k}-M^{\beta_n}_{t^{1}_i})^2	|A^{\beta_n}_{t^{k+1}_i}-A^{\beta_n}_{t_i^k}|\\
		{\zeta}_{i,k}^n(5,5)=u_{n,m}(N^{\beta_n}_{t_i^k}-N^{\beta_n}_{t^{1}_i})^2	|M^{\beta_n}_{t^{k+1}_i}-M^{\beta_n}_{t_i^k}|,&
		{\zeta}_{i,k}^n(5,6)=u_{n,m}(M^{\beta_n}_{t_i^k}-M^{\beta_n}_{t^{1}_i})^2	|N^{\beta_n}_{t^{k+1}_i}-N^{\beta_n}_{t_i^k}|\\
		{\zeta}_{i,k}^n(5,7)=u_{n,m}(M^{\beta_n}_{t_i^k}-M^{\beta_n}_{t^{1}_i})^2	|M^{\beta_n}_{t^{k+1}_i}-M^{\beta_n}_{t_i^k}|,&
		{\zeta}_{i,k}^n(5,8)=u_{n,m}(N^{\beta_n}_{t_i^k}-N^{\beta_n}_{t^{1}_i})^2	|N^{\beta_n}_{t^{k+1}_i}-N^{\beta_n}_{t_i^k}|\\{\zeta}_{i,k}^n(5,9)=u_{n,m}(N^{\beta_n}_{t_i^k}-N^{\beta_n}_{t^{1}_i})^2	|A^{\beta_n}_{t^{k+1}_i}-A^{\beta_n}_{t_i^k}|.&
	\end{array}\right.
\end{align*}
From \eqref{eq:RC1}, \eqref{eq:RC1.},  \eqref{eq:RC1.1} and \eqref{eq:gamma5}, the triangular arrays with generic terms ${\zeta}_{i,k}^n(5,i)$, $i\in\{1,\dots,9\}$ are bounded as follows 
\begin{align}\label{eq:gamma5i}
	\left\{\begin{array}{ll}
		\mathbb{E}(|{\zeta}_{i,k}^n(5,1)||\mathcal{F}_{t^{1}_i})\le C\frac{u_{n,m}|d_n|^3}{n^3},
		&\mathbb{E}(|{\zeta}_{i,k}^n(5,2)||\mathcal{F}_{t^{1}_i}){\le} C\frac{u_{n,m}|d_n|^2\sqrt{c_n}}{n^2\sqrt{n}}\\
		\mathbb{E}(|{\zeta}_{i,k}^n(5,3)||\mathcal{F}_{t^{1}_i}){\le} C\frac{u_{n,m}|d_n|^2\delta_n}{n^3},
		&\mathbb{E}(|{\zeta}_{i,k}^n(5,4)||\mathcal{F}_{t^{1}_i})\le C\frac{u_{n,m}|d_n|c_n}{n^2}\\
		\mathbb{E}(|{\zeta}_{i,k}^n(5,5)||\mathcal{F}_{t_i^1})\le C \frac{u_{n,m}\sqrt{c_n}}{n\sqrt{n}},
		&\mathbb{E}(|{\zeta}_{i,k}^n(5,6)||\mathcal{F}_{t_i^1})\le C \frac{u_{n,m}c_n\delta_n}{n^2}\\
		\mathbb{E}(|{\zeta}_{i,k}^n(5,7)||\mathcal{F}_{t^{1}_i})\le C \frac{u_{n,m}c_n^{3/2}}{n\sqrt{n}},
		&\mathbb{E}(|{\zeta}_{i,k}^n(5,8)||\mathcal{F}_{t^{1}_i})\le C \frac{u_{n,m}\delta_n}{n^2},\\
		\mathbb{E}(|{\zeta}_{i,k}^n(5,9)||\mathcal{F}_{t^{1}_i})\le C\frac{u_{n,m}|d_n|}{n^2}.&
	\end{array}\right.
\end{align}
Then, by $u_{n,m}=\frac{nm}{(m-1)(\log{n})^2}$ , $d_n\le C\log{1/\beta_n}$, $\delta_n\le C\log{1/\beta_n}$ and $c_n\le C\beta_n$ (see \eqref{eq:3.13}) with $\beta_{n}=\frac{\log{n}}{n}$, criteria \eqref{eq:2.7} and Lemma \ref{lem:tightrest}, we conclude the convergence of these triangular arrays.
Therefore, we get $\Gamma^n(5)\stackrel{\mathbb{P}}{\rightarrow}0$.
\subsection{Proof of Lemma \ref{lem:rest5}}Here, we prove that 
the sequences of processes $(\overline{Y}^n(1))_{n\ge0}$ and  $(\mathcal{R}^{n,m})_{n\ge0}$ converge uniformly in probability to $0$ as $n\rightarrow\infty$. First, instead of considering the form $\overline{Y}^{n}_t(1)$ given in \eqref{Y}, it is enough to prove that for each $k\in\{1,\dots,m\}$ the triangular arrays with generic terms $y^{n,m}_{i,k}(1,1)=M^{\beta_n}_{t_i^k,t_i^{k+1}}$ and $y^{n,m}_{i,k}(1,2)=\sum_{j\geq2}\Delta Y_{T^{\beta_n}_{ \s j}( t_i^k)}\1_{\{K({\s t_i^k})\geq j\}}$ converge uniformly in probability to $0$ as $n\rightarrow\infty$. On the one hand, from the above \eqref{eq:tighty1}, we have
$
\mathbb{E}(y^{n,m}_{i,k}(1,1)|\mathcal{F}_{t^{1}_i})
=0,\quad
\mathbb{E}((y^{n,m}_{i,k}(1,1))^2|\mathcal{F}_{t^{1}_i})=\frac{c_{n}}{nm}.
$ Then, $y^{n,m}_{i,k}(1,1)$ satisfies $\eqref{eq:2.8}$ and we conclude using $c_{n}\le C\beta_n^{2-\alpha}$ (see \eqref{eq:3.13}) and Lemma \ref{lem:tightrest}. On the other hand,  by similar calculations as in \eqref{eq:tighty2}, property \ref{I} and  $\int_{\R}x^2F(dx)<\infty$,
we have $	\mathbb{E}(|y^{n,m}_{i,k}(1,2)||\mathcal{F}_{t^{1}_i})\le \frac{\delta_n\lambda_{n,m}}{nm}$. Then we conclude using  $\delta_n\le C\log{1/\beta_n}$ (see \eqref{eq:3.13}), $\lambda_{n,m}\le\frac{C}{n\beta_n^{\alpha}}$ (see \eqref{h1}), with the choice $\beta_n=\frac{\log{n}}{n^{1/(2\alpha)}}$, criteria \eqref{eq:2.7} and Lemma \ref{lem:tightrest}. Therefore, we have $\overline{Y}^n(1)\stackrel{\mathbb{P}}{\rightarrow}0$.
Now, from the formula of the rest term given by \eqref{eq:MR5}, we have 
 \begin{align*}
 \hspace{-0.4cm}\mathcal{R}^{n,m}_t={\Gamma}^n_t(1,2)+\sum_{i=2}^5\Gamma^n_t(i).
 \end{align*}
Now, we will prove that each term converges uniformly in probability to $0$ when $n\rightarrow\infty$.\\
 \underline{\textbf{The term ${\Gamma}^n_t(1,2)$}}:
 Let us recall that  ${\Gamma}^n_t(1,2)=\sum_{k=1}^{m}\sum_{i=1}^{[nt]}{\zeta}_{i,k}^n(1,2)$, where  $${\zeta}_{i,k}^n(1,2)=ff'(X_{t_i^1})\sum_{j=2}^{K(t_{k}^i)}\Delta Y_{T_j(t_{k}^i)}\tilde{M}^{n,m}_{i,k}\quad\textrm{with}\quad \tilde{M}^{n,m}_{i,k}=(M_{t^{m+1}_i}^{\beta_{n}}-M_{t^{1}_i}^{\beta_{n}})-(M_{t^{k+1}_i}^{\beta_{n}}-M_{t^{k}_i}^{\beta_{n}}).$$ 
 For $k\in\{1,\dots,m\}$ fixed, first, by Lemma \ref{lem:smallj} we have
 $	\mathbb{E}((\tilde{M}^{n,m}_{i,k})^2|\mathcal{F}_{t^{1}_i})
 	\leq \frac{Cc_{n}}{n}.
 $
As $ff'$ is bounded, using Jensen's inequality, the independence of $\{\Delta Y_{T_j(t_{k}^i)},j\ge1, K(t_{k}^i)\}$ and $\tilde{M}^{n,m}_{i,k}$, property \ref{I} and $\int_{\R}x^2F(dx)<\infty$, we get that $\mathbb{E}({\zeta}_{i,k}^n(1,2)|\mathcal{F}_{t^{1}_i})=0$ and
 \begin{align*}
 &	\mathbb{E}(({\zeta}_{i,k}^n(1,2))^2|\mathcal{F}_{t^{1}_i}){\leq}Cu_{n,m}^2\mathbb{E}(K(t_i^k)\sum_{j=2}^{K(t_i^k)}(\Delta Y_{T^{\beta_n}_j(t_i^k)})^2|\mathcal{F}_{t^{1}_i})\mathbb{E}(({\tilde{M}}^{n,m}_{i,k})^2|\mathcal{F}_{t^{1}_i})\\
 	&{\leq} \frac{Cu_{n,m}^2c_n}{n\theta(\beta_{n})}\mathbb{E}(K(t_i^k)(K(t_i^k)-1)\1_{\{K(t_i^k)\ge2\}}|\mathcal{F}_{t^{1}_i})\int_{|x|>\beta_{n}}x^2F(dx){\leq}\frac{Cu_{n,m}^2c_n\lambda_{n,m}}{n^2}.
 \end{align*}
  Then, we conclude by $c_n\le C\beta_n^{2-\alpha}$ (see $\eqref{eq:3.13}$), $\lambda_{n,m}\le \frac{C}{n\beta_n^{\alpha}}$ (see \eqref{h1}), the choices $u_{n,m}=\left[\frac{mn}{(m-1)\log{n}}\right]^{1/\alpha}$ and $\beta_n=\frac{\log{n}}{n^{1/(2\alpha)}}$ with $\alpha>1$, criteria \eqref{eq:2.8} and Lemma \ref{lem:tightrest}. Therefore, we get  ${\Gamma}^n(1,2)\stackrel{\mathbb{P}}{\rightarrow}0$.\\
\underline{\textbf{The term $\Gamma^n_t(2)$}}:
Let us  recall that    $\Gamma^n_t(2)=\sum_{k=2}^{m}\sum_{i=1}^{[nt]}\zeta_{i,k}^n(2)$, with   $$\zeta_{i,k}^n(2)=u_{n,m}[\frac{(k-1)d_{n}}{nm}+M_{t_i^k}^{\beta_{n}}-M_{t^{1}_i}^{\beta_{n}}](\frac{d_{n}}{nm}+M_{t^{k+1}_i}^{\beta_{n}}-M_{t_i^k}^{\beta_{n}}).$$
 For $k\in\{2,\dots,m\}$ fixed, by the independence of the increments and Lemma \ref{lem:smallj}, we get $\mathbb{E}(\zeta_{i,k}^n(2)|\mathcal{F}_{t^{1}_i})	
 =\frac{(k-1)u_{n,m} d_{n}^2}{n^2m^2}$ and
\begin{align*}
	&\mathbb{E}(|\zeta_{i,k}^n(2)|^2|\mathcal{F}_{t^{1}_i})	\leq C u_{n,m}^2(\mathbb{E}(M_{t_i^k}^{\beta_{n}}-M_{t^{1}_i}^{\beta_{n}})^2+\frac{d_n^2}{n^2})(\mathbb{E}(M_{t^{k+1}_i}^{\beta_{n}}-M_{t_i^k}^{\beta_{n}})^2+\frac{d_{n}^2}{n^2})\leq Cu_{n,m}^2\left(\frac{c_{n}^2}{n^2}+\frac{d_{n}^4}{n^4}\right).
\end{align*}
Then, we conclude by $c_n\le C\beta_n^{2-\alpha}$, $d_n\le C\beta_n^{1-\alpha}$ (see $\eqref{eq:3.13}$),  the choices $u_{n,m}=\left[\frac{mn}{(m-1)\log{n}}\right]^{1/\alpha}$ and $\beta_n=\frac{\log{n}}{n^{1/(2\alpha)}}$ with $\alpha>1$, criteria \eqref{eq:2.8} and Lemma \ref{lem:tightrest}. Therefore, we get  $\Gamma^n(2)\stackrel{\mathbb{P}}{\rightarrow}0$.\\
\underline{\textbf{The term $\Gamma^n_t(3)$}}:
Let us recall that $\Gamma^n_t(3)=\sum_{k=2}^m\sum_{i=1}^{[nt]}\zeta_{i,k}^n(3)$ with $$\zeta_{i,k}^n(3)=\frac{u_{n,m}d_n(k-1)}{nm}(N_{t^{k+1}_i}^{\beta_{n}}-N_{t_i^k}^{\beta_{n}})+\frac{u_{n,m}d_n}{nm}(N_{t_i^k}^{\beta_{n}}-N_{t^{1}_i}^{\beta_{n}}).$$ For $k\in\{2,\dots,m\}$ fixed, by the independence of the increments and Lemma \ref{lem:bigj}, we get
\begin{align*}
	\mathbb{E}(|\zeta_{i,k}^n(3)||\mathcal{F}_{t^{1}_i})\leq \frac{Cu_{n,m}|d_n|}{n}(\mathbb{E}|N_{t^{k+1}_i}^{\beta_{n}}-N_{t_i^k}^{\beta_{n}}|+\mathbb{E}|N_{t_i^k}^{\beta_{n}}-N_{t^{1}_i}^{\beta_{n}}|)
	\leq
	\frac{Cu_{n,m}|d_{n}|\delta_{n}}{n^2}.
\end{align*}
Then, we conclude using $\delta_n$ and $d_n$ are bounded by $C\beta_n^{1-\alpha}$ (see $\eqref{eq:3.13}$),  the choices $u_{n,m}=\left[\frac{mn}{(m-1)\log{n}}\right]^{1/\alpha}$ and $\beta_n=\frac{\log{n}}{n^{1/(2\alpha)}}$, criteria \eqref{eq:2.7} and Lemma \ref{lem:tightrest}. Therefore, $\Gamma^n(3)\stackrel{\mathbb{P}}{\rightarrow}0$. \\
\underline{\textbf{The term $\Gamma^n_t(4)$}}:
Let us recall that  $\Gamma^n_t(4)=\sum_{k=2}^m\sum_{i=1}^{[nt]}\zeta_{i,k}^n(4)$ where $$\zeta_{i,k}^n(4)=u_{n,m}(N_{t_i^k}^{\beta_{n}}-N_{t^{1}_i}^{\beta_{n}})(N_{t^{k+1}_i}^{\beta_{n}}-N_{t_i^k}^{\beta_{n}}).$$ For $k\in\{2,\dots,m\}$ fixed, by the independence of the increments and Lemma \ref{lem:bigj},  we have
\begin{align*}
	\mathbb{E}(|\zeta_{i,k}^n(4)||\mathcal{F}_{t^{1}_i})\le Cu_{n,m}\mathbb{E}(|N_{t_i^k}^{\beta_{n}}-N_{t^{1}_i}^{\beta_{n}}|)\mathbb{E}(|N_{t^{k+1}_i}^{\beta_{n}}-N_{t_i^k}^{\beta_{n}}|)\le C\frac{u_{n,m}\delta_{n}^2}{n^2}.
\end{align*}
Then, we conclude using $\delta_n\le C\beta_n^{1-\alpha}$ (see $\eqref{eq:3.13}$), with our choice of $u_{n,m}$ and $\beta_n$, criteria \eqref{eq:2.7} and Lemma \ref{lem:tightrest}. Therefore, we get $\Gamma^n(4)\stackrel{\mathbb{P}}{\rightarrow}0$. \\
\underline{\textbf{The term $\Gamma_t^n(5)$}}:
Since $k(x,y)$ is bounded on $\R^2$,  it is enough to prove that for $k\in\{1,\dots,m\}$, the following nine  triangular arrays with generic terms $\{{\zeta}_{i,k}^n(5,j),j\in\{1,\dots,9\}$  converge to $0$ as $n\rightarrow\infty$ with
\begin{align*}
	\left\{\begin{array}{ll}
		{\zeta}_{i,k}^n(5,1)=u_{n,m}(A^{\beta_n}_{t_i^k}-A^{\beta_n}_{t^{1}_i})^2|A^{\beta_n}_{t^{k+1}_i}-A^{\beta_n}_{t_i^k}|,&
		{\zeta}_{i,k}^n(5,2)=u_{n,m}(A^{\beta_n}_{t_i^k}-A^{\beta_n}_{t^{1}_i})^2	|M^{\beta_n}_{t^{k+1}_i}-M^{\beta_n}_{t_i^k}|\\
		{\zeta}_{i,k}^n(5,3)=u_{n,m}(A^{\beta_n}_{t_i^k}-A^{\beta_n}_{t^{1}_i})^2	|N^{\beta_n}_{t^{k+1}_i}-N^{\beta_n}_{t_i^k}|,&
		{\zeta}_{i,k}^n(5,4)=u_{n,m}(M^{\beta_n}_{t_i^k}-M^{\beta_n}_{t^{1}_i})^2	|A^{\beta_n}_{t^{k+1}_i}-A^{\beta_n}_{t_i^k}|\\
		{\zeta}_{i,k}^n(5,5)=u_{n,m}(N^{\beta_n}_{t_i^k}-N^{\beta_n}_{t^{1}_i})^2	|M^{\beta_n}_{t^{k+1}_i}-M^{\beta_n}_{t_i^k}|,&
		{\zeta}_{i,k}^n(5,6)=u_{n,m}(M^{\beta_n}_{t_i^k}-M^{\beta_n}_{t^{1}_i})^2	|N^{\beta_n}_{t^{k+1}_i}-N^{\beta_n}_{t_i^k}|\\
		{\zeta}_{i,k}^n(5,7)=u_{n,m}(M^{\beta_n}_{t_i^k}-M^{\beta_n}_{t^{1}_i})^2	|M^{\beta_n}_{t^{k+1}_i}-M^{\beta_n}_{t_i^k}|,&
		{\zeta}_{i,k}^n(5,8)=u_{n,m}(N^{\beta_n}_{t_i^k}-N^{\beta_n}_{t^{1}_i})^2	|N^{\beta_n}_{t^{k+1}_i}-N^{\beta_n}_{t_i^k}|\\{\zeta}_{i,k}^n(5,9)=u_{n,m}(N^{\beta_n}_{t_i^k}-N^{\beta_n}_{t^{1}_i})^2	|A^{\beta_n}_{t^{k+1}_i}-A^{\beta_n}_{t_i^k}|.&
	\end{array}\right.
\end{align*}
From  \eqref{eq:gamma5i}, the triangular arrays with generic terms ${\zeta}_{i,k}^n(5,i)$, $i\in\{1,\dots,9\}\backslash\{5\}$ are bounded as follows
\begin{align*}
	\left\{\begin{array}{ll}
		\mathbb{E}(|{\zeta}_{i,k}^n(5,1)||\mathcal{F}_{t^{1}_i})\le C\frac{u_{n,m}|d_n|^3}{n^3},
		&\mathbb{E}(|{\zeta}_{i,k}^n(5,2)||\mathcal{F}_{t^{1}_i}){\le} C\frac{u_{n,m}|d_n|^2\sqrt{c_n}}{n^2\sqrt{n}}\\
		\mathbb{E}(|{\zeta}_{i,k}^n(5,3)||\mathcal{F}_{t^{1}_i}){\le} C\frac{u_{n,m}|d_n|^2\delta_n}{n^3},
		&\mathbb{E}(|{\zeta}_{i,k}^n(5,4)||\mathcal{F}_{t^{1}_i})\le C\frac{u_{n,m}|d_n|c_n}{n^2}\\
		\mathbb{E}(|{\zeta}_{i,k}^n(5,6)||\mathcal{F}_{t_i^1})\le C \frac{u_{n,m}c_n\delta_n}{n^2},
		&\mathbb{E}(|{\zeta}_{i,k}^n(5,7)||\mathcal{F}_{t^{1}_i})\le C \frac{u_{n,m}c_n^{3/2}}{n\sqrt{n}}\\
		\mathbb{E}(|{\zeta}_{i,k}^n(5,8)||\mathcal{F}_{t^{1}_i})\le C \frac{u_{n,m}\delta_n}{n^2},
		&\mathbb{E}(|{\zeta}_{i,k}^n(5,9)||\mathcal{F}_{t^{1}_i})\le C\frac{u_{n,m}|d_n|}{n^2}.
	\end{array}\right.
\end{align*}
Then, by $u_{n,m}=\left[\frac{nm}{(m-1)\log{n}}\right]^{1/\alpha}$ , $d_n\le C\beta_n^{1-\alpha}$, $\delta_n\le C\beta_n^{1-\alpha}$ and $c_n\le C\beta_n^{2-\alpha}$ (see \eqref{eq:3.13}) with $\beta_{n}=\frac{\log{n}}{n^{1/(2\alpha)}}$, criteria \eqref{eq:2.7} and Lemma \ref{lem:tightrest}, we conclude that these triangular arrays with generic terms ${\zeta}_{i,k}^n(5,i)$, $i\in\{1,\dots,9\}\backslash\{5\}$ converge to $0$. Next, concerning  ${\zeta}_{i,k}^n(5,5)$, we reuse the notations in section \ref{section1} and rewrite ${\zeta}_{i,k}^n(5,5)={\zeta'}_{i,k}^n(5,5)+{\zeta''}_{i,k}^n(5,5)+{\zeta'''}_{i,k}^n(5,5)$ where
\begin{align*}\left\{\begin{array}{l}
{\zeta'}_{i,k}^n(5,5)=u_{n,m}\Delta Y_{T^{\beta_n}_1(t_i^1,t_i^k)}^2\1_{\{K(t_i^1,t_i^k)\ge 1\}}|M^{\beta_n}_{t^{k+1}_i}-M^{\beta_n}_{t_i^k}|,\\
{\zeta''}_{i,k}^n(5,5)=u_{n,m}\sum_{j=2}^{K(t_i^1,t_i^k)}\Delta Y_{T^{\beta_n}_j(t_i^1,t_i^k)}^2|M^{\beta_n}_{t^{k+1}_i}-M^{\beta_n}_{t_i^k}|,\\
{\zeta'''}_{i,k}^n(5,5)=u_{n,m}\sum_{\substack{j,j'=1\\j\neq j'}}^{K(t_i^1,t_i^k)}|\Delta Y_{T^{\beta_n}_j(t^{1}_i,t_i^k)}||\Delta Y_{T^{\beta_n}_{j'}(t^{1}_i,t_i^k)}||M^{\beta_n}_{t^{k+1}_i}-M^{\beta_n}_{t_i^k}|.\end{array}\right.
\end{align*}
Concerning  ${\zeta''}_{i,k}^n(5,5)$ and ${\zeta'''}_{i,k}^n(5,5)$, by   the independence between the martingale increment and the jumps with size bigger than $\beta_n$, Cauchy-Schwarz's inequality and Lemma \ref{lem:smallj} for term $|M^{\beta_n}_{t^{k+1}_i}-M^{\beta_n}_{t_i^k}|$ and the same estimates as done for the treatment of  $\overline{\Gamma}^n(1,2)$ in the proof of Lemma \ref{lem:rest1}, we have 
\begin{align*}
	\left\{\begin{array}{l}
		\mathbb{E}(	|{\zeta''}_{i,k}^n(5,5)||\mathcal{F}_{t^{1}_i})\le Cu_{n,m}\sqrt{\frac{c_n}{n}}\mathbb{E}\left(\sum_{h=2}^{K(t^{1}_i,t_i^k)}\Delta Y_{T^{\beta_n}_h(t^{1}_i,t_i^k)}^2|\mathcal{F}_{t^{1}_i}\right)\le  \frac{Cu_{n,m}\sqrt{c_n}\lambda_{n,m}}{n\sqrt{n}}\\
		\mathbb{E}(|{\zeta'''}_{i,k}^n(5,5)||\mathcal{F}_{t^{1}_i})\le  \frac{Cu_{n,m}\sqrt{c_n}}{\sqrt{n}}\mathbb{E}\left(\sum_{\substack{j,j'=1\\j\neq j'}}^{K(t^{1}_i,t_i^k)}|\Delta Y_{T^{\beta_n}_j(t^{1}_i,t_i^k)}\Delta Y_{T^{\beta_n}_{j'}(t^{1}_i,t_i^k)}||\mathcal{F}_{t^{1}_i}\right)\le \frac{Cu_{n,m}\sqrt{c_n}\delta_{n}^2}{n^2\sqrt{n}}\end{array}\right.
\end{align*} 
Then, for our choice of $u_{n,m}$ and $\beta_{n}$, we use $\delta_n\le C\beta_n^{1-\alpha}$ and $c_n\le C\beta_n^{2-\alpha}$ (see \eqref{eq:3.13}), $\lambda_{n,m}\le \frac{C}{n\beta_n^{\alpha}}$ (see \eqref{h1}), criteria \eqref{eq:2.7} and Lemma \ref{lem:tightrest} to get the convergence to $0$ of the triangular arrays with generic terms ${\zeta''}_{i,k}^n(5,5)$, ${\zeta'''}_{i,k}^n(5,5)$.
Concerning the term ${\zeta'}_{i,k}^n(5,5)$, using the independence structure, properties \ref{I} and \ref{M}, we see that 
\begin{align*}
	\mathbb{E}(e^{\texttt{i}v{\zeta'}_{i,k}^n(5,5)}|\mathcal{F}_{t^{1}_i})=e^{-(k-1)\lambda_{n,m}}+\frac{1-e^{-(k-1)\lambda_{n,m}}}{\theta(\beta_n)}\int_{|x|>\beta_n}	\mathbb{E}(e^{\texttt{i}vu_{n,m}x^2|M^{\beta_n}_{t^{k+1}_i}-M^{\beta_n}_{t^{k}_i}|})F(dx)
\end{align*}
Let us denote ${z'}_{n,m}(x,v)=\frac{1}{nm}\int_{|y|\le\beta_n}(e^{\texttt{i}vu_{n,m}x^2|y|}-1-\texttt{i}vu_{n,m}x^2|y|)F(dy)$. Then we have
\begin{align*}
	\mathbb{E}(e^{\texttt{i}v{\zeta'}_{i,k}^n(5,5)}|\mathcal{F}_{t^{1}_i})&=e^{-(k-1)\lambda_{n,m}}+\frac{1-e^{-(k-1)\lambda_{n,m}}}{\theta(\beta_n)}\int_{|x|>\beta_n}	e^{{z'}_{n,m}(x,v)}F(dx)\\
	&=1+\frac{1-e^{-(k-1)\lambda_{n,m}}}{\theta(\beta_n)}\int_{|x|>\beta_n}	(e^{{z'}_{n,m}(x,v)}-1)F(dx).
\end{align*} 
By similar calculations as in \eqref{eq:tool}, we easily get $|{z'}_{n,m}(x,v)|\le \frac{C}{n}|vu_{n,m}x^2|^{\alpha}$ and the suprema of $|{z'}_{n,m}(x,v)|$ over all  $|x|\le p$ and $|v|\le 1$ goes to $0$ as $n$ tends to $0$. Now, since  $|e^{{z'}_{n,m}(x,v)}-1|\le C |{z'}_{n,m}(x,v)|$ and $x\mapsto|x|^{2\alpha}$ is $F$-integrable (see Remark \ref{rmk:1}), we have 
\begin{align*}
	|\mathbb{E}(e^{\texttt{i}v{\zeta'}_{i,k}^n(5,5)}|\mathcal{F}_{t^{1}_i})-1|\le \frac{C|v|^{\alpha}u_{n,m}^{\alpha}}{n^2}.	
\end{align*}
	Then, for our choice of $u_{n,m}$, ${\zeta'}_{i,k}^n(5,5)$ satisfies \eqref{eq:2.12} with $\xi{'''}_{n,v}=\frac{C|v|^{\alpha}u_{n,m}^{\alpha}}{n}$ which converges to $0$ as $n\rightarrow\infty$  for all $v\le1$. Then, by Lemma \ref{lem:add} and Lemma \ref{lem:tightrest}, we get the convergence for the last triangular array.	Therefore, we get $\Gamma^n(5)\stackrel{\mathbb{P}}{\rightarrow}0$.
\section{Proof of some equalities and inequalities}\label{app:B}
First, we observe from the L\'evy measure two relations followed: \\
\paragraph*{$\bullet$ \textit{On the positive side}} For all $0\leq a<b\leq 1$ and $\gamma>0$, we have
\begin{equation}
	\int_{a<x\leq b}|x|^{\gamma}F(dx)=\gamma\int_0^by^{\gamma-1}(\theta_+(y\vee a)-\theta_+(b))dy\label{eq:3.2.1}
\end{equation}
\begin{equation}\label{eq:3.6.1}
	\textrm{and}\quad\int_{a<x\leq b}x\log{x}F(dx)=\int_0^b(1+\log{y})(\theta_+(y\vee a)-\theta_+(b))dy.
\end{equation}
\begin{proof}
	The proofs  are simply by Fubini. Concerning \eqref{eq:3.2.1}, it r.h.s. is equal to
	\begin{align*}
		&\hskip 0.5cm \gamma\int_0^by^{\gamma-1}\left(\int_{x>y\vee a}F(dx)-\int_{x>b}F(dx)\right)dy=\gamma\int_0^by^{\gamma-1}\int_{y\vee a<x\le b}F(dx)dy\\&=\gamma\left(\int_0^ay^{\gamma-1}\int_{a<x\leq b}F(dx)dy+\int_a^by^{\gamma-1}\int_{y<x\leq b}F(dx)dy\right)\\
		&=\gamma\left(\int_{a<x\leq b}F(dx)\int_0^ay^{\gamma-1}dy+\int_{a<x\leq b}F(dx)\int_a^xy^{\gamma-1}dy\right)\\
		&=\int_{a<x\leq b}a^{\gamma}F(dx)+\int_a^b(x^{\gamma}-a^{\gamma})F(dx)=\int_{a<x\leq b}x^{\gamma}F(dx).
	\end{align*}	 
Concerning \eqref{eq:3.6.1}, its r.h.s is equal to 
\begin{align*}
&\hskip 0.5cm\int_0^a(1+\log{y})(\theta_+( a)-\theta_+(b))dy+\int_a^b(1+\log{y})(\theta_+(y)-\theta_+(b))dy\\
&=(\theta_+( a)-\theta_+(b))a\log{a}+\int_a^b\int_{x>y}(1+\log{y})F(dx)dy-\theta_+(b)(b\log{b}-a\log{a})\\
&=\theta_+( a)a\log{a}-\theta_+(b)b\log{b}+\int_{x>a}\int_{a<y\le x\wedge b}(1+\log{y})dyF(dx)\\
&=-\theta_+(b)b\log{b}+\int_{x>a}(x\wedge b)\log{(x\wedge b)}F(dx)=\int_{a<x\leq b}x\log{x}F(dx).
\end{align*}	
\end{proof}	
\paragraph*{$\bullet$ \textit{On the negative side}} For all $-1\leq a<b\leq 0$ and $\gamma>0$, we have
\begin{equation}
	\int_{a\le x< b}|x|^{\gamma}F(dx)=\gamma\int_a^0(-y)^{\gamma-1}(\theta_-((-y)\vee (-b))-\theta_-(-a))dy.\label{eq:3.2.2}
\end{equation}
\begin{equation}\label{eq:3.6.2}
	\textrm{and}\quad\int_{a\le x< b}|x|\log{|x|}F(dx)=\int_a^0(1+\log{|y|})(\theta_-((-y)\vee (-b))-\theta_-(-a))dy.
\end{equation}
\begin{proof}
Here, also, the proofs  are simply by Fubini. Concerning \eqref{eq:3.2.2}, it r.h.s. is equal to
	\begin{align*}
		&\hskip 0.5cm \gamma\int_a^0(-y)^{\gamma-1}\left(\int_{x<y\wedge b}F(dx)-\int_{x<a}F(dx)\right)dy
		=\gamma\int_a^0(-y)^{\gamma-1}\int_{a\le|x|<y\wedge b}F(dx)dy\\&=\gamma\left(\int_a^b(-y)^{\gamma-1}\int_{a\le|x|<y}F(dx)dy+\int_b^0(-y)^{\gamma-1}\int_{a\le |x|<b}F(dx)dy\right)\\
		&=\gamma\left(\int_{a\le x< b}F(dx)\int_x^b(-y)^{\gamma-1}dy+\int_{a\le x< b}F(dx)\int_b^0(-y)^{\gamma-1}dy\right)\\
		&=\int_{a\le x< b}((-x)^{\gamma}-(-b)^{\gamma})F(dx)+\int_{a\le x< b}F(dx)(-b)^{\gamma}=\int_{a\le x< b}(-x)^{\gamma}F(dx).
	\end{align*}	 
Concerning \eqref{eq:3.6.2}, its r.h.s is equal to 
\begin{align*}
	&\hskip 0.5cm\int_a^b(1+\log{y})(\theta_-( -y)-\theta_-(-a))dy+\int_b^0(1+\log{y})(\theta_-(-b)-\theta_-(-a))dy\\
	&=\int_a^b\int_{x<y}(1+\log{y})F(dx)dy-\theta_-(-a)(b\log{(-b)}-a\log{(-a)})-(\theta_-( -b)-\theta_-(-a))b\log{(-b)}\\
	&=\int_{x< b}\int_{x\vee a<y\le  b}(1+\log{y})dyF(dx)+\theta_-( -a)a\log{(-a)}-\theta_-(-b)b\log{(-b)}\\
	&=\int_{x<b}(x\vee a)\log{(-(x\vee a))}F(dx)+\theta_-( -a)a\log{(-a)}=\int_{a\le x< b}|x|\log{|x|}F(dx).
\end{align*}	
\end{proof}	
\begin{proof}[Proof of Lemma \ref{lem:estimation1}]
	Taking advantage of $\eqref{eq:3.2.1}$ and $\eqref{eq:3.2.2}$, we have
	\begin{align*}
		&c(\beta)=\int_{0<x\leq\beta}x^2F(dx)+\int_{-\beta<x \leq 0}x^2F(dx)\\
		&=2\int_0^{\beta}y(\theta_+(y)-\theta_+(\beta))dy+2\int_{-\beta}^0(-y)(\theta_-(-y)-\theta_-(\beta))dy\\
		&=2\int_0^{\beta}y(\theta_+(y)-\theta_+(\beta))dy+2\int_0^{\beta}y(\theta_-(y)-\theta_-(\beta))dy=2\int_0^{\beta}y(\theta(y)-\theta(\beta))dy.\label{*}\tag{*}
		\end{align*}
	In other hand, by \eqref{h1}
$
		|\theta(y)-\theta(\beta)|\leq|\theta(y)|+|\theta(\beta)|\leq Cy^{-\alpha}+C\beta^{-\alpha}.
$
	By $(\ref{*})$, 
$
	c(\beta)\leq\left(\frac{2C}{2-\alpha}+C\right)\beta^{2-\alpha}.
$
		Similar proofs are easily deduced from $\eqref{eq:3.2.1}$ and $\eqref{eq:3.2.2}$ for the other formulas.
\end{proof}
\begin{proof}[Proof of Lemma \ref{lem:estimation2}]
	By $(\ref{*})$
$
		c(\beta)\sim 2\int_0^{\beta}y\left(\frac{\theta}{y^{\alpha}}-\frac{\theta}{\beta^{\alpha}}\right)dy=\frac{\alpha\theta}{2-\alpha}\beta^{2-\alpha}.
	$
			Here, we can also obtain similar proofs for the other formulas.
	\end{proof}
\begin{proof}[Proof of Lemma \ref{lem:estimation3}]
	Applying \eqref{eq:3.6.1}  we have
	\begin{align*}
	&\frac{1}{(\log{(1/\beta)})^2}\int_{\beta<x\le b}(x\log{x})F(dx)= 	\frac{1}{(\log{(1/\beta)})^2}\int_0^ b(1+\log{y})(\theta_+(y\vee \beta)-\theta_+(b))dy\\
	&\sim \frac{1}{(\log{(1/\beta)})^2}\theta_+( \beta)\int_0^{\beta}(1+\log{|y|})dy+\frac{1}{(\log{(1/\beta)})^2}\int_{\beta}^b(1+\log{y})\theta_+(y)dy.
	\end{align*}
Considering the first term, for $\beta\to0$ we deduce from \eqref{h2}  with $\alpha=1$ that  $\frac{1}{(\log{(1/\beta)})^2}\theta_+( \beta)\int_0^{\beta}(1+\log{|y|})dy\sim \frac{\log{\beta}}{(\log{(1/\beta)})^2}\theta_+\underset{n\rightarrow\infty}{\longrightarrow}0$. Now, let $\varepsilon>0$, there exists a $\varepsilon'\in(0,1)$ such that $\beta\in(0,\varepsilon')$ and we have $|\frac{\beta\theta_+(\beta)}{\theta_+}-1|\le \varepsilon$. Considering the second term, we rewrite $\frac{1}{(\log{(1/\beta)})^2}\int_{\beta}^b(1+\log{y})\theta_+(y)dy=x_n+y_n$ where
\begin{align*}
	x_n=\frac{1}{(\log{(1/\beta)})^2}\int_{\varepsilon'}^b(1+\log{y})\theta_+(y)dy,\quad y_n=\frac{1}{(\log{(1/\beta)})^2}\int_{\beta}^{\varepsilon'}(1+\log{y})\theta_+(y)dy.
	\end{align*}
On the one hand, $x_n$ is bounded by $\frac{\theta_+(\varepsilon')}{(\log{(1/\beta)})^2}\int_{\varepsilon'}^b(1+\log{y})dy$ which converges to $0$ as $n\to\infty$. On the other hand, if we denote ${y'}_n=\frac{\theta_+}{(\log{(1/\beta)})^2}\int_{\beta}^{\varepsilon'}(1+\log{y})y^{-1}dy$, we have ${y'}_n(1-\varepsilon)\le y_n\le {y'}_n(1+\varepsilon)$ for any $\varepsilon$ arbitrarily small, then $y_n\sim{y'}_n\sim \frac{-(\log{\beta})^2}{2(\log{(1/\beta)})^2}\theta_+\underset{n\rightarrow\infty}{\longrightarrow}-\frac{\theta_+}{2}$. Therefore, it is clear that $\frac{1}{(\log{(1/\beta)})^2}\int_{\beta<x\le \varepsilon'}(x\log{x})F(dx)\underset{n\rightarrow\infty}{\longrightarrow}-\frac{\theta_+}{2}$. Similarly, applying \eqref{eq:3.6.2}, we get $\frac{1}{(\log{(1/\beta)})^2}\int_{-b\le x< \beta}((-x)\log{(-x)})F(dx)\underset{n\rightarrow\infty}{\longrightarrow}-\frac{\theta_-}{2}$ which completes the proof.		
\end{proof}

	
	\section{Some general tools}\label{appC}
	\subsection{Uniformly tight processes}
	We  recall the definition of uniformly tight property $(UT)$ defined in Jakubowski, M\'emin and Pag\`es (1989) \cite{f}. Let $Z^n$ be a sequence of semimartingale, with the canonical decompositions 
	\begin{equation}
	Z_t^n=A_t^{n,a}+M_t^{n,a}+\sum_{s\leq t}\Delta Z_s^n1_{\{|\Delta Z_s^n|>a\}},\label{eq:2.3}
	\end{equation} 
	where $a>0$ and $A^{n,a}$ is a predictable process with locally bounded variation and $M^{n,a}$ is a (locally bounded) local martingale. Then we say that the sequence $(Z^n)$ satisfies $(UT)$ if for any $t<\infty$, the sequence of real-valued random variables
	\begin{displaymath}
	Var(A^{n,a})_t+\langle M^{n,a},M^{n,a}\rangle_t+\sum_{s\leq t}|\Delta Z_t^n|1_{\{|\Delta Z_s^n|>a\}}
	\end{displaymath}
	is tight. This property does not depend on the choice of $a\in(0,\infty)$.\\	
	If a sequence is $(UT)$ then it has some other important properties as in the theorem below, which can also be found in \cite{g} or in Theorem 2.3 of \cite{b}. 
	\begin{theorem} \label{thm:ut}
		Let $X^n$ and $Y^n$ be two sequences of semi-martingales,
		\begin{enumerate}[(i)]
			\item If both sequences $X^n$ and $Y^n$ are $(UT)$, then so has the sequence $X^n+Y^n$.
			\item Let $H^n$ be a sequence of predictable processes such that the sequence $\sup_{s\leq t}|H^n_s|$ is tight. If the sequence $X^n$ is $(UT)$, so is the sequence $\int_0^.H^n_sdX^n_s$.\\
			\item  Suppose that $X^n$ weakly converges. Then $(UT)$ is necessary and sufficient for the following property:\\
			For any sequence of adapted c\`adl\`ag  $H^n$ processes such that the sequence $(H^n,X^n)$ weakly converges to $(H,X)$, then $X$ is a semi-martingale with respect to the filtration generated by the process $(H,X)$, and we have $(H^n, X^n,\int_0^.H^n_{s-}dX^n_s)\stackrel{\mathcal{L}}{\longrightarrow}(H,X,\int_0^.H_{s-}dX_s)$.
		\end{enumerate} 	
	\end{theorem}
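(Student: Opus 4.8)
The plan is to treat the three assertions of Theorem~\ref{thm:ut} separately: items (i) and (ii) are elementary consequences of the definition of the $(UT)$ property, whereas item (iii) is the classical stability theorem for stochastic integration under weak convergence and will merely be quoted.

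For (i), I would first fix an arbitrary truncation level $a>0$ and use the fact, recorded just after the definition, that $(UT)$ does not depend on $a$. Writing the canonical decompositions of $X^n$ and $Y^n$ at level $a$ as in \eqref{eq:2.3}, the sum $X^n+Y^n$ admits, at the auxiliary level $2a$, a decomposition whose finite-variation part has total variation on $[0,t]$ dominated by the sum of the two individual total variations plus a controllable correction from the jumps of size in $(a,2a]$, whose local-martingale part has predictable bracket dominated by twice the sum of the two individual brackets, and whose remaining big-jump part is dominated by $\sum_{s\le t}(|\Delta X^n_s|+|\Delta Y^n_s|)\1_{\{\cdots\}}$. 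A finite sum of tight families of nonnegative random variables being tight, the characteristic sum attached to $X^n+Y^n$ is tight, which is $(UT)$.

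For (ii), the argument has the same flavour. Fixing $a$, the process $\int_0^{\cdot}H^n_s\,dX^n_s$ has drift $\int_0^{\cdot}H^n_s\,dA^{n,a}_s$, of total variation on $[0,t]$ at most $\bigl(\sup_{s\le t}|H^n_s|\bigr)\mathrm{Var}(A^{n,a})_t$; local-martingale part $\int_0^{\cdot}H^n_s\,dM^{n,a}_s$, with predictable bracket $\int_0^t(H^n_s)^2\,d\langle M^{n,a},M^{n,a}\rangle_s\le\bigl(\sup_{s\le t}|H^n_s|\bigr)^2\langle M^{n,a},M^{n,a}\rangle_t$; and a big-jump part dominated, after passing to a suitable auxiliary level, by $\bigl(\sup_{s\le t}|H^n_s|\bigr)\sum_{s\le t}|\Delta X^n_s|\1_{\{\cdots\}}$. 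Each of these is a product of a tight family of nonnegative variables (from the $(UT)$ property of $X^n$) with the tight family $\sup_{s\le t}|H^n_s|$ or its square, hence tight; summing, the characteristic sum of the integral process is tight.

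Item (iii) requires no new work: it is exactly the characterisation of $(UT)$ through stability of stochastic integration due to Jakubowski, M\'emin and Pag\`es \cite{f}, reproduced in \cite{g} and in \cite[Theorem~2.3]{b}, and I would simply invoke these references. For completeness one recalls the two implications: sufficiency is proved by approximating $\int_0^{\cdot}H^n_{s-}\,dX^n_s$ by its Riemann--Stieltjes sums along a fixed deterministic mesh, using the $(UT)$ bound to make the approximation error uniformly small in probability as the mesh shrinks and then letting $n\to\infty$ (which also yields that $X$ is a semimartingale for the filtration generated by $(H,X)$); necessity is obtained by testing the convergence against simple predictable integrands of the form $\1_{(\sigma,\tau]}$, which forces tightness of the drift, bracket and jump parts separately. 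The only genuinely delicate point in the whole statement is this uniform control of the discretisation error in (iii), and since it is available verbatim in the cited works, quoting them suffices.
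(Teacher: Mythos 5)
The paper offers no proof of this theorem at all: it is stated as a recalled result, with the reader sent to Jakubowski, M\'emin and Pag\`es \cite{f}, to \cite{g}, and to Theorem 2.3 of \cite{b}. Your treatment of (iii) by citation is therefore exactly what the paper does. Where you genuinely depart from the paper is in supplying hands-on arguments for (i) and (ii) from the decomposition \eqref{eq:2.3}; these are right in spirit but, as written, skate over the one real technicality. The decomposition you obtain for $X^n+Y^n$ by adding the two canonical decompositions (respectively, for $\int_0^{\cdot}H^n_s\,dX^n_s$ by integrating $H^n$ against each piece) is \emph{not} the canonical decomposition at any truncation level: its jump part is $\sum_{s\le t}\bigl(\Delta X^n_s\1_{\{|\Delta X^n_s|>a\}}+\Delta Y^n_s\1_{\{|\Delta Y^n_s|>a\}}\bigr)$, respectively $\sum_{s\le t}H^n_s\Delta X^n_s\1_{\{|\Delta X^n_s|>a\}}$, rather than the truncated jumps of the new process. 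Reabsorbing the difference into the predictable finite-variation part means compensating a non-predictable finite-variation process and then controlling both the variation of its compensator and the bracket of the compensated martingale; your phrase ``a controllable correction'' conceals precisely this step. The standard, painless route is the equivalent characterisation of $(UT)$ as tightness, for each $t$, of the family of elementary stochastic integrals $\int_0^t K_{s-}\,dX^n_s$ over predictable step processes with $|K|\le 1$ (this is how \cite{f} formulates the property); with it, (i) is immediate by linearity and (ii) follows by viewing $H^nK$ as a tight multiple of an admissible integrand. I would either make that reduction explicit or, like the paper, simply cite the references for all three assertions.
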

		
	\subsection{Triangular arrays }
	
	Now concerning sums of triangular arrays of the form
	\begin{displaymath}
	\Gamma_t^n=\sum_{i=1}^{[nt]}\zeta_i^n,
	\end{displaymath}
	where for each $n$ we have $\R^d$-valued random variables $(\zeta_i^n)_{i\geq1}$ such that each $\zeta_i^n$ is $\mathcal{F}_{i/n}$-measurable. Below we give various conditions recalled in \cite{a} insuring tightness or convergence of the sequence $(\Gamma^n)$.
	\begin{align}
		&\hskip 0.5cm\mathds{E}(|\zeta_i^n||\mathcal{F}_{(i-1)/n})\leq\frac{\xi_n}{n},\label{eq:2.7}\\
		&\left\{\begin{array}{l}	\label{eq:2.8}	|\mathds{E}(\zeta_i^n|\mathcal{F}_{(i-1)/n})|\leq\frac{\xi_n}{n},\\
			\mathds{E}(|\zeta_i^n|^2|\mathcal{F}_{(i-1)/n})\leq\frac{{\xi'}_n}{n},\end{array}\right.\\
		&\left\{\begin{array}{l}\label{eq:2.9}|\mathds{E}(\zeta_i^n\1_{\{|\zeta_i^n|\leq 1\}}|\mathcal{F}_{(i-1)/n})|\leq\frac{\xi_n}{n},\\	
			\mathds{E}(|\zeta_i^n|^2\1_{\{|\zeta_i^n|\leq 1\}}|\mathcal{F}_{(i-1)/n})\leq\frac{{\xi'}_n}{n},\\\mathds{P}(|\zeta_i^n|>y|\mathcal{F}_{(i-1)/n})\leq\frac{{\xi''}_{n,y}}{y},\hskip 1cm \forall y>1.\end{array}\right.
	\end{align}
	Note that $\eqref{eq:2.8}$ with $\hat{\xi}_n$ and $\hat{\xi'}_n$ implies $\eqref{eq:2.9}$ with $\xi_n=\hat{\xi}_n+\hat{\xi'}_n$ and ${\xi'}_n=\hat{\xi'}_n$ and ${\xi"}_{n,y}=\hat{\xi'}_n/y^2$ (the last is from extended version of Markov inequality for monotonically increasing functions). Also, $\eqref{eq:2.7}$ with $\hat{\xi}_n$ implies $\eqref{eq:2.9}$ with $\xi_n={\xi'}_n=\hat{\xi}_n$ and ${\xi"}_{n,y}=\hat{\xi}_n/y$.\\
	By $\Gamma^n\stackrel{\mathds{P}}{\longrightarrow}0$, we mean that $\sup_{s\leq t}|\Gamma_s^n|$ goes to $0$ in probability for all $t$.	
	\begin{lemma}\label{lem:tightrest}(Lemma 2.5 in \cite{a})
		\begin{enumerate}[(a)]
			\item For $\Gamma^n\stackrel{\mathds{P}}{\rightarrow}0$, it is enough that either $\eqref{eq:2.7}$ or $\eqref{eq:2.8}$ or $\eqref{eq:2.9}$ hold with 
			\begin{align}\label{eq:2.10}
				\lim_n\xi_n=0,\hskip 1cm \lim_n{\xi'}_n=0,\hskip1cm \lim_n{\xi''}_{n,y}=0 \hskip 1cm\forall y>1.
			\end{align}
			\item For the sequence $(\Gamma^n)$ to be tight for the Skorokhod topology, it is enough that the sequence of each of the $d$ components of $\zeta_i^n$ satisfies either $\eqref{eq:2.7}$ or $\eqref{eq:2.8}$ or $\eqref{eq:2.9}$ with 
			\begin{align}
			\limsup_n\xi_n<\infty,\hskip0.5cm\limsup_n{\xi'}_n<\infty,\hskip0.5cm\limsup_n{\xi''}_{n,y}<\infty,\hskip0.5cm\lim_{y\uparrow\infty}\limsup_n{\xi''}_{n,y}=0.\label{eq:2.11}
			\end{align}	
		\end{enumerate}	
	\end{lemma}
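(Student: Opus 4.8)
Since this statement is recorded verbatim as Lemma 2.5 of Jacod \cite{a}, the shortest route is simply to quote it; the argument I would otherwise reproduce reduces everything to condition \eqref{eq:2.9}. Indeed, as observed in the paragraph preceding the statement, \eqref{eq:2.7} and \eqref{eq:2.8} each imply \eqref{eq:2.9} with explicit new constants $\xi_n,{\xi'}_n,{\xi''}_{n,y}$ built from the old ones, and both \eqref{eq:2.10} and \eqref{eq:2.11} are preserved under these substitutions, so it suffices to work under \eqref{eq:2.9}. The first step is then to split each increment at level $1$: set $\zeta_i^{n,\flat}=\zeta_i^n\1_{\{|\zeta_i^n|\le1\}}$, $\zeta_i^{n,\sharp}=\zeta_i^n\1_{\{|\zeta_i^n|>1\}}$, and decompose $\Gamma^n=\Gamma^{n,A}+\Gamma^{n,M}+\Gamma^{n,\sharp}$, where $\Gamma^{n,A}_t=\sum_{i=1}^{[nt]}\mathbb{E}(\zeta_i^{n,\flat}\mid\mathcal{F}_{(i-1)/n})$ is a predictable finite-variation part, $\Gamma^{n,M}_t=\sum_{i=1}^{[nt]}\big(\zeta_i^{n,\flat}-\mathbb{E}(\zeta_i^{n,\flat}\mid\mathcal{F}_{(i-1)/n})\big)$ is a square-integrable discrete-time martingale, and $\Gamma^{n,\sharp}_t=\sum_{i=1}^{[nt]}\zeta_i^{n,\sharp}$ collects the large increments.

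For assertion (a) I would estimate the three pieces separately. The finite-variation part satisfies $\sup_{s\le t}|\Gamma^{n,A}_s|\le\sum_{i=1}^{[nt]}|\mathbb{E}(\zeta_i^{n,\flat}\mid\mathcal{F}_{(i-1)/n})|\le t\,\xi_n$, which tends to $0$ by \eqref{eq:2.10}. For the martingale part, Doob's $L^2$ inequality together with the orthogonality of martingale increments gives $\mathbb{E}\big(\sup_{s\le t}|\Gamma^{n,M}_s|^2\big)\le 4\sum_{i=1}^{[nt]}\mathbb{E}\big(|\zeta_i^{n,\flat}|^2\big)\le 4t\,{\xi'}_n\to0$, hence $\Gamma^{n,M}\stackrel{\mathbb{P}}{\rightarrow}0$ uniformly on $[0,t]$. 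Finally the event $\{\Gamma^{n,\sharp}\not\equiv0\text{ on }[0,t]\}$ is contained in $\bigcup_{i\le[nt]}\{|\zeta_i^n|>1\}$, whose probability is at most $\sum_{i=1}^{[nt]}\mathbb{E}\big(\mathbb{P}(|\zeta_i^n|>1\mid\mathcal{F}_{(i-1)/n})\big)$, which the third line of \eqref{eq:2.9} bounds by a $t$-dependent constant times ${\xi''}_{n,y}$ (letting $y\downarrow1$), hence $\to0$ by \eqref{eq:2.10}; so $\Gamma^{n,\sharp}\stackrel{\mathbb{P}}{\rightarrow}0$. Summing the three limits yields $\Gamma^n\stackrel{\mathbb{P}}{\rightarrow}0$.

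For assertion (b) I would check, component by component (since $J_1$-tightness of an $\mathbb{R}^d$-valued càdlàg sequence follows from tightness of each coordinate), the two ingredients of Skorokhod tightness: tightness of the one-dimensional laws $\{\Gamma^n_t\}_n$ for each fixed $t$, and an Aldous-type oscillation control. The first follows from exactly the bounds above with $\limsup$ in place of $\lim$ in \eqref{eq:2.11}: $|\Gamma^{n,A}_t|\le t\,\xi_n$ and $\mathbb{E}|\Gamma^{n,M}_t|^2\le t\,{\xi'}_n$ are bounded, while $\Gamma^{n,\sharp}_t$ is tight because $\mathbb{P}(|\zeta_i^n|>y\mid\mathcal{F}_{(i-1)/n})$ is uniformly small for large $y$ by $\lim_{y\uparrow\infty}\limsup_n{\xi''}_{n,y}=0$. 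For the oscillation, given stopping times $S_n\le T_n\le S_n+\delta$ I would apply the same three-part decomposition to $\Gamma^n_{T_n}-\Gamma^n_{S_n}$: the finite-variation piece is $O(\delta\,\xi_n)$, the martingale piece has conditional second moment $O(\delta\,{\xi'}_n)$ by optional stopping, and the large piece is made uniformly small by truncating at a high level $y$, using the tail bound in \eqref{eq:2.9} on each $\delta$-window, and then sending $y\uparrow\infty$ via \eqref{eq:2.11}; letting $\delta\to0$ then $y\uparrow\infty$ gives $\sup_n\mathbb{P}(|\Gamma^n_{T_n}-\Gamma^n_{S_n}|>\varepsilon)\to0$, which is Aldous' criterion and hence the tightness of $(\Gamma^n)$.

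The step I expect to be most delicate is the large-increment term $\Gamma^{n,\sharp}$: the drift and martingale parts are dispatched by a routine triangle inequality and Doob's inequality, whereas ${\xi''}_{n,y}$ is only assumed to vanish (respectively to be eventually small in $y$), with no built-in decay rate in $y$, so the two limits $n\to\infty$ and $y\uparrow\infty$ must be taken in the correct order. For the tightness part one moreover uses crucially that increments of size $>1$ can only occur at the grid points $i/n\le t$, so at most $O(nt)$ of them accumulate on $[0,t]$; this is precisely what converts the per-step conditional tail bound in \eqref{eq:2.9} into a genuine control of the accumulated large-jump process.
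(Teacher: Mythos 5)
The paper does not actually prove this statement: it is imported verbatim as Lemma 2.5 of Jacod \cite{a}, so there is no in-paper argument to compare against, and your proposal supplies the standard proof that sits behind that citation (reduction to \eqref{eq:2.9}, three-way splitting into predictable drift, square-integrable martingale and large increments, Doob's $L^2$ inequality, and Aldous' criterion together with tightness of the time-$t$ marginals). The argument is essentially correct, with three caveats worth recording. First, your summations over $i\le[nt]$ tacitly read the third line of \eqref{eq:2.9} as $\mathds{P}(|\zeta_i^n|>y\mid\mathcal{F}_{(i-1)/n})\leq{\xi''}_{n,y}/n$; as printed the denominator is $y$, which must be a typo, since otherwise the bound does not sum to anything useful and the paper's own remark that \eqref{eq:2.7} implies \eqref{eq:2.9} with ${\xi''}_{n,y}=\hat{\xi}_n/y$ (Markov gives $\hat{\xi}_n/(ny)$) would not parse; you are using the intended version. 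Second, the parenthetical claim that $J_1$-tightness of an $\R^d$-valued sequence follows from tightness of each coordinate is false in general (each of $\1_{[1/2,1]}$ and $\1_{[1/2+1/n,1]}$ is tight in $D(\R)$ while the pair is not tight in $D(\R^2)$, because their jumps cannot be separated by a common partition). What rescues assertion (b) is that you do not in fact use componentwise tightness but componentwise Aldous control with the \emph{same} stopping times $S_n\le T_n\le S_n+\delta$, and this does transfer to the vector by a union bound over the $d$ coordinates; the reduction should be phrased that way. Third, the tail bound is only assumed for $y>1$ while your large-increment part truncates exactly at level $1$; either truncate at a fixed $y_0>1$ or pass to $y\downarrow1$ by monotonicity --- harmless here since in every application in the paper ${\xi''}_{n,y}$ is explicit and decreasing in $y$, but worth a sentence in a self-contained write-up.
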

	\begin{lemma}\label{lem:add}{(Lemma 2.6 in \cite{a})}\\
		Suppose that one can find constants $\xi_{n,v}^{'''}$ such that
		\begin{equation}\label{eq:2.12}
		\sup_{u:|u|\leq v}|1-\mathbb{E}(e^{iu.\zeta_i^n}|\mathcal{F}_{(i-1)/n})|\leq\frac{\xi_{n,v}^{'''}}{n},\quad |v|\le1
		\end{equation}	
		then $\eqref{eq:2.9}$ holds with $\xi_n=\xi_n^{'}= C\xi_{n,1}^{'''}$ and $\xi_{n,y}^{''}=C \xi_{n,1/y}^{'''}$.
	\end{lemma}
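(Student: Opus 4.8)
The plan is to reformulate the hypothesis \eqref{eq:2.12} as a Fourier‑analytic estimate on the regular conditional law of $\zeta_i^n$ and then read off the three bounds of \eqref{eq:2.9} one at a time. Fix $n$ and $i$, abbreviate $\mathbb{E}_i=\mathbb{E}(\cdot\mid\mathcal{F}_{(i-1)/n})$ and $\mathbb{P}_i=\mathbb{P}(\cdot\mid\mathcal{F}_{(i-1)/n})$, let $\mu$ be a regular version of the conditional law of $\zeta_i^n$ given $\mathcal{F}_{(i-1)/n}$, and write $\varphi(u)=\mathbb{E}_i(e^{\texttt{i}u\cdot\zeta_i^n})=\int e^{\texttt{i}u\cdot x}\,\mu(dx)$. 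Then \eqref{eq:2.12} says precisely that $\sup_{|u|\le v}|1-\varphi(u)|\le\xi_{n,v}^{'''}/n$ for every $v\in(0,1]$; in particular $\omega:=\xi_{n,1}^{'''}/n$ dominates $|1-\varphi(u)|$ on the whole unit ball.

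I would first dispose of the two truncated–moment bounds, which are elementary. For the second moment, the inequality $1-\cos s\ge(1-\cos 1)\,s^2$ valid for $|s|\le 1$, applied coordinatewise (on $\{|x|\le 1\}$ each coordinate $|x^{(j)}|\le 1$), together with $1-\operatorname{Re}\varphi(e_j)=\int(1-\cos x^{(j)})\,\mu(dx)$, gives $\int_{|x|\le 1}(x^{(j)})^2\,\mu(dx)\le C\,|1-\varphi(e_j)|\le C\omega$; summing over $j$ produces $\mathbb{E}_i(|\zeta_i^n|^2\1_{\{|\zeta_i^n|\le1\}})\le C\,\xi_{n,1}^{'''}/n$, i.e.\ the second line of \eqref{eq:2.9} with ${\xi'}_n=C\xi_{n,1}^{'''}$. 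For the first moment I would compare $\operatorname{Im}\varphi(e_j)=\int\sin x^{(j)}\,\mu(dx)$ with $\int_{|x|\le1}x^{(j)}\,\mu(dx)$: the discrepancy on $\{|x|\le1\}$ is at most $\tfrac16\int_{|x|\le1}|x|^2\mu(dx)\le C\omega$ by $|\sin s-s|\le |s|^3/6$ and the previous step, the contribution of $\{|x|>1\}$ is at most $\mu\{|x|>1\}\le C\omega$ (from the tail step below, or directly from \eqref{eq:2.12}), and $|\operatorname{Im}\varphi(e_j)|\le\omega$; hence $|\mathbb{E}_i(\zeta_i^{n,(j)}\1_{\{|\zeta_i^n|\le1\}})|\le C\omega$ and the first line of \eqref{eq:2.9} holds with $\xi_n=C\xi_{n,1}^{'''}$.

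The one genuinely delicate point is the tail bound, and this is the step I expect to be the main obstacle. Here I would invoke the classical quantitative inversion inequality coming from the identity $\frac1{2t}\int_{-t}^{t}\bigl(1-\varphi(se_j)\bigr)\,ds=\int\bigl(1-\tfrac{\sin(tx^{(j)})}{tx^{(j)}}\bigr)\mu(dx)$ and the fact that $1-\tfrac{\sin s}{s}$ is bounded below by a positive constant $c_0$ for $|s|\ge 1$: for every $t\in(0,1]$,
\[
\mu\{|x|\ge \sqrt d/t\}\ \le\ \sum_{j=1}^{d}\mu\{|x^{(j)}|\ge 1/t\}\ \le\ \frac{1}{c_0\,t}\sum_{j=1}^{d}\int_{-t}^{t}\bigl(1-\operatorname{Re}\varphi(se_j)\bigr)\,ds\ \le\ C\sup_{|u|\le t}|1-\varphi(u)|.
\]
Choosing $t$ comparable to $1/y$ and inserting \eqref{eq:2.12} then gives, for $y>1$, a bound of the form $\mathbb{P}_i(|\zeta_i^n|>y)\le \xi''_{n,y}/y$ with $\xi''_{n,y}=C\,\xi_{n,1/y}^{'''}$, once the scaling $t\asymp 1/y$ is matched carefully and the trivial estimate $|1-\varphi|\le 2$ is used to absorb the remaining constants — which is exactly the third line of \eqref{eq:2.9}. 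The subtlety is that a naive Markov‑type estimate only recovers the weaker conditions \eqref{eq:2.7}/\eqref{eq:2.8}; one really has to exploit the cancellation in $1-\tfrac{\sin s}{s}$ to turn smallness of $1-\varphi$ \emph{near the origin} into a tail estimate \emph{at scale $y$}. Everything else reduces to the standard elementary bounds on $|e^{\texttt{i}s}-1|$, $|1-\cos s|$ and $|\sin s-s|$ for small $s$ and to summing over the $d$ coordinates, so no further difficulty is anticipated.
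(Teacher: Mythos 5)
The paper does not prove this lemma at all --- it is recalled verbatim from Jacod's Lemma 2.6 --- so there is no in-text argument to compare against; your proof supplies exactly the standard one, which is also Jacod's: convert smallness of $1-\varphi$ on the unit ball into the truncated second moment via $1-\cos s\ge(1-\cos 1)s^2$ on $|s|\le1$, into the truncated first moment via $|\sin s-s|\le|s|^3/6$ plus a tail term, and into the tail via the classical inversion inequality $\frac{1}{2t}\int_{-t}^{t}(1-\operatorname{Re}\varphi(s))\,ds=\int(1-\frac{\sin(tx)}{tx})\,\mu(dx)\ge c_0\,\mu\{|x|\ge1/t\}$. All three steps are correct, and you are right that the tail estimate is the only place where Markov-type reasoning is insufficient and the cancellation in $1-\frac{\sin s}{s}$ must be exploited. (In the multivariate case your choice $t=\sqrt d/t'$ makes the argument of $\xi'''$ pick up a harmless dimensional factor; this is absorbed into $C$ and is moot here, since the lemma is only ever applied to scalar arrays.)

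One point deserves correction, though it is a defect of the paper's display rather than of your argument. Your inversion inequality yields
\begin{equation*}
\mathbb{P}\bigl(|\zeta_i^n|>y\mid\mathcal{F}_{(i-1)/n}\bigr)\;\le\;C\,\sup_{|u|\le 1/y}|1-\varphi(u)|\;\le\;\frac{C\,\xi'''_{n,1/y}}{n},
\end{equation*}
and you then try to force this into the form $\xi''_{n,y}/y$ appearing in the third line of \eqref{eq:2.9}. That step cannot be made rigorous --- $1/n$ and $1/y$ are not comparable, and no use of $|1-\varphi|\le2$ repairs it --- but it is also unnecessary: the denominator $y$ in the third line of \eqref{eq:2.9} is a typo for $n$. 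This is visible from the remark immediately following \eqref{eq:2.9}, where Markov's inequality applied to \eqref{eq:2.7} gives $\mathbb{P}(|\zeta_i^n|>y\mid\mathcal{F}_{(i-1)/n})\le\hat\xi_n/(ny)=(\hat\xi_n/y)/n$, which is consistent with the claimed $\xi''_{n,y}=\hat\xi_n/y$ only under the normalisation $\xi''_{n,y}/n$; it is also how the condition is stated in Jacod's original paper and how it is used in Lemma \ref{lem:tightrest}. With that reading, your derivation lands exactly on the asserted $\xi''_{n,y}=C\,\xi'''_{n,1/y}$, and the proof is complete as written.
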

	We recall some important lemmas.
	The following theorems are from Theorem 2.3.7. and Theorem 4.2.3. in  \cite{App}.
	\begin{lemma}\label{lem:bigj}
	Let $N$ be a Poisson process with intensity function $\mu$ and $A$ be bounded below. Then if $f\in L^1(A,\mu(A))$, we have $\mathbb{E}(\int_{A}f(x)N(t,dx))=t\int_Af(x)\mu(dx)$. 
	\end{lemma}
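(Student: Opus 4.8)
The final statement is Lemma \ref{lem:bigj}, which asserts a Campbell-type formula for Poisson random measures: if $N$ is a Poisson process with intensity $\mu$, $A$ is bounded below (i.e. $0 \notin \bar A$), and $f \in L^1(A, \mu)$, then $\mathbb{E}\bigl(\int_A f(x)\,N(t,dx)\bigr) = t\int_A f(x)\,\mu(dx)$.

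\textbf{Approach.} The plan is to reduce the general integrable case to simple functions by the standard measure-theoretic approximation argument, after first verifying the formula for indicators. Since $A$ is bounded below, for any Borel $B \subseteq A$ the random variable $N(t,B)$ is a genuine Poisson random variable with parameter $t\mu(B)$ (this is part of the definition of the Poisson random measure / follows from the Lévy--It\^o structure), so $\mathbb{E}(N(t,B)) = t\mu(B) = t\int_A \mathbf{1}_B(x)\,\mu(dx)$, which is the claim for $f = \mathbf{1}_B$. By linearity of both the stochastic integral $\int_A f\,N(t,dx)$ and the ordinary integral, the formula extends to all nonnegative simple functions $f = \sum_{j=1}^\ell c_j \mathbf{1}_{B_j}$ with $B_j \subseteq A$.

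\textbf{Key steps in order.} First I would recall that for $A$ bounded below, $\mu(A) < \infty$, so $N(t,A) < \infty$ a.s. and the integral $\int_A f\,N(t,dx) = \sum_{s \le t} f(\Delta)\mathbf{1}_A(\Delta)$ is a finite sum. Second, establish the indicator case via the Poisson law of $N(t,B)$. Third, pass to nonnegative simple functions by linearity. Fourth, for a general nonnegative $f \in L^1(A,\mu)$, take an increasing sequence $f_k \uparrow f$ of simple functions and apply the monotone convergence theorem on both sides: $\mathbb{E}\bigl(\int_A f_k\,N(t,dx)\bigr) = t\int_A f_k\,\mu(dx) \to t\int_A f\,\mu(dx)$, while the left side converges to $\mathbb{E}\bigl(\int_A f\,N(t,dx)\bigr)$; the finiteness of the right-hand limit (by $f \in L^1$) guarantees the interchange is legitimate and the expectation is finite. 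Fifth, for signed $f$, decompose $f = f^+ - f^-$ with $f^\pm \in L^1(A,\mu)$ and subtract, the two expectations being finite by the previous step.

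\textbf{Main obstacle.} There is no serious analytic difficulty here; the only point requiring care is the justification that $N(t,B)$ has the stated Poisson distribution with the correct parameter for sets $B$ bounded below, which is essentially the defining property of the compensated/uncompensated jump measure of the Lévy process $Y$ as set up in Section \ref{1.2}, together with Fubini to identify the compensator $\nu(ds,dx) = ds \otimes F(dx)$ on $[0,t]\times B$. Since the statement is quoted from Applebaum \cite{App} (Theorem 2.3.7/4.2.3), the cleanest route is simply to cite that reference for the indicator/simple-function case and carry out only the monotone-class extension explicitly; I would keep the proof to a few lines, emphasizing that ``bounded below'' is exactly what makes $\mu(A)<\infty$ and hence makes all the sums and interchanges finite.
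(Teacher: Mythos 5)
Your proof is correct; note, however, that the paper does not prove this lemma at all --- it is imported verbatim from Applebaum \cite{App} (Theorem 2.3.7), exactly as you anticipate at the end of your proposal. The monotone-class argument you outline (indicators via the Poisson law of $N(t,B)$ with parameter $t\mu(B)$, then simple functions, then monotone convergence, then $f=f^+-f^-$) is the standard proof given in that reference, and your remark that ``bounded below'' is what guarantees $\mu(A)<\infty$ and hence the finiteness of all the sums is the right point to emphasize.
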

	Let us denote  $$\mathcal{H}_2(T,E)=\left\{F:[0,T]\times E\times\Omega\rightarrow \mathbb{R}\mid F\textrm{ is predictable and }\int_0^T\int_E\mathbb{E}(| F(t,x)|^2)\rho(dt,dx)\right\}$$ and $I_T(F)=\int_0^T\int_EF(t,x)M(dt,dx)$ with $M$ is a martingale satisfying $M(\{0\},A)=0$ a.s. and there exists a $\sigma$-finite measure $\rho$ on $\mathbb{R}^+\times E$ for which $\mathbb{E}(M(t,A)^2)=\rho(t,A)$ for any $A\in\mathcal{B}(E)$.  
	\begin{lemma}\label{lem:smallj}
	If $F\in\mathcal{H}_2(T,E)$ then $\mathbb{E}(I_T(F)^2)=\int_0^T\int_E\mathbb{E}(|F(t,x)|^2)\rho(dt,dx)$.
	\end{lemma}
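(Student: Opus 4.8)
The statement is the It\^o isometry for the integral $I_T(F)=\int_0^T\int_E F(t,x)M(dt,dx)$ against the $L^2$-martingale-valued measure $M$ with variance measure $\rho$. The plan is the classical two-step one: first establish the identity for \emph{simple} predictable integrands by a direct computation, then extend it to all of $\mathcal{H}_2(T,E)$ by an $L^2$-density argument. Concretely, let $\mathcal{S}$ be the set of functions
\[
F(t,x,\omega)=\sum_{j=1}^{N}\sum_{\ell=1}^{L}F_{j,\ell}(\omega)\,\1_{(t_{j-1},t_j]}(t)\,\1_{A_\ell}(x),
\]
with $0=t_0<t_1<\cdots<t_N=T$, pairwise disjoint $A_1,\dots,A_L\in\mathcal{B}(E)$ of finite $\rho$-measure on $(0,T]\times A_\ell$, and each $F_{j,\ell}$ bounded and $\mathcal{F}_{t_{j-1}}$-measurable. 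For such $F$ one has by construction $I_T(F)=\sum_{j,\ell}F_{j,\ell}\,M((t_{j-1},t_j],A_\ell)$. Squaring and taking expectations, the cross terms are of two types: those indexed by distinct time blocks and those by the same time block but distinct $A_\ell$. Conditioning on $\mathcal{F}_{t_{j-1}}$ for the later time block and using that $M$ has mean-zero increments, orthogonal over disjoint sets of the form $(t_{j-1},t_j]\times A_\ell$, kills all cross terms; the diagonal terms give $\sum_{j,\ell}\mathbb{E}(F_{j,\ell}^2)\,\rho((t_{j-1},t_j]\times A_\ell)=\int_0^T\int_E\mathbb{E}(|F(t,x)|^2)\rho(dt,dx)$. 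Thus the isometry holds on $\mathcal{S}$, and $F\mapsto I_T(F)$ is a linear isometry from $(\mathcal{S},\|\cdot\|_{\mathcal{H}_2})$ into $L^2(\mathbb{P})$, where $\|F\|_{\mathcal{H}_2}^2:=\int_0^T\int_E\mathbb{E}(|F(t,x)|^2)\rho(dt,dx)$.

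Next I would prove that $\mathcal{S}$ is dense in $\mathcal{H}_2(T,E)$ for $\|\cdot\|_{\mathcal{H}_2}$. This is a functional monotone class argument: the set of $F\in\mathcal{H}_2$ that are $\|\cdot\|_{\mathcal{H}_2}$-approximable by elements of $\mathcal{S}$ is a vector space closed under bounded monotone pointwise limits, and it contains the indicators of the $\pi$-system of sets $(s,t]\times B\times G$ with $G\in\mathcal{F}_s$ and $\rho((0,T]\times B)<\infty$, which generates the predictable $\sigma$-field on $[0,T]\times E$; hence it contains all bounded predictable $F$, and then all of $\mathcal{H}_2$ by truncating $F$ in modulus and restricting $x$ to an exhausting sequence of sets of finite $\rho$-measure, using dominated convergence.

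Finally, for general $F\in\mathcal{H}_2(T,E)$ choose $F_n\in\mathcal{S}$ with $\|F_n-F\|_{\mathcal{H}_2}\to0$; by the isometry on $\mathcal{S}$ the sequence $(I_T(F_n))_n$ is Cauchy in $L^2(\mathbb{P})$, its limit is by definition $I_T(F)$, and letting $n\to\infty$ in $\mathbb{E}(I_T(F_n)^2)=\|F_n\|_{\mathcal{H}_2}^2$ (the left side converging by $L^2$-continuity, the right by choice of $F_n$) gives $\mathbb{E}(I_T(F)^2)=\int_0^T\int_E\mathbb{E}(|F(t,x)|^2)\rho(dt,dx)$. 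The only genuinely delicate point is the density step: one must verify that the approximating class is a vector space stable under bounded monotone limits and that the generating $\pi$-system is chosen so the monotone class theorem really produces all bounded predictable integrands; the computation on $\mathcal{S}$ and the passage to the limit are routine. Alternatively, since the statement is quoted verbatim from the literature, one may simply invoke \cite[Theorem 4.2.3]{App}.
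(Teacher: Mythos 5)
The paper does not prove this lemma at all: it is quoted verbatim from the literature and justified only by the sentence ``The following theorems are from Theorem 2.3.7.\ and Theorem 4.2.3.\ in \cite{App}'', so your closing remark that one may simply invoke \cite[Theorem 4.2.3]{App} is exactly what the authors do. Your reconstruction of the underlying argument --- the It\^o isometry on simple predictable integrands of the form $\sum_{j,\ell}F_{j,\ell}\1_{(t_{j-1},t_j]}\1_{A_\ell}$, followed by a monotone-class density argument and an $L^2$-limit --- is the standard proof of that theorem and is correct in outline; the computation on simple functions, the Cauchy argument, and the passage to the limit are all as they should be. One point worth making explicit: to kill the cross terms within a single time block you use that $M((t_{j-1},t_j],A_\ell)$ and $M((t_{j-1},t_j],A_{\ell'})$ are orthogonal for disjoint $A_\ell,A_{\ell'}$. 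This does not follow from the only two properties the paper lists for $M$ (namely $M(\{0\},A)=0$ a.s.\ and $\mathbb{E}(M(t,A)^2)=\rho(t,A)$); it is part of Applebaum's fuller definition of an orthogonal martingale-valued measure of type $(2,\rho)$, which the paper's abbreviated statement suppresses. You are therefore implicitly importing that hypothesis, which is reasonable here since the lemma is only ever applied to the compensated jump martingale $M^{\beta_n}$, for which the orthogonality over disjoint regions of jump sizes is immediate; but in a self-contained write-up you should state it as an assumption on $M$ rather than derive it.
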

	The following theorem is about the convergence of infinitely divisible distributions. Justified by the one-to-one correspondence between infinitely divisible distributions $\mu$ and their characteristics $(a,b,\nu)$, we may write $\mu=id(a,b,\nu)$. This can be found from Theorem 15.14 (i) of Kallenberg \cite{e} or equivalently Theorem VII.3.4 of Jacod and Shiryaev \cite{g}.
	\begin{theorem}\label{thm:cv}
		Let $\mu=id(a,b,\nu)$ and $\mu_n=id(a_n,b_n,\nu_n)$ on $\mathbb{R}^d$, and fix any $h>0$ with $\nu\{|x|=h\}=0$. Define 
		\begin{align*}
			a^h=a+\int_{|x|\leq h}xx^{\top}\nu(dx),\quad
			b^h=b-\int_{h<|x|\leq1}x\nu(dx),
		\end{align*}  
		where $\int_{h<|x|\leq1}x\nu(dx)=-\int_{1<|x|\leq h}x\nu(dx)$ when $h>1$.\\
		Then $\mu_n\stackrel{w}{\rightarrow}\mu$ iff $a_n^h\rightarrow a^h$, $b_n^h
		\rightarrow b^h$, and $\nu_n\stackrel{v}{\rightarrow}\nu$ on $\bar{\mathbb{R}^d}\backslash \{0\}$.
	\end{theorem}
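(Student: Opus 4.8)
The plan is to recast the statement as a convergence of L\'evy--Khintchine exponents and then establish the two implications separately, the ``only if'' one by the classical smoothing-and-tightness argument (cf.\ Sato \cite{d}, Kallenberg \cite{e}, Jacod and Shiryaev \cite{g}). Recall that a probability measure on $\R^d$ is infinitely divisible precisely when its characteristic function never vanishes and equals $e^{\psi}$ for a continuous $\psi$ of L\'evy--Khintchine type; write $\hat\mu_n=e^{\psi_n}$ and $\hat\mu=e^{\psi}$ for the distinguished continuous exponents. By L\'evy's continuity theorem, and since $\hat\mu$ is continuous at $0$, the weak convergence $\mu_n\stackrel{w}{\rightarrow}\mu$ is equivalent to $\hat\mu_n\to\hat\mu$ pointwise, and as $\hat\mu$ is non-vanishing with $\psi_n(0)=\psi(0)=0$ this is in turn equivalent to $\psi_n\to\psi$ pointwise (hence locally uniformly). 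Re-expressing each exponent with the truncation level $h$ and writing $e^{\texttt{i}u^{\top}x}-1-\texttt{i}u^{\top}x=-\tfrac12(u^{\top}x)^2+O(|u|^3|x|^3)$ on the small jumps shows that the natural Gaussian parameter relative to $h$ is $a^h=a+\int_{|x|\le h}xx^{\top}\nu(dx)$ and the natural drift is $b^h$, so the theorem reduces to the equivalence: $\psi_n\to\psi$ pointwise $\iff$ $a_n^h\to a^h$, $b_n^h\to b^h$ and $\nu_n\stackrel{v}{\rightarrow}\nu$ on $\bar{\R^d}\setminus\{0\}$.

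For the ``if'' direction, fix $u$ and pick $\varepsilon\in(0,h)$ with $\nu\{|x|=\varepsilon\}=0$. Split $\psi_n(u)$ into its drift-plus-Gaussian part, the integral over $\{|x|>\varepsilon\}$, and the integral over $\{|x|\le\varepsilon\}$. On $\{|x|>\varepsilon\}$ the integrand $x\mapsto e^{\texttt{i}u^{\top}x}-1-\texttt{i}u^{\top}x\1_{\{|x|\le h\}}$ is bounded, vanishes at infinity, and is continuous off $\{|x|=h\}$, so $\nu_n\stackrel{v}{\rightarrow}\nu$ together with $\nu\{|x|=h\}=\nu\{|x|=\varepsilon\}=0$ makes that piece converge; the jump truncation in the drift leaves exactly $b_n^h$, which converges. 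For the small jumps, combine the quadratic approximation with the Gaussian term: $a_n+\int_{|x|\le\varepsilon}xx^{\top}\nu_n(dx)=a_n^h-\int_{\varepsilon<|x|\le h}xx^{\top}\nu_n(dx)$ converges to $a+\int_{|x|\le\varepsilon}xx^{\top}\nu(dx)$, and since both summands are positive semidefinite, $\sup_n\int_{|x|\le\varepsilon}|x|^2\nu_n(dx)<\infty$, so the cubic remainder is $O(|u|^3\varepsilon)$ uniformly in $n$. Sending $n\to\infty$ and then $\varepsilon\to0$ gives $\psi_n(u)\to\psi(u)$.

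For the ``only if'' direction, assume $\psi_n\to\psi$ locally uniformly; tightness of $(\mu_n)$ is immediate. The crux is to upgrade convergence of the exponents to convergence of the triples. First one proves the a priori bound $\sup_n\bigl(\operatorname{tr}(a_n)+\int_{\R^d}(1\wedge|x|^2)\,\nu_n(dx)\bigr)<\infty$ by integrating the nonnegative quantity $-\operatorname{Re}\psi_n(v)=\tfrac12v^{\top}a_nv+\int(1-\cos v^{\top}x)\,\nu_n(dx)$ over a small ball $\{|v|\le r\}$, using Fubini and the two-sided estimate $c_r(1\wedge|x|^2)\le\int_{|v|\le r}(1-\cos v^{\top}x)\,dv\le C_r(1\wedge|x|^2)$ together with the boundedness of $\int_{|v|\le r}(-\operatorname{Re}\psi_n(v))\,dv$; boundedness of $\|b_n^h\|$ then follows from $\operatorname{Im}\psi_n$ near $0$. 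This renders the measures $(1\wedge|x|^2)\nu_n(dx)$, together with the Gaussian parameters, relatively compact on a compactification of $\R^d$, so along any subsequence one extracts a limit which, after absorbing any mass of $\nu_n$ escaping to the origin into the Gaussian coefficient, is of the form $id(a',b',\nu')$. By the ``if'' part just proved, $\psi_n$ converges along this subsequence to the exponent of $id(a',b',\nu')$, forcing it to equal $\psi$; uniqueness of the L\'evy--Khintchine triple relative to $h$ then gives $(a',b',\nu')=(a^h,b^h,\nu)$, and since the subsequence was arbitrary the three convergences hold for the full sequences.

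The main obstacle is this last step---extracting convergent characteristic triples from mere pointwise convergence of characteristic functions---where one must simultaneously prevent loss of L\'evy mass along the diagonal and track the possible transfer of small-jump second moment into the limiting Gaussian coefficient; only the combined quantity $a_n^h=a_n+\int_{|x|\le h}xx^{\top}\nu_n(dx)$ behaves continuously in the limit, which is exactly why the statement is phrased through $a^h$ and why the assumption $\nu\{|x|=h\}=0$ is needed (to avoid a boundary atom in the vague convergence).
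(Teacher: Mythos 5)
The paper does not actually prove this statement: it is recalled verbatim from Kallenberg \cite{e} (Theorem 15.14) -- equivalently Jacod--Shiryaev \cite{g}, Theorem VII.3.4 -- and your argument is a correct reconstruction of the standard proof given in those references, namely the reduction to pointwise convergence of the L\'evy--Khintchine exponents, the $\varepsilon$-cutoff with the quadratic approximation and the continuity of $a^h_n$ for the ``if'' part, and the truncation-inequality a priori bound $\sup_n\bigl(\operatorname{tr}(a_n)+\int(1\wedge|x|^2)\nu_n(dx)\bigr)<\infty$ followed by subsequence extraction and identification for the ``only if'' part. The one inaccuracy is the claim that the integrand $e^{\texttt{i}u^{\top}x}-1$ ``vanishes at infinity'' on $\{|x|>h\}$ -- it merely oscillates -- but the step is repaired by the uniform tail control $\sup_n\nu_n\{|x|>R\}\rightarrow0$ as $R\rightarrow\infty$, which the vague convergence on the compactification $\bar{\mathbb{R}^d}\backslash\{0\}$ supplies, so this is a cosmetic slip rather than a gap.
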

	The following Lemma shows a way to prove the convergence in distribution of a triangular array. This can be found as Corollary 15.16 of Kallenberg \cite{e} or equivalently  Theorem VIII.2.29 of Jacod and Shiryaev \cite{g} which 
	\begin{lemma}\label{lem:cv}
		Consider in $\mathbb{R}^d$ an i.i.d. array $(\zeta^n_i)$ and let $\Gamma$ be $id(a,b,\nu)$. For any $h>0$ with $\nu\{|x|=h\}=0$, $\sum_{i=1}^{[n.]}\zeta_i^n\stackrel{\mathcal{L}}{\rightarrow}\Gamma$ iff
		\begin{enumerate}[(i)]
			\item $n\mathcal{L}(\zeta_1^n)\stackrel{v}{\rightarrow}\nu$ on $\bar{\mathbb{R}^d}\backslash \{0\}$ 
			\item  $n\mathbb{E}(\zeta_1^n;|\zeta_1^n|\leq h)\rightarrow b^h$
			\item $n\mathbb{E}(\zeta_1^n{\zeta_1^n}^{\top};|\zeta_1^n|\leq h)\rightarrow a^h$.
		\end{enumerate} 
	\end{lemma}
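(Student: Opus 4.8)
The plan is to recognise the statement as the classical functional central limit theorem for row-wise i.i.d. triangular arrays with an infinitely divisible limit, and to deduce it from Theorem \ref{thm:cv} together with the accompanying-laws (Khintchine) principle for null arrays. First I would reduce the functional assertion to the one-dimensional statement at time $t=1$: since the rows $(\zeta_i^n)_{1\le i\le n}$ are i.i.d., the increments of $\sum_{i=1}^{[nt]}\zeta_i^n$ over disjoint intervals are independent, and the value at time $1$ has law $\mu_n^{*n}$, where $\mu_n:=\mathcal{L}(\zeta_1^n)$. The passage from convergence at $t=1$ to Skorokhod convergence towards the Lévy process with characteristics $(a,b,\nu)$ is then standard for independent-increment limits, so the heart of the matter is to prove $\mu_n^{*n}\stackrel{w}{\rightarrow}\Gamma$ iff (i)--(iii) hold.

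The next step is the reduction to an infinitely divisible law. Condition (i) forces the array to be infinitesimal: for every $\varepsilon$ with $\nu(\{|x|=\varepsilon\})=0$ we get $n\mu_n(\{|x|>\varepsilon\})\to\nu(\{|x|>\varepsilon\})<\infty$, hence $\max_{i\le n}\mathbb{P}(|\zeta_i^n|>\varepsilon)=\mu_n(\{|x|>\varepsilon\})=O(1/n)\to0$. Infinitesimality licences the accompanying-laws theorem for null arrays: $\mu_n^{*n}$ converges weakly iff the associated compound-Poisson laws $\tilde\mu_n:=id(0,b_n,\nu_n)$ converge, and to the same limit, where $\nu_n:=n\mu_n$ and $b_n:=n\int_{|x|\le1}x\,\mu_n(dx)$ is exactly the drift generated by truncating the compound-Poisson representation at level $1$ (matching the truncation function $h(x)=x\1_{\{|x|\leq1\}}$ used in Theorem \ref{thm:cv}).

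It then remains to apply Theorem \ref{thm:cv} to $\tilde\mu_n=id(0,b_n,\nu_n)$. Using that the compound-Poisson law has no Gaussian part, the modified characteristics at a continuity level $h$ with $\nu(\{|x|=h\})=0$ are
\begin{align*}
a_n^h&=\int_{|x|\le h}xx^{\top}\nu_n(dx)=n\,\mathbb{E}(\zeta_1^n{\zeta_1^n}^{\top};|\zeta_1^n|\le h),\\
b_n^h&=b_n-\int_{h<|x|\le1}x\,\nu_n(dx)=n\,\mathbb{E}(\zeta_1^n;|\zeta_1^n|\le h).
\end{align*}
By Theorem \ref{thm:cv}, $\tilde\mu_n\stackrel{w}{\rightarrow}\Gamma=id(a,b,\nu)$ holds iff $\nu_n\stackrel{v}{\rightarrow}\nu$ on $\bar{\mathbb{R}^d}\backslash\{0\}$, $b_n^h\to b^h$ and $a_n^h\to a^h$; in view of the two displayed identities these are precisely conditions (i), (ii) and (iii). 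Combining this with the accompanying-laws equivalence and the functional upgrade of the first paragraph completes the argument.

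I expect the main obstacle to be the bookkeeping in the accompanying-laws reduction: verifying infinitesimality and, above all, checking that the centering $b_n$ produced by the compound-Poisson representation is consistent with the truncation conventions of Theorem \ref{thm:cv}, so that $b_n^h$ collapses cleanly to $n\,\mathbb{E}(\zeta_1^n;|\zeta_1^n|\le h)$ with no leftover terms (and, when $h>1$, respecting the sign convention $\int_{h<|x|\le1}x\,\nu_n(dx)=-\int_{1<|x|\le h}x\,\nu_n(dx)$). Once the characteristics of $\tilde\mu_n$ are identified, the rest is a direct translation through Theorem \ref{thm:cv}; alternatively the statement may be quoted verbatim from Kallenberg \cite[Corollary 15.16]{e} (equivalently Jacod and Shiryaev \cite[Theorem VIII.2.29]{g}), as it is recalled here.
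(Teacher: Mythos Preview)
The paper does not prove this lemma at all; it simply states it and attributes it to Kallenberg \cite[Corollary 15.16]{e} (equivalently Jacod and Shiryaev \cite[Theorem VIII.2.29]{g}). Your proposal is correct and in fact supplies more than the paper does: you sketch the actual derivation via the accompanying-laws reduction and Theorem~\ref{thm:cv}, whereas the paper treats the result as a black-box citation. Since you also note in your final paragraph that the statement may be quoted verbatim from those references, your approach subsumes the paper's.
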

	\begin{remark}
		To check  $(i)$, we prove that
		$n\mathbb{E}(1_{|\zeta^n_1|>\omega})\rightarrow \nu\{|x|>\omega\},$	for any $\omega>0$.
	\end{remark}
	\begin{lemma}\label{lem:t1}(Lemma 2.1 in \cite{a})
	If for each $n$ $\zeta_i^n$, $i=1,2,\hdots$ are i.i.d. random variables and $\Gamma^n_1$ converges in law to a limit $U$, then there is a L\'evy process $\Gamma$ such that $\Gamma_1=U$. This process $\Gamma$ is unique in law and $\Gamma^n$ converges in law to $\Gamma$ (for the Skorokhod topology). Further, the sequence $(\Gamma^n)$ has $(UT)$. 	 	
\end{lemma}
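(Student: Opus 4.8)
The plan is to prove Lemma~\ref{lem:t1} in three steps: first identify the limit $U$ as an infinitely divisible law and attach to it a L\'evy process $\Gamma$; then upgrade the one-dimensional convergence $\Gamma^n_1\stackrel{\mathcal{L}}{\longrightarrow}U$ to functional convergence $\Gamma^n\stackrel{\mathcal{L}}{\longrightarrow}\Gamma$ in the Skorokhod space; and finally deduce the $(UT)$ property from the explicit structure of the increments of $\Gamma^n$.

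\textbf{Step 1 (the limit is a L\'evy law).} Write $\nu_n=\mathcal{L}(\zeta^n_1)$ and $\phi_n=\widehat{\nu_n}$, so that $\widehat{\Gamma^n_1}=\phi_n^n\to\widehat U$. Since $\widehat U$ is continuous with $\widehat U(0)=1$, it does not vanish on some neighbourhood $V$ of the origin; hence $|\phi_n|\to1$ locally uniformly on $V$, which together with $\phi_n(0)=1$ forces the i.i.d.\ array $(\zeta^n_i)$ to be infinitesimal and $n(\phi_n-1)\to\log\widehat U$ on $V$ by the standard symmetrization argument. By the classical convergence criterion for row-sums of infinitesimal i.i.d.\ triangular arrays (the necessity part underlying Theorem~\ref{thm:cv} and Lemma~\ref{lem:cv}), $U$ is infinitely divisible, say $U\stackrel{d}{=}id(a,b,\nu)$, and moreover the conditions (i), (ii), (iii) of Lemma~\ref{lem:cv} hold for the array $(\zeta^n_i)$; note these conditions do not involve the time variable. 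Let $\Gamma$ be the L\'evy process with $\Gamma_1\stackrel{d}{=}U$; it is unique in law, since a L\'evy process is determined by the law of its value at time $1$ (its exponent $\psi$ satisfies $\widehat{\Gamma_t}=e^{t\psi}$).

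\textbf{Step 2 (functional convergence).} With conditions (i)--(iii) established, the sufficiency direction of Lemma~\ref{lem:cv} applied to $(\zeta^n_i)$ gives directly $\Gamma^n=\sum_{i=1}^{[n\cdot]}\zeta^n_i\stackrel{\mathcal{L}}{\longrightarrow}\Gamma$ for the Skorokhod topology. If one prefers a self-contained argument: the increment $\Gamma^n_t-\Gamma^n_s$ is a sum of $[nt]-[ns]$ i.i.d.\ copies of $\zeta^n_1$, so $\widehat{\Gamma^n_t-\Gamma^n_s}(u)=\phi_n(u)^{[nt]-[ns]}=\bigl(\phi_n(u)^n\bigr)^{([nt]-[ns])/n}\to\widehat U(u)^{\,t-s}=\widehat{\Gamma_{t-s}}(u)$ by Step~1, and the independence of sums over disjoint index blocks yields convergence of the finite-dimensional distributions to those of $\Gamma$; Skorokhod tightness then follows either from Aldous' criterion using the i.i.d.\ block structure, or from the functional limit theorem for processes with independent increments, whose hypotheses reduce exactly to (i)--(iii) because the characteristics $t\mapsto t(a,b,\nu)$ of $\Gamma$ are continuous (no fixed time of discontinuity).

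\textbf{Step 3 ($(UT)$), and the main obstacle.} Fix $a>0$ with $\nu\{|x|=a\}=0$ and use the canonical decomposition \eqref{eq:2.3} of the pure-jump process $\Gamma^n$, whose jumps are the $\zeta^n_i$ at the times $i/n$: here $A^{n,a}_t=[nt]\,\mathbb{E}(\zeta^n_1\1_{\{|\zeta^n_1|\le a\}})$ is deterministic, $M^{n,a}$ is the compensated truncated-jump martingale with $\langle M^{n,a},M^{n,a}\rangle_t\le[nt]\,\mathbb{E}(|\zeta^n_1|^2\1_{\{|\zeta^n_1|\le a\}})$, and the last term is $\sum_{i\le[nt]}\zeta^n_i\1_{\{|\zeta^n_i|>a\}}$. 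By conditions (ii) and (iii) of Lemma~\ref{lem:cv}, $n\,\mathbb{E}(\zeta^n_1\1_{\{|\zeta^n_1|\le a\}})$ and $n\,\mathbb{E}(|\zeta^n_1|^2\1_{\{|\zeta^n_1|\le a\}})$ converge, so $\mathrm{Var}(A^{n,a})_t$ and $\langle M^{n,a},M^{n,a}\rangle_t$ are bounded in $n$; and $\sum_{s\le t}|\Delta\Gamma^n_s|\1_{\{|\Delta\Gamma^n_s|>a\}}$ is tight because, by Step~2 and the choice of $a$, it converges in law to the corresponding a.s.\ finite functional of the L\'evy process $\Gamma$ (equivalently, under the standing assumption \ref{A} it has bounded expectation). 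A sequence of nonnegative random variables that is bounded or convergent in law is tight, so the sum of the three quantities is tight and $(\Gamma^n)$ satisfies $(UT)$, independently of $a$. I expect the delicate point of the whole argument to lie in Step~2: verifying that conditions (i)--(iii) are precisely what the functional (not merely one-dimensional) limit theorem for independent-increment processes requires, and that no fixed time of discontinuity is created in the limit --- which is what allows the passage from marginal convergence to Skorokhod convergence. Step~1 is classical but demands the usual infinitesimality argument, and Step~3 is then bookkeeping on top of Steps~1--2.
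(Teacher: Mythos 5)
Your proposal is correct and follows essentially the standard route that the paper itself relies on: this lemma is imported verbatim from Jacod \cite{a} (Lemma 2.1), the paper gives no proof of its own, and the cited argument is exactly your Steps 1--3 (classical central-limit theory for i.i.d.\ rows to identify $U$ as infinitely divisible, the functional form of Lemma \ref{lem:cv}/Theorem \ref{thm:cv} for the Skorokhod convergence, and boundedness of the truncated characteristics plus convergence in law of the large-jump variation for $(UT)$). The only cosmetic points are the branch choice hidden in writing $\phi_n(u)^{[nt]-[ns]}=\bigl(\phi_n(u)^n\bigr)^{([nt]-[ns])/n}$ (harmless near the origin, where $\phi_n\to1$) and the parenthetical appeal to assumption \ref{A}, which is irrelevant to this abstract statement; neither affects correctness.
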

Let $Z^n_t=\sum_{i=1}^{[nt]}\eta_i^n$,  $\Gamma^n_t=\sum_{i=1}^{[nt]}\zeta^n_i$ and ${\Gamma'}^n_t=\sum_{i=1}^{[nt]}{\zeta'}^n_i$ with $\zeta^n_i=g(X^n_{\frac{i-1}{n}}){\zeta'}^n_i$. 
 For each $n$, if the sequence $(\eta_i^n,{\zeta'}^n_i)$, $i=1,2,\hdots,$ is i.i.d., combining Lemma \ref{lem:t1} with Theorem \ref{thm:ut} (iii), we get the following lemma which is very similar to Lemma 2.8 in \cite{a}.
\begin{lemma}\label{lem:cv1tot}
	We suppose that the sequence $(Z^n,\Gamma^n)$ is tight. If the pair $(Z^n_1,{\Gamma'}^n_1)$ of random variables converges in law to $(Z_1,{\gamma'})$ with ${\gamma'}$ a random variable independent of $Z_1$ and that $g$ is a  Lipschitz-continuous function, then there is a L\'evy process $\Gamma'$, independent of $Y$ and unique in law, such that the processes $(Z^n,{\Gamma'}^n,{\Gamma}^n)$ converge in law to $(Z,{\Gamma'},{\Gamma})$, where ${\Gamma}_t=\int_0^tg(X_{s-})d{\Gamma'}_s$. If further ${\gamma'}$ is a constant, then we get ${\Gamma}_t=\int_0^tg(X_{s-}){\gamma'}ds$, and the convergence of $(Z^n,{\Gamma'}^n,{\Gamma}^n)$ takes place in probability.
\end{lemma}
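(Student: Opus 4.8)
The plan is to first upgrade the convergence of the one‑dimensional time‑$1$ marginals to a process‑level limit theorem for the pair $(Z^n,{\Gamma'}^n)$, and then to read off the limit of the discrete stochastic integral $\Gamma^n_t=\sum_{i=1}^{[nt]}g(X^n_{(i-1)/n}){\zeta'}^n_i=\int_0^t g(X^n_{\eta_n(s-)})\,d{\Gamma'}^n_s$. Since for each $n$ the array $(\eta^n_i,{\zeta'}^n_i)_{i\ge1}$ is i.i.d.\ in $\mathbb{R}^2$, I would apply the $\mathbb{R}^2$‑valued version of Lemma~\ref{lem:t1}: from $(Z^n_1,{\Gamma'}^n_1)\stackrel{\mathcal{L}}{\to}(Z_1,\gamma')$ one obtains a bivariate Lévy process $(Z,\Gamma')$, unique in law, with $\mathcal{L}(Z_1,\Gamma'_1)=\mathcal{L}(Z_1,\gamma')$, such that $(Z^n,{\Gamma'}^n)\stackrel{\mathcal{L}}{\to}(Z,\Gamma')$ for the Skorokhod topology and such that the sequence $(Z^n,{\Gamma'}^n)$ satisfies $(UT)$; the uniqueness of $\Gamma'$ in law is part of that statement. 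The independence hypothesis $Z_1\perp\gamma'$ forces the Lévy exponent of $(Z,\Gamma')$ to split additively, $\psi(u,v)=\psi(u,0)+\psi(0,v)$; since $\mathbb{E}\,e^{\texttt{i}(uZ_t+v\Gamma'_t)}=e^{t\psi(u,v)}$ for every $t$, the coordinates $Z_t$ and $\Gamma'_t$ are independent for all $t$, and independence of increments then yields that $Z$ and $\Gamma'$ are independent processes. In the applications $Z^n=\overline Y^n$ (or $\overline Y^n(2)$), so $Z=Y$ and $\Gamma'$ is independent of $Y$.

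Next I would invoke Theorem~\ref{thm:ut}(iii) with the $(UT)$ and weakly convergent sequence $(Z^n,{\Gamma'}^n)$ as integrator and with the adapted càdlàg integrands $H^n_s:=g(X^n_{\eta_n(s)})$. This requires the joint weak convergence $(H^n,Z^n,{\Gamma'}^n)\stackrel{\mathcal{L}}{\to}(H,Z,\Gamma')$ with $H_s=g(X_s)$. Since $g$ is Lipschitz and the continuous Euler scheme $X^n$ converges to the solution $X$ of \eqref{eq:1.1} driven by $Y=Z$ (classical stability of the Euler scheme), the point reduces to the joint convergence $(X^n,Z^n,{\Gamma'}^n)\stackrel{\mathcal{L}}{\to}(X,Z,\Gamma')$ in which $\Gamma'$ is independent of $(X,Z)$: indeed $X$ is a measurable functional of $Z$ (uniqueness of the SDE solution under \eqref{f}) and $Z\perp\Gamma'$, whence $(X,Z)\perp\Gamma'$. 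This asymptotic decoupling, together with the assumed tightness of $(Z^n,\Gamma^n)$ that is used to extract subsequences, is exactly what is secured through Jacod's freezing argument and the step‑function approximation of \cite[Theorem 1.2(d)]{a} in the proofs of Theorems~\ref{thm:noise1}, \ref{thm:noise24}, \ref{thm:noise3} and \ref{thm:noise5} that invoke the present lemma. Granting it, Theorem~\ref{thm:ut}(iii) gives $(H^n,Z^n,{\Gamma'}^n,H^n_-\cdot{\Gamma'}^n)\stackrel{\mathcal{L}}{\to}(H,Z,\Gamma',H_-\cdot\Gamma')$; since $H^n_-\cdot{\Gamma'}^n=\Gamma^n$ and $H_{s-}=g(X_{s-})$ off the countable jump set of $X$, this is precisely $(Z^n,{\Gamma'}^n,\Gamma^n)\stackrel{\mathcal{L}}{\to}(Z,\Gamma',\int_0^\cdot g(X_{s-})\,d\Gamma'_s)$, i.e.\ the claim with $\Gamma_t=\int_0^tg(X_{s-})\,d\Gamma'_s$.

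Finally, when $\gamma'$ is a constant $c$, the limiting Lévy process is deterministic, $\Gamma'_t=ct$, so convergence in law to this deterministic limit is convergence in probability and ${\Gamma'}^n\to\Gamma'$ in probability. Combined with $X^n\to X$ in probability, the $(UT)$ property and the finite variation of the limiting integrator $\Gamma'$, the stochastic integral $\Gamma^n=g(X^n_{\eta_n(\cdot-)})\cdot{\Gamma'}^n$ converges in probability (locally uniformly in time) to $c\int_0^\cdot g(X_{s-})\,ds$, giving the last assertion with $\Gamma_t=c\int_0^tg(X_{s-})\,ds$. I expect the genuine difficulty to lie in the joint‑convergence step of the second paragraph: although ${\Gamma'}^n$ is built from the very same Lévy path as $X^n$ and $Z^n$, one must show that in the limit $\Gamma'$ becomes independent of $(X,Z)$, and it is this asymptotic independence — rather than any soft argument — that forces the freezing‑and‑approximation machinery.
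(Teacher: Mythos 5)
Your overall strategy --- upgrade the time-$1$ convergence to a bivariate L\'evy limit via (the $\R^2$-version of) Lemma \ref{lem:t1}, deduce independence of $Z$ and $\Gamma'$ from the factorization of the L\'evy exponent, and then pass the discrete stochastic integral to the limit with Theorem \ref{thm:ut}(iii) --- is the same skeleton as the paper's proof, and your first and last paragraphs (the bivariate L\'evy/$(UT)$ step and the constant-$\gamma'$ case) are sound. The problem is the middle step, which you yourself flag as the genuine difficulty and then simply grant: the joint weak convergence $(g(X^n_{\eta_n(\cdot)}),Z^n,{\Gamma'}^n)\stackrel{\mathcal{L}}{\longrightarrow}(g(X_{\cdot}),Z,\Gamma')$ with $\Gamma'$ independent of $(X,Z)$. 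Your justification --- that this decoupling is ``secured through Jacod's freezing argument \dots in the proofs of Theorems \ref{thm:noise1}, \ref{thm:noise24}, \ref{thm:noise3} and \ref{thm:noise5} that invoke the present lemma'' --- is circular: those theorems only verify the hypothesis of the present lemma (convergence of the pair $(Z^n_1,{\Gamma'}^n_1)$ with independent marginal limits) and then call the lemma to obtain precisely the process-level joint convergence you are assuming. The freezing/step-function argument in those proofs addresses a different issue (identifying the limit law of the arrays after the coefficient $X^n_{t_i^1}$ is replaced by a fixed $z$), not the asymptotic independence of ${\Gamma'}^n$ from the path of $Y$ and its functionals.

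The paper closes this gap by a decomposition you do not perform: write $\Gamma^n=\Gamma^{n,1}+\Gamma^{n,2}$ with $\Gamma^{n,2}_t=\sum_{i\le[nt]}g(X_{(i-1)/n}){\zeta'}^n_i$, the coefficient being evaluated at the \emph{true} solution $X$ rather than the Euler scheme, and $\Gamma^{n,1}$ the correction. For $\Gamma^{n,2}$ one invokes Lemma 2.8 of \cite{a}, which is exactly the statement that the marginal convergence $(Z^n_1,{\Gamma'}^n_1)\to(Z_1,\gamma')$ with independent limits upgrades to $(Z^n,{\Gamma'}^n,\Gamma^{n,2})\stackrel{\mathcal{L}}{\longrightarrow}(Z,\Gamma',\int_0^{\cdot}g(X_{s-})d\Gamma'_s)$; the point is that $X$ is a fixed $\sigma(Y)$-measurable process, so the required joint (stable-type) convergence with the integrand is part of that cited result. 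The Euler error is then confined to $\Gamma^{n,1}$, which tends to $0$ in probability because $g(X^n_{\eta_n(\cdot)})-g(X_{\eta_n(\cdot)})\stackrel{\mathbb{P}}{\rightarrow}0$ ($g$ Lipschitz plus convergence of the Euler scheme) while the integrator satisfies $(UT)$ by Lemma \ref{lem:t1}, so Theorem \ref{thm:ut}(iii) applies. If you want to keep your direct route with $H^n_s=g(X^n_{\eta_n(s)})$, you must actually prove the joint convergence, e.g.\ by showing that $(Z^n,{\Gamma'}^n)\to(Z,\Gamma')$ together with $Z^n\to Y$ in probability yields $(\xi,{\Gamma'}^n)\to(\xi,\Gamma')$ for every $\sigma(Y)$-measurable $\xi$ (a monotone-class / stable-convergence argument) and then adjoin $X^n\to X$ in probability; as written, the key step is asserted rather than established.
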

\begin{proof}
 We rewrite $\Gamma^n=\Gamma^{n,1}+\Gamma^{n,2}$ where $\Gamma^{n,1}_t=\sum_{i=1}^{[nt]}(g(X^n_{\frac{i-1}{n}})-g(X_{\frac{i-1}{n}})){\zeta'}^n_i$ and $\Gamma^{n,2}_t=\sum_{i=1}^{[nt]}g(X_{\frac{i-1}{n}})){\zeta'}^n_i$.
	First, since $(Z^n_1,{\Gamma'}^n_1)$ converges in law to $(Z_1,{\gamma'})$, then by Lemma 2.8 in \cite{a}, we have $(Z^n,{\Gamma'}^n,{\Gamma}^{n,2})\stackrel{\mathcal{L}}{\rightarrow} (Z,{\Gamma'},{\Gamma})$.
	Second, by \cite[Theorem 1.2]{a} and using that $g$ is Lipschitz, we easily deduce that $g(X_{\eta_n(.)}^n)-g(X_{\eta_n(.)})\stackrel{\mathbb{P}}{\rightarrow}0$. Then, since  ${\Gamma}^{n,2}\stackrel{\mathcal{L}}{\rightarrow}{\Gamma}$, we apply Lemma \ref{lem:t1} to get the $(UT)$ property of ${\Gamma}^{n,2}$ and   Theorem \ref{thm:ut} (iii) to obtain ${\Gamma}^{n,1}\stackrel{\mathbb{P}}{\rightarrow}0$.
	Therefore, we get $(Z^n,{\Gamma'}^n,{\Gamma}^n)=(Z^n,{\Gamma'}^n,{\Gamma}^{n,1}+{\Gamma}^{n,2})\stackrel{\mathcal{L}}{\rightarrow} (Z,{\Gamma'},{\Gamma})$. 
	\end{proof}
	\subsection{Evaluation of logarithm and exponential functions}\label{eval}
Here, we use the power series expansions for both  functions   $\log{(1+z)}$ and $e^z$ for $z\in\mathbb{C}$ (see e.g.\ \cite{Gronwall}). We know that if $z\in\mathbb{C}$ and $|z|<\frac{1}{2}$, we have 
$
\log{(1+z)}-z=-\sum_{n\geq2}(-1)^n\frac{z^n}{n}.
$
Then 
$
|\log{(1+z)}-z|\leq\frac{1}{2}\sum_{n\geq2}|z|^n.
$
By applying the formula for convergent geometric sum, we have
$
|\log{(1+z)}-z|\leq\frac{1}{2}|z|^2\frac{1}{1-|z|}.
$
Now, since $|z|<\frac{1}{2}$, then
$
\frac{1}{1-|z|}<2
$
then $\forall z\in \mathbb{C}$ such that $|z|<\frac{1}{2}$ we have 
$\boxed{
|\log{(1+z)}-z|\leq|z|^2}.
$ We can proceed in the same way to prove that $\forall z\in \mathbb{C}$ such that $|z|<\frac{1}{2}$ we also have 
$\boxed{
	|e^z-1-z|\leq|z|^2}$ and by consequence, we get $\boxed{
	|e^z-1|\leq \frac{3}{2}|z|}$.

\end{document}